\theoremstyle{plain}
\newtheorem{thm}{Theorem}
\newtheorem{lem}[thm]{Lemma}
\newtheorem{cor}[thm]{Corollary}
\newtheorem{prop}[thm]{Proposition}
\newtheorem{conj}[thm]{Conjecture}
\theoremstyle{definition}
\newtheorem{defn}[thm]{Definition}
\newtheorem{question}[thm]{Question}
\newtheorem{rmk}[thm]{Remark}
\newtheorem{rmks}[thm]{Remarks}
\newtheorem{nota}[thm]{Notation}
\numberwithin{thm}{section} \numberwithin{equation}{section}
\newcommand{\ga}[2]{\begin{gather}\label{#1}#2 \end{gather}}
\newcommand{\sD}{{\mathcal D}}
\newcommand{\sE}{{\mathcal E}}
\newcommand{\sF}{{\mathcal F}}
\newcommand{\sG}{{\mathcal G}}
\newcommand{\sH}{{\mathcal H}}
\newcommand{\sK}{{\mathcal K}}
\newcommand{\sL}{{\mathcal L}}
\newcommand{\sM}{{\mathcal M}}
\newcommand{\sN}{{\mathcal N}}
\newcommand{\sO}{{\mathcal O}}
\newcommand{\sP}{{\mathcal P}}
\newcommand{\sQ}{{\mathcal Q}}
\newcommand{\sR}{{\mathcal R}}
\newcommand{\sU}{{\mathcal U}}
\newcommand{\sV}{{\mathcal V}}
\newcommand{\sW}{{\mathcal W}}
\newcommand{\sX}{{\mathcal X}}
\newcommand{\sY}{{\mathcal Y}}
\def\wt#1{\widetilde {#1}}
\begin{document}

\title{factorization of generalized Theta functions revisited}
\author{Xiaotao Sun}
\address{Center of Applied Mathematics, Tianjin University, No.92 Weijin Road, Tianjin 300072, P. R. China}
\email{xiaotaosun@tju.edu.cn}
\date{October 2, 2016}
\thanks{Supported in part by the National Natural Science Foundation of China No.11321101}
\begin{abstract}
This survey is based on my lectures given in last a few years. As a reference, constructions of
moduli spaces of parabolic sheaves and generalized parabolic sheaves are provided.
By a refinement of the proof of vanishing theorem, we show, without using vanishing theorem, a new observation that ${\rm dim}\,H^0(\sU_C,\Theta_{\sU_C})$ is independent of all of the choices for any smooth curves. The estimate of various codimension and computation of canonical line bundle of moduli space of generalized parabolic sheaves on a reducible curve are provided in Section 6, which is completely new.
\end{abstract}
\keywords{Moduli spaces, Parabolic sheaves, Degeneration}
\subjclass{Algebraic Geometry, 14H60, 14D20}
\maketitle
\begin{quote}

\end{quote}
\section{Introduction}

Let $C$ be a smooth projective curve of genus $g$, $\mathbf{Q}$ be the quotient scheme of quotients $V\otimes\sO_C(-N)\to E\to 0$ with $$\chi(E)=\chi=d+r(1-g)$$ and let $V\otimes \sO_{C\times\mathbf{Q}}(-N)\to\sF\to 0$ ($\text{where
$V=\mathbb{C}^{P(N)}$}$) be the universal quotient on
$C\times\mathbf{Q}$. There is an ${\rm SL}(V)$-equivariant embeding
$$\mathbf{Q}\hookrightarrow \mathbf{G}={\rm Grass}_{P(m)}(V\otimes{\rm
H}^0(\sO_C(m-N))),$$
and the GIT quotient $\sU_C=\mathbf{Q}^{ss}//{\rm SL}(V)$ respecting to the polarization
$$\Theta_{\mathbf{Q}^{ss}}:={\rm
det}R\pi_{\mathbf{Q}^{ss}}(\sF)^{-k}\otimes{\rm det}(\sF_y)^{\frac{k\chi}{r}}$$
(where $\sF_y=\sF|_{\{y\}\times\mathbf{Q}}$) is the so called moduli space of semi-stable rank $r$ vector bundles of degree $d$ on $C$. When $r|k\chi$, $\Theta_{\mathbf{Q}^{ss}}$
descends to an ample line bundle $\Theta_{\sU_C}$ on $\sU_C$.
When $r=1$, the sections $s\in H^0(\sU_C,\Theta_{\sU_C})$ are nothing but the classical
\textbf{theta functions of order $k$} and ${\rm dim}\,H^0(\sU_C,\Theta_{\sU_C})=k^g.$

When $r>1$, the sections $s\in H^0(\sU_C,\Theta_{\sU_C})$
are so called \textbf{generalized theta functions of order $k$} on $\sU_C$. It is clearly a very interesting
question for mathematicians to find a formula of ${\rm dim}\,H^0(\sU_C,\Theta_{\sU_C})$, which however
was only predicted by \textbf{Conformal Field Theory}, the so called Verlinde formula. For example, when $r=2$,
$${\rm dim}\,H^0(\sU_C,\Theta_{\sU_C})=\left(\frac{k}{2}\right)^g\left(\frac{k+2}{2}\right)^{g-1}\sum^k_{i=0}\frac{(-1)^{id}}{(\sin\frac{(i+1)\pi}{k+2})^{2g-2}}.$$

According to \cite{Be}, there are two kinds of approaches for the proof of Verlinde formula: Infinite-dimensional approaches and finite-dimensional approaches (see \cite{Be} for an account). Infinite-dimensional appeoach is close to physics, which works for any group
$G$, but the geometry behind it is unclear (at least to me). Finite-dimensional approach depends on well understand of geometry of moduli spaces, but it works
only for $r=2$ (as far as I know).

One of the finite-dimensional approaches is to consider a flat family of projective curves $\sX\to T$ such that a fiber $\sX_{t_0}:=X$ ($t_0\in T$) is a connected curve with only one node $x_0\in X$ and $\sX_t$ ($t\in T\setminus\{t_0\}$) are smooth curves with a fiber $\sX_{t_1}=C$ ($t_1\neq t_0$). Then one can associate a family of moduli spaces $\sM\to T$ and a line bundle
$\Theta$ on $\sM$ such that each fiber $\sM_t=\sU_{\sX_t}$ is the moduli space of semi-stable torsion free sheaves on $\sX_t$ and $\Theta|_{\sM_t}=\Theta_{\sU_{\sX_t}}$. By degenerating $C$ to an irreducible $X$, there are two steps to
establish a recurrence relation of $D_g(r,d, k)={\rm dim}\,H^0(\sU_C,\Theta_{\sU_C})$ in term of $g$ (the genus of $C$):\begin{itemize} \item [(1)] (Invariance)  $\,\,{\rm dim}\,H^0(\sU_{\sX_t},\Theta_{\sU_{\sX_t}})$ are independent of $t\in T$; \item [(2)] (Factorization) Let $\pi:\wt X\to X$ be the normalization of $X$, then $$H^0(\sU_X,\Theta_{\sU_X})\cong\bigoplus_{\mu}H^0(\sU^{\mu}_{\widetilde{X}},\Theta_{\sU^{\mu}_{\widetilde{X}}}),$$
where $\mu=(\mu_1,\cdots,\mu_r)$ runs through $0\le\mu_r\le\cdots\le
\mu_1\le k-1$, $\sU^{\mu}_{\widetilde{X}}$ are moduli spaces of semi-stable parabolic bundles on $\wt X$ with parabolic structure at
$x_i\in \pi^{-1}(x_0)=\{x_1,\,x_2\}$ determined by $\mu$ and $\wt X$ has genus $g(\wt X)=g-1$. \end{itemize}

In order to carry throught the induction on $g$, one has to start with moduli spaces $\sU_{\sX_t}=\sU_{\sX_t}(r, d,\omega)$ of semistable parabolic torsion free sheaves $E$ on $\sX_t$
of rank $r$ and ${\rm deg}(E)=d$ with parabolic structures of type
$\{\vec n(x)\}_{x\in I}$ and weights $\{\vec a(x)\}_{x\in I}$ at
smooth points $\{x\}_{x\in I}\subset \sX_t$, where
$\omega=(k, \{\vec n(x),\,\,\vec a(x)\}_{x\in I})$ denote the parabolic data. In \cite{S1}, the factorization theorem as above (2) was proved for $\sU_X=\sU_X(r,d, \omega)$.

Let $\sU_C=\sU_C(r,d,\omega)$ be the moduli space of semi-stable parabolic bundles of rank $r$ and degree $d$ on $C$ with parabolic structures of type
$\{\vec n(x)\}_{x\in I}$ and weights $\{\vec a(x)\}_{x\in I}$ at a finite set $I\subset C$ of points, and
$$D_g(r,d,\omega)={\rm dim}\,H^0(\sU_C,\Theta_{\sU_C}).$$
If the invariance property that ${\rm dim}\,H^0(\sU_C,\Theta_{\sU_C})$ is independent of $C$ and choices of points $x\in I$ holds (for example, if
$H^1(\sU_{\sX_t},\Theta_{\sU_{\sX_t}})=0$), we will have the following recurrence relation
\ga{}{D_g(r,d,\omega)=\sum_{\mu}D_{g-1}(r,d,\omega^{\mu})} where $\mu=(\mu_1,\cdots,\mu_r)$ runs through $0\le\mu_r\le\cdots\le
\mu_1< k$ and $$\omega^{\mu}=(k, \{\vec n(x),\,\,\vec a(x)\}_{x\in I\cup\{x_1,x_2\}})$$
with $\vec n(x_i)$, $\vec a(x_i)$ ($i=1,\,2$) determined by $\mu$. A vanishing theorem
$$H^1(\sU_{\sX_t},\Theta_{\sU_{\sX_t}})=0$$ was proved in \cite{S1} when $(r-1)(g-1)+\frac{|I|}{k}\ge 2$, which implies the
invariance property for $g\ge 3$.

The recurrence relation (1.1) decreases the genus $g$, but it increases the number $|I|$ of parabolic points. By degenerating $C$ to an reducible $X=X_1\cup X_2$, we can establish a recurrence relation for the number of parabolic points if we can prove the invariance property (1) and a factorization (2). In \cite{S2}, we proved the factorization theorem
$$H^0(\sU_{X_1\cup X_2},\Theta_{\sU_{X_1\cup X_2}})\cong\bigoplus_{\mu}
H^0(\sU^{\mu}_{X_1},\Theta_{\sU^{\mu}_{X_1}})\otimes
H^0(\sU^{\mu}_{X_2},\Theta_{\sU^{\mu}_{X_2}})$$ where
$\mu=(\mu_1,\cdots,\mu_r)$ runs through $0\le\mu_r\le\cdots\le
\mu_1<k.$ If $$H^1(\sU_X,\Theta_{\sU_X})=0$$
holds for $X=X_1\cup X_2$, fix a partition $I=I_1\cup I_2$, we have
\ga{} {D_g(r,d.\omega)=\sum_{\mu}D_{g_1}(r,d_1^{\mu},\omega_1^{\mu})\cdot D_{g_2}(r,d_2^{\mu},\omega_2^{\mu}),\quad g_1+g_2=g}
where $d_1^{\mu}+d_2^{\mu}=d$, $\omega_j^{\mu}=(k, \{\vec n(x),\,\,\vec a(x)\}_{x\in I_j\cup\{x_j\}})$ ($j=1,\,2$).

For a projective variety $\hat{M}$ with an ample line bundle $\hat{\sL}$, if a reductive group $G$ acts on $\hat{M}$ with respect to the polarization $\hat{\sL}$ and assume that $\hat{\sL}$ descends to a line bundle $\sL$ on GIT quotient $M=\hat{M}^{ss}(\hat{\sL})//G$, then
$$H^i(M,\sL)=H^i(\hat{M}^{ss}(\hat{\sL}),\hat{\sL})^{inv.}.$$
If there is another $G$-variety $\hat\sY$ with an $G$-morphism $p:\hat\sY\to \hat M$ such that
$H^i(\hat M,\hat\sL)^{inv.}=H^i(\hat\sY, p^*\hat\sL)^{inv.}$, we would be able to show the vanishing theorem $H^i(M,\sL)=0$ by assuming the following statements:\begin{itemize} \item [(i)] There are line bundles $\hat\sL_1$, $\hat\sL_2$ on $\hat\sY$ such that
$p^*\hat\sL=\omega_{\hat\sY}\otimes\hat\sL_1\otimes \hat\sL_2$ (where $\omega_{\hat\sY}$ is the canonical line bundle of $\hat\sY$) and $\hat\sL_1$, $\hat\sL_2$
descend to ample line bundles $\sL_1$, $\sL_2$ on GIT quotient $\sY=\hat\sY^{ss}(\hat\sL_1)//G$;
\item [(ii)] If $\psi:\hat\sY^{ss}(\hat\sL_1)\to \sY$ is quotient map, $\omega_{\sY}=(\psi_*\omega_{\hat\sY^{ss}(\hat\sL_1)})^G$;
\item [(iii)]$H^i(\hat M,\hat\sL)^{inv.}=H^i(\hat{M}^{ss}(\hat{\sL}),\hat{\sL})^{inv.}$ and
$$H^i(\hat\sY, p^*\hat\sL)^{inv.}=H^i(\hat\sY^{ss}(\hat\sL_1), p^*\hat\sL)^{inv.}.$$ \end{itemize}
The above statements imply $H^i(M,\sL)=H^i(\sY, \omega_{\sY}\otimes\sL_1\otimes\sL_2)$, then Kodaira-type vanishing theorem for $\sY$ do the job.
To establish (i), (ii) and (iii), one has to compute canonical bundle and singularities of the moduli spaces, to estimate codimensions of
$$\hat\sY^{ss}(\hat\sL_1)\setminus \hat\sY^{s}(\hat\sL_1),\quad\hat\sM\setminus \hat\sM^{ss}(\hat\sL), \quad\hat\sY\setminus \hat\sY^{ss}(\hat\sL_1),$$
which were done in \cite{S1} for moduli spaces of parabolic bundles and generalized parabolic sheaves on an irreducible smooth curve, so that $H^1(\sU_X,\Theta_{\sU_X})=0$ was only proved for the irreducible nodal curve $X$ of genus $g\ge 3$ in \cite{S1}. If $H^1(\sU_X,\Theta_{\sU_X})=0$ holds
for both irreducible $X$ and reducible $X$ of arbitrary genus, the numbers $D_g(r,d,\omega)$ will satisfy the recurrence relation (1.1) and (1.2) which
will imply a formula of $D_g(r,d,\omega)$. However, the vanishing theorem for reducible curve $X$ remains open.

In this survey article, we provide a detail construction of various moduli spaces in Section 2. The theta line bundles $\Theta_{\sU_X}$ and
the two factorization theorems are reviewed in Section 3. We review firstly the proof of vanishing theorem for smooth curves of $g\ge 2$, then we show, without
using the vanishing of $H^1(\sU_C,\Theta_{\sU_C})$,
that the invariance property of ${\rm dim}\,H^0(\sU_C,\Theta_{\sU_C})$ holds for any smooth curve of genus $g\ge 0$ in Section 4 (see Corollary 4.8). Section 5 contains the review of vanishing theorem for irreducible node curves. Section 6 is an attempt to prove, using the same method of Section 5, the vanishing theorem $H^1(\sU_X, \Theta_{\sU_X})=0$ for reducible curve $X=X_1\cup X_2$.

\section{Construction of moduli spaces}

Let $X$ be an irreducible projective curve of genus $g$ over an
algebraically closed field of characteristic zero, which has at most
one node $x_0$. Let $I$ be a finite set of smooth points of $X$, and
$E$ be a coherent sheaf of rank $r$ and degree $d$ on $X$ (the rank
$r(E)$ is defined to be dimension of $E_{\xi}$ at generic point
$\xi\in X$, and $d=\chi(E)-r(1-g)$).

\begin{defn}\label{defn2.1} By a quasi-parabolic structure on E at a
smooth point $x\in X$, we mean a choice of flag of quotients
$$E_x=Q_{l_x+1}(E)_x\twoheadrightarrow
Q_{l_x}(E)_x\twoheadrightarrow\cdots\cdots\twoheadrightarrow
Q_1(E)_x\twoheadrightarrow Q_0(E)_x=0$$ of the fibre $E_x$ of $E$ at
$x$ (each quotient $Q_i(E)_x\twoheadrightarrow Q_{i-1}(E)_x$ in the
flag is not an isomorphism). If, in addition, a sequence of integers
called the parabolic weights $0\leq a_1(x)<a_2(x)<\cdots
<a_{l_x+1}(x)\le k$ are given, we call that $E$ has a parabolic
structure at $x$.
\end{defn}
Notice that, let $F_i(E)_x:=ker\{E_x\twoheadrightarrow Q_i(E)_x\}$,
it is equivalent to give a flag of subspaces of $E_x$:
$$E_x=F_0(E)_x\supset F_1(E)_x\supset\cdots\supset
F_{l_x}(E)_x\supset F_{l_x+1}(E)_x=0.$$

Let $r_i(x)={\rm dim}(Q_i(E)_x)$, $n_i(x)={\rm
dim}(ker\{Q_i(E)_x\twoheadrightarrow Q_{i-1}(E)_x\})$ (or simply
defining $n_i(x)=r_i(x)-r_{i-1}(x)$) and
$$\aligned
\vec a(x):&=(a_1(x),a_2(x),\cdots,a_{l_x+1}(x))\\
\vec n(x):&=(n_1(x),n_2(x),\cdots,n_{l_x+1}(x)).\endaligned$$ $\vec
a$ (resp., $\vec n$) denotes the map $x\mapsto \vec a(x)$ (resp.,
$x\mapsto\vec n(x)$).
\begin{defn} The parabolic Euler characteristic of $E$ is
$${\rm par}\chi(E):=\chi(E)-\frac{1}{k}\sum_{x\in
I}\left(a_{l_x+1}(x){\rm
dim}(E^{\tau}_x)-\sum^{l_x+1}_{i=1}a_i(x)n_i(x)\right)$$ where
$E^{\tau}\subset E$ is the subsheaf of torsion and
$E^{\tau}_x=E^{\tau}|_{\{x\}}$.
\end{defn}

\begin{defn} For any subsheaf $F\subset E$, let $Q_i(E)_x^F\subset
Q_i(E)_x$ be the image of $F$, $n_i^F={\rm
dim}(ker\{Q_i(E)_x^F\twoheadrightarrow Q_{i-1}(E)_x^F\})$ and
$${\rm par}\chi(F):=\chi(F)-\frac{1}{k}\sum_{x\in
I}\left(a_{l_x+1}(x){\rm
dim}(F^{\tau}_x)-\sum^{l_x+1}_{i=1}a_i(x)n^F_i(x)\right).$$ Then $E$
is called semistable (resp., stable) for $(k,\vec a)$ if for any
nontrivial subsheaf $E'\subset E$ such that $E/E'$ is torsion free,
one has
$${\rm par}\chi(E')\leq
\frac{{\rm par}\chi(E)}{r}\cdot r(E')\,\,(\text{resp., }<).$$
\end{defn}

\begin{rmk}\label{rmk2.4} Stable parabolic sheaf must be torsion free.
If $E$ is semistable, then $E$ is torsion free outside $x\in I$, the
quotient homomorphisms in Definition (\ref{defn2.1}) injection
$E^{\tau}_x$ to $Q_i(E)_x$ ($1\le i\le l_x$) for any $x\in I$.
Moreover, if $E^{\tau}_x\neq 0$, we must have $a_1(x)=0$ and
$a_{l_x+1}(x)=k$.
\end{rmk}

Fix a line bundle $\sO(1)$ on $X$ of ${\rm deg}(\sO(1))=c$, let
$\chi=d+r(1-g)$, $P$ denote the polynomial $P(m)=crm+\chi$,
$\sW=\sO(-N)=\sO(1)^{-N}$ and $V=\Bbb C^{P(N)}$. Consider the Quot
scheme
$${\rm Quot}(V\otimes\sW,P)(T)=\left\{
\aligned &\text{$T$-flat quotients $V\otimes\sW\to E\to 0$ over}\\
&\text{$X\times T$ with $\chi(E_t(m))=P(m)$ ($\forall\,\,t\in
T$)}\endaligned\right\},$$ and let $\mathbf{Q}\subset
Quot(V\otimes\sW,P)$ be the open set
$$\mathbf{Q}(T)=\left\{\aligned&\text{$V\otimes\sW\to E\to 0$, with
$R^1p_{T*}(E(N))=0$ and}\\&\text{$V\otimes\sO_T\to p_{T*}E(N)$
induces an isomorphism}\endaligned\right\}.$$ Choose $N$ large
enough so that every semistable parabolic sheaf with Hilbert
polynomial $P$ and parabolic structures of type $\{\vec n(x)\}_{x\in
I}$ with weights $\{\vec a(x)\}_{x\in I}$ at points $\{x\}_{x\in I}$
appears as a quotient corresponding to a point of $\mathbf{Q}$. Let
$\widetilde{\mathbf{Q}}$ be the closure of $\mathbf{Q}$ in the Quot
scheme, $V\otimes\sW\to \sF\to 0$ be the universal quotient over
$X\times\widetilde{\mathbf{Q}}$ and $\sF_x$ be the restrication of
$\sF$ on
$\{x\}\times\widetilde{\mathbf{Q}}\cong\widetilde{\mathbf{Q}}$. Let
$Flag_{\vec n(x)}(\sF_x)\to\widetilde{\mathbf{Q}}$ be the relative
flag scheme of locally free quotients of type $\vec n(x)$, and
$$\sR=\underset{x\in I}{\times_{\widetilde{\mathbf{Q}}}}
Flag_{\vec n(x)}(\sF_x)\to\widetilde{\mathbf{Q}}$$ be the product
over $\widetilde{\mathbf{Q}}$. A (closed) point
$(p,\{p_{r_1(x)},...,p_{r_{l_x}(x)}\}_{x\in I})$ of $\sR$ by
definition is given by a point $V\otimes\sW\xrightarrow{p}E\to 0$ of
the Quot scheme, together with the flags of quotients
$$\{E_x\twoheadrightarrow
Q_{r_{l_x}(x)}\twoheadrightarrow Q_{r_{l_x-1}(x)}\twoheadrightarrow
\cdots\twoheadrightarrow Q_{r_2(x)}\twoheadrightarrow
Q_{r_1(x)}\twoheadrightarrow 0\}_{x\in I}$$ where $p_{r_i(x)}:
V\otimes\sW\xrightarrow{p}E\to E_x\twoheadrightarrow
Q_{r_{l_x}(x)}\twoheadrightarrow\cdots\twoheadrightarrow
Q_{r_i(x)}$.

For large enough $m$, we have a $SL(V)$-equivariant embedding
$$\sR\hookrightarrow \mathbf{G}=Grass_{P(m)}(V\otimes W_m)\times\bold {Flag},$$
where $W_m=H^0(\sW(m))$, and $\bold{Flag}$ is defined to be
$$\bold{Flag}=\prod_{x\in I}\{Grass_{r_1(x)}(V\otimes W_m)
\times\cdots\times Grass_{r_{l_x}(x)}(V\otimes W_m)\},$$ which maps
a point $(p,\{p_{r_1(x)},...,p_{r_{l_x}(x)}\}_{x\in I})=$
$$(V\otimes\sW\xrightarrow{p}E, \{V\otimes\sW\xrightarrow{p_{r_1(x)}}Q_{r_1(x)},\,\,\cdots\,,
V\otimes\sW\xrightarrow{p_{r_{l_x}(x)}}Q_{r_{l_x}(x)}\}_{x\in I})$$
of $\sR$ to the point $(g,\{g_{r_1(x)},...,g_{r_{l_x}(x)}\}_{x\in
I})=$
$$(V\otimes
W_m\xrightarrow{g}U,\{V\otimes
W_m\xrightarrow{g_{r_1(x)}}U_{r_1(x)},\,\cdots\,, V\otimes
W_m\xrightarrow{g_{r_{l_x}(x)}}U_{r_{l_x}(x)}\}_{x\in I})$$ of
$\mathbf{G}$, where $g:=H^0(p(m))$, $U:=H^0(E(m))$,
$g_{r_i(x)}:=H^0(p_{r_i(x)}(m))$, $U_{r_i(x)}:=H^0(Q_{r_i(x)})$
($i=1,...,l_x$) and $r_i(x)=dim(Q_{r_i(x)})$.

\begin{nota}\label{nota2.5}
Given the polarisation ($N$ large enough) on $\mathbf{G}$:
$$\frac{\ell+kcN}{c(m-N)}\times\prod_{x\in
I}\{d_1(x),\cdots, d_{l_x}(x)\}$$ where $d_i(x)=a_{i+1}(x)-a_i(x)$
and $\ell$ is the rational number satisfying \ga{2.1}{\sum_{x\in
I}\sum_{i=1}^{l_x}d_i(x)r_i(x)+r\ell=k\chi}
\end{nota} By the general criteria of GIT stability, we have

\begin{prop}\label{prop2.6} A point
$(g,\{g_{r_1(x)},...,g_{r_{l_x}(x)}\}_{x\in I})\in \mathbf{G}$ is
stable (respectively, semistable) for the action of $SL(V)$, with
respect to the above polarisation (we refer to this from now on as
GIT-stability), iff for all nontrivial subspaces $H\subset V$ we
have (with $h=dim H$)
$$\aligned e(H):=&\frac{\ell+kcN}{c(m-N)}(hP(m)-P(N)dim\,g(H\otimes W_m))+\\
&\sum_{x\in
I}\sum^{l_x}_{i=1}d_i(x)(r_i(x)h-P(N)dim\,g_{r_i(x)}(H\otimes W_m))
<(\le)\,0.\endaligned$$
\end{prop}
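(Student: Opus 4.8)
The plan is to apply the Hilbert--Mumford numerical criterion to the $SL(V)$-action on $\mathbf{G}$ linearized by the ample line bundle attached to the polarisation of Notation \ref{nota2.5}. By the fundamental theorem of GIT, a point $x=(g,\{g_{r_1(x)},\dots,g_{r_{l_x}(x)}\}_{x\in I})\in\mathbf{G}$ is semistable (resp.\ stable) if and only if $\mu(x,\lambda)\le 0$ (resp.\ $<0$) for every nontrivial one-parameter subgroup $\lambda\colon\mathbb{G}_m\to SL(V)$, where $\mu$ is the Mumford weight on the chosen linearization, normalized so that semistability means $\mu\le0$. Thus the entire content is to evaluate $\mu(x,\lambda)$ explicitly and to show that the supremum over all $\lambda$ is already governed by the single-subspace quantities $e(H)$.

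First I would normalize the shape of $\lambda$. After choosing a basis of weight vectors, $\lambda$ is diagonal with integer weights; ordering them produces a filtration $0=V_0\subset V_1\subset\cdots\subset V_s=V$ together with a strictly decreasing sequence of weights $w_1>\cdots>w_s$, and membership in $SL(V)$ forces $\sum_j w_j\dim(V_j/V_{j-1})=0$. Every such $\lambda$ is a positive combination $\lambda=\sum_{j=1}^{s-1}(w_j-w_{j+1})\,\lambda_{V_j}$ of the elementary one-parameter subgroups $\lambda_H$, where $\lambda_H$ acts with weight $\dim V-\dim H$ on $H$ and weight $-\dim H$ on a complement (rescaled into $SL(V)$). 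Because all the $\lambda_{V_j}$ are simultaneously diagonal for the fixed flag, the limit point and its Plücker weight depend linearly on the torus weight, so $\mu(x,\lambda)=\sum_j(w_j-w_{j+1})\,\mu(x,\lambda_{V_j})$. This additivity reduces the verification from arbitrary $\lambda$ to the elementary subgroups indexed by nontrivial subspaces $H\subsetneq V$.

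Next I would compute $\mu(x,\lambda_H)$ factor by factor. On the Grassmannian factor $Grass_{P(m)}(V\otimes W_m)$, whose linearization carries coefficient $\frac{\ell+kcN}{c(m-N)}$ with $\ell$ as in (\ref{2.1}), the limit $\lim_{t\to0}\lambda_H(t)\cdot[g]$ collapses $g(H\otimes W_m)$, and the induced $\mathbb{G}_m$-weight on the determinant of the limiting $P(m)$-dimensional quotient equals $P(N)\dim g(H\otimes W_m)-hP(m)$, using $P(N)=\dim V$; with the sign convention this contributes $\frac{\ell+kcN}{c(m-N)}\bigl(hP(m)-P(N)\dim g(H\otimes W_m)\bigr)$. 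On each flag factor $Grass_{r_i(x)}(V\otimes W_m)$, whose coefficient is $d_i(x)=a_{i+1}(x)-a_i(x)$, the identical computation yields $d_i(x)\bigl(r_i(x)h-P(N)\dim g_{r_i(x)}(H\otimes W_m)\bigr)$. Summing the contributions of all factors reproduces exactly the expression $e(H)$, so that $\mu(x,\lambda_H)=e(H)$, and the criterion $\mu\le(<)\,0$ becomes $e(H)\le(<)\,0$.

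The main obstacle I anticipate is not any single weight computation but the bookkeeping that makes the reduction to single subspaces both necessary and sufficient: one must confirm that the elementary subgroups $\lambda_H$ genuinely attain the extremes of the piecewise-linear function $\lambda\mapsto\mu(x,\lambda)$, so that testing $e(H)\le(<)\,0$ over all nontrivial $H\subsetneq V$ is equivalent to testing $\mu(x,\lambda)\le(<)\,0$ over all $\lambda$. This requires checking that the determinant (trace-zero) condition on the weights is compatible with the additive decomposition, that the normalizing constants — in particular the coefficient $\frac{\ell+kcN}{c(m-N)}$ and the identification $\dim V=P(N)$ — combine the per-factor weights into the stated form with the correct overall sign, and that the rationality of the polarisation entails no loss of generality in restricting to integral $\lambda$.
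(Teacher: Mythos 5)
Your proposal is correct and is precisely the standard Hilbert--Mumford computation for a product of Grassmannians that the paper itself invokes without proof (it only says ``by the general criteria of GIT stability''): the reduction to the elementary one-parameter subgroups $\lambda_H$ via the positive-combination decomposition of a weighted flag, and the per-factor weight count giving $\mu(x,\lambda_H)=e(H)$, are exactly the intended argument. The weight bookkeeping (trace-zero normalization, $\dim V=P(N)$, and the coefficient $\frac{\ell+kcN}{c(m-N)}$ on the Quot-scheme factor) checks out as you describe.
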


\begin{nota}\label{nota2.7} Given a point
$(p,p_{r_1(x)},...,p_{r_{l_x}(x)}\}_{x\in I})\in\sR$, and a subsheaf
$F$ of $E$ we denote the image of $F$ in $Q_{r_i(x)}$ by
$Q_{r_i(x)}^F$. Similarly, given a quotient $E\xrightarrow{T}\sG\to
0$, set $Q_{r_i(x)}^{\sG}:=Q_{r_i(x)}/Im(ker(T))$.
\end{nota}

\begin{lem}\label{lem2.8}There exists $M_1(N)$ such that for $m\ge M_1(N)$ the
following holds. Suppose
$(p,\{p_{r(x)},p_{r_1(x)},...,p_{r_{l_x}(x)}\}_{x\in I})\in\sR$ is a
point which is GIT-semistable then for all quotients $E\xrightarrow
T\sG\to 0$ we have \ga{2.2}{h^0(\sG(N))\ge
\frac{1}{k}\left(r(\sG)(\ell+kcN)+ \sum_{x\in I}\sum^{l_x}_{i=1}
d_i(x)h^0(Q^{\sG}_{r_i(x)})\right).} In particular, $V\to H^0(E(N))$
is an isomorphism and $E$ satisfies the requirements in Remark
\ref{rmk2.4}.
\end{lem}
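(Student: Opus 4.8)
The plan is to deduce the cohomological bound \eqref{2.2} from the GIT-semistability criterion of Proposition \ref{prop2.6} by testing it against one well-chosen subspace $H\subset V$ for each quotient $E\xrightarrow{T}\sG\to 0$. Set $E'=\ker(T)$ and
$$H=\ker\big(V\to H^0(E(N))\to H^0(\sG(N))\big).$$
Left-exactness of global sections identifies this kernel with the $V$-preimage of $H^0(E'(N))$, so the composite $H\otimes\sW\to E$ factors through $E'$, while $V/H\hookrightarrow H^0(\sG(N))$ yields at once
$$h^0(\sG(N))\ge \dim(V/H)=P(N)-h,\qquad h=\dim H.$$
I would first dispose of the degenerate cases $H=0$ and $H=V$, which are immediate, and then feed a nontrivial $H$ into $e(H)\le 0$.

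Next I would bound from above the two image dimensions in $e(H)$. Since $H\otimes\sW$ lands in $E'$, the image $g(H\otimes W_m)$ lies in the image of $H^0(E'(m))$, so $\dim g(H\otimes W_m)\le h^0(E'(m))$; and since the $H$-sections reach $Q_{r_i(x)}$ only through $E'$, one gets $\dim g_{r_i(x)}(H\otimes W_m)\le r_i(x)-h^0(Q^{\sG}_{r_i(x)})$, with $Q^{\sG}_{r_i(x)}=Q_{r_i(x)}/\mathrm{Im}(\ker T)$ as in Notation \ref{nota2.7}. For $m\ge M_1(N)$ large enough that $h^1(E(m))=h^1(E'(m))=0$ on the whole family in play, the sequence $0\to E'\to E\to\sG\to 0$ gives $h^0(E'(m))=P(m)-h^0(\sG(m))$. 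Substituting both bounds into $e(H)\le 0$, clearing the positive factor $c_1=\tfrac{\ell+kcN}{c(m-N)}$, and solving for $h$ produces an upper bound for $h$ whose denominator is $c_1P(m)+\sum_{x,i}d_i(x)r_i(x)$. Using $P(m)=cr(m-N)+P(N)$ and the defining relation \eqref{2.1} in the form $r\ell+\sum_{x,i}d_i(x)r_i(x)=k\chi$, this denominator equals $kP(N)+O(1/(m-N))$, and identical bookkeeping turns the numerator into $P(N)\big(kP(N)-r(\sG)(\ell+kcN)-\sum_{x,i}d_i(x)h^0(Q^{\sG}_{r_i(x)})\big)+O(1/(m-N))$. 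Hence $P(N)-h$, and therefore $h^0(\sG(N))$, is at least $\tfrac1k\big(r(\sG)(\ell+kcN)+\sum_{x,i}d_i(x)h^0(Q^{\sG}_{r_i(x)})\big)$ up to terms vanishing as $m\to\infty$; since the left side is independent of $m$, letting $m\to\infty$ gives \eqref{2.2} exactly.

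For the final assertions I would specialize \eqref{2.2}. Applying the criterion to $H=\ker(V\to H^0(E(N)))$, on which every image $g(H\otimes W_m)$ and $g_{r_i(x)}(H\otimes W_m)$ vanishes, gives $e(H)=h\,(kP(N)+O(1/(m-N)))\le 0$, forcing $h=0$, so $V\to H^0(E(N))$ is injective; since $h^1(E(N))=0$ makes $h^0(E(N))=\chi(E(N))=P(N)=\dim V$, it is an isomorphism. Taking $\sG=E$ in \eqref{2.2} and reusing \eqref{2.1} recovers the consistent equality $h^0(E(N))=P(N)$, while taking $\sG=E/E^{\tau}_y$ for a torsion quotient at $y\notin I$ forces its length to vanish, so $E$ is torsion free off $I$; the remaining flag statements of Remark \ref{rmk2.4} follow by applying \eqref{2.2} to the analogous torsion quotients at $x\in I$. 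The one genuinely hard point is uniformity: the quotients $\sG$ (equivalently the subsheaves $E'$) a priori form an unbounded set, yet a single threshold $M_1(N)$ must make $h^1(E'(m))=h^1(\sG(m))=0$ and the fibre evaluations surject for all of them simultaneously. Producing $M_1(N)$ is exactly where one invokes Grothendieck--Kleiman boundedness of the relevant subsheaves of the bounded family parametrized by $\sR$, together with a Le Potier--Simpson uniform estimate controlling $h^0$ of arbitrary (possibly high-slope, non-saturated) subsheaves; this boundedness and uniform-vanishing input, not the numerical manipulation above, is the heart of the argument.
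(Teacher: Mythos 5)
Your overall strategy---testing the GIT criterion of Proposition \ref{prop2.6} on $H=\ker\big(V\to H^0(E(N))\to H^0(\sG(N))\big)$, using $h^0(\sG(N))\ge P(N)-h$, and letting the relation \eqref{2.1} collapse the leading coefficient to $kP(N)$---is exactly the paper's. But there is a genuine gap at the step where you convert $\dim g(H\otimes W_m)$ into Euler characteristics. You bound $\dim g(H\otimes W_m)\le h^0(E'(m))$ with $E'=\ker(T)$ and then ask for a single $M_1(N)$ making $h^1(E'(m))=0$ for \emph{all} quotients $E\to\sG$. That family of kernels is genuinely unbounded (e.g.\ $E'=E(-D)$ for arbitrarily large effective divisors $D$), so no such $M_1(N)$ exists, and neither Grothendieck-type boundedness nor a Le Potier--Simpson estimate will produce one; moreover, without the vanishing of $h^1(E'(m))$ the only free inequality, $h^0(E'(m))\ge\chi(E'(m))$, goes the wrong way for your purposes. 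The device the paper uses, and which your write-up is missing, is to replace $E'$ by the subsheaf $F\subseteq E'$ generated by $H$: since $F(N)$ is generated by at most $P(N)$ global sections, these $F$ form a bounded family, so one uniform $M_1'(N)$ gives $\dim g(H\otimes W_m)=h^0(F(m))=\chi(F(m))$ and $\dim g_{r_i(x)}(H\otimes W_m)=h^0(Q^F_{r_i(x)})$. One then passes from $F$ to $\sG$ not through an exact sequence but through the inequalities $r-r(F)\ge r(\sG)$ and $r_i(x)-h^0(Q^F_{r_i(x)})\ge h^0(Q^{\sG}_{r_i(x)})$; the only leftover is the error term $(\ell+kcN)\frac{h-\chi(F(N))}{kc(m-N)}$, which is uniformly controlled precisely because $\{h-\chi(F(N))\}$ is a finite set over the bounded family.

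Two smaller points. First, ``letting $m\to\infty$'' is not available: GIT-semistability is a hypothesis at one fixed $m$ (the embedding into $\mathbf{G}$ depends on $m$), so you must fix $m\ge M_1(N)$ and absorb the $O(1/(m-N))$ error at that $m$; the paper does this by forcing the error to exceed $-1/k$ and using that both sides of \eqref{2.2} lie in a discrete set. Second, surjectivity of $V\to H^0(E(N))$ does not follow from taking $\sG=E$ in \eqref{2.2}: that only returns $h^0(E(N))\ge P(N)$, which is automatic from $h^1\ge 0$. You still need $H^1(E(N))=0$, which the paper obtains by Serre duality: a nonzero class gives a nontrivial quotient $E(N)\to L\subset\omega_X$, and \eqref{2.2} applied to that quotient forces $h^0(L)\ge N+B$, contradicting $h^0(L)\le h^0(\omega_X)$ once $N$ is large. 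Your treatment of the torsion statements via $\sG=E/\tau$ is in line with the paper.
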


\begin{proof} The injectivity of $V\xrightarrow {H^0(p(N))}
H^0(E(N))$ is easy to see. Let
$$H=ker\{V\xrightarrow{H^0(p(N))}H^0(E(N))\xrightarrow{H^0(T(N))}
H^0(\sG(N))\}$$ and $F\subset E$ be the subsheaf generated by $H$.
Since all these $F$ are in a bounded family, $dim\,g(H\otimes
W_m)=h^0(F(m))=\chi(F(m))$ and $g_{r_i(x)}(H\otimes
W_m)=h^0(Q^F_{r_i(x)})$ ($\forall\,\,x\in I$) for $m\ge M_1'(N)$.
Then, by Proposition \ref{prop2.6} (with $h=dim(H)$), we have
$$\aligned e(H)=&(\ell+kcN)(rh-r(F)P(N))+(\ell+kcN)P(N)\frac{h-\chi(F(N))}{c(m-N)}
\\&+\sum_{x\in
I}\sum^{l_x}_{i=1}d_i(x)\left(r_i(x)h-P(N)h^0(Q^F_{r_i(x)})\right).\endaligned$$
By using $h\ge P(N)-h^0(\sG(N))$, $r-r(F)\ge r(\sG)$ and
$r_i(x)-h^0(Q^F_{r_i(x)})\ge h^0(Q^{\sG}_{r_i(x)}),$ we get the
inequality
$$\aligned h^0(\sG(N))\ge&
(\ell+kcN)\frac{h-\chi(F(N))}{k(m-N)c}-\frac{e(H)}{kP(N)}+\\&
\frac{1}{k}\left(r(\sG)(\ell+kcN)+ \sum_{x\in I}\sum^{l_x}_{i=1}
d_i(x)h^0(Q^{\sG}_{r_i(x)})\right).\endaligned$$ For given $N$, the
set $\{h-\chi(F(N))\}$ is finite since all these $F$ are in a
bounded family. Let $\chi(N)={\rm min}\{h-\chi(F(N))\}$. If
$\chi(N)\ge 0$, then
$$h^0(\sG(N))\ge \frac{1}{k}\left(r(\sG)(\ell+kcN)+ \sum_{x\in I}\sum^{l_x}_{i=1}
d_i(x)h^0(Q^{\sG}_{r_i(x)})\right)-\frac{e(H)}{kP(N)}.$$ When
$\chi(N)<0$, let $M_1(N)>{\rm max}\{M_1'(N),
-\chi(N)(\ell+kcN)+cN\}$ and $m\ge M_1(N)$. Then, since $e(H)\le 0$,
we have
$$h^0(\sG(N))\ge \frac{1}{k}\left(r(\sG)(\ell+kcN)+ \sum_{x\in I}\sum^{l_x}_{i=1}
d_i(x)h^0(Q^{\sG}_{r_i(x)})\right).$$

Now we show that $V\to H^0(E(N))$ is an isomorphism. To see it being
surjective, it is enough to show that one can choose $N$ such that
$H^1(E(N))=0$ for all such $E$. If $H^1(E(N))$ is nontrivial, then
there is a nontrivial quotient $E(N)\to L\subset\omega_X$ by Serre
duality, and thus
$$h^0(\omega_X)\ge h^0(L)\ge N+B,$$
where $B$ is a constant independent of $E$, we choose $N$ such that
$H^1(E(N))=0$ for all GIT-semistable points.

Let $\tau=Tor(E)$, $\sG=E/\tau$, note $h^0(\sG(N))=P(N)-h^0(\tau)$
and $$h^0(Q^{\sG}_{r_i(x)})=r_i(x)-h^0(Q^{\tau}_{r_i(x)}),$$ then
the inequality \eqref{2.2} becomes
$$kh^0(\tau)\le\sum_{x\in
I}\sum_{i=1}^{l_x}d_i(x)h^0(Q^{\tau}_{r_i(x)}) \le \sum_{x\in
I}(a_{l_x+1}(x)-a_1(x))h^0(\tau_x)$$ which implies the requirements
in Remark \ref{rmk2.4}.
\end{proof}

\begin{prop}\label{prop2.9} Suppose
$(p,\{p_{r_1(x)},...,p_{r_{l_x}(x)}\}_{x\in I})\in\sR$ is a point
corresponding to a parabolic sheaf $E$. Then $E$ is semistable iff
for any nontrivial subsheaf $F\subset E$ we have
$$\aligned&s(F):=\frac{\ell+kcN}{c(m-N)}(\chi(F(N))P(m)-P(N)\chi(F(m)))+\\
&\sum_{x\in
I}\sum^{l_x}_{i=1}d_i(x)(r_i(x)\chi(F(N))-P(N)h^0(Q_{r_i(x)}^F))
\le\,0.\endaligned$$ If $s(F)<0$ for any nontrivial $F\subset E$,
then $E$ is stable. Conversely, if $E$ is stable, then $s(F)<0$ for
any nontrivial subsheaf $F\subset E$ except that $r(F)=r$,
$\tau:=E/F=0$ outside $x\in I$, $a_{l_x+1}(x)-a_1(x)=k$ if
$\tau_x\neq 0$, and $n_1^F(x)=n_1(x)-h^0(\tau_x)$, $n^F_i(x)=n_i(x)$
($2\le i\le l_x+1$) for any $x\in I$.
\end{prop}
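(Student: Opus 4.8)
The plan is to prove the statement by a direct algebraic comparison of the numerical weight $s(F)$ with the parabolic Euler characteristic ${\rm par}\chi$, paralleling the bookkeeping already carried out in the proof of Lemma~\ref{lem2.8}. The first step is to simplify $s(F)$ so that its sign becomes transparent. Substituting $P(m)=crm+\chi$ and $\chi(F(m))=cr(F)m+\chi(F)$ one checks that $\chi(F(N))P(m)-P(N)\chi(F(m))=c(m-N)(r\chi(F)-r(F)\chi)$, so the apparent dependence on $m$ cancels. Expanding, collecting terms, and using \eqref{2.1} in the form $\sum_{x\in I}\sum_{i=1}^{l_x}d_i(x)r_i(x)=k\chi-r\ell$ to combine the pieces, one arrives at the clean expression (abbreviating $\Sigma^F:=\sum_{x\in I}\sum_{i=1}^{l_x}d_i(x)h^0(Q^F_{r_i(x)})$)
\[
s(F)=P(N)\bigl(k\chi(F)-\ell\,r(F)-\Sigma^F\bigr).
\]
Since $P(N)>0$, the sign of $s(F)$ is controlled by $k\chi(F)-\ell\,r(F)-\Sigma^F$.

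Next I would convert $\Sigma^F$ into parabolic data. Because $d_i(x)=a_{i+1}(x)-a_i(x)$, Abel summation gives $\sum_{i=1}^{l_x}d_i(x)h^0(Q^F_{r_i(x)})=a_{l_x+1}(x)\,h^0(Q^F_{r_{l_x}(x)})-\sum_{i=1}^{l_x}a_i(x)n^F_i(x)$. Comparing this with the definition of ${\rm par}\chi$ and using that $\dim(\text{image of }F_x\text{ in }E_x)=r(F)$ for a subsheaf $F$ with $E/F$ torsion free, one obtains the identity $k\chi(F)-\Sigma^F=k\,{\rm par}\chi(F)-r(F)\sum_{x\in I}a_{l_x+1}(x)$, and its analogue $r\ell=k\chi-\Sigma^E=k\,{\rm par}\chi(E)-r\sum_{x\in I}a_{l_x+1}(x)$ for $E$ itself (with $\Sigma^E=\sum_{x,i}d_i(x)r_i(x)$). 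Substituting the second into the first makes the common term $\sum_x a_{l_x+1}(x)$ drop out, and the whole expression collapses to
\[
s(F)=k\,P(N)\Bigl({\rm par}\chi(F)-\frac{{\rm par}\chi(E)}{r}\,r(F)\Bigr).
\]
Thus, for every $F$ with $E/F$ torsion free, $s(F)\le 0$ is \emph{exactly} the semistability inequality ${\rm par}\chi(F)\le \frac{{\rm par}\chi(E)}{r}r(F)$, which proves both implications of the equivalence on this class of subsheaves. To reach an arbitrary nontrivial $F$ in the ``only if'' direction I would pass to its saturation $\bar F\supseteq F$ (so $E/\bar F$ is torsion free and $r(\bar F)=r(F)$) and verify $s(F)\le s(\bar F)$: since $\chi(\bar F)-\chi(F)$ is the length of $\bar F/F$ and $\Sigma^{\bar F}-\Sigma^F=\sum_{x,i}d_i(x)\bigl(h^0(Q^{\bar F}_{r_i(x)})-h^0(Q^F_{r_i(x)})\bigr)$ is bounded by $k$ times that length, one gets $s(\bar F)-s(F)\ge0$, which is the same torsion estimate used at the end of Lemma~\ref{lem2.8}.

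Finally the stability assertions. The implication ``$s(F)<0$ for all nontrivial $F$ $\Rightarrow$ $E$ stable'' is immediate from the displayed identity applied to $F$ with $E/F$ torsion free. For the converse the only subsheaves not covered by the torsion-free test are those of full rank $r(F)=r$; here $\bar F=E$ and $s(E)=0$, so writing $\tau:=E/F$ one has $s(E)-s(F)=P(N)\bigl(k\,\chi(\tau)-\sum_{x,i}d_i(x)h^0(Q^{\tau}_{r_i(x)})\bigr)$, and $s(F)=0$ forces the chain
\[
\sum_{x\in I}\sum_{i=1}^{l_x}d_i(x)h^0(Q^{\tau}_{r_i(x)})\le\sum_{x\in I}(a_{l_x+1}(x)-a_1(x))h^0(\tau_x)\le k\sum_{x\in I}h^0(\tau_x)\le k\,\chi(\tau)
\]
to be a chain of equalities. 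Reading off the equality conditions pins down the listed exceptional case: $\tau$ is supported on $I$, $a_{l_x+1}(x)-a_1(x)=k$ wherever $\tau_x\neq0$, and $n^F_1(x)=n_1(x)-h^0(\tau_x)$, $n^F_i(x)=n_i(x)$ for $2\le i\le l_x+1$. I expect this last equality analysis — tracking torsion at the parabolic points through the Abel summation so as to separate the strict from the non-strict inequality — to be the main obstacle, since it is precisely where the exceptional full-rank subsheaves are isolated; the generic-rank part is routine once the identity of the second paragraph is in hand.
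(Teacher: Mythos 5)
Your proof is correct and takes essentially the same route as the paper's: the same key identity $s(F)=kP(N)\left({\rm par}\chi(F)-\frac{r(F)}{r}{\rm par}\chi(E)\right)$ for subsheaves with torsion-free quotient (obtained there by the same Abel-summation bookkeeping), the same comparison $s(F)\le s(\bar F)$ with the saturation to handle arbitrary $F$, and the same equality analysis isolating the full-rank exceptional subsheaves. The only caveat is cosmetic: when $E$ has torsion at $x\in I$ the image of $F_x$ in $E_x$ has dimension $r(F)+h^0(F^{\tau}_x)$ rather than $r(F)$, but this extra term cancels against the $a_{l_x+1}(x)h^0(F^{\tau}_x)$ term in the definition of ${\rm par}\chi(F)$, so your displayed identity — and hence the argument — is unaffected.
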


\begin{proof} The point corresonding to a quotient
$V\otimes\sW\xrightarrow{p}E\to 0$ and
$$\{E_x\twoheadrightarrow
Q_{r_{l_x}(x)}\twoheadrightarrow Q_{r_{l_x-1}(x)}\twoheadrightarrow
\cdots\twoheadrightarrow Q_{r_2(x)}\twoheadrightarrow
Q_{r_1(x)}\twoheadrightarrow 0\}_{x\in I}$$ $p_{r_i(x)}:
V\otimes\sW\xrightarrow{p}E\to E_x\twoheadrightarrow
Q_{r_{l_x}(x)}\twoheadrightarrow\cdots\twoheadrightarrow
Q_{r_i(x)}$. For $F\subset E$ such that $E/F$ is torsion free, we
have the flags of quotient sheaves
$$\{F\twoheadrightarrow F_x\twoheadrightarrow
Q^F_{r_{l_x}(x)}\twoheadrightarrow
Q^F_{r_{l_x-1}(x)}\twoheadrightarrow \cdots\twoheadrightarrow
Q^F_{r_2(x)}\twoheadrightarrow Q^F_{r_1(x)}\twoheadrightarrow
0\}_{x\in I}$$ Let
$n_i^F(x)=h^0(Q^F_{r_i(x)})-h^0(Q^F_{r_{i-1}(x)})$, notice that
$$\aligned \sum_{x\in I}\sum^{l_x}_{i=1}d_i(x)r_i(x)=&\,\,r\sum_{x\in
I}a_{l_x+1}(x) +\sum_{x\in
I}a_{l_x+1}(x)h^0(E^{\tau}_x)\\&-\sum_{x\in
I}\sum^{l_x+1}_{i=1}a_i(x)n_i(x)\endaligned$$
$$\aligned\sum_{x\in I}\sum^{l_x}_{i=1}d_i(x)h^0(Q^F_{r_i(x)})=&
\,\,r(F)\sum_{x\in I}a_{l_x+1}(x)+\sum_{x\in
I}a_{l_x+1}(x)h^0(F^{\tau}_x)
\\&-\sum_{x\in I}\sum^{l_x+1}_{i=1}a_i(x)n^F_i(x),\endaligned$$
$\chi(F(N))P(m)-P(N)\chi(F(m))=c(m-N)(r\chi(F)-r(F)\chi(E))$, then
$$\aligned s(F)=&\left(r\ell+rkcN+\sum_{x\in
I}\sum^{l_x}_{i=1}d_i(x)r_i(x)\right)\left(\chi(F)-\frac{r(F)}{r}\chi(E)\right)
+\\& P(N)\left(\frac{r(F)}{r}\sum_{x\in
I}\sum^{l_x}_{i=1}d_i(x)r_i(x)\,\,-\sum_{x\in
I}\sum^{l_x}_{i=1}d_i(x)h^0(Q^F_{r_i(x)})\right)\\=& kP(N)\left({\rm
par}\chi((F)-\frac{r(F)}{r}{\rm par}\chi(E)\right).\endaligned$$

For any nontrivial subsheaf $F\subset E$, let $\tau$ be the torsion
of $E/F$ and $F'\subset E$ such that $\tau=F'/F$ and $E/F'$ torsion
free. If we write $\tau=\tilde\tau+\sum_{x\in I}\tau_x,$ note
$h^0(\tau_x)+h^0(Q^F_{r_i(x)})-h^0(Q^{F'}_{r_i(x)})\ge 0$, then
$$\aligned s(F)-s(F')&=-kP(N)h^0(\widetilde{\tau})-P(N)\sum_{x\in
I}(k-a_{l_x+1}(x)+a_1(x))h^0(\tau_x) \\-P(N)&\sum_{x\in
I}\sum^{l_x}_{i=1}d_i(x)(h^0(\tau_x)+h^0(Q^F_{r_i(x)})-h^0(Q^{F'}_{r_i(x)}))\,\,\le
0.\endaligned$$ If $E$ is stable and $s(F)=0$, it is easy to see
that the last requirements in the proposition are satisfied.
\end{proof}

\begin{prop}\label{prop2.10} There exists an integer $M_1(N)>0$ such
that for $m\ge M_1(N)$ the following is true. If a point
$$(p,\{p_{r_1(x)},...,p_{r_{l_x}(x)}\}_{x\in I})\in\sR$$ is GIT-stable
(respectively, GIT-semistable), then the quotient $E$ is a stable
(respectively, semistable) parabolic sheaf and $V\to H^0(E(N))$ is
an isomorphism.\end{prop}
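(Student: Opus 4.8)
The plan is to reduce the whole statement to Lemma~\ref{lem2.8}, the GIT criterion of Proposition~\ref{prop2.6}, and the numerical dictionary established in the proof of Proposition~\ref{prop2.9}. The assertion that $V\to H^0(E(N))$ is an isomorphism is already contained in Lemma~\ref{lem2.8} (together with $H^1(E(N))=0$ and the torsion description of Remark~\ref{rmk2.4}), so only the (semi)stability of $E$ remains to be proved. Throughout I keep $m\ge M_1(N)$ as in Lemma~\ref{lem2.8}, so that the quotient inequality \eqref{2.2} is available for every quotient $E\to\sG\to 0$.

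For semistability, by Definition~2.3 it suffices to test saturated subsheaves $F\subset E$, i.e. those with $\sG:=E/F$ torsion free. The key observation is that one may feed such a $\sG$ directly into \eqref{2.2}: since $X$ is a curve, the long exact cohomology sequence of $0\to F(N)\to E(N)\to \sG(N)\to 0$ exhibits $H^1(\sG(N))$ as a quotient of $H^1(E(N))=0$, hence $H^1(\sG(N))=0$ and $h^0(\sG(N))=\chi(\sG(N))=P(N)-\chi(F(N))$. I would then substitute this, together with $h^0(Q^{\sG}_{r_i(x)})=r_i(x)-h^0(Q^F_{r_i(x)})$ and the relation \eqref{2.1}, into \eqref{2.2}; after the $kcN$ terms cancel, the estimate collapses to $$k\chi(F)\le r(F)\ell+\sum_{x\in I}\sum_{i=1}^{l_x}d_i(x)h^0(Q^F_{r_i(x)}).$$ Finally, the same bookkeeping identities used in the proof of Proposition~\ref{prop2.9} rewrite the right-hand side so that this is exactly $s(F)\le 0$, i.e. ${\rm par}\chi(F)\le \frac{r(F)}{r}{\rm par}\chi(E)$. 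As $F$ was an arbitrary saturated subsheaf, $E$ is semistable.

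For the stable case I would run the identical computation with strict inequalities. Proposition~\ref{prop2.6} gives $e(H)<0$ for every nontrivial proper $H\subset V$; taking $H=H^0(F(N))=\ker\{V\to H^0(\sG(N))\}$ for a proper nontrivial torsion free quotient $\sG$ (so that, for $N$ large, both $H$ and $V/H$ are nonzero) makes \eqref{2.2} strict, and the reduction above then yields $s(F)<0$ for every saturated $F$ with $0\ne F\ne E$. Since $\sG=E/F$ being torsion free and nonzero forces $r(F)<r$, this is precisely the stability condition of Definition~2.3. The rank-$r$ subsheaves whose quotient is torsion supported at the $x\in I$ — the boundary cases flagged at the end of Proposition~\ref{prop2.9} — correspond to $H=V$ and are therefore not constrained by GIT-stability, in agreement with those exceptions.

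The genuinely delicate analytic inputs — the uniform choice of $N$ and $M_1(N)$ and the underlying boundedness — have already been absorbed into Lemma~\ref{lem2.8}, so no new boundedness estimate is needed; the clean gain of this route is the observation that $H^1(\sG(N))=0$ comes for free on a curve, which is what lets one bypass re-bounding the family of test subsheaves. The step where I expect to have to be careful is precisely the passage from \eqref{2.2} to the parabolic-slope inequality: one must track the torsion of $E$ at the points $x\in I$ (Remark~\ref{rmk2.4}) and reproduce the identities for $\sum_{x,i}d_i(x)r_i(x)$ and $\sum_{x,i}d_i(x)h^0(Q^F_{r_i(x)})$ exactly as in Proposition~\ref{prop2.9}, and, in the stable case, verify that strictness survives the saturation step $F'\subset F$ with $F/F'$ torsion.
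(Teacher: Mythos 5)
Your semistability argument is correct and genuinely different from the paper's: you feed the saturated quotient $\sG=E/F$ into the inequality \eqref{2.2} of Lemma \ref{lem2.8}, use $H^1(\sG(N))=0$ to replace $h^0(\sG(N))$ by $P(N)-\chi(F(N))$, and then \eqref{2.1} does collapse everything to $k\chi(F)\le r(F)\ell+\sum_{x\in I}\sum_{i=1}^{l_x}d_i(x)h^0(Q^F_{r_i(x)})$, which is exactly $s(F)\le 0$ by the computation in Proposition \ref{prop2.9}. The paper never returns to \eqref{2.2} for this step: it takes $H\subset V$ to be the preimage of $H^0(F(N))$ and proves the single comparison $s(F)\le e(H)$ term by term (using $\chi(F(N))\le h$, $h^1(F(N))\ge h^1(F(m))$ and $\dim g_{r_i(x)}(H\otimes W_m)\le h^0(Q^F_{r_i(x)})$), then quotes Propositions \ref{prop2.6} and \ref{prop2.9}. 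For semistability the two routes are equally valid.

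For stability your argument has a real gap: the claim that GIT-stability ``makes \eqref{2.2} strict.'' The derivation of \eqref{2.2} in Lemma \ref{lem2.8} is carried out for the subsheaf generated by $H$ (not your saturated $F$), discards several nonnegative terms, and in the case $\chi(N)<0$ obtains the stated inequality only by absorbing an error term of size up to $1/k$ using $e(H)\le 0$ and the choice of $M_1(N)$; upgrading the hypothesis to $e(H)<0$ gains only a quantity of order $1/(kP(N)c(m-N))$ and therefore does not yield the strict form of \eqref{2.2}. Concretely, if some saturated $F$ had $s(F)=0$ --- precisely the case stability must exclude --- your chain of inequalities would not detect it. (The degenerate cases $H=0$ and $H=V$ also need an explicit argument, since for fixed $N$ the subsheaves $F$ do not form a bounded family; and the exceptional rank-$r$ subsheaves of Proposition \ref{prop2.9} have $H\subsetneq V$, not $H=V$ --- they are harmless only because $E/F$ is then not torsion free, so they lie outside the test set of Definition 2.3.) The repair is the paper's device: keep $H$ equal to the preimage of $H^0(F(N))$, prove $s(F)\le e(H)$ directly, and let the strict inequality $e(H)<0$ of Proposition \ref{prop2.6} carry the strictness, the case $h=0$ being covered because the comparison inequalities are then themselves strict.
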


\begin{proof} If $(p,\{p_{r_1(x)},...,p_{r_{l_x}(x)}\}_{x\in
I})\in \sR$ is GIT-stable (GIT-semistable), by Lemma \ref{lem2.8},
$V\to H^0(E(N))$ is an isomorphism. For any nontrivial subsheaf
$F\subset E$ with $E/F$ torsion free, let $H\subset V$ be the
inverse image of $H^0(F(N))$ and $h=dim(H)$, we have (for $m>N$)
$$\chi(F(N))P(m)-P(N)\chi(F(m))\le hP(m)-P(N)h^0(F(m))$$ for $m>N$
(note $h^1(F(N))\ge h^1(F(m))$). Thus $s(F)\le e(H)$ since
$$g(H\otimes W_m)\le h^0(F(m)), \quad g_{r_i(x)}(H\otimes W_m)\le
h^0(Q^F_{r_i(x)})$$ (the inequalities are strict when $h=0$). By
Proposition \ref{prop2.6} and Proposition \ref{prop2.9}, $E$ is
stable (respectively, semistable) if the point is GIT stable
(respectively, GIT semistable).
\end{proof}

For a semistable parabolic sheaf $E$ of rank $r$ on $X$, we have,
for any subsheaf $F\subset E$, $\chi(F)\le
\frac{\chi(E)}{r}r(F)+2r|I|$. The following elementary lemma should
be well-known.
\begin{lem}\label{lem2.11} Let $E$ be a coherent sheaf of rank $r$ on $X$. If
$$\chi(F)\le\frac{\chi(E)}{r}r(F)+C,\quad \forall \,\,F\subset E.$$
Then, for any $F\subset E$ with $H^1(F)\neq 0$, we have
$$h^0(F)\le \frac{\chi(E)}{r}(r(F)-1)+C+r(F)g.$$
\end{lem}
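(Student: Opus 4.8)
The plan is to convert the cohomological hypothesis $H^1(F)\neq 0$ into geometry via Serre duality, and then to induct on the rank. Since $X$ is a projective Gorenstein curve (it has at worst one node), the dualizing sheaf $\omega_X$ is invertible, Serre duality gives $H^1(F)^{\vee}\cong\Hom(F,\omega_X)$, and $h^0(\omega_X)=g$. Thus $H^1(F)\neq 0$ produces a nonzero $\phi\colon F\to\omega_X$; putting $L=\im(\phi)$ and $F'=\ker(\phi)$ I get a short exact sequence $0\to F'\to F\to L\to 0$ in which $L$, being a nonzero subsheaf of the torsion-free rank-one sheaf $\omega_X$, is itself torsion-free of rank one. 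Hence $r(F')=r(F)-1$, the subsheaf $F'\subseteq F\subseteq E$ is again a subsheaf of $E$ to which the hypothesis applies, and $h^0(L)\le h^0(\omega_X)=g$.

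The core of the argument I would isolate as an auxiliary bound, valid for \emph{every} subsheaf and proved by induction on the rank:
$$R(s):\qquad h^0(G)\le\frac{\chi(E)}{r}\,s+C+s\,g\qquad\text{for all }G\subseteq E\text{ with }r(G)=s.$$
For $s=0$ the sheaf $G$ is torsion, so $h^0(G)=\chi(G)\le C$ directly from the hypothesis. For $s\ge 1$ I split into cases: if $H^1(G)=0$ then $h^0(G)=\chi(G)\le\frac{\chi(E)}{r}s+C$, which is already below the claimed bound; if $H^1(G)\neq 0$ I peel off one rank-one quotient $L\subseteq\omega_X$ exactly as above, so that $h^0(G)\le h^0(G')+h^0(L)\le h^0(G')+g$ with $r(G')=s-1$, and I feed in the inductive bound $R(s-1)$ for $G'$. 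The Lemma then follows by applying $R(r(F)-1)$ to $F'$, giving $h^0(F)\le h^0(F')+g\le\frac{\chi(E)}{r}(r(F)-1)+C+(r(F)-1)g+g$, which is precisely the asserted inequality.

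The hard part will be the arithmetic of the inductive step rather than its geometry. Closing $R(s)$ from $R(s-1)$ yields $h^0(G)\le\frac{\chi(E)}{r}(s-1)+C+s\,g$, and upgrading the coefficient of $\frac{\chi(E)}{r}$ from $s-1$ to $s$ requires $\frac{\chi(E)}{r}\ge 0$; correspondingly the genus term stays linear in the rank only because each peeling costs exactly $g$, which forces one to use the sharp estimate $h^0(L)\le g$ rather than the cruder $h^0(L)\le\deg L+1$. In the regime where the Lemma is applied the sheaves are twists with $\chi\gg 0$, so this positivity is automatic; I would therefore record the standing assumption $\chi(E)\ge 0$ (or work in that regime) to make the induction go through, and I expect the only delicate point to be this sign bookkeeping — verifying that no surplus multiple of $\frac{\chi(E)}{r}$ leaks out across the $r(F)$ successive rank reductions.
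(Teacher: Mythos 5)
Your proof is correct and follows essentially the same route as the paper's: Serre duality turns $H^1(F)\neq 0$ into a nonzero map $F\to\omega_X$, one peels off the rank-one image (with $h^0\le h^0(\omega_X)=g$) to pass to the kernel of one lower rank, and one iterates until $H^1$ vanishes. The only differences are presentational — you package the iteration as an explicit induction $R(s)$ and you explicitly record the sign condition $\chi(E)\ge 0$ needed to close the arithmetic, a point the paper's terser ``by repeating the arguments'' leaves implicit (and which is harmless in every application, where the lemma is invoked for large twists $E(N)$ with $\chi(E(N))>0$).
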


\begin{proof} $H^1(F)\neq 0$ means that we have a nontrivial
morphism $F\to \omega_X$. Let $F'$ be the kernel of $F\to \omega_X$,
then $h^0(F)\le h^0(F')+g$. If $H^1(F')=0$, we have
$h^0(F)\le\chi(F')+g\le\frac{\chi(E)}{r}(r(F)-1)+C+g$. If
$H^1(F')\neq 0$, by repeating the arguments to $F'$, we get the
required inequality.
\end{proof}

\begin{prop}\label{prop2.12} There exist integers $N>0$ and $M_2(N)>0$ such
that for $m\ge M_2(N)$ the following is true. If a point
$$(p,\{p_{r_1(x)},...,p_{r_{l_x}(x)}\}_{x\in I})\in\sR$$ corresponds
to a semistable parabolic sheaf $E$, then the point is
GIT-semistable. Moreover, if $E$ is a stable parabolic sheaf, then
the point is GIT stable except the case $a_{l_x+1}(x)-a_1(x)=k$.
\end{prop}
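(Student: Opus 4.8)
The plan is to prove the converse of Proposition \ref{prop2.10}: starting from a (semi)stable parabolic sheaf $E$, I want to verify the GIT-stability inequality $e(H)\le 0$ (resp. $<0$) of Proposition \ref{prop2.6} for every nontrivial $H\subset V$. The bridge between the two sides is the identity $s(F)=kP(N)({\rm par}\chi(F)-\frac{r(F)}{r}{\rm par}\chi(E))$ from Proposition \ref{prop2.9}, so the whole argument is organized around comparing $e(H)$ with $s(F)$ for the subsheaf $F\subset E$ generated by the image of $H$. This is the standard Gieseker--Simpson comparison, adapted to the parabolic data.

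First I would pin down the relevant boundedness. For a semistable $E$ every subsheaf satisfies $\chi(F)\le\frac{\chi(E)}{r}r(F)+2r|I|$ (the bound recorded before Lemma \ref{lem2.11}), so the pairs $(E,F)$ range in a bounded family. This lets me choose $N$ once and for all so that $H^1(E(N))=0$, $V\to H^0(E(N))$ is an isomorphism and $E(N)$ is globally generated, and then choose $M_2(N)$ so that for every $m\ge M_2(N)$ and every such $F$ one has $h^1(F(m))=0$ together with surjectivity of $H\otimes W_m\to H^0(F(m))$. With these choices in force, for any $H$ with $\dim H=h$ and $F$ the subsheaf it generates, the image dimensions occurring in $e(H)$ become exact, $\dim g(H\otimes W_m)=h^0(F(m))=\chi(F(m))$ and $\dim g_{r_i(x)}(H\otimes W_m)=h^0(Q^F_{r_i(x)})$. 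Since $h\le h^0(F(N))$ and $h$ enters $e(H)$ only through positive coefficients, this gives $e(H)\le B(m)$, where $B(m)$ is the same expression with $h$ replaced by $h^0(F(N))$.

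Now I would split on $H^1(F(N))$. If $H^1(F(N))=0$ then $h^0(F(N))=\chi(F(N))$ and $B(m)$ is literally $s(F)$, which is $\le 0$ by Proposition \ref{prop2.9}; for stable $E$ it is $<0$ unless $F$ falls into the exceptional configuration of Proposition \ref{prop2.9} (the case $a_{l_x+1}(x)-a_1(x)=k$), which is exactly the exception recorded in the statement. The delicate case is $H^1(F(N))\ne 0$: here $h^0(F(N))$ can no longer be replaced by $\chi(F(N))$, so instead I would feed $E(N)$ into Lemma \ref{lem2.11} to obtain $h^0(F(N))\le\frac{P(N)}{r}(r(F)-1)+C_0$ with $C_0$ depending only on $r$, $g$, $|I|$ and not on $N$, $m$, or $F$. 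Letting $m\to\infty$ (so $\frac{P(m)}{c(m-N)}\to r$ and $\frac{\chi(F(m))}{c(m-N)}\to r(F)$) reduces $B(m)$ to $(\ell+kcN)(h^0(F(N))r-P(N)r(F))+\sum_{x,i}d_i(x)(r_i(x)h^0(F(N))-P(N)h^0(Q^F_{r_i(x)}))$. The substitution exhibits a rank deficit $h^0(F(N))r-P(N)r(F)\le -P(N)+rC_0$, so the first summand is bounded by $(\ell+kcN)(-P(N)+rC_0)$, which is negative of size $O(N^2)$, while the surviving positive contribution is at most $h^0(F(N))\sum_{x,i}d_i(x)r_i(x)=O(N)$ because $\sum_{x,i}d_i(x)r_i(x)=k\chi-r\ell$ is fixed by \eqref{2.1}. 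Hence $\lim_{m\to\infty}B(m)<0$ for $N$ large, and therefore $e(H)<0$ for $m\ge M_2(N)$.

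The main obstacle is precisely this last case together with the order of quantifiers it forces: $N$ must be fixed large enough that the Lemma \ref{lem2.11} deficit dominates uniformly over the whole bounded family of subsheaves $F$, and only afterwards may $M_2(N)$ be enlarged to make the multiplication maps surjective and to kill $h^1(F(m))$. Keeping the two asymptotic regimes compatible --- $N\to\infty$ controlling the sheaves with $H^1(F(N))\ne 0$, and $m\to\infty$ turning the image dimensions into exact cohomological quantities --- and verifying that $C_0$ is genuinely independent of $N$ and $m$, is where the real care lies. The semistable inequality and the stable/exception dichotomy then fall out of the comparison $e(H)\le s(F)$.
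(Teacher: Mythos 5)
Your proposal is correct and follows essentially the same route as the paper's proof: reduce $e(H)$ to $s(F)$ for the subsheaf $F$ generated by $H$, split on whether $H^1(F(N))$ vanishes, and in the nonvanishing case use Lemma \ref{lem2.11} to make the $O(N^2)$ rank deficit dominate the $O(N)$ parabolic contribution, fixing $N$ first and then $M_2(N)$. The only cosmetic difference is that the paper divides through by $P(N)$ and bounds $-kcN+C+(\ell+kcN)\frac{h-\chi(F(N))}{c(m-N)}$ directly rather than passing to the limit of $B(m)$, but the estimate is the same.
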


\begin{proof} There is $N_1>0$ such that for any $N\ge N_1$ the
following is true. For any $V\otimes \sW\xrightarrow{p}E\to 0$ with
semistable parabolic sheaf $E$, the induced map $V\to H^0(E(N))$ is
an isomorphism.

Let $H\subset V$ be a nontrivial subspace of ${\rm dim}(H)=h$ and
$F\subset E$ be the sheaf such that $F(N)\subset E(N)$ is generated
by $H$. Since all these $F$ are in a bounded family (for fixed $N$),
$dim\,g(H\otimes W_m)=h^0(F(m))=\chi(F(m))$, $g_{r_i(x)}(H\otimes
W_m)=h^0(Q^F_{r_i(x)})$ ($\forall\,\,x\in I$) for $m\ge M_1'(N)$ and
$$e(H)=s(F)+\frac{\ell+kcm}{c(m-N)}P(N)\left(h-\chi(F(N))\right).$$
If $H^1(F(N))=0$, we have $e(H)\le s(F)$ since $h\le h^0(F(N))$.
Then $e(H)\le s(F)\le 0$ by Proposition \ref{prop2.9} since $E$ is a
semistable parabolic sheaf. If $H^1(F(N))\neq 0$, by Lemma
\ref{lem2.11}, we have
$$h^0(F(N))\le\frac{rcN+\chi}{r}(r(F)-1)+r(g+2|I|).$$
Putting $h\le h^0(F(N))$ and above inequality in the equality
$$\aligned
e(H)=&P(N)\left(kh-(\ell+kcN)r(F)+(\ell+kcN)\frac{h-\chi(F(N))}{c(m-N)}\right)
\\&-P(N)\sum_{x\in
I}\sum^{l_x}_{i=1}d_i(x)h^0(Q^F_{r_i(x)}),\endaligned$$ then, let
$C=k|\chi|+r(g+2|I|)k+|\ell|r$, we have
$$e(H)\le P(N)\left(
-kcN+C+(\ell+kcN)\frac{h-\chi(F(N))}{c(m-N)}\right).$$ Choose an
integer $N_2\ge N_1$ such that $-kcN_2+C<-1$. Then, for any fixed
$N\ge N_2$, there is an integer $M_2(N)$ such that for $m\ge M_2(N)$
$$(\ell+kcN)\frac{h-\chi(F(N))}{c(m-N)}<1$$
for any $H\subset V$, which implies $e(H)<0$ and we are done.

\end{proof}

\begin{thm}\label{thm2.13} There
exists a seminormal projective variety
$$\sU_X:=\sU_X(r,d,
\{k,\vec n(x),\vec a(x)\}_{x\in I}),$$ which is the coarse moduli
space of $s$-equivalence classes of semistable parabolic sheaves $E$
of rank $r$ and $\chi(E)=\chi=d+r(1-g)$ with parabolic structures of type
$\{\vec n(x)\}_{x\in I}$ and weights $\{\vec a(x)\}_{x\in I}$ at
points $\{x\}_{x\in I}$. If $X$ is smooth, then it is normal, with
only rational singularities.
\end{thm}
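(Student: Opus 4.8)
The plan is to realise $\sU_X$ as a GIT quotient and then extract its geometric properties from the analysis of the semistable locus. Since $\widetilde{\mathbf{Q}}$ is the closure of $\mathbf{Q}$ in the Quot scheme and the relative flag schemes are proper over it, $\sR$ is projective, and the $SL(V)$-equivariant embedding $\sR\hookrightarrow\mathbf{G}$ together with the polarisation of Notation \ref{nota2.5} furnishes $\sR$ with an $SL(V)$-linearised ample line bundle. Mumford's GIT then produces a projective variety $\sU_X:=\sR^{ss}/\!/SL(V)$, where $\sR^{ss}$ is the semistable locus. Fixing $N$ and $m\ge\max\{M_1(N),M_2(N)\}$ so that both Proposition \ref{prop2.10} and Proposition \ref{prop2.12} apply, the GIT-semistable (resp.\ GIT-stable) points of $\sR$ are precisely those carrying a semistable (resp.\ stable) parabolic sheaf $E$ with $V\xrightarrow{\sim}H^0(E(N))$.

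The next step is to identify the closed points of $\sU_X$ with $s$-equivalence classes. By Mumford's theory these biject with the closed $SL(V)$-orbits in $\sR^{ss}$; a Jordan--H\"older argument, degenerating $E$ to the associated graded $\gr(E)$ of a filtration by parabolic subsheaves of equal parabolic slope, shows that the closed orbits are exactly those carrying polystable parabolic sheaves and that two semistable points share a fibre iff their sheaves have isomorphic $\gr$, i.e.\ are $s$-equivalent. To obtain the coarse moduli statement I would verify the local universal property: for the chosen $N$, any flat family $E$ of semistable parabolic sheaves on $X\times T$ satisfies $R^1p_{T*}(E(N))=0$ with $p_{T*}(E(N))$ locally free of rank $P(N)$, so after trivialising it the family is locally induced by an $SL(V)$-equivariant map $T\to\sR^{ss}$; the categorical quotient $\sU_X$ therefore corepresents the moduli functor.

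For seminormality I would invoke that a good quotient of a seminormal scheme by a reductive group is again seminormal, reducing the claim to the seminormality of $\sR^{ss}$, which is inherited from the Quot scheme via the smoothness of the flag-bundle projection $\sR\to\widetilde{\mathbf{Q}}$. When $X$ is smooth, every semistable parabolic sheaf is a vector bundle, so the deformation theory of the Quot-and-flag point is unobstructed in the relevant stable range (the obstruction spaces are $H^1$ groups that vanish), whence $\sR^{ss}$ is smooth there. Applying Boutot's theorem, that a GIT quotient of a variety with rational singularities by a reductive group has again rational singularities, to $\sR^{ss}/\!/SL(V)$ shows that $\sU_X$ has only rational singularities, hence is normal and Cohen--Macaulay.

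The parts I expect to be most delicate are the unobstructedness computation for $\sR^{ss}$ in the smooth case, which is what feeds Boutot's theorem and yields both normality and rational singularities simultaneously, and the seminormality claim in general: for a nodal $X$ normality genuinely fails, so one cannot deduce seminormality from normality but must instead argue that the seminormal structure of the parameter space survives passage to the quotient.
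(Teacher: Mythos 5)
Your overall route---realise $\sU_X$ as the GIT quotient $\sR^{ss}//SL(V)$, match GIT-(semi)stability with parabolic (semi)stability via Propositions \ref{prop2.10} and \ref{prop2.12}, identify closed orbits with polystable sheaves, and check the local universal property---is exactly the paper's (whose own proof is essentially a citation to \cite{S1}). But two of your reductions have genuine gaps.

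First, the seminormality. You reduce it to the seminormality of $\sR^{ss}$ and then assert this is ``inherited from the Quot scheme via the smoothness of the flag-bundle projection.'' The flag-bundle step is fine, but you have silently assumed the hard part: that the relevant locus of $\widetilde{\mathbf{Q}}$, which for nodal $X$ contains quotients $E$ that are not locally free at the node $x_0$, is seminormal. At such points the Quot scheme is singular, and proving that this singularity is seminormal is the substantive content of \cite{NR} (rank $2$) and \cite{S1}; there it is obtained not by inheritance from an ambient seminormal scheme but by exhibiting $\sR^{ss}$ as the image of the normal variety ${\wt{\sR'}}^{ss}$ of generalized parabolic sheaves (cf.\ Theorem \ref{thm2.24}) under the pushforward map $(E,Q)\mapsto F$ of \eqref{5.2}. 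As written, your argument contains no input that establishes seminormality anywhere.

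Second, the smooth case. You write that when $X$ is smooth every semistable parabolic sheaf is a vector bundle, but by Remark \ref{rmk2.4} a semistable parabolic sheaf may have torsion at a parabolic point $x\in I$ whenever $a_{l_x+1}(x)-a_1(x)=k$, a case the theorem allows since $a_{l_x+1}(x)\le k$. In that case $\sR^{ss}$ is not contained in the locally free locus, your unobstructedness computation does not apply to all of it, and Boutot's theorem has no smooth variety to start from. This is precisely the exceptional case the paper's proof singles out: one must show directly that the locus of $\sR$ allowing torsion at such $x$ is normal with rational singularities, by the same local analysis used for $\sH$ in \cite{S1}. When $a_{l_x+1}(x)-a_1(x)<k$ for all $x\in I$, your argument for the smooth case (smoothness of $\sR^{ss}$ from $H^1(K^{\vee}\otimes E)=0$, then Boutot) is correct and is the standard one.
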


\begin{proof} Let $\sR^{ss}\subset \sR$ be the open set consisting
of semistable parabolic sheaves. $\sU_X:=\sU_X(r,\chi,I,k,\vec
a,\vec n)$ is defined to be the GIT quotient $\sR^{ss}//SL(V)$. The
statements about singularities of $\sU_X$ are proved in \cite{S1}.
The case $a_{l_x+1}(x)-a_1(x)=k$ can be covered by the same
arguments in \cite{S1} where we proved that $\sH$ is normal with
only rational singularities.
\end{proof}

When $X$ is a reduced projective curve with two smooth irreducible
components $X_1$ and $X_2$ of genus $g_1$ and $g_2$ meeting at only
one point $x_0$ (which is the only node of $X$), we fix an ample
line bundle $\sO(1)$ of degree $c$ on $X$ such that
$deg(\sO(1)|_{X_i})=c_i>0$ ($i=1,2$). For any coherent sheaf $E$,
$P(E,n):=\chi(E(n))$ denotes its Hilbert polynomial, which has degree
$1$. We define the rank of $E$ to be
$$r(E):=\frac{1}{deg(\sO(1))}\cdot \lim \limits_{n\to\infty}\frac{P(E,n)}
{n}.$$ Let $r_i$ denote the rank of the restriction of $E$ to $X_i$
($i=1,2$), then
$$P(E,n)=(c_1r_1+c_2r_2)n+\chi(E),\quad r(E)=
\frac{c_1}{c_1+c_2}r_1+\frac{c_2}{c_1+c_2}r_2.$$ We say that $E$ is
of rank $r$ on $X$ if $r_1=r_2=r$, otherwise it will be said of rank
$(r_1,r_2)$.

Fix a finite set $I=I_1\cup I_2$ of smooth points on $X$, where
$I_i=\{x\in I\,|\,x\in X_i\}$ ($i=1,2$), and parabolic data $\omega=\{k,\vec
n(x),\vec a(x)\}_{x\in I}$ with
$$\ell:=\frac{k\chi-\sum_{x\in I}\sum^{l_x}_{i=1}d_i(x)r_i(x)}{r}$$
(recall $d_i(x)=a_{i+1}(x)-a_i(x)$, $r_i(x)=n_1(x)+\cdots+n_i(x)$).
Then we will indicate how the
same construction gives moduli space of semistable parabolic sheaves
on $X$ (see \cite{S2} for details). For simplicity, we only state
the case that $a_{l_x+1}(x)-a_1(x)<k$ ($\forall\,x\in I$).

\begin{defn}\label{defn2.14} For any coherent sheaf $F$ of rank $(r_1,r_2)$, let
$$m(F):= \frac{r(F)-r_1}{k}\sum_{x\in I_1}a_{l_x+1}(x)+
\frac{r(F)-r_2}{k}\sum_{x\in I_2}a_{l_x+1}(x),$$ the modified
parabolic Euler characteristic and slop of $F$ are
$${\rm par}\chi_m(F):={\rm par}\chi(F)+m(F),\quad {\rm par}\mu_m(F):=\frac{{\rm par}\chi_m(F)}{r(F)}.$$
A parabolic sheaf $E$ is called semistable (resp. stable) if, for
any subsheaf $F\subset E$ such $E/F$ is torsion free, one has, with
the induced parabolic structure,
$${\rm par}\chi_m(F)\le \frac{{\rm par}\chi_m(E)}{r(E)}r(F)\quad (resp.<).$$
\end{defn}

There is a similar $\sR$ and a $SL(V)$-equivariant embedding
$\sR\hookrightarrow \mathbf{G}$. As the same as Notation
\ref{nota2.5}, give the polarization on $\mathbf{G}$:
$$\frac{\ell+kcN}{c(m-N)}\times\prod_{x\in
I}\{d_1(x),\cdots, d_{l_x}(x)\}.$$ Then we have the same Proposition
\ref{prop2.6}, Lemma \ref{lem2.8}, Proposition \ref{prop2.9} and
Lemma \ref{lem2.11}. The modification in the proof of Proposition
\ref{prop2.9} is: for $F\subset E$ of rank $(r_1,r_2)$ such that
$E/F$ is torsion free, we have
$$\sum_{x\in I}\sum^{l_x}_{i=1}d_i(x)r_i(x)=\,\,r\sum_{x\in
I}a_{l_x+1}(x)-\sum_{x\in I}\sum^{l_x+1}_{i=1}a_i(x)n_i(x),$$
$$\sum_{x\in I}\sum^{l_x}_{i=1}d_i(x)h^0(Q^F_{r_i(x)})=
r_1\sum_{x\in I_1}a_{l_x+1}(x)+r_2\sum_{x\in
I_2}a_{l_x+1}(x)-\sum_{x\in I}\sum^{l_x+1}_{i=1}a_i(x)n^F_i(x),$$
$$s(F)=kP(N)\left({\rm
par}\chi_m((F)-\frac{r(F)}{r}{\rm par}\chi_m(E)\right).$$ In
particular, we have

\begin{prop}\label{prop2.15} There exist integers $N>0$ and $M_2(N)>0$ such
that for $m\ge M_2(N)$ the following is true. If a point
$$(p,\{p_{r_1(x)},...,p_{r_{l_x}(x)}\}_{x\in I})\in\sR$$ corresponds
to a quasi-parabolic sheaf $E$, then the point is GIT-semistable
(resp. GIT-stable) under the above polarization if and only if $E$
is a semistable (resp. stable) parabolic sheaf for the weights
$0\leq a_1(x)<a_2(x)<\cdots <a_{l_x+1}(x)<k$ ($\forall\,\,x\in I$).
\end{prop}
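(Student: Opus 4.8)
The plan is to prove the two implications separately, in each case mirroring the corresponding irreducible statement (Proposition \ref{prop2.12} for ``semistable $\Rightarrow$ GIT-semistable'' and Proposition \ref{prop2.10} for the converse) with ${\rm par}\chi_m$ substituted everywhere for ${\rm par}\chi$. The only genuinely new ingredient has already been recorded above: for a subsheaf $F\subset E$ of rank $(r_1,r_2)$ with $E/F$ torsion free, the two rewritten weight-sums yield
$$s(F)=kP(N)\left({\rm par}\chi_m(F)-\frac{r(F)}{r}{\rm par}\chi_m(E)\right).$$
Because $E$ has rank $r$ on $X$ we have $r(E)=r$, so by Definition \ref{defn2.14} the sheaf $E$ is semistable (resp. stable) exactly when $s(F)\le 0$ (resp. $<0$) for every such $F$. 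The whole proposition therefore reduces, as in the smooth case, to comparing the GIT-functional $e(H)$ on subspaces $H\subset V$ with the sheaf-functional $s(F)$.

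First I would prove that a semistable parabolic sheaf gives a GIT-semistable point, following Proposition \ref{prop2.12}. Semistability with respect to ${\rm par}\chi_m$ bounds ${\rm par}\chi_m(F)$, and since the correction $m(F)$ is bounded in terms of the weights and the rank vector $(r_1,r_2)$, one recovers a bound $\chi(F)\le \frac{\chi(E)}{r}r(F)+C$ for all subsheaves $F$; this is precisely the hypothesis feeding Lemma \ref{lem2.11}, and it also gives the boundedness needed to fix $N$ so that $V\to H^0(E(N))$ is an isomorphism for all semistable parabolic $E$. For $H\subset V$ of dimension $h$, let $F$ be the subsheaf with $F(N)$ generated by $H$; for $m\ge M_1'(N)$ one gets
$$e(H)=s(F)+\frac{(\ell+kcN)P(N)}{c(m-N)}\left(h-\chi(F(N))\right).$$
If $H^1(F(N))=0$ then $h\le h^0(F(N))=\chi(F(N))$, so $e(H)\le s(F)\le 0$. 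If $H^1(F(N))\neq 0$ I would bound $h^0(F(N))$ by Lemma \ref{lem2.11}, absorb everything into a constant $C$, then choose $N$ with $-kcN+C<-1$ and finally $m\ge M_2(N)$ large enough that the residual term is $<1$; this forces $e(H)<0$.

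For the reverse implication I would copy Proposition \ref{prop2.10}. Lemma \ref{lem2.8} holds verbatim in the reducible setting, so a GIT-semistable point has $V\to H^0(E(N))$ an isomorphism and $E$ satisfies the conditions of Remark \ref{rmk2.4}. For a nontrivial $F\subset E$ with $E/F$ torsion free, take $H$ to be the preimage of $H^0(F(N))$ in $V$; then $g(H\otimes W_m)\le h^0(F(m))$, $g_{r_i(x)}(H\otimes W_m)\le h^0(Q^F_{r_i(x)})$, and $h^1(F(N))\ge h^1(F(m))$ combine to give $s(F)\le e(H)\le 0$, hence semistability. Replacing $\le$ by $<$ throughout handles the stable cases in both directions.

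The hypothesis $a_{l_x+1}(x)-a_1(x)<k$ is what lets me state a clean ``iff'' for stability, and tracking its effect is the main point. In the irreducible Proposition \ref{prop2.9}, a stable sheaf could still have $s(F)=0$ only for a subsheaf with $r(F)=r$, torsion quotient supported on $I$, and $a_{l_x+1}(x)-a_1(x)=k$; this was exactly the exceptional case excluded from Proposition \ref{prop2.12}. Forbidding equality removes that boundary phenomenon, so for a stable $E$ one has $s(F)<0$ for all nontrivial $F$. I expect the real difficulty to lie not in this formal comparison but in the node: verifying that the family of subsheaves $F$ above is bounded, and that the two weight-sum rewritings (hence the $s(F)$-identity) remain valid when $F$ has unequal ranks $(r_1,r_2)$ on the two components and is merely torsion free, not locally free, at $x_0$. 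Carrying the correction term $m(F)$ correctly through these rank-dependent computations is where the argument must be done with care.
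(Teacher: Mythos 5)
Your proposal is correct and follows essentially the same route as the paper, which itself only records the single genuinely new ingredient — the identity $s(F)=kP(N)\bigl({\rm par}\chi_m(F)-\frac{r(F)}{r}{\rm par}\chi_m(E)\bigr)$ obtained from the two rank-$(r_1,r_2)$-dependent rewritings of the weight sums — and then declares that Proposition \ref{prop2.6}, Lemma \ref{lem2.8}, Proposition \ref{prop2.9} and Lemma \ref{lem2.11} carry over verbatim so that the arguments of Propositions \ref{prop2.10} and \ref{prop2.12} apply. Your observation that the hypothesis $a_{l_x+1}(x)-a_1(x)<k$ is exactly what eliminates the exceptional case of Proposition \ref{prop2.12} and yields the clean ``if and only if'' for stability matches the paper's intent.
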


\begin{thm}\label{thm2.16} There
exists a reduced, seminormal projective scheme
$$\sU_X:=\sU_X(r,d,\sO(1),
\{k,\vec n(x),\vec a(x)\}_{x\in I_1\cup I_2})$$ which is the coarse
moduli space of $s$-equivalence classes of semistable parabolic
sheaves $E$ of rank $r$ and $\chi(E)=\chi=d+r(1-g)$ with parabolic structures
of type $\{\vec n(x)\}_{x\in I}$ and weights $\{\vec a(x)\}_{x\in
I}$ at points $\{x\}_{x\in I}$. The moduli space $\sU_X$ has at most
$r+1$ irreducible components.
\end{thm}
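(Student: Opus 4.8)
The plan is to realize $\sU_X$ as the GIT quotient $\sR^{ss}//SL(V)$ exactly as in the irreducible case, now using the stability of Definition \ref{defn2.14} and the matching of GIT-- and parabolic (semi)stability recorded in Proposition \ref{prop2.15}. Since $\sR$ is a closed subscheme of the projective scheme $\mathbf{G}$ and $SL(V)$ is reductive, Mumford's theory produces a projective good quotient $\sU_X=\sR^{ss}//SL(V)$; its closed points are the closed $SL(V)$-orbits in $\sR^{ss}$, which by Proposition \ref{prop2.15} are precisely the $s$-equivalence classes of semistable parabolic sheaves with the prescribed numerical data. The coarse moduli property then follows from the universal property of the good quotient together with the local universal family $V\otimes\sW\to\sF\to 0$ on $\sR$, exactly as in the proof of Theorem \ref{thm2.13}. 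Reducedness and seminormality I would deduce from the corresponding properties of the parameter scheme together with the fact that they descend through good quotients: near a semistable point the deformation theory of a parabolic torsion-free sheaf on $X$ is governed by the same $\mathrm{Ext}$-groups as in the situation of Theorem \ref{thm2.13}, so $\sR^{ss}$ is reduced and seminormal there, and a good quotient of a seminormal scheme by a reductive group remains seminormal. This is carried out in \cite{S2}.

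The substance of the statement is the bound on the number of components, and here the plan is to stratify $\sU_X$ by the local type of $E$ at the node $x_0$. Writing $\pi:\widetilde X=X_1\sqcup X_2\to X$ for the normalization and $\widehat E=\pi^*E/(\text{torsion})=\widehat E_1\oplus\widehat E_2$ for the induced rank-$r$ bundles on the two branches, every torsion-free $E$ of uniform rank $r$ sits in an exact sequence $0\to E\to\widehat E_1\oplus\widehat E_2\to Q\to 0$ with $Q$ a quotient of the $2r$-dimensional space $\widehat E_1|_{x_1}\oplus\widehat E_2|_{x_2}$; the integer $a(E):=r-\dim Q$ is the local type and satisfies $0\le a(E)\le r$, which already produces at most $r+1$ possible values. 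For fixed $a$ the gluing datum $(\widehat E_1,\widehat E_2,Q)$ is parametrized by a flag bundle with irreducible fibres, an open subset of a Grassmannian of quotients, over the product of the irreducible moduli of bundles on $X_1$ and $X_2$, so each stratum is irreducible; equivalently the locally free locus $a=0$ splits into irreducible pieces indexed by the bidegree $(\deg\widehat E_1,\deg\widehat E_2)$, and semistability for the modified slope of Definition \ref{defn2.14}, tested against the canonical subsheaf and quotient of $E$ supported on a single component, confines this bidegree to an interval of at most $r+1$ integers. I would then show that every semistable $E$ is a flat limit of locally free semistable sheaves of one of these bidegrees, so that the non-locally-free strata ($a>0$) lie in the closures and mutual intersections of the locally free strata and contribute no new components. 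Counting the resulting closures yields at most $r+1$ irreducible components.

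The hard part will be this last closure analysis at the node. I must show that the type-$a$ locus is contained in the closure of the locally free locus, a smoothing statement for torsion-free sheaves at the node controlled by the local model $R=\sO_{X,x_0}$ together with its only indecomposable torsion-free modules $R$ and $\mathfrak m$, and, simultaneously, that no component is exhausted by non-locally-free sheaves, so that the assignment sending a component to the bidegree of its generic locally free point is well defined and injective into a set of size at most $r+1$. Making the bidegree bound precise, that is, extracting exactly the interval length $r$ from the modified-slope inequalities of Definition \ref{defn2.14} uniformly in the parabolic weights, is the remaining calculation; once it is in hand the count $\le r+1$ is immediate.
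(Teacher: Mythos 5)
Your proposal follows essentially the same route as the paper: define $\sU_X=\sR^{ss}//{\rm SL}(V)$, and count components by the pair $(\chi_1,\chi_2)=(\chi(E|_{X_1}),\chi(E|_{X_2}))$ of a generic locally free $E$, which semistability pins to an interval of length $r$. The one calculation you defer is exactly what the paper's proof carries out: applying ${\rm par}\chi_m$-semistability to the subsheaf $E_1(-x_0)\subset E$ and the quotient $E\to E_2$ from $0\to E_1(-x_0)\to E\to E_2\to 0$ gives $\frac{c_j}{c_1+c_2}{\rm par}\chi_m(E)\le {\rm par}\chi_m(E_j)\le \frac{c_j}{c_1+c_2}{\rm par}\chi_m(E)+r$, i.e. $n^{\omega}_j\le\chi_j\le n^{\omega}_j+r$ as in \eqref{2.4}, and with $\chi_1+\chi_2=\chi+r$ this leaves at most $r+1$ pairs. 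Your additional stratification by the local type $a(E)=r-\dim Q$ at the node is a detour the paper avoids: it simply takes the locally free locus $\sU_X^0$ to be dense (deferring that and the irreducibility of each piece to \cite{S2}), so the closure analysis you flag as the hard part is not where the argument lives; the substance is the displayed semistability inequality, which is routine from Definition \ref{defn2.14}.
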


\begin{proof} Let $\sR^{ss}\subset\sR$ be the open set of
semi-stable parabolic sheaves. $\sU_X:=\sU_X(r, d,\sO(1),
\{k,\vec n(x),\vec a(x)\}_{x\in I_1\cup I_2})$ is defined to be the
GIT quotient $\sR^{ss}//{\rm SL}(V)$. Let $\sU_X^0\subset \sU_X$ be
the dense open set of locally free sheaves. For any $E\in \sU_X^0$,
let $E_1$ and $E_2$ be the restrictions of $E$ to $X_1$ and $X_2$.
By the exact sequence
$$0\to E_1(-x_0)\to E\to E_2\to 0$$
and semi-stability of $E$, we have
$$\frac{c_1}{c_1+c_2}par\chi_m(E)\le par\chi_m(E_1)\le\frac{c_1}{c_1+c_2}
par\chi_m(E)+r,$$
$$\frac{c_2}{c_1+c_2}par\chi_m(E)\le par\chi_m(E_2)\le\frac{c_2}{c_1+c_2}
par\chi_m(E)+r.$$
For $j=1,\,2$ and $\omega=\{k,\vec n(x),\vec a(x)\}_{x\in I_1\cup I_2}$, let $\chi_j=\chi(E_j)$ and \ga{2.3}
{n^{\omega}_j=\frac{1}{k}\left(r\frac{c_j}{c_1+c_2}\ell+\sum_{x\in
I_j}\sum^{l_x}_{i=1}d_i(x)r_i(x)\right).} Then the above
inequalities can be rewritten as \ga{2.4} {n^{\omega}_1\le\chi_1\le
n^{\omega}_1+r,\quad n^{\omega}_2\le\chi_2\le n^{\omega}_2+r.} There are at most $r+1$ possible
choices of $(\chi_1,\chi_2)$ satisfying \eqref{2.4} and
$\chi_1+\chi_2=\chi+r$, each of the choices corresponds an
irreducible component of $\sU_X$.
\end{proof}

\begin{rmks}\label{rmks2.17} (1) If $n^{\omega}_j$ ($j=1,\,2$) are not integers, then there are at most $r$
irreducible components $\sU_X^{\chi_1,\,\chi_2}\subset\sU_X$ of
$\sU_X$ with \ga{2.5} {n^{\omega}_1<\chi_1< n^{\omega}_1+r,\quad n^{\omega}_2<\chi_2< n^{\omega}_2+r}
such that the (dense) open set of parabolic bundles $E\in
\sU_X^{\chi_1,\,\chi_2}$ satisfy
$$\chi(E|_{X_1})=\chi_1,\quad \chi(E|_{X_2})=\chi_2.$$
For any $\chi_1,\,\chi_2$ satisfying \eqref{2.5}, let $\sU_{X_1}$
(resp. $\sU_{X_2}$) be the moduli space of semistable parabolic
bundles of rank $r$ and Euler characteristic $\chi_1$ (resp.
$\chi_2$), with parabolic structures of type $\{\vec n(x)\}_{x\in
I_1}$ (resp. $\{\vec n(x)\}_{x\in I_2}$) and weights $\{\vec
a(x)\}_{x\in I_1}$ (resp. $\{\vec a(x)\}_{x\in I_2}$) at points
$\{x\}_{x\in I_1}$ (resp. $\{x\}_{x\in I_2}$), then
$\sU_X^{\chi_1,\,\chi_2}$ is not empty if $\sU_{X_j}$ ($j=1,\,2$)
are not empty (See Proposition 1.4 of \cite{S2}). In fact,
$\sU_X^{\chi_1,\,\chi_2}$ contains a stable parabolic bundle if one
of $\sU_{X_j}$ ($j=1,\,2$) contains a stable parabolic bundle.

(2) Let $E\in \sU_X$, for any nontrivial $F\subset E$ of rank
$(r_1,r_2)$ such that $E/F$ torsion free, we have \ga{2.6}{\aligned
&k r(F)(par\mu_m(F)-par\mu_m(E))\\&=k\chi(F)-\sum_{x\in
I}\sum_{i=1}^{l_x}d_i(x)h^0(Q^F_{r_i(x)})-r(F)\ell,\endaligned}
which implies the following facts: (i) When $\ell=0$, the moduli
spaces $\sU_X$ is independent of the choices of $\sO(1)$. (ii) When
$\ell\neq0$, we can choose $\sO(1)$ such that all the numbers $n^{\omega}_1$,
$n^{\omega}_2$ and $r(F)\ell$ (for all possible $r_1\neq r_2$) are not integers
(we call such $\sO(1)$ \textbf{a generic polarization}, its
existence is an easy excise). Then, for any $E\in\sU_X\setminus
\sU_X^s$ (i.e. non-stable sheaf), the sub-sheaf $F\subset E$ of rank
$(r_1,r_2)$ with $par\mu_m(F)=par\mu_m(E)$ must have $r_1=r_2$.
\end{rmks}

When $X$ is a connected nodal curve (irreducible or reducible) of
genus $g$, with only one node $x_0$, let $\pi:\widetilde{X}\to X$ be
the normalization and $\pi^{-1}(x_0)=\{x_1,x_2\}$. Then the
normalization $\phi: \sP\to\sU_X$ of $\sU_X$ is given by moduli
space of generalized parabolic sheaves (GPS) on $\widetilde{X}$.

Recall that a GPS $(E,Q)$ of rank $r$ on $\widetilde{X}$ consists of
a sheaf $E$ on $\widetilde{X}$, torsion free of rank $r$ outside
$\{x_1,x_2\}$ with parabolic structures at the points of $I$ (we
identify $I$ with $\pi^{-1}(I)$) and an $r$-dimensional quotient
$$E_{x_1}\oplus E_{x_2}\xrightarrow{q} Q\to 0.$$

The moduli space $\sP$ consists of semistable $(E,Q)$ with
additional parabolic structures at the points of $I$ (we identify
$I$ with $\pi^{-1}(I)$) given by the data $\omega=(r,\chi,\{\vec
n(x),\,\vec a(x)\}_{x\in I}, \mathcal{O}(1),k)$ satisfying
$$\sum_{x\in I}\sum^{l_x}_{i=1}d_i(x)r_i(x)+r\wt\ell=k\wt\chi$$ where
$d_i(x)=a_{i+1}(x)-a_i(x)$, $\wt\chi=\chi+r$, $\wt\ell=k+\ell$ and
the pullback $\pi^*\mathcal{O}(1)$ is denoted by
$\wt{\mathcal{O}}(1)$ (See \cite{S1} and \cite{S2} for details).

\begin{defn}\label{defn2.18} A GPS $(E,Q)$ is called semistable (resp.,
stable), if for every nontrivial subsheaf $E'\subset E$ such that
$E/E'$ is torsion free outside $\{x_1,x_2\},$ we have, with the
induced parabolic structures at points $\{x\}_{x\in I}$,
$$par\chi_m(E')-dim(Q^{E'})\leq
r(E')\cdot\frac{par\chi_m(E)-dim(Q)}{r(E)} \,\quad (\text{resp.,
$<$}),$$ where $Q^{E'}=q(E'_{x_1}\oplus E'_{x_2})\subset Q.$
\end{defn}

When $X$ is irreducible, let $\wt P$ denote the polynomial
$\wt{P}(m)=crm+\wt\chi$, $\wt\sW=\wt{\sO}(-N)=\wt{\sO}(1)^{-N}$ and
$\wt V=\Bbb C^{\wt{P}(N)}$. Consider the Quot scheme
$${\rm Quot}(\wt V\otimes\wt\sW,P)(T)=\left\{
\aligned &\text{$T$-flat quotients $\wt V\otimes\wt \sW\to E\to 0$ over}\\
&\text{$\wt X\times T$ with $\chi(E_t(m))=\wt P(m)$
($\forall\,\,t\in T$)}\endaligned\right\},$$ and let
$\mathbf{Q}\subset Quot(\wt V\otimes\wt\sW,P)$ be the open set
$$\mathbf{Q}(T)=\left\{\aligned&\text{$\wt V\otimes\wt\sW\to E\to 0$, with
$R^1p_{T*}(E(N))=0$ and}\\&\text{$\wt V\otimes\sO_T\to p_{T*}E(N)$
induces an isomorphism}\endaligned\right\}.$$ Let
$\widetilde{\mathbf{Q}}$ be the closure of $\mathbf{Q}$ in the Quot
scheme, $\wt V\otimes\wt\sW\to \wt\sF\to 0$ be the universal
quotient over $\wt X\times\widetilde{\mathbf{Q}}$ and $\wt\sF_x$ be
the restriction of $\wt\sF$ on
$\{x\}\times\widetilde{\mathbf{Q}}\cong\widetilde{\mathbf{Q}}$. Let
$Flag_{\vec n(x)}(\wt\sF_x)\to\widetilde{\mathbf{Q}}$ be the
relative flag scheme of locally free quotients of type $\vec n(x)$,
and
$$\wt\sR=\underset{x\in I}{\times_{\widetilde{\mathbf{Q}}}}
Flag_{\vec n(x)}(\wt\sF_x)\to\widetilde{\mathbf{Q}},\quad
\wt{\sR'}=\widetilde{\sR}\times_{\widetilde{\mathbf{Q}}}
Grass_r(\wt\sF_{x_1}\oplus\wt\sF_{x_2}).$$ A (closed) point
$(p,\{p_{r_1(x)},...,p_{r_{l_x}(x)}\}_{x\in I},q_s)$ of $\wt{\sR'}$
by definition is given by a point $\wt
V\otimes\wt\sW\xrightarrow{p}E\to 0$ of the Quot scheme, together
with the flags of quotients
$$\{E_x\twoheadrightarrow
Q_{r_{l_x}(x)}\twoheadrightarrow Q_{r_{l_x-1}(x)}\twoheadrightarrow
\cdots\twoheadrightarrow Q_{r_2(x)}\twoheadrightarrow
Q_{r_1(x)}\twoheadrightarrow 0\}_{x\in I}$$ and a $r$-dimensional
quotient $E_{x_1}\oplus E_{x_2}\xrightarrow{q} Q\to 0 $, where
$p_{r_i(x)}: \wt V\otimes\wt\sW\xrightarrow{p}E\to
E_x\twoheadrightarrow
Q_{r_{l_x}(x)}\twoheadrightarrow\cdots\twoheadrightarrow Q_{r_i(x)}$
and $q_s:\wt V\otimes\wt\sW\xrightarrow{p}E\to E_{x_1}\oplus
E_{x_2}\xrightarrow{q} Q$. Choose $N$ large enough so that every
semistable GPS $(E,Q)$ with $\chi(E(m))=\wt P(m)$ and parabolic
structures of type $\{\vec n(x)\}_{x\in I}$ with weights $\{\vec
a(x)\}_{x\in I}$ at points $\{x\}_{x\in I}$ appears as a point of
$\wt{\sR'}$. For large enough $m$, we have a $SL(\wt V)$-equivariant
embedding
$$\wt{\sR'}\hookrightarrow \mathbf{G}'=Grass_{\wt P(m)}(\wt V\otimes W_m)\times\bold {Flag}\times Grass_r(\wt V\otimes W_m),$$
where $W_m=H^0(\wt\sW(m))$, and $\bold{Flag}$ is defined to be
$$\bold{Flag}=\prod_{x\in I}\{Grass_{r_1(x)}(\wt V\otimes W_m)
\times\cdots\times Grass_{r_{l_x}(x)}(\wt V\otimes W_m)\},$$ which
maps a point $(p,\{p_{r_1(x)},...,p_{r_{l_x}(x)}\}_{x\in
I},q_s)=(\wt V\otimes\wt\sW\xrightarrow{p}E,$
$$ \{\wt V\otimes\wt\sW\xrightarrow{p_{r_1(x)}}Q_{r_1(x)},\,\,\cdots\,,
\wt V\otimes\wt\sW\xrightarrow{p_{r_{l_x}(x)}}Q_{r_{l_x}(x)}\}_{x\in
I},\wt V\otimes\wt\sW\xrightarrow{q_s}Q)$$ of $\wt{\sR'}$ to the
point $(g,\{g_{r_1(x)},...,g_{r_{l_x}(x)}\}_{x\in I},g_G)=(\wt
V\otimes W_m\xrightarrow{g}U,$
$$\{\wt V\otimes
W_m\xrightarrow{g_{r_1(x)}}U_{r_1(x)},\,\cdots\,, \wt V\otimes
W_m\xrightarrow{g_{r_{l_x}(x)}}U_{r_{l_x}(x)}\}_{x\in I},\wt
V\otimes W_m\xrightarrow{g_G}U_r)$$ of $\mathbf{G}'$, where
$g:=H^0(p(m))$, $U:=H^0(E(m))$, $g_{r_i(x)}:=H^0(p_{r_i(x)}(m))$,
$U_{r_i(x)}:=H^0(Q_{r_i(x)})$ ($i=1,...,l_x$), $g_G:=H^0(q_s(m))$,
$U_r:=H^0(Q)$ and $r_i(x)=dim(Q_{r_i(x)})$. Given $\mathbf{G}'$ the
polarisation
$$\frac{(\ell+kcN)}{c(m-N)}\times\prod_{x\in
I} \{d_1(x),\cdots,d_{l_x}(x)\}\times k.$$ Then, by the general
criteria of GIT stability, we have

\begin{prop}\label{prop2.19} A point
$(g,\{g_{r_1(x)},...,g_{r_{l_x}(x)}\}_{x\in I},g_G)\in \mathbf{G}'$
is stable (respectively, semistable) for the action of $SL(\wt V)$,
with respect to the above polarisation (we refer to this from now on
as GIT-stability), iff for all nontrivial subspaces $H\subset \wt V$
we have (with $h=dim H$)
$$\aligned e(H):=&\frac{\ell+kcN}{c(m-N)}(h\wt P(m)-\wt P(N)dim\,g(H\otimes W_m))+ \\
&\sum_{x\in I}\sum^{l_x}_{i=1}d_i(x)(r_i(x)h-\wt
P(N)dim\,g_{r_i(x)}(H\otimes W_m))\\&+k(rh-\wt P(N)dim\,g_G(H\otimes
W_m))<(\le)\,0.\endaligned$$
\end{prop}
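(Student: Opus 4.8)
The plan is to apply the Hilbert--Mumford numerical criterion to the $SL(\wt V)$-action on $\mathbf{G}'$ with the stated linearisation, following the same route as for Proposition \ref{prop2.6}; the only genuinely new feature is the extra factor $Grass_r(\wt V\otimes W_m)$, carrying polarisation weight $k$. Recall that a one-parameter subgroup $\lambda:\mathbb{G}_m\to SL(\wt V)$ is, up to conjugacy, the same datum as a weighted filtration of $\wt V$: a flag $0=H_0\subsetneq H_1\subsetneq\cdots\subsetneq H_s=\wt V$ with strictly decreasing $\lambda$-weights on the successive quotients, normalised by the trace-zero condition so that $\lambda$ lies in $SL(\wt V)$. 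Because the linearisation on $\mathbf{G}'=Grass_{\wt P(m)}(\wt V\otimes W_m)\times\mathbf{Flag}\times Grass_r(\wt V\otimes W_m)$ is an external product, the Mumford weight $\mu(x,\lambda)$ is the sum of the Mumford weights on the three types of factors, taken with the respective polarisation weights $\frac{\ell+kcN}{c(m-N)}$, $d_i(x)$ and $k$.

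First I would record the weight contributed by a single Grassmannian $Grass_q(\wt V\otimes W_m)$ of $q$-dimensional quotients $\wt V\otimes W_m\xrightarrow{g}U$. Since $\lambda$ acts trivially on $W_m$, the $\lambda$-eigenspaces of $\wt V\otimes W_m$ are exactly the $H_j\otimes W_m$, and the standard Pl\"ucker computation (the one already invoked for Proposition \ref{prop2.6}) gives, for the indicator one-parameter subgroup $\lambda_{H_j}$ attached to a single step and in the sign convention under which semistability reads $\mu\le 0$, the contribution $\dim H_j\cdot q-\dim\wt V\cdot\dim g(H_j\otimes W_m)$. Applying this to the big Grassmannian ($q=\wt P(m)$, $\dim\wt V=\wt P(N)$), to each flag factor ($q=r_i(x)$) and to $Grass_r$ ($q=r$, image $g_G(H\otimes W_m)$), and summing against the polarisation weights $\frac{\ell+kcN}{c(m-N)}$, $d_i(x)$, $k$, I obtain $\mu(x,\lambda_H)=e(H)$ with $e(\cdot)$ exactly the expression in the statement; the final summand $k(rh-\wt P(N)\dim g_G(H\otimes W_m))$ is the new GPS contribution coming from $Grass_r$.

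Finally I would carry out the reduction to single subspaces. Every one-parameter subgroup of $SL(\wt V)$ decomposes, inside the rational cone of one-parameter subgroups, as a nonnegative combination of the indicator subgroups $\lambda_{H_j}$ associated with its filtration steps, and $\mu(x,\cdot)$ is additive under this decomposition, so $\mu(x,\lambda)=\sum_j c_j\,e(H_j)$ with all $c_j\ge 0$. Hence the numerical criterion $\mu(x,\lambda)\le 0$ (resp.\ $<0$) for every nontrivial $\lambda$ holds if and only if $e(H)\le 0$ (resp.\ $<0$) for every nontrivial subspace $H\subsetneq\wt V$, each such $H$ being the unique nontrivial step of its own indicator subgroup. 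I expect the only real obstacle to be the bookkeeping of signs and normalisations: checking that, after subtracting the central character to place $\lambda$ in $SL(\wt V)$, the three Grassmannian contributions recombine with coefficients $\frac{\ell+kcN}{c(m-N)}$, $d_i(x)$ and $k$ into precisely $e(H)$, and that passing to the extremal indicator subgroups loses no (semi)stability information.
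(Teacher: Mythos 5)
The paper offers no written proof of this proposition: like Proposition \ref{prop2.6}, it is stated as a direct instance of ``the general criteria of GIT stability,'' and your write-up is precisely the standard Hilbert--Mumford argument that this citation invokes --- additivity of the Mumford weight over the product of Grassmannians against the polarisation weights $\frac{\ell+kcN}{c(m-N)}$, $d_i(x)$, $k$, together with the reduction from weighted flags to single subspaces via indicator one-parameter subgroups. Your identification of the new $Grass_r(\wt V\otimes W_m)$ contribution as $k(rh-\wt P(N)\dim g_G(H\otimes W_m))$ is correct, so the proposal is correct and takes essentially the same (implicit) approach as the paper.
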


\begin{lem}\label{lem2.20}There exists $M_1(N)$ such that for $m\ge M_1(N)$ the
following holds. Suppose
$(p,\{p_{r(x)},p_{r_1(x)},...,p_{r_{l_x}(x)}\}_{x\in
I},q_s)\in\wt{\sR'}$ is GIT-semistable, then for all quotients
$E\xrightarrow T\sG\to 0$ we have $$h^0(\sG(N))\ge
\frac{1}{k}\left(r(\sG)(\ell+kcN)+ \sum_{x\in I}\sum^{l_x}_{i=1}
d_i(x)h^0(Q^{\sG}_{r_i(x)})\right)+h^0(Q^{\sG}).$$ In particular,
$\wt V\to H^0(E(N))$ is an isomorphism and $E$ satisfies the
following conditions: (1) the torsion ${\rm Tor}\,E$ of $E$ is
supported on $\{x_1,x_2\}$ and $q:({\rm Tor}\,E)_{x_1}\oplus ({\rm
Tor}\,E)_{x_2}\hookrightarrow Q$, (2) if $N$ is large enough, then
$H^1(E(N)(-x-x_1-x_2))=0$ for all $E$ and $x\in\wt X$.
\end{lem}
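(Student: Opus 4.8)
The plan is to adapt the proof of Lemma \ref{lem2.8} essentially verbatim, carrying along the extra $Grass_r(\wt V\otimes W_m)$ factor whose GIT weight contributes the new summand $k(rh-\wt P(N)\dim g_G(H\otimes W_m))$ to $e(H)$ in Proposition \ref{prop2.19}. First I would note injectivity of $\wt V\to H^0(E(N))$ as before. Given a quotient $E\xrightarrow{T}\sG\to 0$, set $H=\ker\{\wt V\to H^0(E(N))\to H^0(\sG(N))\}$ and let $F\subset E$ be the subsheaf generated by $H$. Since all such $F$ lie in a bounded family, for $m\ge M_1'(N)$ one has $\dim g(H\otimes W_m)=\chi(F(m))$, $g_{r_i(x)}(H\otimes W_m)=h^0(Q^F_{r_i(x)})$ and, crucially, $\dim g_G(H\otimes W_m)=\dim Q^F$ where $Q^F=q(F_{x_1}\oplus F_{x_2})$.

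The heart of the computation is to expand $e(H)$ via Proposition \ref{prop2.19} and check that the coefficient of $h$ again collapses. Using the normalization relation $\sum_{x\in I}\sum_{i}d_i(x)r_i(x)+r\wt\ell=k\wt\chi$ (with $\wt\ell=k+\ell$, $\wt\chi=\chi+r$), the extra $kr$ coming from the $Grass_r$ weight combines with $r(\ell+kcN)+\sum_{x}\sum_{i}d_i(x)r_i(x)$ to give exactly $k\wt P(N)$, so the coefficient of $h$ in $e(H)$ is $\wt P(N)\bigl(k+\tfrac{\ell+kcN}{c(m-N)}\bigr)$, precisely as in Lemma \ref{lem2.8} with $\wt P$ in place of $P$. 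Solving for $h^0(\sG(N))$ and substituting the four estimates $h\ge \wt P(N)-h^0(\sG(N))$, $r-r(F)\ge r(\sG)$, $r_i(x)-h^0(Q^F_{r_i(x)})\ge h^0(Q^{\sG}_{r_i(x)})$ and the new one $r-\dim Q^F\ge \dim Q^{\sG}=h^0(Q^{\sG})$ (valid because $F\subset\ker T$ forces $Q^F\subset q((\ker T)_{x_1}\oplus(\ker T)_{x_2})$ while $Q^{\sG}=Q/q((\ker T)_{x_1}\oplus(\ker T)_{x_2})$), I obtain
$$h^0(\sG(N))\ge \frac{1}{k}\Bigl(r(\sG)(\ell+kcN)+\sum_{x\in I}\sum_{i}d_i(x)h^0(Q^{\sG}_{r_i(x)})\Bigr)+h^0(Q^{\sG})-\frac{e(H)}{k\wt P(N)}+\frac{\ell+kcN}{kc(m-N)}(h-\chi(F(N))).$$
Here $k(r-\dim Q^F)$ divided by $k$ is what produces the coefficient-one summand $h^0(Q^{\sG})$. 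Since $e(H)\le 0$, and setting $\chi(N)=\min\{h-\chi(F(N))\}$ and choosing $M_1(N)>\max\{M_1'(N),-\chi(N)(\ell+kcN)+cN\}$ kills the error term exactly as in Lemma \ref{lem2.8}, this yields the asserted inequality.

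For the "in particular" part, surjectivity of $\wt V\to H^0(E(N))$ follows once $H^1(E(N))=0$, obtained from Serre duality and boundedness as in Lemma \ref{lem2.8}. For condition (1), I apply the main inequality to $\sG=E/\tau$, $\tau={\rm Tor}\,E$: using $h^0(\sG(N))=\wt P(N)-h^0(\tau)$, $h^0(Q^{\sG}_{r_i(x)})=r_i(x)-h^0(Q^{\tau}_{r_i(x)})$, $h^0(Q^{\sG})=r-\dim q(\tau_{x_1}\oplus\tau_{x_2})$ and $\tfrac1k(r(\ell+kcN)+\sum_x\sum_i d_i(x)r_i(x))=\wt P(N)-r$, everything telescopes to
$$h^0(\tau)\le \frac{1}{k}\sum_{x\in I}\sum_{i}d_i(x)h^0(Q^{\tau}_{r_i(x)})+\dim q(\tau_{x_1}\oplus\tau_{x_2}).$$
Since $\sum_i d_i(x)h^0(Q^{\tau}_{r_i(x)})\le (a_{l_x+1}(x)-a_1(x))h^0(\tau_x)$ and $a_{l_x+1}(x)-a_1(x)<k$, this forces $\tau_x=0$ for $x\in I$ and $\tau$ supported on $\{x_1,x_2\}$; the residual inequality $h^0(\tau_{x_1})+h^0(\tau_{x_2})\le\dim q(\tau_{x_1}\oplus\tau_{x_2})$ then forces equality, i.e. $q$ is injective on $\tau_{x_1}\oplus\tau_{x_2}$. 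For condition (2), a nonzero class in $H^1(E(N)(-x-x_1-x_2))$ gives, by Serre duality on the smooth curve $\wt X$, a nonzero map $E\to\omega_{\wt X}(x+x_1+x_2)\otimes\wt\sO(-N)$; as $E$ ranges over a bounded family and the target line bundle has degree $2g(\wt X)+1-cN\to-\infty$ uniformly in $x$, its image would be a quotient of $E$ of arbitrarily negative degree, contradicting boundedness, so no such map exists for $N$ large.

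The main obstacle I expect is bookkeeping in the two ingredients with no counterpart in Lemma \ref{lem2.8}: first, verifying via the normalization relation that the $Grass_r$ weight collapses correctly, so that $h^0(Q^{\sG})$ appears with coefficient one rather than $\tfrac1k$; and second, extracting from the equality case the injectivity $q:({\rm Tor}\,E)_{x_1}\oplus({\rm Tor}\,E)_{x_2}\hookrightarrow Q$ in (1), which hinges on the strict bound $a_{l_x+1}(x)-a_1(x)<k$. The uniform choice of $N$ for the triple-point twist in (2) is then routine given boundedness.
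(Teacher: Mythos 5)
Your proposal is correct and follows essentially the same route as the paper: the same choice of $H=\ker\{\wt V\to H^0(\sG(N))\}$ and $F$ generated by $H$, the same four estimates (including $r-h^0(Q^F)\ge h^0(Q^{\sG})$) feeding into $e(H)\le 0$, and the identical telescoping for the torsion statement using $a_{l_x+1}(x)-a_1(x)<k$. The only minor divergence is in condition (2), where the paper gets the contradiction by applying the main inequality to the line-bundle quotient $L\subset\omega_{\wt X}(x_1+x_2+x)$ of $E(N)$ to force $h^0(L)\ge N+B$ against the fixed bound $h^0(\omega_{\wt X}(x_1+x_2+x))$, whereas you argue via degrees of quotients of a bounded family -- both work, but the paper's version avoids having to justify boundedness uniformly in $N$.
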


\begin{proof} Let
$H=ker\{\wt V\xrightarrow{H^0(p(N))}H^0(E(N))\xrightarrow{H^0(T(N))}
H^0(\sG(N))\}$ and $F\subset E$ be the subsheaf generated by $H$.
Since all these $F$ are in a bounded family, there exists an integer
$M_1'(N)$ such that $dim\,g(H\otimes W_m)=h^0(F(m))=\chi(F(m))$,
$g_{r_i(x)}(H\otimes W_m)=h^0(Q^F_{r_i(x)})$ ($\forall\,\,x\in I$)
and $dim\,g_G(H\otimes W_m)=h^0(Q^F)$ for $m\ge M_1'(N)$. Then, by
Proposition \ref{prop2.19} (with $h=dim(H)$), we have
$$\aligned &e(H)=(\ell+kcN)(rh-r(F)\wt{P}(N))+(\ell+kcN)\wt{P}(N)\frac{h-\chi(F(N))}{c(m-N)}
\\&+\sum_{x\in
I}\sum^{l_x}_{i=1}d_i(x)\left(r_i(x)h-\wt{P}(N)h^0(Q^F_{r_i(x)})\right)+k(rh-\wt
P(N)h^0(Q^F)).\endaligned$$ By using $h\ge \wt P(N)-h^0(\sG(N))$,
$r-r(F)\ge r(\sG)$, $r_i(x)-h^0(Q^F_{r_i(x)})\ge
h^0(Q^{\sG}_{r_i(x)})$ and $r-h^0(Q^F)\ge h^0(Q^{\sG})$, we get the
inequality
$$\aligned h^0(\sG(N))\ge&
(\ell+kcN)\frac{h-\chi(F(N))}{k(m-N)c}-\frac{e(H)}{k\wt
P(N)}+h^0(Q^{\sG})+\\& \frac{1}{k}\left(r(\sG)(\ell+kcN)+ \sum_{x\in
I}\sum^{l_x}_{i=1} d_i(x)h^0(Q^{\sG}_{r_i(x)})\right).\endaligned$$
For given $N$, the set $\{h-\chi(F(N))\}$ is finite since all these
$F$ are in a bounded family. Let $\chi(N)={\rm
min}\{h-\chi(F(N))\}$. If $\chi(N)\ge 0$, then
$$\aligned h^0(\sG(N))\ge &\frac{1}{k}\left(r(\sG)(\ell+kcN)+ \sum_{x\in I}\sum^{l_x}_{i=1}
d_i(x)h^0(Q^{\sG}_{r_i(x)})\right)\\&+h^0(Q^{\sG})-\frac{e(H)}{k\wt
P(N)}.\endaligned$$ When $\chi(N)<0$, let $M_1(N)>{\rm
max}\{M_1'(N), -\chi(N)(\ell+kcN)+cN\}$ and $m\ge M_1(N)$. Then,
since $e(H)\le 0$, we have
$$h^0(\sG(N))\ge \frac{1}{k}\left(r(\sG)(\ell+kcN)+ \sum_{x\in I}\sum^{l_x}_{i=1}
d_i(x)h^0(Q^{\sG}_{r_i(x)})\right)+h^0(Q^{\sG}).$$

Now we show that $\wt V\to H^0(E(N))$ is an isomorphism. The
injectivity of $\wt V\xrightarrow {H^0(p(N))} H^0(E(N))$ is easy to
see. To see it being surjective, it is enough to show that one can
choose $N$ such that $H^1(E(N))=0$ for all such $E$. We prove
$H^1(E(N)(-x_1-x_2-x))=0$ for any $x\in \wt X$. Otherwise, there is
a nontrivial quotient $E(N)\to L\subset\omega_{\wt X}(x_1+x_2+x)$ by
Serre duality, and thus
$$h^0(\omega_{\wt X}(x_1+x_2+x))\ge h^0(L)\ge N+B,$$
where $B$ is a constant independent of $E$, we choose $N$ such that
$H^1(E(N)(-x_1-x_2-x))=0$ for all GIT-semistable points.

Let $\tau=Tor(E)$, $\sG=E/\tau$, note $h^0(\sG(N))=\wt
P(N)-h^0(\tau)$ and
$$h^0(Q^{\sG}_{r_i(x)})=r_i(x)-h^0(Q^{\tau}_{r_i(x)}),\quad h^0(Q^{\sG})=r-h^0(Q^{\tau})$$ then the
inequality in Lemma \ref{lem2.20} becomes
$$\aligned kh^0(\tau)&\le k h^0(Q^{\tau})+\sum_{x\in
I}\sum_{i=1}^{l_x}d_i(x)h^0(Q^{\tau}_{r_i(x)}) \\&\le
kh^0(Q^{\tau})+\sum_{x\in
I}(a_{l_x+1}(x)-a_1(x))h^0(\tau_x).\endaligned$$ Thus $\tau=Tor(E)$
is supported on $\{x_1,x_2\}$ (since $a_{l_x+1}(x)-a_1(x)<k$) and
$E_{x_1}\oplus E_{x_2}\xrightarrow{q}Q$ induces injection
$\tau_{x_1}\oplus\tau_{x_2}\hookrightarrow Q$.
\end{proof}

\begin{nota}\label{nota2.21} Let $\sH\subset\wt{\sR'}$ be the subscheme
parametrising the generalised parabolic sheaves $E=(E,E_{x_1} \oplus
E_{x_2}\xrightarrow{q}Q)$ satisfying the conditions (1) and (2) at
the end of Lemma \ref{lem2.20}. Then, if ${\wt{\sR'}}^{ss}$ (resp.
${\wt{\sR'}}^s$) denotes the open set of $\wt{\sR'}$ consisting of
the semistable (resp. stable) GPS, then it is clear that we have
open embedding
$${\wt{\sR'}}^{ss}\hookrightarrow
\sH\hookrightarrow \wt{\sR'}.$$
\end{nota}

\begin{prop}\label{prop2.22} Suppose
$(p,\{p_{r_1(x)},...,p_{r_{l_x}(x)}\}_{x\in I},q_s)\in\sH$ is a
point corresponding to a GPS $(E,Q)$. Then $(E,Q)$ is stable (resp.
semistable) iff for any nontrivial subsheaf $F\subset E$ we have
$$\aligned&s(F):=\frac{\ell+kcN}{c(m-N)}(\chi(F(N))\wt P(m)-\wt P(N)\chi(F(m)))+\\
&\sum_{x\in I}\sum^{l_x}_{i=1}d_i(x)(r_i(x)\chi(F(N))-\wt
P(N)h^0(Q_{r_i(x)}^F))\\&+k(r\chi(F(N))-\wt P(N)h^0(Q^F))
<(resp.\,\le)\,0.\endaligned$$
\end{prop}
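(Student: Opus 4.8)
The plan is to translate the GIT-stability condition of Proposition \ref{prop2.19} into the numerical condition $s(F) \le 0$ on subsheaves, following exactly the pattern established for the non-GPS case in Proposition \ref{prop2.9} and the refined passage of Proposition \ref{prop2.10}/\ref{prop2.12}. First I would set up the correspondence between subspaces $H \subset \wt V$ and subsheaves $F \subset E$: given $H$, let $F \subset E$ be the subsheaf generated by $H$ (equivalently, $F(N)$ the subsheaf with $H^0(F(N)) \supseteq H$), and conversely, given $F$ with $E/F$ torsion free, let $H = $ the inverse image of $H^0(F(N))$ in $\wt V$. By Lemma \ref{lem2.20} we already know $\wt V \to H^0(E(N))$ is an isomorphism, so this correspondence is clean, and for $m \ge M_1'(N)$ in a bounded family we have $\dim g(H \otimes W_m) = h^0(F(m))$, $g_{r_i(x)}(H \otimes W_m) = h^0(Q^F_{r_i(x)})$, and crucially the new identity $\dim g_G(H \otimes W_m) = h^0(Q^F)$ where $Q^F = q(F_{x_1} \oplus F_{x_2})$.

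Next I would compute the relation between the GIT-weight $e(H)$ of Proposition \ref{prop2.19} and the numerical expression $s(F)$ defined in the statement. The key algebraic step is to show $e(H)$ and $s(F)$ differ only by the term $\frac{\ell+kcN}{c(m-N)}\wt P(N)(h - \chi(F(N)))$, using the identity $\chi(F(N))\wt P(m) - \wt P(N)\chi(F(m)) = c(m-N)(r\chi(F) - r(F)\chi(E))$ and regrouping. The extra summand $k(rh - \wt P(N)h^0(Q^F))$ appearing in $e(H)$ matches the extra summand $k(r\chi(F(N)) - \wt P(N)h^0(Q^F))$ in $s(F)$ after substituting $h = \chi(F(N))$ (when $H^1(F(N)) = 0$), so the $Q$-contribution carries through transparently. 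When $H^1(F(N)) = 0$ one has $h = h^0(F(N)) = \chi(F(N))$, giving $e(H) = s(F)$ exactly; in general $h \le h^0(F(N))$ and $h \ge \wt P(N) - h^0(\sG(N))$ for the complementary quotient, which gives the inequalities $s(F) \le e(H)$ in one direction. This is precisely the mechanism of Proposition \ref{prop2.10}.

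For the two directions of the equivalence I would argue as follows. For the ``only if'' direction (semistable GPS implies the numerical condition), I would take $F \subset E$ with $E/F$ torsion free, form the corresponding $H$, and use $s(F) \le e(H) \le 0$ where the last inequality is GIT-semistability of the point — but here one must first know the point is GIT-semistable, so this direction really runs the other way and I instead reduce to showing the numerical inequality directly from the definition of semistable GPS (Definition \ref{defn2.18}) by rewriting $s(F)$ in terms of $\mathrm{par}\chi_m$ and $\dim Q^F$, exactly as the final display of Proposition \ref{prop2.9} expresses $s(F) = kP(N)(\mathrm{par}\chi(F) - \tfrac{r(F)}{r}\mathrm{par}\chi(E))$; the GPS analogue should read as a multiple of $\mathrm{par}\chi_m(F) - \dim Q^F - \tfrac{r(F)}{r(E)}(\mathrm{par}\chi_m(E) - \dim Q)$, which is $\le 0$ precisely by Definition \ref{defn2.18}. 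For the ``if'' direction I would invoke Lemma \ref{lem2.20} to restrict to $m \ge M_1(N)$ and points of $\sH$, then run the $e(H) \ge s(F)$ comparison (using $H^1(F(N)) = 0$, guaranteed by enlarging $N$ via condition (2) of Lemma \ref{lem2.20}) to conclude GIT-semistability of every nontrivial $H$.

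The main obstacle I anticipate is bookkeeping the torsion contribution at the node. Because $E$ may have torsion supported on $\{x_1,x_2\}$ mapping injectively into $Q$ (condition (1) of Lemma \ref{lem2.20}), the identities relating $\sum_{x} d_i(x) h^0(Q^F_{r_i(x)})$ to $\mathrm{par}\chi_m$ must correctly account for both the parabolic weights at the smooth points $I$ and the separate $k \cdot \dim Q^F$ node term; verifying that these combine into the modified invariant $\mathrm{par}\chi_m(F) - \dim Q^F$ without double-counting the torsion at $x_1, x_2$ is the delicate step, and it is exactly where the definition $\wt\chi = \chi + r$, $\wt\ell = k + \ell$ must be used consistently. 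Once that identity is pinned down, the rest is the same formal manipulation as in Proposition \ref{prop2.9}.
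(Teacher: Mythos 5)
Your proposal misreads what Proposition \ref{prop2.22} is asserting. It is a purely sheaf-theoretic statement: it compares the numerical quantity $s(F)$ with the GPS (semi)stability condition of Definition \ref{defn2.18}; the comparison with the GIT weight $e(H)$ of Proposition \ref{prop2.19} is the content of the \emph{next} proposition (Proposition \ref{prop2.23}), not this one. Most of your first three paragraphs --- the $H\leftrightarrow F$ correspondence, the relation $e(H)=s(F)+\frac{\ell+kcm}{c(m-N)}\wt P(N)(h-\chi(F(N)))$, the invocation of Lemma \ref{lem2.20} to deduce GIT-semistability --- therefore prove a different statement. You do land on the correct key identity in passing, namely
$$s(F)=k\wt P(N)\left(par\chi_m(F)-{\rm dim}(Q^F)-r(F)\,\frac{par\chi_m(E)-{\rm dim}(Q)}{r(E)}\right),$$
which is exactly how the paper argues, and which immediately gives the equivalence for those $F$ with $E/F$ torsion free outside $\{x_1,x_2\}$.

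The genuine gap is the quantifier: the proposition requires $s(F)<(\le)\,0$ for \emph{every} nontrivial subsheaf $F$, while Definition \ref{defn2.18} only tests subsheaves $E'$ with $E/E'$ torsion free outside $\{x_1,x_2\}$. So in the direction ``$(E,Q)$ (semi)stable $\Rightarrow$ $s(F)<(\le)\,0$'' you must pass from an arbitrary $F$ to its saturation $F'$ (with $\tau=F'/F$ the torsion of $E/F$) and show $s(F)\le s(F')$. The paper does this by an explicit computation of $s(F)-s(F')$, decomposing $\tau=\tilde\tau+\tau_{x_1}+\tau_{x_2}+\sum_{x\in I}\tau_x$ and checking that every contribution is nonpositive (using $h^0(\tau_x)+h^0(Q^F_{r_i(x)})-h^0(Q^{F'}_{r_i(x)})\ge 0$ and $h^0(\tau_{x_1}\oplus\tau_{x_2})+h^0(Q^F)-h^0(Q^{F'})\ge 0$), with strict inequality when $\tilde\tau+\sum_{x\in I}\tau_x\neq 0$, which is what makes the stable case go through. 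Your proposal never addresses this step; your last paragraph gestures at ``torsion bookkeeping at the node,'' but only in the context of reconciling $par\chi_m$ with the $k\cdot{\rm dim}(Q^F)$ term, not in the context of the saturation comparison. Without the estimate $s(F)\le s(F')$ the forward direction is unproved for non-saturated $F$.
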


\begin{proof} The point corresonding to a quotient
$\wt V\otimes\wt\sW\xrightarrow{p}E\to 0$ with
$$\{E_x\twoheadrightarrow
Q_{r_{l_x}(x)}\twoheadrightarrow Q_{r_{l_x-1}(x)}\twoheadrightarrow
\cdots\twoheadrightarrow Q_{r_2(x)}\twoheadrightarrow
Q_{r_1(x)}\twoheadrightarrow 0\}_{x\in I}$$ and $E_{x_1}\oplus
E_{x_2}\xrightarrow{q}Q\to 0$, where $q_s:\wt V\otimes\wt\sW\to
E_{x_1}\oplus E_{x_2}\xrightarrow{q}Q\to 0$ and $p_{r_i(x)}:
V\otimes\sW\xrightarrow{p}E\to E_x\twoheadrightarrow
Q_{r_{l_x}(x)}\twoheadrightarrow\cdots\twoheadrightarrow
Q_{r_i(x)}$. For $F\subset E$ such that $E/F$ is torsion free
outside $\{x_1,x_2\}$, we have the flags of quotient sheaves
$$\{F\twoheadrightarrow F_x\twoheadrightarrow
Q^F_{r_{l_x}(x)}\twoheadrightarrow
Q^F_{r_{l_x-1}(x)}\twoheadrightarrow \cdots\twoheadrightarrow
Q^F_{r_2(x)}\twoheadrightarrow Q^F_{r_1(x)}\twoheadrightarrow
0\}_{x\in I}$$ Let
$n_i^F(x)=h^0(Q^F_{r_i(x)})-h^0(Q^F_{r_{i-1}(x)})$ and $F$ have rank
$(r_1,r_2)$. Then
$$\sum_{x\in I}\sum^{l_x}_{i=1}d_i(x)r_i(x)=\,\,r\sum_{x\in
I}a_{l_x+1}(x) -\sum_{x\in I}\sum^{l_x+1}_{i=1}a_i(x)n_i(x)$$
$$\aligned\sum_{x\in I}\sum^{l_x}_{i=1}d_i(x)h^0(Q^F_{r_i(x)})=&
\,r_1\sum_{x\in I_1}a_{l_x+1}(x)+r_2\sum_{x\in I_2}a_{l_x+1}(x)
\\&-\sum_{x\in I}\sum^{l_x+1}_{i=1}a_i(x)n^F_i(x).\endaligned$$ Thus we have
$$\aligned s(F)&=k\wt P(N)\left(\aligned&\chi(F)-\frac{1}{k}\sum_{x\in
I}\sum^{l_x}_{i=1}d_i(x)h^0(Q^F_{r_i(x)})-h^0(Q^F)\\&-\frac{r(F)}{r}\left(\chi(E)-r-\frac{1}{k}\sum_{x\in
I}\sum^{l_x}_{i=1}d_i(x)r_i(x)\right)\endaligned\right)\\&=k\wt
P(N)\left(par\chi_m(F)-{\rm dim}(Q^F)-r(F)\frac{par\chi_m(E)-{\rm
dim}(Q)}{r(E)}\right).\endaligned$$ $(E,Q)$ is semi-stable (resp.
stable) iff $s(F)\le 0$ (resp. $s(F)<0$) for nontrivial $F\subset E$
such that $E/F$ torsion free outside $\{x_1,x_2\}$.

For any nontrivial subsheaf $F\subset E$, let $\tau$ be the torsion
of $E/F$ and $F'\subset E$ such that $\tau=F'/F$ and $E/F'$ torsion
free. If we write $\tau=\tilde\tau+\tau_{x_1}+\tau_{x_2}+\sum_{x\in
I}\tau_x$, then
$$\aligned s(F)-s(F')=&-k\wt P(N)h^0(\widetilde{\tau})-\wt P(N)\sum_{x\in
I}(k-a_{l_x+1}(x)+a_1(x))h^0(\tau_x) \\&-\wt P(N)\sum_{x\in
I}\sum^{l_x}_{i=1}d_i(x)(h^0(\tau_x)+h^0(Q^F_{r_i(x)})-h^0(Q^{F'}_{r_i(x)}))\\&-k\wt
P(N)(h^0(\tau_{x_1})+h^0(\tau_{x_2})+h^0(Q^F)-h^0(Q^{F'})).\endaligned$$
Since $h^0(\tau_x)+h^0(Q^F_{r_i(x)})-h^0(Q^{F'}_{r_i(x)})\ge 0$ and
$h^0(\tau_{x_1}\oplus\tau_{x_2})+h^0(Q^F)-h^0(Q^{F'})\ge 0$, we have
$s(F)\le s(F')$ and $s(F)<s(F')$ if $\tilde\tau+\sum_{x\in
I}\tau_x\neq 0$. Thus stability of $(E,Q)$ implies $s(F)<0$ for any
nontrivial $F\subset E$.
\end{proof}

\begin{prop}\label{prop2.23} There exist integers $N$ and $M(N)>0$ such
that for $m\ge M(N)$ the following is true. A point
$$(E,Q)=(p,\{p_{r_1(x)},...,p_{r_{l_x}(x)},q_s\}_{x\in I})\in\wt{\sR'}$$ is GIT-stable
(respectively, GIT-semistable) if and only if $(E,Q)$ is a stable
(respectively, semistable) GPS such that $\wt V\to H^0(E(N))$ is an
isomorphism and $(p,\{p_{r_1(x)},...,p_{r_{l_x}(x)},q_s\}_{x\in
I})\in\sH.$\end{prop}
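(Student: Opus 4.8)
The plan is to establish the two directions of the biconditional separately, combining the numerical translations already developed in Lemma \ref{lem2.20}, Proposition \ref{prop2.19}, and Proposition \ref{prop2.22}. The essential observation is that the GIT function $e(H)$ on subspaces $H\subset\wt V$ and the stability function $s(F)$ on subsheaves $F\subset E$ are linked by the bookkeeping identity used in the irreducible (non-GPS) case, now augmented by the Grassmannian factor $k(rh-\wt P(N)\dim g_G(H\otimes W_m))$ coming from the quotient $Q$. Concretely, for $m>N$ one has the comparison
\eq{cmp}{e(H)=s(F)+\frac{\ell+kcN}{c(m-N)}\wt P(N)\bigl(h-\chi(F(N))\bigr),}
valid whenever $F\subset E$ is the subsheaf with $F(N)$ generated by $H$, and the correction term is controlled exactly as in the proofs of Proposition \ref{prop2.10} and Proposition \ref{prop2.12}.

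First I would prove the forward implication (GIT-(semi)stable $\Rightarrow$ (semi)stable GPS). Assuming GIT-semistability, Lemma \ref{lem2.20} already gives that $\wt V\to H^0(E(N))$ is an isomorphism, that $\mathrm{Tor}\,E$ is supported on $\{x_1,x_2\}$ with $q$ injective there, and that the higher-cohomology vanishing (2) holds; hence the point lies in $\sH$. For the stability inequality, given a nontrivial $F\subset E$ with $E/F$ torsion free outside $\{x_1,x_2\}$, let $H\subset\wt V$ be the inverse image of $H^0(F(N))$ with $h=\dim H$. Then $h\ge\chi(F(N))$ (using $h^1(F(N))\ge h^1(F(m))$ for $m>N$), while $\dim g(H\otimes W_m)\le h^0(F(m))$, $\dim g_{r_i(x)}(H\otimes W_m)\le h^0(Q^F_{r_i(x)})$, and $\dim g_G(H\otimes W_m)\le h^0(Q^F)$; these three monotonicities force $s(F)\le e(H)$. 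Since $e(H)<(\le)\,0$ by GIT-(semi)stability (Proposition \ref{prop2.19}), Proposition \ref{prop2.22} yields the GPS (semi)stability. The strict case requires the observation that the relevant inequalities become strict when $h=0$, exactly as in Proposition \ref{prop2.10}.

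Next I would prove the converse (stable/semistable GPS in $\sH$ with $\wt V\cong H^0(E(N)) \Rightarrow$ GIT-(semi)stable), which follows the strategy of Proposition \ref{prop2.12}. Fix $N\ge N_1$ so the isomorphism $\wt V\cong H^0(E(N))$ holds for all such $E$. For arbitrary $H\subset\wt V$, let $F\subset E$ be the subsheaf with $F(N)$ generated by $H$; boundedness of the family of such $F$ gives, for $m\ge M_1'(N)$, the identifications $\dim g(H\otimes W_m)=\chi(F(m))$, $\dim g_{r_i(x)}(H\otimes W_m)=h^0(Q^F_{r_i(x)})$, and $\dim g_G(H\otimes W_m)=h^0(Q^F)$, whence the comparison \eqref{cmp}. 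If $H^1(F(N))=0$ then $h\le h^0(F(N))=\chi(F(N))$, so the correction term is nonpositive and $e(H)\le s(F)\le 0$ by Proposition \ref{prop2.22}. The delicate case is $H^1(F(N))\neq 0$: here I invoke the analogue of Lemma \ref{lem2.11} to bound $h^0(F(N))$ linearly in $r(F)$, absorb all $N$-independent constants into a single $C$, and then choose $N$ large enough that the dominant $-kcN$ term makes $e(H)$ negative, finally taking $m\ge M(N)$ so that the residual term $(\ell+kcN)\tfrac{h-\chi(F(N))}{c(m-N)}$ is smaller than $1$ uniformly in $H$.

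The main obstacle is the GPS analogue of Lemma \ref{lem2.11}: I must control $h^0(F(N))$ for subsheaves $F$ with $H^1(F(N))\neq 0$, but the relevant semistability bound now involves $par\chi_m$ and the extra quotient term $\dim Q^F$, and $E$ itself carries torsion along $\{x_1,x_2\}$. I expect to need a modified Lemma \ref{lem2.11} asserting $\chi(F)\le\tfrac{\chi(E)}{r}r(F)+C'$ for a uniform constant $C'$ depending only on $r$, $g$, $|I|$, and the parabolic data, derived from the GPS-semistability inequality of Proposition \ref{prop2.22} after discarding the nonnegative $\dim Q^F$ contribution; once that uniform bound is in hand, the twisting argument of Lemma \ref{lem2.11} (mapping $F\to\omega_{\wt X}$ and peeling off the kernel) goes through verbatim on $\wt X$, and the constant-absorption step of Proposition \ref{prop2.12} finishes the estimate.
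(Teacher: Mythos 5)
Your proposal is correct and follows essentially the same route as the paper: the forward direction via Lemma \ref{lem2.20} and the comparison $s(F)\le e(H)$ feeding into Propositions \ref{prop2.19} and \ref{prop2.22}, and the converse via the identity $e(H)=s(F)+\tfrac{\ell+kcm}{c(m-N)}\wt P(N)(h-\chi(F(N)))$ (note the numerator is $\ell+kcm$, not $\ell+kcN$, though this does not affect the argument), the case split on $H^1(F(N))$, Lemma \ref{lem2.11}, and the final choice of $N$ and $M(N)$. The ``obstacle'' you flag --- obtaining the uniform bound $\chi(F)\le\tfrac{\chi(E)}{r}r(F)+C'$ for GPS by discarding the nonnegative $\dim Q^F$ term --- is resolved exactly as you propose and is what the paper's constant $A$ silently encodes.
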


\begin{proof} If $(p,\{p_{r_1(x)},...,p_{r_{l_x}(x)}\}_{x\in
I},q_s)\in \wt{\sR'}$ is GIT-stable (GIT-semistable), by Lemma
\ref{lem2.20}, $\wt V\to H^0(E(N))$ is an isomorphism and
$$(p,\{p_{r_1(x)},...,p_{r_{l_x}(x)},q_s\}_{x\in I})\in\sH.$$ For any
nontrivial subsheaf $F\subset E$ such that $E/F$ is torsion free
outside $\{x_1,x_2\}$, let $H\subset \wt V$ be the inverse image of
$H^0(F(N))$ and $h=dim(H)$, note $h^1(F(N))\ge h^1(F(m))$ when
$m>N$, we have
$$\chi(F(N))\wt P(m)-\wt P(N)\chi(F(m))\le h\wt P(m)-\wt P(N)h^0(F(m)).$$
Thus $s(F)\le e(H)$ since ${\rm dim}\,g(H\otimes W_m)\le h^0(F(m))$
and
$${\rm dim}\,g_{r_i(x)}(H\otimes W_m)\le
h^0(Q^F_{r_i(x)}),\quad {\rm dim}\,g_G(H\otimes W_m)\le h^0(Q^F)$$
(the inequalities are strict when $h=0$). By Proposition
\ref{prop2.19} and Proposition \ref{prop2.22}, $(E,Q)$ is stable
(respectively, semistable) if the point is GIT stable (respectively,
GIT semistable).

There is $N_1>0$ such that for any $N\ge N_1$ the following is true.
For any $\wt V\otimes\wt \sW\xrightarrow{p}E\to 0$ with semistable
GPS $(E,Q)$, the induced map $\wt V\to H^0(E(N))$ is an isomorphism
and $(E,Q)\in \sH$.

Let $H\subset \wt V$ be a nontrivial subspace of ${\rm dim}(H)=h$
and $F\subset E$ be the sheaf such that $F(N)\subset E(N)$ is
generated by $H$. Since all these $F$ are in a bounded family (for
fixed $N$), there is a $M_1(N)$ such that $${\rm dim}\,g(H\otimes
W_m)=h^0(F(m))=\chi(F(m)),\quad {\rm dim}\,g_G(H\otimes
W_m)=h^0(Q^F)$$ and $g_{r_i(x)}(H\otimes W_m)=h^0(Q^F_{r_i(x)})$
($\forall\,\,x\in I$) whenever $m\ge M_1(N)$, which imply that
$$e(H)=s(F)+\frac{\ell+kcm}{c(m-N)}\wt P(N)\left(h-\chi(F(N))\right).$$
If $H^1(F(N))=0$, we have $e(H)\le s(F)$ since $h\le h^0(F(N))$.
Then $e(H)\le s(F)<(resp.\le)\,0$ by Proposition \ref{prop2.22} when
$(E,Q)$ is stable (resp. semistable). If $H^1(F(N))\neq 0$, by Lemma
\ref{lem2.11}, we have
$$h^0(F(N))\le\frac{rcN+\wt\chi}{r}(r(F)-1)+A$$
where $A$ is a constant. Putting $h\le h^0(F(N))$ and above
inequality in
$$\aligned
e(H)=&\wt
P(N)\left(kh-(\ell+kcN)r(F)+(\ell+kcN)\frac{h-\chi(F(N))}{c(m-N)}\right)
\\&-\wt P(N)\sum_{x\in
I}\sum^{l_x}_{i=1}d_i(x)h^0(Q^F_{r_i(x)})-k\wt
P(N)h^0(Q^F),\endaligned$$ then, let $C=k|\chi|+(|A|+|\ell|)r$, we
have
$$e(H)\le \wt P(N)\left(
-kcN+C+(\ell+kcN)\frac{h-\chi(F(N))}{c(m-N)}\right).$$ Choose an
integer $N_2\ge N_1$ such that $-kcN_2+C<-1$. Then, for any fixed
$N\ge N_2$, there is an integer $M_2(N)$ such that for $m\ge M_2(N)$
$$(\ell+kcN)\frac{h-\chi(F(N))}{c(m-N)}<1$$
for any $H\subset V$, which implies $e(H)<0$ and we are done.
\end{proof}

\begin{thm}\label{thm2.24} When $\wt X$ is irreducible, there exists
a (coarse) moduli space $\sP^s$ of stable GPS on $\wt X$, which is a
smooth variety. There is an open immersion
$\sP^s\hookrightarrow\sP$, where $\sP$ is the moduli space of
$s$-equivalence classes of semi-stable GPS on $\wt X$, which is
reduced, irreducible and normal projective variety with at most
rational singularities.
\end{thm}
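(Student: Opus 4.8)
The plan is to realise both spaces as GIT quotients of $\wt{\sR'}$ and to deduce every stated property from the geometry of the semistable locus together with Boutot's theorem. Set $\sP:=\wt{\sR'}^{ss}/\!/SL(\wt V)$ and $\sP^s:=\wt{\sR'}^{s}/SL(\wt V)$, the superscripts denoting the GIT-(semi)stable loci for the polarisation of Proposition \ref{prop2.19}. As $\wt{\sR'}$ is a Grassmann bundle over the flag bundle $\wt\sR$ over the projective scheme $\widetilde{\mathbf Q}$, it is projective and $\sP$ is a projective variety; the stable locus is open, the $SL(\wt V)$-action there admits a geometric quotient, and the induced map $\sP^s\hookrightarrow\sP$ is an open immersion. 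By Proposition \ref{prop2.23} a point of $\wt{\sR'}$ is GIT-(semi)stable precisely when the underlying GPS $(E,Q)$ is (semi)stable, $\wt V\xrightarrow{\sim}H^0(E(N))$, and the point lies in $\sH$; hence $\sP$ and $\sP^s$ are the asserted coarse moduli spaces of $s$-equivalence classes of semistable GPS, respectively of isomorphism classes of stable GPS.

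For smoothness the crucial input is that the Quot scheme is smooth at every semistable point. Writing $0\to K\to \wt V\otimes\wt\sW\to E\to 0$, the sheaf $K$ is locally free, being a subsheaf of a vector bundle on the smooth curve $\wt X$; applying $\Hom(-,E)$ and using $H^1(E(N))=0$ on $\mathbf Q$ together with ${\rm Ext}^2_{\sO_{\wt X}}(E,E)=0$ (as $\wt X$ has dimension one) forces the obstruction ${\rm Ext}^1(K,E)$ to vanish. Thus $\mathbf Q$ is smooth; since the semistable locus is contained in $\mathbf Q$ and the flag and Grassmann factors are smooth fibrations over it, $\wt{\sR'}^{ss}$ (which lies in $\sH$) is smooth, as established in \cite{S1}. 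A stable GPS has only scalar automorphisms, so every stabiliser in $SL(\wt V)$ equals the finite centre, $PGL(\wt V)$ acts freely on $\wt{\sR'}^{s}$, and $\wt{\sR'}^{s}\to\sP^s$ is a principal $PGL(\wt V)$-bundle; therefore $\sP^s$ is smooth.

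The remaining global properties of $\sP$ follow next. For irreducibility, the open subset of $\wt{\sR'}^{ss}$ on which $E$ is a vector bundle is a Grassmann bundle over the framed family of semistable parabolic bundles on the irreducible smooth curve $\wt X$, hence is irreducible; it is dense, and as $\wt{\sR'}^{ss}$ is smooth its irreducible components are its connected components, so there is only one and $\sP$ is irreducible. Finally, a smooth variety has rational singularities, so $\wt{\sR'}^{ss}$ does; by Boutot's theorem the good quotient $\sP=\wt{\sR'}^{ss}/\!/SL(\wt V)$ of a variety with rational singularities by the reductive group $SL(\wt V)$ again has rational singularities, whence $\sP$ is normal, and in particular reduced and seminormal.

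I expect the main obstacle to be the smoothness of $\wt{\sR'}^{ss}$ at the boundary points where $E$ acquires torsion at $x_1$ and $x_2$: there $\wt\sF_{x_1}\oplus\wt\sF_{x_2}$ fails to have locally constant rank, so the Grassmann factor is a relative Grassmannian of a non-locally-free sheaf rather than an honest bundle, and the naive fibration argument breaks down. Controlling the deformation theory at these points, using exactly the condition defining $\sH$ that the torsion of $E$ injects into $Q$, is the delicate computation, and it is this (together with the dimension count showing the stable locus is big) that forms the technical core of \cite{S1}. Granting it, Boutot's theorem and the free-action argument deliver the smoothness and singularity statements formally.
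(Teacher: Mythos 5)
Your GIT framework is the same as the paper's, and your use of Boutot's theorem for the rational singularities of $\sP$ is exactly the intended mechanism. But there is a genuine gap: the assertion in your second paragraph that $\wt{\sR'}^{ss}$ is smooth is false, and your own closing paragraph correctly locates why while proposing the wrong repair. Semistable (non-stable) GPS may have $E$ with torsion supported at $\{x_1,x_2\}$ (Lemma \ref{lem2.20} only forces this torsion to inject into $Q$, not to vanish), and at such points $\sF_{x_1}\oplus\sF_{x_2}$ is not locally free, $Grass_r(\sF_{x_1}\oplus\sF_{x_2})$ is not a Grassmann bundle, and $\sH$ is genuinely singular. The input the paper actually uses is Proposition 3.2 of \cite{S1}: $\sH$ is reduced and \emph{normal with at most rational singularities} --- not smooth. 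So the ``delicate computation'' you defer is not a smoothness proof at the torsion locus (that would fail); it is the normality and rational-singularity statement for $\sH$, after which Boutot applies verbatim and your argument for $\sP$ goes through with ``smooth'' replaced by ``normal with rational singularities'' (this substitution also rescues your irreducibility argument, since for a normal scheme connected components are still irreducible).

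For the smoothness of $\sP^s$ the paper's route is shorter and avoids the issue entirely: a \emph{stable} GPS must have $E$ torsion free (take $E'=\mathrm{Tor}(E)$ in Definition \ref{defn2.18}; since the torsion injects into $Q$ one gets $par\chi_m(E')-\dim Q^{E'}=0$, violating strict inequality unless $E'=0$). Hence $\wt{\sR'}^{s}$ lies in the locally free locus, where $\wt{\sR'}$ really is a Grassmann bundle over a flag bundle over the smooth open part of the Quot scheme, and is therefore smooth; combined with your (correct) free-action/principal-bundle observation this gives smoothness of $\sP^s$. You state the torsion-freeness of stable GPS nowhere, and instead derive smoothness of $\wt{\sR'}^{s}$ as an open subset of the purportedly smooth $\wt{\sR'}^{ss}$, so this step needs to be rerouted as above.
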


\begin{proof} Let $\sP^s:={\wt{\sR'}}^s//SL(\wt V)$ and $\sP:={\wt{\sR'}}^{ss}//SL(\wt
V)$ be the GIT quotient. When $(E,Q)$ is a stable GPS, $E$ must be
torsion free. Thus ${\wt{\sR'}}^s$ is a smooth variety, so is
$\sP^s$. By Proposition 3.2 of \cite{S1}, $\sH$ is reduced, normal
with at most rational singularities, so are
${\wt{\sR'}}^{ss}\subset\sH$  and $\sP$.
\end{proof}

The above construction also works for the case when
$\wt X=X_1\sqcup X_2$ is a disjoint union of two irreducible smooth
curves. However, for later applications, we need to use a different quotient
space $\wt \sR$. Let $\chi_1$ and $\chi_2$ be integers such that
$\chi_1+\chi_2-r=\chi$, and fix, for $i=1,2$, the polynomials
$P_i(m)=c_irm+\chi_i$ and $\sW_i=\sO_{X_i}(-N)$ where
$\sO_{X_i}(1)=\sO (1)|_{X_i}$ has degree $c_i$. Write $V_i=\Bbb
C^{P_i(N)}$ and consider the Quot schemes $Quot(V_i\otimes\sW_i,
P_i)$, let $\wt{\textbf{Q}}_i$ be the closure of the open set
$$\mathbf{Q}_i=\left\{\aligned&\text{$V_i\otimes\sW_i\to E_i\to 0$, with
$H^1(E_i(N))=0$ and}\\&\text{$V_i\to H^0(E_i(N))$ induces an
isomorphism}\endaligned\right\},$$ we have the universal quotient
$V_i\otimes\sW_i\to \sF^i\to 0$ on $X_i\times \wt{\textbf{Q}}_i$ and
the relative flag scheme
$$\sR_i=\underset{x\in I_i}{\times_{\wt{\textbf{Q}_i}}}
Flag_{\vec n(x)}(\sF^i_x)\to \wt{\textbf{Q}}_i.$$ Let
$\sF=\sF^1\oplus\sF^2$ denote direct sum of pullbacks of $\sF^1$,
$\sF^2$ on $$\wt X\times
(\wt{\textbf{Q}}_1\times\wt{\textbf{Q}}_2)=(X_1\times\wt{\textbf{Q}}_1)\sqcup(X_2\times\wt{\textbf{Q}}_2).$$
Let $\sE$ be the pullback of $\sF$ to $\wt
X\times(\sR_1\times\sR_2)$, $\wt V=V_1\oplus V_2$ and
$$\rho:\widetilde{\sR'}:=Grass_r(\sE_{x_1}\oplus\sE_{x_2})\to\wt\sR:=\sR_1\times\sR_2\to
\wt{\textbf{Q}}:=\wt{\textbf{Q}}_1\times\wt{\textbf{Q}}_2.$$ Note
that $V_1\otimes\sW_1\oplus V_2\otimes\sW_2\to\sF\to 0$ is a
$\wt{\textbf{Q}}_1\times\wt{\textbf{Q}}_2$-flat quotient with
Hilbert polynomial $\wt P(m)=P_1(m)+P_2(m)$ on $\widetilde X\times
(\wt{\textbf{Q}}_1\times\wt{\textbf{Q}}_2)$, we have for $m$ large
enough a $G$-equivariant embedding
$$\wt{\textbf{Q}}_1\times\wt{\textbf{Q}}_2\hookrightarrow Grass_{\wt P(m)}(V_1\otimes W_1^m
\oplus V_2\otimes W_2^m),$$ where $W_i^m=H^0(\sW_i(m))$ and
$G=(GL(V_1)\times GL(V_2))\cap SL(\wt V).$ Moreover, for large
enough $m$, we have a $G$-equivariant embedding
$$\wt{\sR'}\hookrightarrow \mathbf{G}'=Grass_{\wt P(m)}(\wt V\otimes W_m)\times\bold {Flag}\times Grass_r(\wt V\otimes W_m)$$
(\textbf{Warning }: $\wt V\otimes W_m:=V_1\otimes W_1^m \oplus
V_2\otimes W_2^m$), which maps a point $$(p=p_1\oplus p_2,
\{p_{r_1(x)},...,p_{r_{l_x}(x)}\}_{x\in I},q_s)\in\wt{\sR'},$$ where
$V_i\otimes \sW_i\xrightarrow{p_i} E_i\to 0,$ $(V_1\otimes
\sW_1)\oplus (V_2\otimes \sW_2)\xrightarrow{p=p_1\oplus p_2}
E:=E_1\oplus E_2$ denotes the quotient on $\wt X=X_1\sqcup X_2$ and
$$\{\,\,(V_1\otimes\sW_1)\oplus (V_2\otimes
\sW_2)\xrightarrow{p_{r_i(x)}}Q_{r_i(x)}\to 0,\,\,1\le i\le
l_x\,\,\}_{x\in I},$$ $(V_1\otimes \sW_1)\oplus (V_2\otimes
\sW_2)\xrightarrow{q_s}Q$ denotes the surjection of sheaves
$$q_s:(V_1\otimes \sW_1)\oplus (V_2\otimes \sW_2)\to E_{x_1}\oplus
E_{x_2}\xrightarrow{q}Q\to 0,$$ to the point
$(g,\{g_{r_1(x)},...,g_{r_{l_x}(x)}\}_{x\in I},g_G)=(\wt V\otimes
W_m\xrightarrow{g}U,$
$$\{\wt V\otimes
W_m\xrightarrow{g_{r_1(x)}}U_{r_1(x)},\,\cdots\,, \wt V\otimes
W_m\xrightarrow{g_{r_{l_x}(x)}}U_{r_{l_x}(x)}\}_{x\in I},\wt
V\otimes W_m\xrightarrow{g_G}U_r)$$ of $\mathbf{G}'$, where
$g:=H^0(p(m))$, $U:=H^0(E(m))$, $g_{r_i(x)}:=H^0(p_{r_i(x)}(m))$,
$U_{r_i(x)}:=H^0(Q_{r_i(x)})$ ($i=1,...,l_x$), $g_G:=H^0(q_s(m))$,
$U_r:=H^0(Q)$ and $r_i(x)=dim(Q_{r_i(x)})$. Given $\mathbf{G}'$ the
polarisation
$$\frac{\ell+kcN}{c(m-N)}\times\prod_{x\in
I} \{d_1(x),\cdots,d_{l_x}(x)\}\times k.$$
Then we have criterion (see Proposition 1.14 and 2.4 of \cite{Bh})
\begin{prop} A point
$(g,\{g_{r_1(x)},...,g_{r_{l_x}(x)}\}_{x\in I},g_G)\in \bold G'$ is
stable (semistable) for the action of $G$, with respect to the above
polarisation, iff for all nontrivial subspaces $H\subset \wt V$,
where $H=H_1\oplus H_2$, $H_i\subset V_i$ ($i=1,2$), we have (with
$h=dim H$, $\wt H:=H_1\otimes W^m_1\oplus H_2\otimes W_2^m$)
$$\aligned e(H):=&\frac{\ell+kcN}{c(m-N)}\left(\wt{P}(m)h-\wt{P}(N)dim\,g(\wt
H)\right)\\&+\sum_{x\in
I}\sum^{l_x}_{i=1}d_i(x)\left(r_i(x)h-\wt{P}(N)dim\,g_{r_i(x)}(\wt
H)\right)\\&+k\left(rh-\wt P(N)dim\,g_G(\wt
H)\right)<(\le)\,0.\endaligned$$
\end{prop}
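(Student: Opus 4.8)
The plan is to deduce the criterion from the Hilbert--Mumford numerical criterion applied to the group $G=(GL(V_1)\times GL(V_2))\cap SL(\wt V)$ acting on $\mathbf{G}'$ with the stated linearisation, exactly in the manner of \cite{Bh}. First I would observe that every one-parameter subgroup $\lambda:\G_m\to G$ preserves the decomposition $\wt V=V_1\oplus V_2$, so it is a pair $(\lambda_1,\lambda_2)$ of one-parameter subgroups of $GL(V_1)$ and $GL(V_2)$ subject to the single trace-zero ($SL$) relation. Diagonalising $\lambda$ in suitable bases of $V_1$ and $V_2$ produces a decreasing weight filtration of $\wt V$, each step of which is automatically of the block form $H=H_1\oplus H_2$ with $H_i\subset V_i$; this is precisely why only such subspaces enter the final criterion, in contrast with the irreducible case of Proposition \ref{prop2.19}.

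Next I would compute the Mumford weight $\mu(x,\lambda)$ of a point $x=(g,\{g_{r_i(x)}\},g_G)$ factor by factor. Since $\lambda$ acts on $\wt V\otimes W_m=V_1\otimes W_1^m\oplus V_2\otimes W_2^m$ only through the $V_i$ factors, the weight spaces are obtained by tensoring the $V_i$-weight spaces with the fixed spaces $W_i^m$, and the relevant filtration step is $\wt H=H_1\otimes W_1^m\oplus H_2\otimes W_2^m$. On the quotient Grassmannian $Grass_{\wt P(m)}(\wt V\otimes W_m)$ the weight is expressed through $\dim g(\wt H)$, on each flag factor through $\dim g_{r_i(x)}(\wt H)$, and on $Grass_r(\wt V\otimes W_m)$ through $\dim g_G(\wt H)$. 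Weighting these by the polarisation coefficients $\frac{\ell+kcN}{c(m-N)}$, $d_i(x)$ and $k$ and performing an Abel summation over the weight jumps $\rho_j-\rho_{j+1}\ge 0$ writes $\mu(x,\lambda)$ as a nonnegative combination $\sum_j(\rho_j-\rho_{j+1})\,e(H_{(j)})$, up to an overall positive normalisation that absorbs the $SL$ shift. The trace-zero condition is exactly what makes $e$ enter only through its shift-invariant form, matching the displayed expression. By the criterion in the form of \cite{Bh}, $x$ is semistable iff $\mu(x,\lambda)\le 0$ for every such $\lambda$.

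With this identity in hand the two implications are immediate. If $e(H)\le 0$ for every block subspace $H=H_1\oplus H_2$, then $\mu(x,\lambda)\le 0$ for every $\lambda$, so $x$ is semistable, and strictness for all proper nonzero $H$ gives stability. Conversely, given any nonzero proper $H=H_1\oplus H_2$, I would exhibit a one-parameter subgroup of $G$ whose weight filtration is the two-step filtration $0\subset H\subset\wt V$: assign weight $\dim\wt V-h$ on $H$ and weight $-h$ on a block complement, which satisfies the trace-zero relation, so that $\mu(x,\lambda)$ is a positive multiple of $e(H)$ and semistability forces $e(H)\le 0$ (respectively $<0$ for stability).

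The main obstacle is the bookkeeping of the Mumford weight for the restricted group $G$ rather than the full $SL(\wt V)$: one must check that the trace-zero normalisation of the admissible one-parameter subgroups reproduces exactly the shift-invariant combination defining $e(H)$, and that tensoring by the two \emph{distinct} fixed spaces $W_1^m$ and $W_2^m$ (the content of the \textbf{Warning}) does not disturb the identification of weight spaces. Once the weight formula is matched to $e(H)$, the remainder is the standard Hilbert--Mumford argument for a product of Grassmannians, and this is the point at which I would simply invoke Proposition 1.14 and 2.4 of \cite{Bh} rather than recompute the product-of-Grassmannians weight by hand.
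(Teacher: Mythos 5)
Your proposal is correct and matches what the paper does: the paper offers no argument beyond invoking the general Hilbert--Mumford criterion via Propositions 1.14 and 2.4 of \cite{Bh}, and your sketch is precisely the computation underlying that citation — one-parameter subgroups of $G=(GL(V_1)\times GL(V_2))\cap SL(\wt V)$ have block weight filtrations $H=H_1\oplus H_2$, the weights on the product of Grassmannians assemble by Abel summation into $\sum_j(\rho_j-\rho_{j+1})e(H_{(j)})$, and the two-step filtration gives the converse. The points you flag (the trace-zero normalisation and the two distinct factors $W_1^m$, $W_2^m$) are handled exactly as you describe, so there is nothing further to add.
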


The Lemma \ref{lem2.20} and Proposition \ref{prop2.22} (thus Proposition \ref{prop2.23})
are also true for the case $\wt X=X_1\sqcup X_2$. Thus we have

\begin{thm}\label{thm2.26} When $\wt X=X_1\sqcup X_2$, there exists
a (coarse) moduli space $\sP^s$ of stable GPS on $\wt X$, which is a
smooth scheme. There is an open immersion
$\sP^s\hookrightarrow\sP$, where $\sP$ is the moduli space of
$s$-equivalence classes of semi-stable GPS on $\wt X$, which is a disjoint union of at most $r+1$
irreducible, normal projective varieties with at most
rational singularities.
\end{thm}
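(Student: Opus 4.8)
The plan is to mirror the proof of Theorem \ref{thm2.24}, replacing the group $SL(\wt V)$ by the reductive group $G=(GL(V_1)\times GL(V_2))\cap SL(\wt V)$ and keeping track of the discrete invariant $(\chi_1,\chi_2)$ that labels the connected components. First I would fix a pair of integers $(\chi_1,\chi_2)$ with $\chi_1+\chi_2-r=\chi$, carry out the construction of $\wt{\sR'}=Grass_r(\sE_{x_1}\oplus\sE_{x_2})$ described above, and set $\sP^s_{\chi_1,\chi_2}:={\wt{\sR'}}^s//G$ and $\sP_{\chi_1,\chi_2}:={\wt{\sR'}}^{ss}//G$. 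Since $G$ is the codimension-one reductive subgroup of $SL(\wt V)$ cut out by $\det=1$, these GIT quotients exist; and because the extensions of Lemma \ref{lem2.20} and Propositions \ref{prop2.22}, \ref{prop2.23} to the case $\wt X=X_1\sqcup X_2$ hold, GIT-(semi)stability of a point coincides with (semi)stability of the corresponding GPS $(E,Q)$ with $\chi(E|_{X_i})=\chi_i$. Thus $\sP^s_{\chi_1,\chi_2}$ and $\sP_{\chi_1,\chi_2}$ are coarse moduli spaces of stable, resp.\ of $s$-equivalence classes of semistable, GPS of this numerical type.

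For the smoothness of $\sP^s$, I would note that a stable GPS $(E,Q)$ has $E=E_1\oplus E_2$ torsion free, so ${\wt{\sR'}}^s$ lies in the smooth locus of $\wt{\sR'}$ and is itself a smooth scheme, exactly as in the proof of Theorem \ref{thm2.24}; as the scalars in $G$ act trivially and the induced action of $G$ modulo scalars on ${\wt{\sR'}}^s$ is free, the geometric quotient $\sP^s_{\chi_1,\chi_2}$ is smooth. For normality and the singularity type of $\sP_{\chi_1,\chi_2}$, I would invoke the analogue of Proposition 3.2 of \cite{S1} in the present split setting: the locus $\sH$ of Notation \ref{nota2.21}, now cut out by the conditions of the disjoint-union version of Lemma \ref{lem2.20}, is reduced and normal with at most rational singularities, and these properties descend to the open subset ${\wt{\sR'}}^{ss}\subset\sH$ and to the GIT quotient $\sP_{\chi_1,\chi_2}$.

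Irreducibility of each piece follows because $\wt{\mathbf{Q}}_i$ is irreducible, being the closure of the locus of torsion-free quotients of fixed rank and Euler characteristic on the smooth curve $X_i$; hence $\sR_1\times\sR_2$ and the Grassmannian bundle $\wt{\sR'}$ over it are irreducible, and so is $\sP_{\chi_1,\chi_2}$. To count components I would argue as in the proof of Theorem \ref{thm2.16}: applying semistability of $(E,Q)$ to the test subsheaves $E_1$ and $E_2$ (for which $\dim Q^{E_i}$ lies in $[0,r]$) bounds each $\chi_i=\chi(E|_{X_i})$ to an interval of length $r$, so the pairs $(\chi_1,\chi_2)$ with $\chi_1+\chi_2=\chi+r$ compatible with semistability number at most $r+1$. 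Finally, since $\chi_1$ (equivalently $\chi_2$) is locally constant on the moduli space, semistable GPS of distinct numerical type lie in distinct connected components, so $\sP=\bigsqcup_{(\chi_1,\chi_2)}\sP_{\chi_1,\chi_2}$ is the disjoint union of these at most $r+1$ irreducible normal pieces and $\sP^s=\bigsqcup_{(\chi_1,\chi_2)}\sP^s_{\chi_1,\chi_2}\hookrightarrow\sP$ is the asserted open immersion.

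The step I expect to be the main obstacle is transporting the local analysis behind Proposition 3.2 of \cite{S1} --- the \'etale-slice description that yields normality and rational singularities of $\sH$ --- from the $SL(\wt V)$-action in the irreducible case to the $G$-action in the split case, since the deformation theory of the gluing datum $q$ and of the two sheaves $E_1,E_2$ must be combined while $G$ is only the codimension-one subgroup cutting out $\det=1$. Verifying that this slice analysis goes through so that the cited result applies is where the real work lies; the existence of the quotients, the GIT-versus-GPS stability comparison, and the component count are then routine given the earlier results.
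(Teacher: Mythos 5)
Your proposal follows essentially the same route as the paper: the paper's own proof simply fixes each pair $(\chi_1,\chi_2)$ with $\chi_1+\chi_2=\chi+r$ and $n^{\omega}_j\le\chi_j\le n^{\omega}_j+r$, sets $\sP^s_{\chi_1,\chi_2}:=\wt{\sR'}^s//G$ and $\sP_{\chi_1,\chi_2}:=\wt{\sR'}^{ss}//G$, takes the disjoint union over the at most $r+1$ admissible pairs, and cites the transported versions of Lemma \ref{lem2.20} and Propositions \ref{prop2.22}--\ref{prop2.23} together with the normality/rational-singularities result for $\sH$ from \cite{S1}, exactly as you do. Your write-up is more detailed (in particular on the component count and on flagging the slice analysis for $\sH$, which the paper asserts without comment), but the argument is the same.
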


\begin{proof} For any $\chi_1$ and $\chi_2$ satisfying $\chi_1+\chi_2=\chi+r$ and
$$n^{\omega}_1\le\chi_1\le n^{\omega}_1+r,\quad n^{\omega}_2\le\chi_2\le n^{\omega}_2+r,$$
let $\,\,\sP_{\chi_1,\,\chi_2}^s:=\wt{\sR'}^s//G\,$, $\,\,\sP_{\chi_1,\,\chi_2}:=\wt{\sR'}^{ss}//G\,\,$ and
$$\sP^s:=\bigsqcup_{\chi_1+\chi_2=\chi+r}\sP_{\chi_1,\,\chi_2}^s,\quad \sP:=\bigsqcup_{\chi_1+\chi_2=\chi+r}\sP_{\chi_1,\,\chi_2}.$$
Then $\sP_{\chi_1,\,\chi_2}^s$ are smooth varieties and $\sP_{\chi_1,\,\chi_2}$ are reduced, irreducible and normal projective varieties with at most
rational singularities.
\end{proof}

\section{Factorization of generalized theta functions}

The moduli spaces $\sU_X:=\sU_X(r,d,\sO(1), \{k,\vec n(x),\vec
a(x)\}_{x\in I})$ is independent of the choice of $\sO(1)$ when
$X$ is irreducible. However, when $X=X_1\cup X_2$, the moduli spaces $\sU_X:=\sU_X(r,d,\sO(1), \{k,\vec n(x),\vec
a(x)\}_{x\in I})$
depends on the choice of $\sO(1)$ (more precisely, it
only depends on the degree $c_i$ of $\sO(1)|_{X_i}$). We will
require in this section that \ga{3.1}
{\text{$\ell:=\frac{k\chi-\sum_{x\in
I}\sum^{l_x}_{i=1}d_i(x)r_i(x)}{r}$ is an integer.}}

When $X$ is irreducible, for any divisor $L=\sum_q\ell_qz_q$ of
degree $\ell$ on $X$ (supported on smooth points), there is an ample
line bundle $$\Theta_{\sU_X,\,L}=\Theta(r,d,\{k,\vec
n(x),\vec a(x)\}_{x\in I},L)$$ on $\sU_X$, which is called a theta line bundle
on $\sU_X$. We are going to define it as follows.

By a family of parabolic sheaves of rank $r$ and Euler
characteristic $\chi$ with parabolic structures of type $\{\vec
n(x)\}_{x\in I}$ and weights $\{\vec a(x)\}_{x\in I}$ at points
$\{x\}_{x\in I}$ parametrized by $T$, we mean a sheaf $\sF$ on
$X\times T$, flat over $T$, and torsion free with rank $r$ and Euler
characteristic $\chi$ on $X\times\{t\}$ for every $t\in T$, together
with, for each $x\in I$, a flag
$$\sF_{\{x\}\times T}=\sQ_{\{x\}\times T,l_x+1}\twoheadrightarrow\sQ_{\{x\}\times T,l_x}\twoheadrightarrow \sQ_{\{x\}\times T,l_x-1}
\twoheadrightarrow\cdots\twoheadrightarrow \sQ_{\{x\}\times
T,1}\twoheadrightarrow0$$ of quotients of type $\vec n(x)$ and
weights $\vec a(x)$. We define $\Theta_{\sF,\,L}$ to be
$$({\rm det}R\pi_T\sF)^{-k}\otimes\bigotimes_{x\in I}
\lbrace\bigotimes^{l_x}_{i=1} {\rm det}(\sQ_{\{x\}\times
T,i})^{d_i(x)}\rbrace\otimes\bigotimes_q{\rm det}(\sF_{\{z_q\}\times
T})^{\ell_q}$$ where $\pi_T$ is the projection $X\times T\to T$ and
${\rm det}\,R\pi_T\sF$ is the determinant of cohomology: $\{{\rm
det}\,R\pi_T\sF\}_t:={\rm det}\,H^0(X,\sF_t)\otimes{\rm
det}\,H^1(X,\sF_t)^{-1}$. We have the following theorem (see
\cite{NR} for $r=2$ and \cite{Pa} for $r>2$):

\begin{thm}\label{thm3.1} Let $X$ be irreducible and $L=\sum_q\ell_qz_q$ a divisor
of degree $\ell$ supported on smooth points of $X$. Then there is an
unique ample line bundle $\Theta_{\sU_X,\,L}=\Theta(r,d,\{k,\vec n(x), \vec
a(x)\}_{x\in I},L)$ on $\sU_X$ such that \begin{itemize}
\item[(1)]
for any family of parabolic sheaf $\sF$ of rank $r$ and degree $d$
parametrised by $T$, with parabolic structures of type $\{\vec
n(x)\}_{x\in I}$ at points $\{x\}_{x\in I}$, semistable with respect
to the weights $\{\vec a(x)\}_{x\in I}$, we have
$\phi_T^*\Theta_{\sU_X,\,L}=\Theta_{\sF,\,L}$, where
$\phi_T:T\to\sU_X$ is the morphism induced by $\sF$.
\item[(2)] for any two choices $L$ and $L'$,
$\Theta_{\sU_X,\,L}$ and $\Theta_{\sU_X,\,L'}$ are algebraically
equivalent.
\end{itemize}
\end{thm}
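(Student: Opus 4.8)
The plan is to realize $\Theta_{\sU_X,\,L}$ by descent from the parameter space and to read off its properties from the universal family. First I would form, on the parabolic parameter scheme $\sR$ carrying the universal quotient $V\otimes\sW\to\sF$ together with its universal flags $\sQ_{\{x\}\times\sR,i}$, the line bundle
$$\Theta_{\sF,\,L}=(\det R\pi\,\sF)^{-k}\otimes\bigotimes_{x\in I}\Bigl\{\bigotimes_{i=1}^{l_x}\det(\sQ_{\{x\}\times\sR,i})^{d_i(x)}\Bigr\}\otimes\bigotimes_q\det(\sF_{\{z_q\}\times\sR})^{\ell_q},$$
which is canonically $GL(V)$-linearized because $V$ acts on $\sF$. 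The crux is to descend its restriction to $\sR^{ss}$ along the good quotient $\sR^{ss}\to\sU_X=\sR^{ss}//SL(V)$ by the descent lemma of Kempf (the criterion used in \cite{NR} and \cite{Pa}): it suffices that the stabilizer of every point lying on a closed orbit act trivially on the fibre of $\Theta_{\sF,\,L}$.

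Second I would verify the stabilizer condition numerically. The central $\G_m\subset GL(V)$ scales $\sF$, hence scales $H^0$, $H^1$, each fibre $\sF_{z_q}$ (of dimension $r$, as $z_q$ is a smooth point off the torsion locus) and each quotient $\sQ_i$ (of dimension $r_i(x)$); the induced weight on $\Theta_{\sF,\,L}$ is
$$-k\chi+\sum_{x\in I}\sum_{i=1}^{l_x}d_i(x)r_i(x)+r\ell,$$
which is exactly $0$ by the relation \eqref{2.1} (and $\ell\in\Z$ by \eqref{3.1} guarantees that $L$ is an honest divisor). For a strictly semistable polystable sheaf the stabilizer in $SL(V)$ is $\bigl(\prod_j GL(W_j)\bigr)\cap SL(V)$, where $E=\bigoplus_j E_j\otimes W_j$ is the decomposition into stable constituents of equal parabolic slope; testing a one-parameter subgroup of this stabilizer reproduces, up to a positive factor, the difference of parabolic slopes computed in Proposition \ref{prop2.9}, which vanishes on a polystable sheaf. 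Hence every such stabilizer acts through the trivial character and $\Theta_{\sF,\,L}$ descends to a line bundle $\Theta_{\sU_X,\,L}$ on $\sU_X$.

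Third I would establish ampleness and the universal property. Ampleness follows by comparing $\Theta_{\sF,\,L}$ with the pullback of the GIT polarization of Notation \ref{nota2.5} under the embedding $\sR\hookrightarrow\mathbf G$: the Pl\"ucker bundle of $Grass_{P(m)}(V\otimes W_m)$ pulls back to $\det R\pi\,\sF(m)$ and the flag factors to the bundles $\det\sQ_i$, so a suitable power of the GIT-ample bundle agrees with a power of $\Theta_{\sF,\,L}$ after the standard comparison of the determinant of cohomology $\det R\pi\,\sF(m)$ with $\det R\pi\,\sF$ and the fibre determinants $\det\sF_{\{z\}\times\sR}$ under twisting by $\sO(m)$. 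Since the GIT-ample bundle descends to an ample bundle on $\sU_X$, so does $\Theta_{\sF,\,L}$. For (1), given any semistable family $\sF$ on $X\times T$ the morphism $\phi_T$ is the one induced by the moduli functor, and Zariski-locally on $T$ the family $\sF$ is pulled back from the universal family along a lift $T\to\sR^{ss}$, unique up to $SL(V)$; functoriality of the determinant of cohomology then gives $\phi_T^*\Theta_{\sU_X,\,L}=\Theta_{\sF,\,L}$, the lifting ambiguity being absorbed precisely by the descent verified in the second step. Uniqueness is immediate, since $\sR^{ss}\to\sU_X$ is surjective and two line bundles with equal pullback must coincide.

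Finally, for (2) I would note that $\Theta_{\sF,\,L}\otimes\Theta_{\sF,\,L'}^{-1}$ is a product of factors $\det(\sF_{\{z\}\times\sR})^{\pm\ell_q}$ whose exponents sum to $\deg L-\deg L'=0$. For a fixed smooth point $z$, $\det(\sF_{\{z\}\times\sR})$ descends to a line bundle $\mathcal{D}_z$ on $\sU_X$, and as $z$ varies over the connected curve $X$ these $\mathcal{D}_z$ form an algebraic family; hence all $\mathcal{D}_z$ are algebraically equivalent, and the degree-zero combination above is algebraically trivial. Thus $\Theta_{\sU_X,\,L}$ and $\Theta_{\sU_X,\,L'}$ are algebraically equivalent. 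I expect the main obstacle to be the stabilizer computation in the second step: verifying Kempf's triviality criterion at \emph{every} closed orbit, rather than only for the central $\G_m$, is where the parabolic-slope bookkeeping and the relation \eqref{2.1} must be combined with care.
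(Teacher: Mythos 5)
Your overall strategy coincides with the paper's: define $\Theta_{\sF,\,L}$ on $\sR^{ss}$ via the determinant of cohomology and the universal flags, descend it along $\sR^{ss}\to\sU_X$ (the paper defers the descent verification to \cite{Pa}, which is exactly the Kempf-type stabilizer check you outline, with the weight computation hinging on \eqref{2.1}), obtain ampleness by comparison with the GIT polarization of Notation \ref{nota2.5}, and prove (2) by letting the support of the divisor move along the connected curve.

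There is, however, one step in your treatment of (2) that would fail as written: the claim that for a fixed smooth point $z$ the individual fibre determinant $\det(\sF_{\{z\}\times\sR})$ descends to a line bundle $\mathcal{D}_z$ on $\sU_X$. It does not. The center of $SL(V)$ (and, more seriously, the stabilizer $\bigl(\prod_j GL(W_j)\bigr)\cap SL(V)$ of a polystable point $E=\bigoplus_j E_j\otimes W_j$) acts on $\det(E_z)$ through the character $\prod_j\det(W_j)^{r(E_j)}$, which is nontrivial; only the full combination $\Theta_{\sF,\,L}$, whose total weight vanishes by \eqref{2.1}, satisfies the descent criterion. The cure is the one the paper uses: work with degree-zero combinations only. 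Concretely, for $L_0=L-z$ the bundle $\pi_{\sR^{ss}}^*\bigl(\Theta_{\sE,\,L}\otimes\det(\sE_z)^{-1}\bigr)\otimes\det(\sE)$ on $X^0\times\sR^{ss}$ is invariant under all relevant stabilizers (the weights of $\det(\sE_z)^{-1}$ and of $\det(\sE)|_{\{y\}\times\sR^{ss}}$ cancel), hence descends to a line bundle $\sL$ on $X^0\times\sU_X$ with $\sL|_{\{z\}\times\sU_X}=\Theta_{\sU_X,\,L}$ and $\sL|_{\{y\}\times\sU_X}=\Theta_{\sU_X,\,L_0+y}$, giving the algebraic equivalence directly without ever descending a single fibre determinant. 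Your final conclusion is correct once the argument is rerouted this way; the intermediate objects $\mathcal{D}_z$ simply do not exist on $\sU_X$.
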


\begin{proof} (1) Let $\sE$ be the universal family on $X\times
\sR^{ss}$, then the line bundle $\Theta_{\sE,\,L}$ on $\sR^{ss}$,
which was defined as $$({\rm
det}R\pi_{\sR^{ss}}\sE)^{-k}\otimes\bigotimes_{x\in I}
\lbrace\bigotimes^{l_x}_{i=1} {\rm det}(\sQ_{\{x\}\times
\sR^{ss},i})^{d_i(x)}\rbrace\otimes\bigotimes_q{\rm
det}(\sE_{\{z_q\}\times \sR^{ss}})^{\ell_q},$$ descends to the line
bundle $\Theta_{\sU_X,\,L}$ on $\sU_X$ (see \cite{Pa} for the
detail).

(2) Let $X^0\subset X$ be the open set of smooth points and
$L_0=L-z$, where $z$ is a point in the support of $L$. It is enough
to show that $\Theta_{\sU_X,\,L}$ is algebraically equivalent to
$\Theta_{\sU_X,\,L_0+y}$ for any $y\in X^0$. To prove it, note that
$X^0\times\sR^{ss}\to X^0\times\sU_X$ is a good quotient and the
line bundle
$$\pi_{\sR^{ss}}^*(\Theta_{\sE,\,L}\otimes {\rm det}(\sE_z)^{-1})\otimes {\rm
det}(\sE)$$ descends to a line bundle $\sL$ on $X^0\times\sU_X$ such
that $$\sL|_{\{z\}\times\sU_X}=\Theta_{\sU_X,\,L}\,,\quad
\sL|_{\{y\}\times\sU_X}=\Theta_{\sU_X,\,L_0+y}$$ i.e.
$\Theta_{\sU_X,\,L}$ and $\Theta_{\sU_X,\,L_0+y}$ are algebraically
equivalent.

The ampleness of $\Theta_{\sU_X,\,L}$ follows the ampleness of
$\Theta_{\sU_X,\,\ell\cdot y}$, which is the descendant of
restriction (on $\sR^{ss}$) of the polarization (Notation
\ref{nota2.5}) if we choose $\sO(1)=\sO(cy)$.
\end{proof}

When $X=X_1\cup X_2$, we choose $\sO(1)=\sO_X(c_1y_1+c_2y_2)$ such
that \ga{3.2}{\ell_i=\frac{c_i\ell}{c_1+c_2}\,\,(i=1,2)\,\, are\,\,
integers.}
Then the following theorem can be proven similarly (see
\cite{S2} for the detail).

\begin{thm}\label{thm3.2} Let $X=X_1\cup X_2$ and $L_i=\sum_{q\in
X_i}\ell_qz_q$ be a divisor of degree $\ell_i$ supported on
$X_i\setminus\{x_0\}$. Then there is an unique ample line bundle
$\Theta_{\sU_X,\,L_1+L_2}=\Theta(r,d,\{k,\vec n(x),\vec a(x)\}_{x\in
I_1\cup I_2},L_1+L_2)$ on $\sU_X$ such that
\begin{itemize}
\item[(1)]
for any family of parabolic sheaf $\sF$ of rank $r$ and degree $d$
parametrised by $T$, with parabolic structures of type $\{\vec
n(x)\}_{x\in I}$ at points $\{x\}_{x\in I}$, semistable with respect
to the weights $\{\vec a(x)\}_{x\in I}$, we have
$\phi_T^*\Theta_{\sU_X,\,L_1+L_2}=\Theta_{\sF,\,L_1+L_2}$, where
$\phi_T:T\to\sU_X$ is the morphism induced by $\sF$.
\item[(2)] for any two choices $L_1+L_2$, $L'_1+L'_2$,
$\Theta_{\sU_X,\,L_1+L_2}$ and $\Theta_{\sU_X,\,L'_1+L'_2}$ are
algebraically equivalent.
\end{itemize}
\end{thm}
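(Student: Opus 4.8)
The plan is to follow the proof of Theorem~\ref{thm3.1} verbatim, replacing the single divisor $L$ by $L_1+L_2$ and keeping track of the two components of $X$. Let $\sE$ be the universal family on $X\times\sR^{ss}$ furnished by the construction of Theorem~\ref{thm2.16} (so that $\sU_X=\sR^{ss}//SL(V)$), where the support of $L_1$ (resp.\ $L_2$) lies in $X_1\setminus\{x_0\}$ (resp.\ $X_2\setminus\{x_0\}$), and form on $\sR^{ss}$ the line bundle
\begin{equation*}
\Theta_{\sE,\,L_1+L_2}=({\rm det}R\pi_{\sR^{ss}}\sE)^{-k}\otimes\bigotimes_{x\in I}\Big\{\bigotimes_{i=1}^{l_x}{\rm det}(\sQ_{\{x\}\times\sR^{ss},i})^{d_i(x)}\Big\}\otimes\bigotimes_{q}{\rm det}(\sE_{\{z_q\}\times\sR^{ss}})^{\ell_q}.
\end{equation*}
First I would show that this $SL(V)$-linearized bundle descends to a bundle $\Theta_{\sU_X,\,L_1+L_2}$ on $\sU_X$; property~(1) then follows for an arbitrary semistable family $\sF$ from the functoriality of the determinant of cohomology together with the universal property of the quotient $\sR^{ss}\to\sU_X$, exactly as in Theorem~\ref{thm3.1}.

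The descent is the step I expect to be the main obstacle. By the descent criterion used for Theorem~\ref{thm3.1} (see \cite{Pa}) it suffices that at each point of a closed orbit in $\sR^{ss}$, corresponding to a polystable parabolic sheaf $E=\bigoplus_j E^{(j)}$, the stabilizer acts trivially on the fiber of $\Theta_{\sE,\,L_1+L_2}$; this is a weight computation summand by summand. The contributions of $({\rm det}R\pi\sE)^{-k}$ and of the flag factors are identical to those in Theorem~\ref{thm3.1}; the only genuinely new term is $\bigotimes_q{\rm det}(\sE_{z_q})^{\ell_q}$, on which a central scalar acting through $E^{(j)}$ has total weight $\ell_1 r_1^{(j)}+\ell_2 r_2^{(j)}$, where $r_i^{(j)}$ is the rank of $E^{(j)}$ on $X_i$. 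With the choice $\ell_i=c_i\ell/(c_1+c_2)$ of \eqref{3.2} this equals $\ell\cdot r(E^{(j)})$, exactly the quantity appearing in the irreducible case; hence the weight balance reduces verbatim to that of Theorem~\ref{thm3.1} and cancels by the defining relation $\sum_{x}\sum_i d_i(x)r_i(x)+r\ell=k\chi$ together with the equality of modified parabolic slopes among the $E^{(j)}$. The integrality imposed by \eqref{3.1} and \eqref{3.2} is precisely what makes the bundle well defined in the first place.

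For part~(2) I would use that the bundles ${\rm det}(\sE_z)$ on $\sU_X$, for $z$ ranging over $X_i^0:=X_i\setminus\{x_0\}$, are all algebraically equivalent, so that up to algebraic equivalence $\bigotimes_q{\rm det}(\sE_{z_q})^{\ell_q}$ depends only on the degrees $\ell_1,\ell_2$. Concretely, fixing $z$ in the support of $L_i$ and writing $L_0=L_1+L_2-z$, it suffices to show $\Theta_{\sU_X,\,L_1+L_2}$ is algebraically equivalent to $\Theta_{\sU_X,\,L_0+y}$ for every $y\in X_i^0$. Since $X_i^0\times\sR^{ss}\to X_i^0\times\sU_X$ is a good quotient, the bundle $\pi_{\sR^{ss}}^*\big(\Theta_{\sE,\,L_1+L_2}\otimes{\rm det}(\sE_z)^{-1}\big)\otimes{\rm det}(\sE)$ descends to a line bundle $\sL$ on $X_i^0\times\sU_X$ whose restrictions to $\{z\}\times\sU_X$ and $\{y\}\times\sU_X$ are the two theta bundles; running this over all the points of $L_1$ and then $L_2$ yields~(2) for any two choices $L_1+L_2$ and $L_1'+L_2'$.

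Finally, for ampleness I would specialize to $\sO(1)=\sO_X(c_1y_1+c_2y_2)$ as in \eqref{3.2}: then $\Theta_{\sU_X,\,\ell_1y_1+\ell_2y_2}$ is the descendant of the GIT polarization of Notation~\ref{nota2.5}, hence ample, and by~(2) every $\Theta_{\sU_X,\,L_1+L_2}$ is algebraically---thus numerically---equivalent to it and therefore ample as well. Uniqueness is immediate from~(1), since $\sR^{ss}\to\sU_X$ is surjective, which forces the pullback of any bundle satisfying~(1) to coincide with $\Theta_{\sE,\,L_1+L_2}$.
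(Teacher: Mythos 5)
Your proposal is correct and follows exactly the route the paper intends: the paper gives no independent argument for Theorem \ref{thm3.2}, saying only that it "can be proven similarly" to Theorem \ref{thm3.1} (deferring details to \cite{S2}), and your write-up is precisely that adaptation, with the one genuinely new point — that the stabilizer weight $\ell_1 r_1^{(j)}+\ell_2 r_2^{(j)}$ of the divisor factor equals $\ell\cdot r(E^{(j)})$ under the choice \eqref{3.2}, so the cancellation reduces to the relation \eqref{2.6} for summands of equal modified parabolic slope — correctly identified and handled. The component-wise algebraic-equivalence argument (moving points only within $X_i\setminus\{x_0\}$, preserving each $\ell_i$) and the ampleness via the descended GIT polarization for $\sO(1)=\sO_X(c_1y_1+c_2y_2)$ are likewise the intended arguments.
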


\begin{rmks}\label{rmks3.3} (1) When $X$ is irreducible, the map $E\mapsto
E\otimes\sO_X(\pm y)$ induces an isomorphism ($\ell\mapsto \ell\pm
k$)
$$f:\sU_X(r,d,
\{k,\vec n(x),\vec a(x)\}_{x\in I})\to \sU_X(r,d\pm r, \{k,\vec
n(x),\vec a(x)\}_{x\in I})$$ such that $\Theta_{\sU_X,\,L\pm
ky}=f^*\Theta_{\sU_X,\,L}$ for the divisor $L=\sum_q\ell_qz_q$ of
degree $\ell$.

(2) If $\ell\neq 0$, for any $L=\sum_{q\in X^0}\ell_qz_q$ of degree
$\ell$, then $\Theta_{\sU_X,\,L}$ is the descendant of restriction
(on $\sR^{ss}$) of the polarization (Notation \ref{nota2.5}) if we
choose $\sO(1)=\sO(\sum_q\frac{|\ell|\ell_q}{\ell}z_q)$ where
$c=|\ell|$.

\end{rmks}

In the rest of this paper, we will fix a smooth point $y\in X$ (and
$y_i\in X_i$ when $X$ is reducible), and choose
$$L=\ell_yy+\sum_{x\in I}\alpha_xx,\qquad L_i=\ell_{y_i}y_i+\sum_{x\in I_i}\alpha_xx\quad(i=1,\,2).$$
This choice determines, when $X$ is irreducible, the theta line
bundle
$$\Theta_{\sU_X}=\Theta(r,d,\{k,\vec n(x),\vec
a(x),\alpha_x\}_{x\in I},\ell_y)$$ where
$\ell_y+\sum_{x\in I}\alpha_x=\ell$, and it determines, when $X$ is
reducible,
$$\Theta_{\sU_X}=\Theta(r,d,\{k,\vec n(x),\vec a(x),\alpha_x\}_{x\in I_1\cup I_2},\ell_{y_1},\ell_{y_2})$$ where
$\ell_{y_i}+\sum_{x\in I_i} \alpha_x=\ell_i$ ($i=1,\,2$).

Now we are going to state the factorizations proved in \cite{S1} and
\cite{S2}. Firstly, let $X$ be an irreducible projective curve of
genus $g$, smooth but for one node $x_0$. Let $\pi:\widetilde{X}\to
X$ be the normalization of $X$, and $\pi^{-1}(x_0)=\{x_1,x_2\}$. Let
$I$ be a finite set of smooth points on $X$ and $y\in X$ be a fixed
smooth point. Given integers $d,$ $k,$ $r$, $\{\alpha_x\}_{x\in I}$,
$\ell_y$,
$$\aligned
\vec a(x)&=(a_1(x),a_2(x),\cdots,a_{l_x+1}(x))\\
\vec n(x)&=(n_1(x),n_2(x),\cdots,n_{l_x+1}(x))\endaligned$$
satisfying $\ell_y+\sum_{x\in I}\alpha_x=\ell$ and
$$0\leq a_1(x)<a_2(x)<\cdots<a_{l_x+1}(x)<k\quad (x\in
I).$$ Recall that $\ell $ is defined by \ga{3.3}{\sum_{x\in
I}\sum^{l_x}_{i=1}d_i(x)r_i(x)+r\ell=k(d+r(1-g))=k\chi} where
$d_i(x)=a_{i+1}(x)-a_i(x)$ and $r_i(x)=n_1(x)+\cdots+n_i(x).$

Let $\sU_X$ be the moduli space of ($s$-equivalence classes of)
parabolic torsion free sheaves of rank $r$ and degree $d$ on $X$,
with parabolic structures of type $\{\vec n(x)\}_{x\in I}$ at points
$\{x\}_{x\in I},$ semistable with respect to the weights $\{\vec
a(x)\}_{x\in I}$.

For $\mu=(\mu_1,\cdots,\mu_r)$  with $0\le\mu_r\le\cdots\le\mu_1\le
k-1,$ let $$\{d_i=\mu_{r_i}-\mu_{r_i+1}\}_{1\le i\le l}$$ be the
subset of nonzero integers in
$\{\mu_i-\mu_{i+1}\}_{i=1,\cdots,r-1}.$ We define
$$ r_i(x_1)=r_i,\quad d_i(x_1)=d_i,\quad l_{x_1}=l,\quad \alpha_{x_1}=\mu_r$$
$$ r_i(x_2)=r-r_{l-i+1},\quad d_i(x_2)=d_{l-i+1},\quad l_{x_2}=l,\quad
\alpha_{x_2}=k-\mu_1$$ and for $j=1,2$, we set
$$\aligned
\vec a(x_j)&=\left(\mu_r,\mu_r+d_1(x_j),\cdots,\mu_r+
\sum^{l_{x_j}-1}_{i=1}d_i(x_j),\mu_r+\sum^{l_{x_j}}_{i=1}d_i(x_j)\right)\\
\vec n(x_j)&=(r_1(x_j),r_2(x_j)-r_1(x_j),
\cdots,r_{l_{x_j}}(x_j)-r_{l_{x_j}-1}(x_j),r-r_{l_{x_j}}(x_j)).\endaligned$$

Let $\sU^{\mu}_{\widetilde{X}}$ be the moduli space of semistable
parabolic bundles on $\widetilde{X}$ with parabolic structures of
type $\{\vec n(x)\}_{x\in I\cup\{x_1,x_2\}}$ at points $\{x\}_{x\in
I\cup\{x_1,x_2\}}$ and weights $\{\vec a(x)\}_{x\in
I\cup\{x_1,x_2\}}$, and let
$$\Theta_{\sU^{\mu}_{\widetilde{X}}}=\Theta(r,d,\{k,\vec n(x),\vec a(x),\alpha_x\}_{x\in I\cup\{x_1,x_2\}},\ell_y).$$
Then the following is the so called \textbf{Factorization Theorem I}

\begin{thm}\label{thm3.4} There exists a (noncanonical)
isomorphism
$$H^0(\sU_X,\Theta_{\sU_X})\cong\bigoplus_{\mu}H^0(\sU^{\mu}_{\widetilde{X}},\Theta_{\sU^{\mu}_{\widetilde{X}}})$$
where $\mu=(\mu_1,\cdots,\mu_r)$ runs through $0\le\mu_r\le\cdots\le
\mu_1\le k-1.$
\end{thm}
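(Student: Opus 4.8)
The plan is to realize both sides of the isomorphism as spaces of invariant sections on GIT quotients, and then to compare them by relating the semistable loci of parabolic sheaves on $X$ to the semistable loci of generalized parabolic sheaves (GPS) on the normalization $\widetilde{X}$. The key geometric input is that the normalization morphism $\pi:\widetilde{X}\to X$ gives the normalization $\phi:\sP\to\sU_X$ of the moduli space $\sU_X$ by the moduli space $\sP$ of semistable GPS on $\widetilde{X}$, as recorded in the discussion preceding Definition \ref{defn2.18}. I would therefore first pass from $\sU_X$ to $\sP$, reducing the computation of $H^0(\sU_X,\Theta_{\sU_X})$ to a computation on the GPS moduli space on $\widetilde{X}$.

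First I would express $H^0(\sU_X,\Theta_{\sU_X})$ as $H^0(\sR^{ss}//\mathrm{SL}(V),\Theta)$, which by descent equals the $\mathrm{SL}(V)$-invariants $H^0(\sR^{ss},\Theta_{\sE,L})^{\mathrm{SL}(V)}$. The comparison via $\phi$ lets me rewrite this in terms of invariant sections of the corresponding theta bundle on the semistable GPS locus $\widetilde{\sR'}^{ss}$ over $\widetilde{X}$; here one must check that $\phi^*\Theta_{\sU_X}$ agrees with the theta bundle on $\sP$ built from the same parabolic and quotient data, which follows from the explicit definition of the theta bundle together with the relation $\widetilde{\chi}=\chi+r$, $\widetilde{\ell}=k+\ell$ between the two sets of numerical data. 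The second step is to analyze the extra $r$-dimensional quotient datum $E_{x_1}\oplus E_{x_2}\xrightarrow{q}Q\to 0$ defining a GPS. The quotient $q$ carries a residual action of the group $\mathrm{GL}(E_{x_1}\oplus E_{x_2})$ acting on the $\mathrm{Grass}_r$ factor, and the essence of factorization is that decomposing $H^0$ over the strata of this Grassmannian, indexed by the relative position of $Q$ with respect to the two summands $E_{x_1}$ and $E_{x_2}$, produces exactly the direct sum over $\mu$.

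The heart of the argument is thus a representation-theoretic decomposition: the global sections of the theta bundle restricted to $\mathrm{Grass}_r(E_{x_1}\oplus E_{x_2})$, twisted by the relevant determinant power $k$ from the polarization $\frac{\ell+kcN}{c(m-N)}\times\prod_{x\in I}\{d_1(x),\cdots,d_{l_x}(x)\}\times k$, decompose under $\mathrm{GL}_r\times\mathrm{GL}_r$ into a sum of tensor products $V_\mu\otimes V_{\mu^*}$ of irreducible representations indexed by the weight $\mu=(\mu_1,\ge\cdots\ge\mu_r)$ with $0\le\mu_r\le\cdots\le\mu_1\le k-1$. This is the classical statement that sections of $\sO(k)$ on the Grassmannian $\mathrm{Grass}_r(\C^r\oplus\C^r)$, as a $\mathrm{GL}_r\times\mathrm{GL}_r$-module, are a sum of $\mathrm{Schur}_\mu\otimes\mathrm{Schur}_{\mu^c}$ with $\mu$ ranging over Young diagrams in an $r\times k$ box. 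Matching the parabolic weights $\vec a(x_1),\vec a(x_2)$ prescribed just before the theorem to the data $(d_i,r_i)$ extracted from $\mu$ identifies the $\mu$-summand of the quotient datum with the theta bundle of the parabolic moduli space $\sU^\mu_{\widetilde{X}}$ carrying parabolic structure of the specified type at $x_1,x_2$. Taking $\mathrm{SL}(\widetilde{V})$-invariants across all strata and summing then yields
$$H^0(\sU_X,\Theta_{\sU_X})\cong\bigoplus_{\mu}H^0(\sU^{\mu}_{\widetilde{X}},\Theta_{\sU^{\mu}_{\widetilde{X}}}).$$

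The step I expect to be the main obstacle is controlling the interaction between the GIT-semistability condition on $\widetilde{\sR'}$ and the stratification of the Grassmannian factor: one must verify that a GPS $(E,Q)$ is semistable precisely when its underlying bundle, endowed with the induced parabolic structure at $x_1,x_2$ determined by the stratum of $Q$, is a semistable parabolic bundle on $\widetilde{X}$, and that this correspondence is compatible with the theta bundles. This is exactly where the stability inequality in Definition \ref{defn2.18}, involving $\dim(Q^{E'})$, must be translated into the parabolic stability inequality defining $\sU^\mu_{\widetilde{X}}$; the noncanonical nature of the isomorphism in the statement reflects the choices made in splitting off the contribution of the quotient $Q$ on each stratum. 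Verifying that no sections are lost or double-counted across the boundary between strata — equivalently, that the decomposition of $H^0$ respects the semistable locus and that unstable contributions vanish — is the delicate point, and it is precisely here that one invokes the codimension estimates for $\widetilde{\sR'}^{ss}\setminus\widetilde{\sR'}^{s}$ and the normality with rational singularities established in Theorem \ref{thm2.24}.
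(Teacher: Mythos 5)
First, a remark on the comparison itself: the paper does not prove Theorem \ref{thm3.4} — it is recalled from \cite{S1} — so the only in-paper evidence of the intended argument is the machinery of Sections 2 and 5, in particular the normalization $\phi:\sP\to\sU_X$ by the GPS moduli space and the decomposition \eqref{5.4}, $\rho_*(\hat{\Theta}_{\omega}')=\bigoplus_{\mu}p_*^{\mu}(\hat\Theta_{\mu})$. Your skeleton (pass to $\sP$, decompose the pushforward along the Grassmannian bundle $Grass_r(\sE_{x_1}\oplus\sE_{x_2})$ into $\mu$-summands, identify each summand with $\Theta_{\sU^{\mu}_{\widetilde X}}$) is indeed the route taken there.

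There is, however, a genuine gap, and it is visible in the index set. The Littlewood--Richardson/Borel--Weil decomposition of $H^0(Grass_r(\C^r\oplus\C^r),\sO(k))$ under $GL_r\times GL_r$ runs over partitions $\mu$ in an $r\times k$ box, i.e.\ $0\le\mu_r\le\cdots\le\mu_1\le k$ — exactly the range appearing in \eqref{5.4} — not the $r\times(k-1)$ box you assert. Consequently what your argument actually computes is
$H^0(\sP,\Theta_{\sP})\cong\bigoplus_{0\le\mu_1\le k}H^0(\sU^{\mu}_{\widetilde X},\Theta_{\sU^{\mu}_{\widetilde X}})$,
which is strictly larger than the right-hand side of Theorem \ref{thm3.4}. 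The missing idea is that $\phi^*:H^0(\sU_X,\Theta_{\sU_X})\to H^0(\sP,\Theta_{\sP})$ is only injective, not an isomorphism: $\sU_X$ is recovered from its normalization $\sP$ by gluing the two divisors $\hat\sD_1,\hat\sD_2$ (where $E_{x_1}\to Q$, resp.\ $E_{x_2}\to Q$, fails to be an isomorphism; cf.\ Proposition \ref{prop5.2}) along an isomorphism $\sigma:\hat\sD_1\to\hat\sD_2$, and by seminormality a section on $\sP$ descends to $\sU_X$ exactly when its restrictions to $\hat\sD_1$ and $\hat\sD_2$ match under $\sigma$. Analyzing this matching condition against the $\mu$-decomposition — using restriction sequences to the divisors and the Hecke-type identifications between the boundary parabolic moduli spaces with a weight equal to $k$ at $x_1$ and those with a weight equal to $0$ at $x_2$ (Remark \ref{rmks3.3}(1)) — is the heart of the proof in \cite{S1}; it is precisely this step that cuts the sum from $\mu_1\le k$ down to $\mu_1\le k-1$ and is the source of the ``noncanonical'' in the statement. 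You correctly sense that ``no sections are lost or double-counted across the boundary'' is the delicate point, but you locate the difficulty in the interaction of semistability with the Grassmannian strata (for $H^0$ this is handled by Lemma \ref{lem4.7} alone, with no codimension estimates needed), rather than in the descent from $\sP$ to $\sU_X$, which your write-up treats as an unproblematic isomorphism. As written, the argument proves a statement about $\sP$, not about $\sU_X$.
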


When $X=X_1\cup X_2$, $I=I_1\cup I_2$,
$\widetilde{X}=X_1\sqcup X_2$ is the disjoint union of smooth
projective curves $X_1$ and $X_2$. Recall that
$$\Theta_{\sU_X}=\Theta(r,d,\{k,\vec n(x),\vec a(x),\alpha_x\}_{x\in I_1\cup I_2},\ell_{y_1},\ell_{y_2}),$$ where
$\ell_{y_i}+\sum_{x\in I_i} \alpha_x=\ell_i$ ($i=1,\,2$), are the theta line bundles on $$\sU_X=\sU_X(r,d,\sO(1),\omega).$$
For $\mu=(\mu_1,\cdots,\mu_r)$ with $0\le\mu_r\le\cdots\le\mu_1\le
k-1,$ we define
$$\aligned&\chi_1^{\mu}=\frac{1}{k}\left(r\ell_1+\sum_{x\in I_1}\sum^{l_x}_{i=1}
d_i(x)r_i(x)\right)+\frac{1}{k}\sum^r_{i=1}
\mu_i=n^{\omega}_1+\frac{1}{k}\sum^r_{i=1}
\mu_i\\
&\chi_2^{\mu}=\frac{1}{k}\left(r\ell_2+\sum_{x\in I_2}\sum^{l_x}_{i=1}
d_i(x)r_i(x)\right)+r-\frac{1}{k}
\sum^r_{i=1}\mu_i=n^{\omega}_2+r-\frac{1}{k}
\sum^r_{i=1}\mu_i.\endaligned$$ One can check that the numbers
satisfy ($j=1,2$) \ga{3.4}{\sum_{x\in
I_j\cup\{x_j\}}\sum^{l_x}_{i=1}d_i(x)r_i(x)+r\sum_{x\in
I_j\cup\{x_j\}}\alpha_x+r\ell_{y_j}=k\chi_j^{\mu}.}

Let $\omega_j^{\mu}=\{k, \vec n(x),\vec a(x)\}_{x\in I_j\cup\{x_j\}}$ ($j=1,2$), $d_j^{\mu}=\chi_j^{\mu}+r(g_j-1)$ and
$$\sU^{\mu}_{X_j}:=\sU_{X_j}(r, d_j^{\mu},\omega_j^{\mu})$$ be
the moduli space of $s$-equivalence classes of semistable parabolic
bundles $E$ of rank $r$ on $X_j$ and $\chi(E)=\chi_j^{\mu}$,
together with parabolic structures of type $\{\vec n(x)\}_{x\in
I\cup\{x_j\}}$ and weights $\{\vec a(x)\}_{x\in I\cup\{x_j\}}$ at
points $\{x\}_{x\in I\cup\{x_j\}}$. We define $\sU^{\mu}_{X_j}$ to
be empty if $\chi_j^{\mu}$ is not an integer. Let
$$\Theta_{\sU^{\mu}_{X_j}}=
\Theta(r,d_j^{\mu},\{k,\vec n(x),\vec
a(x),\alpha_x\}_{x\in I_j\cup\{x_j\}},\ell_{y_j})$$
then we have \textbf{Factorization
Theorem II}
\begin{thm}\label{thm3.5} There exists a (noncanonical) isomorphism
 $$H^0(\sU_{X_1\cup X_2},\Theta_{\sU_{X_1\cup X_2}})\cong\bigoplus_{\mu}
H^0(\sU^{\mu}_{X_1},\Theta_{\sU^{\mu}_{X_1}})\otimes
H^0(\sU^{\mu}_{X_2},\Theta_{\sU^{\mu}_{X_2}})$$ where
$\mu=(\mu_1,\cdots,\mu_r)$ runs through $0\le\mu_r\le\cdots\le
\mu_1\le k-1.$
\end{thm}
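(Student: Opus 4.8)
The plan is to transport the whole computation to the moduli space $\sP$ of generalized parabolic sheaves on $\wt X=X_1\sqcup X_2$, which is the normalization of $\sU_X$ (Theorem \ref{thm2.26}), and then to exploit the fact that for a \emph{disconnected} curve the gluing datum is pure Grassmannian data. First I would record that the theta bundle $\Theta_{\sU_X}$ pulls back along the normalization $\phi:\sP\to\sU_X$ to a theta bundle $\Theta_{\sP}$ assembled from $\det R\pi\,\sE$ on $\wt X$, the parabolic determinants at $I$, and an extra factor $(\det Q)^{k}$ coming from the $r$-dimensional quotient $\sE_{x_1}\oplus\sE_{x_2}\to Q$; and that, because $\sU_X$ is seminormal and the theta sections are defined by the universal family exactly as in Theorem \ref{thm3.2}, one has an identification $H^0(\sU_X,\Theta_{\sU_X})\cong H^0(\sP,\Theta_{\sP})$ (the content of the construction in \cite{S2}).

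Next I would use that for $\wt X=X_1\sqcup X_2$ the space $\sP$ is literally a relative Grassmannian: by the construction preceding Theorem \ref{thm2.26}, $\wt{\sR'}=\mathrm{Grass}_r(\sE_{x_1}\oplus\sE_{x_2})$ sits over $\sR_1\times\sR_2$, and after GIT quotient each component $\sP_{\chi_1,\chi_2}$ fibers as a Grassmannian bundle $\rho:\sP_{\chi_1,\chi_2}\to \sN_{X_1}^{\chi_1}\times\sN_{X_2}^{\chi_2}$ over the product of the moduli spaces of parabolic bundles on $X_1,X_2$ carrying only the parabolic structure at $I_1,I_2$. On each fibre $\mathrm{Grass}_r(V_1\oplus V_2)$, with $V_j=E_{x_j}$ of dimension $r$, the bundle $\Theta_{\sP}$ restricts to the $k$-th Plücker power $(\det Q)^{k}$. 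Since the global sections factor through $\rho$, the task becomes the computation of $\rho_*\Theta_{\sP}$.

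The heart is the branching rule on the fibre. Now $H^0(\mathrm{Grass}_r(V_1\oplus V_2),(\det Q)^{k})$ is, as a $GL(V_1\oplus V_2)$-module, the rectangular Schur module $S_{(k^{r})}$ (up to dual), and the classical Cauchy / Littlewood--Richardson decomposition for a rectangle gives, as $GL(V_1)\times GL(V_2)$-modules,
\[
S_{(k^{r})}(V_1\oplus V_2)=\bigoplus_{\mu}S_{\mu}(V_1)\otimes S_{\hat\mu}(V_2),
\]
where $\mu=(\mu_1,\cdots,\mu_r)$ runs over $0\le\mu_r\le\cdots\le\mu_1\le k-1$ and $\hat\mu_i=k-\mu_{r+1-i}$. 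By relative Borel--Weil each $S_{\mu}(V_1)$ is the sections of the line bundle on the flag variety of $V_1$ cut out by the jumps of $\mu$; relativised over $\sN_{X_1}^{\chi_1}$ this flag bundle together with the line bundle is exactly the parabolic datum $\{\vec n(x_1),\vec a(x_1),\alpha_{x_1}=\mu_r\}$ defining $\sU^{\mu}_{X_1}$ and its theta bundle $\Theta_{\sU^{\mu}_{X_1}}$, while $\hat\mu$ produces $\alpha_{x_2}=k-\mu_1$ and $\sU^{\mu}_{X_2}$. The Euler-characteristic bookkeeping matches through $\chi_j^{\mu}$ and the identity \eqref{3.4}. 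As the higher cohomology of $(\det Q)^{k}$ along the Grassmannian, and of each $\mathcal{L}_{\mu}$ along the flag, vanishes by Bott, one gets $\rho_*\Theta_{\sP}=\bigoplus_{\mu}\Theta^{\mu}$ with no correction term.

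Finally I would take global sections over $\sN_{X_1}^{\chi_1}\times\sN_{X_2}^{\chi_2}$: because $\wt X=X_1\sqcup X_2$ is disconnected these moduli factor as products and $\Theta^{\mu}=\Theta_{\sU^{\mu}_{X_1}}\boxtimes\Theta_{\sU^{\mu}_{X_2}}$, so Künneth yields $H^0(\sU^{\mu}_{X_1},\Theta_{\sU^{\mu}_{X_1}})\otimes H^0(\sU^{\mu}_{X_2},\Theta_{\sU^{\mu}_{X_2}})$; summing over $\mu$ (which simultaneously ranges over the components $\chi_1^{\mu}+\chi_2^{\mu}=\chi+r$ of $\sP$) gives the asserted isomorphism. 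The main obstacle I expect is not the branching rule itself but making the decomposition descend through the GIT quotients: one must verify that semistability on $\wt{\sR'}$ with the $\times k$ polarization is compatible fibrewise with the Borel--Weil splitting, so that forming $SL(\wt V)$-invariants commutes with $\rho_*$ and with the sum over $\mu$, and one must pin down the Step~1 identification $H^0(\sU_X,\Theta_{\sU_X})\cong H^0(\sP,\Theta_{\sP})$ precisely, since $\phi$ is only generically an isomorphism.
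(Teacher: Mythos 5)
Your overall strategy --- pass to the moduli space $\sP$ of generalized parabolic sheaves on $\wt X=X_1\sqcup X_2$ (Theorem \ref{thm2.26}), decompose the pushforward of the theta bundle along the Grassmannian bundle $Grass_r(\sE_{x_1}\oplus\sE_{x_2})$ by the rectangular branching rule, and identify each piece with a theta bundle on a product of parabolic moduli spaces --- is indeed the route of \cite{S2}, whose machinery this survey reproduces (the theorem itself is stated here without proof; compare the decomposition \eqref{5.4} used in Section 5 for the irreducible case). However, there is a genuine gap in your first and third steps, and it changes the answer.

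First, the identification $H^0(\sU_X,\Theta_{\sU_X})\cong H^0(\sP,\Theta_{\sP})$ is false: $\phi^*$ is injective but not surjective. Already for $r=1$, $k=1$ one has $\dim H^0(\sU_X,\Theta_{\sU_X})=1^{g}=1$, while $\sP$ is a disjoint union of two $\P^1$-bundles over $J^{d_1}_{X_1}\times J^{d_2}_{X_2}$ and $\dim H^0(\sP,\Theta_{\sP})=2$. The image of $\phi^*$ is the subspace of sections whose restrictions to the two divisors $\hat\sD_1\cong\hat\sD_2\subset\sP$ (where $E_{x_i}\to Q$ fails to be an isomorphism, i.e.\ the preimage of the non-normal locus of $\sU_X$) agree under the canonical identification of these divisors. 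Establishing and exploiting this gluing condition is the heart of the proof in \cite{NR}, \cite{S1}, \cite{S2}, and it is absent from your outline.

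Second, and correlatively, your branching rule is misstated: the Littlewood--Richardson decomposition of the rectangle gives
$$S_{(k^{r})}(V_1\oplus V_2)=\bigoplus_{\mu}S_{\mu}(V_1)\otimes S_{\hat\mu}(V_2)$$
with $\mu$ running over the \emph{full} rectangle $0\le\mu_r\le\cdots\le\mu_1\le k$ (this is exactly the index set in \eqref{5.4}), i.e.\ $\binom{k+r}{r}$ summands distributed over the components $\sP_{\chi_1,\chi_2}$ via the central-character constraint $\chi_1=\chi_1^{\mu}$ --- not the $\binom{k+r-1}{r}$ summands with $\mu_1\le k-1$ appearing in the theorem. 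The discrepancy is precisely what the gluing condition resolves: it identifies the extremal components (those with $\mu_1=k$, for which the weight $a_{l_{x_1}+1}(x_1)=k$ is degenerate) with components having $\mu_1\le k-1$, reducing the index set to the stated one. Carried out as written, your argument would produce too many summands. The remaining concerns you raise (that $\sP_{\chi_1,\chi_2}\to\sU_{X_1}\times\sU_{X_2}$ is not literally a fibration of GIT quotients, cf.\ Lemma \ref{lem6.1}, and that invariants must be moved between semistable loci) are real but are handled for $H^0$ by Lemma \ref{lem4.7} on the parameter spaces, as in Section 5.
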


\section{Invariance of spaces of generalized theta functions}

For a smooth projective curve $C$ of genus $g\ge 0$ and a finite set $I_1\subset C$ of points, to compute the
dimension of $H^0(\sU_C,\Theta_{\sU_C})$, we take a family $\{(X_t, I_t)\}_{t\in T}$ of curves with parabolic data such that
$$(X_1,I_1)=(C,I_1)$$ is the curve $C$ with given parabolic data and $(X_0,I_0)=(X, I)$ is an curve $X$ with one node and
parabolic data. If dimension of the spaces $H^0(\sU_{X_t},\Theta_{\sU_{X_t}})$ is invariant, we can reduce, by using \textbf{Factorization Theorem I},
the computation of dimension for
a genus $g$ curve to the computation of dimension for a genus $g-1$ curve. Then, by the same procedure and using
\textbf{Factorization Theorem II}, we can decrease the number of parabolic points.

In order to prove the invariance, we proved in \cite{S1} that $$H^1(\sU_X,\Theta_{\sU_X})=0$$ when $X$ is an irreducible curve of $g\ge 3$
with at most one node (which implies the invariance for $g\ge 3$). We recall in this section the proof of vanishing theorem for smooth curves and
remark that our arguments in \cite{S1} in fact imply the invariance for any smooth curves $X_t:=\widetilde{X}$.

Let $\widetilde{X}$ be a smooth projective curve of genus $\widetilde{g}$. Fix a line bundle $\sO(1)$ on $\widetilde{X}$ of
${\rm deg}(\sO(1))=c$, let $\widetilde{\chi}=d+r(1-\widetilde{g})$, $\widetilde{P}$ denote the polynomial
$\widetilde{P}(m)=crm+\widetilde{\chi}$, $\sO_{\widetilde{X}}(-N)=\sO(1)^{-N}$ and $V=\Bbb C^{\widetilde{P}(N)}$.
Let $\widetilde{\bold Q}$ be the Quot scheme of quotients $$V\otimes\sO_{\widetilde{X}}(-N)\to F\to 0$$ (of rank
$r$ and degree $d$) on $\widetilde{X}$. Thus there is on $\widetilde{X}\times\widetilde{\bold Q}$ a universal quotient
$V\otimes\sO_{\widetilde{X}\times\widetilde{\bold Q}}(-N)\to \sF\to 0$.
Let $\sF_x$ be the sheaf given by restricting $\sF$ to $\{x\}\times\widetilde{\bold Q}$,
$Flag_{\vec n(x)}(\sF_x)\to \widetilde{\bold Q}$ be the relative flag scheme of type $\vec n(x)$ and
$$\widetilde{\sR}=\underset{x\in I}{\times_{\widetilde{\bold Q}}}Flag_{\vec n(x)}(\sF_x)\to \widetilde{\bold Q}.$$
Let $\widetilde{\sR}_F$ denote open set of locally free quotients and
$$V\otimes\sO_{\widetilde{X}\times\widetilde{\sR}}(-N)\to
\widetilde{\sF}\to 0$$
denote pullback of the universal quotient
$V\otimes\sO_{\widetilde{X}\times\widetilde{\bold Q}}(-N)\to \sF\to
0$. The reductive group ${\rm SL}(V)$ acts on $\widetilde{\sR}$.

For large enough $m$, we have a ${\rm SL}(V)$-equivariant embedding
$$\widetilde{\sR}\hookrightarrow \mathbf{G}=Grass_{\widetilde{P}(m)}(V\otimes W_m)\times\bold {Flag},$$
where $W_m={\rm H}^0(\sO_{\widetilde{X}}(m))$, and $\bold{Flag}$ is
defined to be
$$\bold{Flag}=\prod_{x\in I}\{Grass_{r_1(x)}(V\otimes W_m)
\times\cdots\times Grass_{r_{l_x}(x)}(V\otimes W_m)\}.$$
For any given data $\omega=\{k,\vec n(x),\vec a(x)\}_{x\in I}$, $\tilde{\ell}$ is defined by
 \ga{4.1}{\sum_{x\in
I}\sum^{l_x}_{i=1}d_i(x)r_i(x)+r\tilde{\ell}=k(d+r(1-\tilde
g)=k\widetilde{\chi},} $\omega$ determines a polarisation (for fixed
$\sO(1)$) on $\mathbf{G}$:
$$\frac{\wt\ell+kcN}{c(m-N)}\times\prod_{x\in
I}\{d_1(x),\cdots, d_{l_x}(x)\}.$$

The set $\widetilde{\sR}_{\omega}^{ss}\subset\widetilde{\sR}_F$ of
GIT semistable (resp. stable) points for the ${\rm SL}(V)$ action
under this polarisation is precisely the set of semistable (resp.
stable) parabolic bundles on $\widetilde{X}$ of the type determined
by the given data. Its good quotient $\sU_{\widetilde{X},\,\omega}$
is our moduli space and
$$\Theta_{\widetilde{\sR}^{ss}_{\omega}}=
(det\,R\pi_{\widetilde{\sR}^{ss}_{\omega}}\widetilde{\sF})^{-k}\otimes\bigotimes_{x\in
I}
\{(det\,\widetilde{\sF}_x)^{\alpha_x}\otimes\bigotimes^{l_x}_{i=1}
(det\,\sQ_{x,i})^{d_i(x)}\}\otimes
(det\,\widetilde{\sF}_y)^{\tilde\ell_y}$$ where
$\tilde\ell_y+\sum_{x\in I}\alpha_x=\tilde\ell$, descends to an
ample line bundle $\Theta_{\sU_{\widetilde{X},\,\omega}}$ on
$\sU_{\widetilde{X},\,\omega}$. To prove
$H^1(\sU_{\widetilde{X},\,\omega},\Theta_{\sU_{\widetilde{X},\,\omega}})=0$,
we need essentially the following codimension estimates:

\begin{prop}[Proposition 5.1 of \cite{S1}]\label{prop4.1} Let $|{\rm I}|$ be the number
of parabolic points. Then
\begin{itemize}
 \item[(1)]  $\,\,{\rm codim}(\widetilde{\sR}^{ss}\setminus\widetilde{\sR}^s)\ge (r-1)(\tilde g-1)+\frac{1}{k}|{\rm
 I}|$,
\item[(2)]  $\,\,{\rm codim} (\widetilde{\sR}_F\setminus\widetilde{\sR}^{ss})>(r-1)(\tilde g-1)+\frac{1}{k}|{\rm
I}|$.
\end{itemize}
\end{prop}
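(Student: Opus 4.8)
The plan is to realise $\widetilde{\sR}_F$ as a smooth scheme of known dimension and to bound both codimensions by stratifying the relevant loci according to the numerical type of a destabilising (for (2)) or equal–slope (for (1)) parabolic subbundle, estimating each stratum by parabolic deformation theory. First I would record that over the open locus where the universal quotient $E$ is locally free with $H^1(E(N))=0$ the Quot scheme is smooth, with tangent space $\Hom(K,E)$ for $0\to K\to V\otimes\sO(-N)\to E\to 0$; a Riemann--Roch count then gives $\dim\widetilde{\sR}_F=(\dim V)^2+r^2(\tilde g-1)+\sum_{x\in I}f(x)$, where $f(x)=\tfrac12\big(r^2-\sum_i n_i(x)^2\big)$ is the dimension of the flag variety of type $\vec n(x)$. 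Since passing to the quotient stack by $GL(V)$ subtracts $(\dim V)^2$ from every invariant subvariety, the codimension of an $SL(V)$--invariant locus in $\widetilde{\sR}_F$ equals the codimension of the corresponding substack in the moduli stack of parabolic bundles, of dimension $r^2(\tilde g-1)+\sum_x f(x)$. I will run the count there, where automorphisms are transparent, using Propositions \ref{prop2.10} and \ref{prop2.12} to identify $\widetilde{\sR}^{ss},\widetilde{\sR}^s$ with the parabolic semistable and stable loci.

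For (2) I stratify $\widetilde{\sR}_F\setminus\widetilde{\sR}^{ss}$ by the parabolic Harder--Narasimhan type: every unstable $E$ carries a canonical maximal destabilising parabolic subbundle $F$ with torsion--free quotient $G=E/F$, so the forgetful map from filtered objects is injective on each stratum. For (1) I use instead, for each strictly semistable $E$, a proper subbundle $F\subset E$ with $E/F$ torsion free and ${\rm par}\mu(F)={\rm par}\mu(E)$ (a Jordan--Hölder step), stratifying by its type. In both cases a stratum of fixed type $(r_1,r_2)$, $r_2=r-r_1$, is the image of the stack of parabolic extensions $0\to F\to E\to G\to 0$; since such an extension determines the induced parabolic structure on $E$, this stack fibres over ${\rm ParBun}_F\times{\rm ParBun}_G$ with fibres $[\,{\rm ParExt}^1(G,F)/{\rm ParHom}(G,F)\,]$, of dimension $-\chi_{\rm par}(G,F)$. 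Subtracting $\dim{\rm ParBun}_F+\dim{\rm ParBun}_G-\chi_{\rm par}(G,F)$ from $\dim{\rm ParBun}$ and applying parabolic Riemann--Roch, I expect the codimension to rearrange as
\[
\codim = r_1r_2(\tilde g-1) + \big(r_2\,{\rm par}\chi(F)-r_1\,{\rm par}\chi(G)\big) + \sum_{x\in I}T_x ,
\]
where the middle term equals $r_1 r\big({\rm par}\mu(F)-{\rm par}\mu(E)\big)$ and the $T_x$ are purely local flag contributions at the marked points.

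The estimate then follows by three observations. The genus term satisfies $r_1r_2\ge r-1$ (minimised at $r_1\in\{1,r-1\}$), giving $r_1r_2(\tilde g-1)\ge(r-1)(\tilde g-1)$. The middle term lies in $\tfrac1k\Z$: it vanishes in case (1), where ${\rm par}\mu(F)={\rm par}\mu(E)$, and is $\ge\tfrac1k>0$ in case (2), where $F$ strictly destabilises. Finally, a local computation at each $x$, using $0\le a_1(x)<\cdots<a_{l_x+1}(x)\le k$, should give $T_x\ge\tfrac1k$, so the parabolic points contribute at least $\tfrac{|I|}{k}$. Combining these, in case (1) one gets $\codim\ge(r-1)(\tilde g-1)+\tfrac{|I|}{k}$, and in case (2) the strictly positive middle term upgrades this to the strict inequality $\codim>(r-1)(\tilde g-1)+\tfrac{|I|}{k}$. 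Boundedness of the families of destabilising subbundles (forced by semistability and the fixed parabolic data) guarantees finitely many strata and the validity of the counts, so minimising over all types closes the argument.

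The hard part will be the local analysis producing $T_x\ge\tfrac1k$: this is exactly the delicate parabolic Euler--characteristic bookkeeping at each marked point, where one must compute the degree drop of the parabolic $\Hom$--sheaf and the relative--position flag dimensions in terms of the weights and of the induced flag types $\vec n^F(x),\vec n^G(x)$, and then verify the bound is uniform over all numerical types so that the minimum is attained precisely at $(r-1)(\tilde g-1)+\tfrac{|I|}{k}$. A secondary subtlety in case (1) is that a strictly semistable bundle is detected by an equal--slope subsheaf with torsion--free quotient rather than a subbundle, so I must check that the induced parabolic structure and the extension--stack dimension count behave as in the subbundle case.
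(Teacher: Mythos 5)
Your plan follows essentially the same route as the cited proof in \cite{S1} (and as its reducible-curve analogue carried out in Proposition \ref{prop6.3} of this paper): cover the unstable, respectively strictly semistable, locus by parameter spaces of extensions $0\to F\to E\to E/F\to 0$ of fixed numerical type, count dimensions (equivalently, work on the moduli stack after discounting $\dim V^2$ and automorphisms), and split the resulting codimension into the genus term $r_1r_2(\tilde g-1)\ge (r-1)(\tilde g-1)$, a slope-difference term lying in $\frac{1}{k}\mathbb{Z}$ that vanishes in case (1) and is at least $\frac{1}{k}$ in case (2), and local flag contributions of at least $\frac{1}{k}$ per marked point. The one step you defer, the pointwise bound $T_x\ge\frac{1}{k}$, is exactly the weight-and-flag bookkeeping that constitutes the bulk of the argument in \cite{S1} (compare the displayed local sums in the proof of Proposition \ref{prop6.3}), so your outline agrees with the paper's proof in both strategy and in the decomposition of the estimate.
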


\begin{prop}[Proposition 2.2 of \cite{S1}]\label{prop4.2} Let
$\omega_{\widetilde{X}}=\sO_{\widetilde{X}}(\sum q)$ and
$\omega_{\widetilde{\sR}_F}$ be the canonical sheaf of
$\widetilde{X}$ and $\widetilde{\sR}_F$ respectively. Then
$$\aligned\omega^{-1}_{\wt\sR_F}=&(det\,R\pi_{\wt\sR_F}\widetilde{\sF})^{-2r}\otimes
\bigotimes_{x\in I}\left\{(det\,\widetilde{\sF}_x)^{n_{l_x+1}-r}
\otimes\bigotimes^{l_x}_{i=1}(det\,\sQ_{x,i})
^{n_i(x)+n_{i+1}(x)}\right\}\\
&\otimes\bigotimes_q(det\,\widetilde{\sF}_q)^{1-r}\otimes(det\,\wt\sF_y)^{2\wt\chi+(r-1)(2\tilde g-2)}\otimes{\rm Det}^*(\Theta_y^{-2})
\endaligned$$
where ${\rm Det}: \wt\sR_F\to J^d_{\wt X}$ is the determinant morphism and $\Theta_y$ is the theta line bundle on $J^d_{\wt X}$.
\end{prop}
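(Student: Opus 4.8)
The plan is to compute $\omega^{-1}_{\wt\sR_F}=\det T_{\wt\sR_F}$ by splitting the tangent bundle along the structure morphism $\rho:\wt\sR_F\to\wt{\mathbf Q}_F$, where $\wt{\mathbf Q}_F\subset\wt{\mathbf Q}$ is the (smooth) locus of locally free quotients. Over this locus $\wt\sR_F$ is a tower of relative flag bundles, so $\rho$ is smooth and the relative tangent sequence gives
\[
\omega^{-1}_{\wt\sR_F}=\det T_{\wt\sR_F/\wt{\mathbf Q}_F}\otimes\rho^*\det T_{\wt{\mathbf Q}_F}.
\]
The two factors account respectively for the parabolic terms $\bigotimes_{x\in I}\{(\det\wt\sF_x)^{n_{l_x+1}-r}\otimes\bigotimes_i(\det\sQ_{x,i})^{n_i(x)+n_{i+1}(x)}\}$ and for everything else in the stated formula, so I treat them separately.

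For the relative factor, at each $x\in I$ the fibre of $\rho$ is the flag variety of quotients $\wt\sF_x\twoheadrightarrow\sQ_{x,l_x}\twoheadrightarrow\cdots\twoheadrightarrow\sQ_{x,1}\twoheadrightarrow0$ of the rank-$r$ bundle $\wt\sF_x$. Its relative tangent bundle is the standard sum of $\mc{H}om$'s between the graded pieces $\ker(\sQ_{x,i}\to\sQ_{x,i-1})$ of the universal flag (with $\sQ_{x,l_x+1}:=\wt\sF_x$ and $\sQ_{x,0}:=0$). Taking determinants and re-expressing these graded pieces through $\det\sQ_{x,i}$ and $\det\wt\sF_x$ is a direct bookkeeping that produces exactly the exponents $n_i(x)+n_{i+1}(x)$ on $\det\sQ_{x,i}$ and $n_{l_x+1}-r$ on $\det\wt\sF_x$; this is the parabolic part of the formula.

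For the base factor I would compute $\det T_{\wt{\mathbf Q}_F}$. With the universal sequence $0\to\sK\to V\otimes\sW\to\sF\to0$ on $\wt X\times\wt{\mathbf Q}_F$ (where $\sW=\sO(-N)$), the tangent bundle is $R^0\pi_*\mc{H}om(\sK,\sF)$ and, for $N$ large, $R^1\pi_*\mc{H}om(\sK,\sF)=0$, so $\det T_{\wt{\mathbf Q}_F}=\det R\pi_*(\sK^\vee\otimes\sF)$, where $\pi$ denotes the projection to the base. Applying $\mc{H}om(-,\sF)$ to the universal sequence (legitimate since $\sF$ is locally free here, so $\mc{E}xt^1(\sF,\sF)=0$) gives
\[
0\to\sF^\vee\otimes\sF\to V^\vee\otimes\sO(N)\otimes\sF\to\sK^\vee\otimes\sF\to0,
\]
and multiplicativity of the determinant of cohomology, together with the triviality of $\det R\pi_*(\sF(N))$ on the good locus (where $\pi_*\sF(N)\cong V\otimes\sO$ and $R^1\pi_*\sF(N)=0$), reduces this to $\det T_{\wt{\mathbf Q}_F}=\big(\det R\pi_*(\sF^\vee\otimes\sF)\big)^{-1}$. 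Thus the base factor is the inverse of the determinant of cohomology of $\mc{E}nd\,\sF=\sF^\vee\otimes\sF$.

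The final and hardest step is to evaluate $\det R\pi_*(\sF^\vee\otimes\sF)$ in terms of $\det R\pi_*\sF$, $\det\sF_q$, $\det\sF_y$ and ${\rm Det}^*\Theta_y$. I would run Grothendieck--Riemann--Roch for $\pi$: since ${\rm ch}_1(\sF^\vee\otimes\sF)=0$ and ${\rm ch}_2(\sF^\vee\otimes\sF)=2r\,{\rm ch}_2(\sF)-c_1(\sF)^2$, comparison with the GRR expression for $c_1(\det R\pi_*\sF)$ gives
\[
c_1\big(\det R\pi_*(\sF^\vee\otimes\sF)\big)=2r\,c_1(\det R\pi_*\sF)+\pi_*\big(r\,\pi_{\wt X}^*K_{\wt X}\cdot c_1(\sF)-c_1(\sF)^2\big).
\]
Here $\pi_*(\pi_{\wt X}^*K_{\wt X}\cdot c_1(\sF))=\sum_q c_1(\det\sF_q)$ via $\omega_{\wt X}=\sO(\sum q)$, while for the quadratic term I would write $\det\sF={\rm Det}^*\sP\otimes\pi_{\wt{\mathbf Q}}^*(\det\sF_y)$ for a Poincar\'e bundle $\sP$ rigidified at $y$, whence $\pi_*(c_1(\sF)^2)=-2\,{\rm Det}^*c_1(\Theta_y)+2d\,c_1(\det\sF_y)$; substituting $2d=2\wt\chi+r(2\wt g-2)$ and splitting off $(2\wt g-2)c_1(\det\sF_y)$ from $\sum_q c_1(\det\sF_q)$ yields precisely the exponents $1-r$ on $\det\sF_q$, $2\wt\chi+(r-1)(2\wt g-2)$ on $\det\sF_y$, and the factor ${\rm Det}^*\Theta_y^{-2}$ after inverting. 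The real obstacle is not this Chern-class identity but upgrading it to a canonical isomorphism of line bundles. I would obtain this by carrying out the computation inside the Knudsen--Mumford determinant-of-cohomology formalism, in which $\bigotimes_q\det\sF_q$ is the Deligne pairing $\langle\det\sF,\omega_{\wt X}\rangle$ and the theta factor is the self-pairing $\langle\det\sF,\det\sF\rangle$, so that the identity holds on the nose rather than merely in $\Pic$ modulo algebraic equivalence.
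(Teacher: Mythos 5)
The paper itself gives no proof of this proposition (it is quoted from \cite{S1}, Prop.~2.2), and your argument reconstructs essentially that standard proof: the splitting $\omega^{-1}_{\wt\sR_F}=\det T_{\wt\sR_F/\wt{\mathbf Q}_F}\otimes\rho^*\det T_{\wt{\mathbf Q}_F}$, the flag-variety computation giving the exponents $n_i(x)+n_{i+1}(x)$ and $n_{l_x+1}-r$, the identification $\omega_{\wt{\mathbf Q}_F}=\det R\pi_*(\sF^\vee\otimes\sF)$ via the universal sequence, and the evaluation of the latter. Your plan is correct, and the one point needing care is exactly the one you flag: the displayed identity for $\pi_*(c_1(\sF)^2)$ drops the algebraically trivial classes $\sum_q{\rm Det}^*c_1(\sP_q)$, which must be tracked (most cleanly by proving $\det R\pi_*(\sF^\vee\otimes\sF)=(\det R\pi_*\sF)^{2r}\otimes\bigotimes_q(\det\sF_q)^{r-1}\otimes(\det R\pi_*\det\sF)^{-2}$ exactly, via a line-bundle filtration of $\sF$ and bilinearity of the Deligne pairing, and then substituting $(\det R\pi_*\det\sF)^{2}={\rm Det}^*(\Theta_y^{-2})\otimes(\det\sF_y)^{2(d+1-\tilde g)}$ with $2(d+1-\tilde g)=2\wt\chi+(r-1)(2\tilde g-2)$); once these terms are kept they recombine with one copy of $\bigotimes_q\det\sF_q$ to yield the stated exponents on the nose.
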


The following result due to F. Knop is essential in our arguments, whose global form was formulated
in \cite{NR}.

\begin{lem}[Lemma 4.17 of \cite{NR}]\label{lem4.3} Let $X$ be a normal, Cohen-Macaulay varirty on which a reductive group
$G$ acts, such that a good quotient $\pi:X\to Y$ exists. Suppose that the action is generically free and
$dim\,G=dim\,X-dim\,Y$. Suppose further that \begin{itemize}
\item[(1)] the subset where the action is not free has codimension $\ge 2$,
\item[(2)] for every prime divisor $D$ in $X$, $\pi(D)$ has codimension $\le 1$, where $D$ need not be invariant.
\end{itemize} Then $\omega_Y=(\pi_*\omega_X)^G$ where $\omega_X$, $\omega_Y$ are the respective dualizing sheaves.
\end{lem}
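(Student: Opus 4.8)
The plan is to prove the identity first on a large open subset where $\pi$ is a principal bundle, and then to propagate it over the singular and non-free loci by a reflexivity argument, with hypotheses (1) and (2) supplying the two codimension estimates that make this work. First I would let $X^\circ\subseteq X$ be the open locus where $X$ is smooth and $G$ acts freely. As $X$ is normal its singular locus has codimension $\ge 2$, and by (1) the non-free locus also has codimension $\ge 2$, so $\codim_X(X\setminus X^\circ)\ge 2$. On $X^\circ$ the map $\pi\colon X^\circ\to Y^\circ:=\pi(X^\circ)$ is a principal $G$-bundle with $Y^\circ$ smooth, so the vertical tangent sheaf is trivialized by the infinitesimal action, $T_{X^\circ/Y^\circ}\cong\sO_{X^\circ}\otimes\mathfrak{g}$, whence $\omega_{X^\circ/Y^\circ}\cong\sO_{X^\circ}\otimes\wedge^{\mathrm{top}}\mathfrak{g}^{\vee}$. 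Because $G$ is reductive it is unimodular, i.e. it acts trivially on $\wedge^{\mathrm{top}}\mathfrak{g}$, so $\omega_{X^\circ/Y^\circ}\cong\sO_{X^\circ}$ $G$-equivariantly and $\omega_{X^\circ}\cong\pi^{*}\omega_{Y^\circ}$. Pushing forward and taking invariants, the projection formula together with the defining property $(\pi_*\sO_X)^G=\sO_Y$ of a good quotient yields $(\pi_*\omega_X)^G|_{Y^\circ}\cong\omega_{Y^\circ}$.

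Next I would establish that $\codim_Y(Y\setminus Y^\circ)\ge 2$. A point $y$ lies outside $Y^\circ$ exactly when its whole fibre is contained in $X\setminus X^\circ$, so $\pi^{-1}(Y\setminus Y^\circ)\subseteq X\setminus X^\circ$. If a prime divisor $W\subseteq Y$ were contained in $Y\setminus Y^\circ$, then since $\dim G=\dim X-\dim Y$ and fibre dimension is upper semicontinuous, every component of $\pi^{-1}(W)$ would have dimension $\ge\dim W+\dim G=\dim X-1$; thus $\pi^{-1}(W)$ would contain a prime divisor of $X$ sitting inside $X\setminus X^\circ$, contradicting $\codim_X(X\setminus X^\circ)\ge 2$. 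Hypothesis (2) enters through its contrapositive: if $Z\subseteq Y$ has $\codim\ge 2$, then $\pi^{-1}(Z)$ contains no prime divisor of $X$, so $\codim_X\pi^{-1}(Z)\ge 2$.

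Then I would show both sheaves are reflexive on the normal variety $Y$. The dualizing sheaf $\omega_Y$ is reflexive since $Y$ is normal and Cohen--Macaulay (a good quotient of a Cohen--Macaulay variety by a reductive group is Cohen--Macaulay). For $\sG:=(\pi_*\omega_X)^G$, reflexivity is the $S_2$-extension property: for open $V\subseteq Y$ and closed $Z\subseteq V$ of codimension $\ge 2$ one has $\Gamma(V,\sG)=\Gamma(\pi^{-1}V,\omega_X)^G$ and $\Gamma(V\setminus Z,\sG)=\Gamma(\pi^{-1}V\setminus\pi^{-1}Z,\omega_X)^G$; by the contrapositive of (2) the removed set $\pi^{-1}Z$ has codimension $\ge 2$ in $X$, and since $\omega_X$ is reflexive ($X$ normal Cohen--Macaulay) its sections extend across $\pi^{-1}Z$. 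Taking $G$-invariants, which is exact as $G$ is reductive, shows $\Gamma(V,\sG)\xrightarrow{\sim}\Gamma(V\setminus Z,\sG)$, so $\sG$ is reflexive.

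Finally, a reflexive sheaf on a normal variety is recovered as the pushforward $j_*$ of its restriction to any open set whose complement has codimension $\ge 2$. Applying this to $j\colon Y^\circ\hookrightarrow Y$, the isomorphism $\sG|_{Y^\circ}\cong\omega_{Y^\circ}$ extends uniquely to $\sG\cong\omega_Y$ on all of $Y$, which is the assertion. The main obstacle is the reflexivity of $\sG=(\pi_*\omega_X)^G$, where hypothesis (2) is indispensable: without control on divisors contracted by $\pi$, sections of $\omega_X$ could fail to extend and $\sG$ could acquire extra sections at codimension-one points of $Y$, breaking the identification; the codimension-two estimate for $Y\setminus Y^\circ$, powered by hypothesis (1), is what then licenses the final extension.
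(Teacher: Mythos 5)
First, a remark on the comparison itself: the paper does not prove this statement. It is quoted verbatim as Lemma 4.17 of \cite{NR} and attributed to F.~Knop, so there is no internal proof to measure your attempt against; it has to be judged against the actual content of Knop's theorem.

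There is a genuine gap at your very first step. You take $X^\circ$ to be the smooth free locus and assert that $\pi\colon X^\circ\to Y^\circ:=\pi(X^\circ)$ is a principal $G$-bundle. This is false in general: $X^\circ$ need not be saturated for $\pi$, a point of $Y^\circ$ only needs its fibre to \emph{meet} $X^\circ$, and that fibre can consist of several non-closed free orbits together with a smaller closed orbit lying outside $X^\circ$. Concretely, let $G=\mathbb{G}_m$ act on $X=\mathbb{A}^2$ with weights $(1,-1)$, so that $Y=\mathbb{A}^1$ via $xy$. All hypotheses of the lemma hold: the non-free locus is the origin (codimension $2$), each axis is a prime divisor mapping onto $\{0\}$ (codimension $1$ in $Y$), and the action is generically free with $\dim G=\dim X-\dim Y$. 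Here $X^\circ=\mathbb{A}^2\setminus\{0\}$ and $\pi(X^\circ)=\mathbb{A}^1$, but the fibre over $0$ is the union of the two punctured axes --- two orbits, not one --- so $\pi|_{X^\circ}$ is not a principal bundle; the locus of $Y$ over which $\pi$ genuinely is a principal bundle is $\mathbb{A}^1\setminus\{0\}$, whose complement has codimension $1$. (The conclusion of the lemma still holds in this example, since $dx\wedge dy$ is invariant; the example only refutes your intermediate claim.) Consequently the overall strategy --- prove the identity over a principal-bundle locus with complement of codimension $\ge 2$ and extend by reflexivity --- cannot work: two reflexive rank-one sheaves that agree off a divisor may still differ by a multiple of that divisor, and the entire content of Knop's theorem is precisely the local computation at the generic point of such a prime divisor $W\subset Y$, where one must compare $\omega_{Y}$ with the invariant top forms on the (reducible, non-homogeneous) fibre over the generic point of $W$, using (1) and (2) to control that fibre. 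The second half of your write-up --- reflexivity of $(\pi_*\omega_X)^G$ via the contrapositive of (2), and the fibre-dimension argument showing no prime divisor of $Y$ lies in $Y\setminus\pi(X^\circ)$ --- is correct and is indeed part of the standard argument, but it only reduces the problem to codimension one in $Y$; it does not resolve it there.
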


\begin{thm}[Theorem 5.1 of \cite{S1}]\label{thm4.4} Assume $(r-1)(\tilde g-1)+\frac{1}{k}|{\rm
 I}|\ge 2$. Then, for any data $\omega$ such that $\tilde{\ell}\in\mathbb{Z}$, we
 have $$H^1(\sU_{\widetilde{X},\,\omega},\Theta_{\sU_{\widetilde{X},\,\omega}})=0.$$
\end{thm}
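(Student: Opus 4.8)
The plan is to carry out the GIT-descent strategy sketched in the introduction: after passing upstairs and changing the polarisation, realize $\Theta_{\sU_{\widetilde X,\omega}}$ as the twist of a dualizing sheaf by an ample line bundle on a projective quotient, and then invoke a Kodaira-type vanishing theorem.

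First I would move the computation to the parameter space. Since $SL(V)$ is reductive, taking invariants is exact, and because $\Theta_{\widetilde\sR^{ss}_\omega}$ descends to the ample $\Theta_{\sU_{\widetilde X,\omega}}$ under the good quotient $\widetilde\sR^{ss}_\omega\to\sU_{\widetilde X,\omega}$, one has, as recalled in the introduction,
$$H^1(\sU_{\widetilde X,\omega},\Theta_{\sU_{\widetilde X,\omega}})=H^1(\widetilde\sR^{ss}_\omega,\Theta_{\widetilde\sR^{ss}_\omega})^{SL(V)}.$$
The determinant-of-cohomology and evaluation bundles defining $\Theta$ make sense over the whole smooth locus $\widetilde\sR_F$, so $\Theta$ extends there. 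Setting $A:=(r-1)(\tilde g-1)+\frac{1}{k}|I|$, Proposition \ref{prop4.1}(2) gives $\codim(\widetilde\sR_F\setminus\widetilde\sR^{ss}_\omega)>A\ge 2$, so local cohomology vanishing on the smooth $\widetilde\sR_F$ identifies the invariant group above with the corresponding one after passing to an auxiliary GIT quotient that is projective.

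The decisive step is the canonical-bundle identity. Combining the explicit description of $\Theta_{\widetilde\sR^{ss}_\omega}$ with Proposition \ref{prop4.2}, I would produce theta-type line bundles $\hat\sL_1,\hat\sL_2$ on $\widetilde\sR_F$, attached to suitably modified parabolic weight data, so that
$$\Theta_{\widetilde\sR_F}\cong\omega_{\widetilde\sR_F}\otimes\hat\sL_1\otimes\hat\sL_2,$$
with each $\hat\sL_j$ descending to an ample bundle $\sL_j$ on the GIT quotient $\sY=\widetilde\sR_F^{ss}(\hat\sL_1)//SL(V)$; this establishes condition (i) of the framework. Knop's Lemma \ref{lem4.3} then yields condition (ii), $\omega_\sY=(\psi_*\omega_{\widetilde\sR_F^{ss}(\hat\sL_1)})^{SL(V)}$, its hypotheses being supplied by the normality and Cohen--Macaulayness of $\widetilde\sR_F$, by generic freeness of the $SL(V)$-action, and by the codimension $\ge 2$ of the non-free locus coming from Proposition \ref{prop4.1}(1) together with $A\ge 2$. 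The remaining codimension estimates of Proposition \ref{prop4.1}, applied also to the data of $\hat\sL_1$, give the cohomology-invariance (iii), so that
$$H^1(\sU_{\widetilde X,\omega},\Theta_{\sU_{\widetilde X,\omega}})=H^1(\sY,\omega_\sY\otimes\sL_1\otimes\sL_2).$$
Since $\sL_1\otimes\sL_2$ is ample and $\sY$ is projective with rational, hence Cohen--Macaulay, singularities, a Kodaira-type vanishing theorem forces this last group to vanish.

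The main obstacle I expect is the bookkeeping in the canonical-bundle identity (i): one must match the exponent of $\det R\pi\widetilde\sF$ and the weights of the flag- and evaluation-determinant factors in Proposition \ref{prop4.2} against those of $\Theta$ (the exponents combine to $-(k+2r)$ in the determinant-of-cohomology direction, which is the positive direction, so the split is feasible), while simultaneously choosing the modified data defining $\hat\sL_1,\hat\sL_2$ so that both factors are genuinely ample after descent and the GIT-semistable locus for $\hat\sL_1$ is still large enough for the estimates of Proposition \ref{prop4.1} to remain valid. A secondary difficulty is verifying all hypotheses of Lemma \ref{lem4.3} on the non-proper space $\widetilde\sR_F$, in particular the divisor condition that no prime divisor is contracted to codimension $\ge 2$ by the quotient map.
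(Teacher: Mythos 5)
Your proposal follows the paper's proof essentially step for step: reduce to $SL(V)$-invariants on $\widetilde\sR_F$ using Proposition \ref{prop4.1}(2), write $\Theta_{\widetilde\sR_F}=\omega_{\widetilde\sR_F}\otimes\hat\Theta_{\bar\omega}\otimes{\rm Det}^*(\Theta_y)^{-2}$ with the shifted data $\bar k=k+2r$, descend the canonical bundle via Knop's Lemma \ref{lem4.3}, and finish with Kodaira-type vanishing. The one inaccuracy is your expectation that each factor descends to an ample bundle separately: in the paper the second factor ${\rm Det}^*(\Theta_y)^{-2}$ is the pullback of an anti-ample bundle on the Jacobian, and only the product $\Theta_{\bar\omega}\otimes{\rm Det}^*(\Theta_y)^{-2}$ is ample (Lemma 5.3 of \cite{S1}), which is all the argument needs.
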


\begin{proof} Note that, on good quotient
$\sU_{\widetilde{X},\,\omega}$, we always have for any $i\ge 0$
$$H^i(\sU_{\widetilde{X},\,\omega},\Theta_{\sU_{\widetilde{X},\,\omega}})
=H^i(\widetilde{\sR}_{\omega}^{ss},\Theta_{\widetilde{\sR}^{ss}_{\omega}})^{inv}.$$
By the assumption and Proposition \ref{prop4.1}, we have ${\rm
codim} (\widetilde{\sR}_F\setminus\widetilde{\sR}_{\omega}^{ss})>2.$
Thus
$H^1(\widetilde{\sR}_{\omega}^{ss},\Theta_{\widetilde{\sR}^{ss}_{\omega}})^{inv}
=H^1(\widetilde{\sR}_F,\Theta_{\widetilde{\sR}_F})^{inv}$, where
$$\Theta_{\widetilde{\sR}_F}=
(det\,R\pi_{\widetilde{\sR}_{F}}\widetilde{\sF})^{-k}\otimes\bigotimes_{x\in
I}
\{(det\,\widetilde{\sF}_x)^{\alpha_x}\otimes\bigotimes^{l_x}_{i=1}
(det\,\sQ_{x,i})^{d_i(x)}\}\otimes
(det\,\widetilde{\sF}_y)^{\tilde\ell_y}$$ with
$\tilde\ell_y+\sum_{x\in I}\alpha_x=\tilde\ell$. Let $J=J^d_{\wt X}$
be the Jacobian of line bundles of degree $d$ on $\wt X$, $\sL$ the
universal line bundle on $\wt X\times J$ and
$$\Theta_y={\rm det}(R\pi_J\sL)^{-1}\otimes\sL_y^{d+1-\tilde{g}}.$$
The line bundle ${\rm det}(\wt \sF)$ on $\wt X\times\wt\sR_F$
induces (for any data $\bar\omega$)
$${\rm Det}:\wt\sR_F\to
J\,\,,\quad {\rm Det}: \sU_{\wt X\,,\bar\omega}\to J$$
such that ${\rm det}\,R\pi_{\wt\sR_F}det\widetilde{\sF}={\rm Det}^*({\rm det}(R\pi_J\sL))$. Then we can write
$$\Theta_{\wt\sR_F}\otimes\omega_{\wt\sR_F}^{-1}=\hat{\Theta}_{\bar\omega}\otimes{\rm
Det}^*(\Theta_y)^{-2}$$
$$\aligned\hat\Theta_{\bar\omega}=&(det\,R\pi_{\wt\sR_F}\widetilde{\sF})^{-\bar k}\otimes
\bigotimes_{x\in I}\left\{(det\,\widetilde{\sF}_x)^{\bar\alpha_x}
\otimes\bigotimes^{l_x}_{i=1}(det\,\sQ_{x,i})
^{\bar d_i(x)}\right\}\\
&\otimes
(det\wt\sF_y)^{\bar\ell_y}\otimes\bigotimes_q(det\,\widetilde{\sF}_q)^{1-r}\otimes
(det\wt\sF_y)^{(r-1)(2\tilde g-2)}
\endaligned$$
where $\bar k=k+2r$, $\,\bar\alpha_x=\alpha_x+n_{l_x+1}(x)-r$, $\,\bar\ell_y=2\wt\chi+\tilde\ell_y\,$ and
$$\bar{d}_i(x)=d_i(x)+n_i(x)+n_{i+1}(x).$$
Let $\bar\omega=\{\bar k, \vec n(x),\vec {\bar a}(x)\}_{x\in I}$ with $\vec {\bar a}(x)=(\bar a_1(x), \bar a_2(x),\cdots,\bar a_{l_x+1}(x))$ such
that $\bar d_i(x)=\bar a_{i+1}(x)-\bar a_i(x)$ ($i=1,\,2,\,\cdots,\,l_x$). Let
$$\psi_{\bar\omega}:\wt\sR^{ss}_{\bar\omega}\to \wt\sR^{ss}_{\bar\omega}//{\rm SL}(V):=\sU_{\wt X}(r,d,\bar\omega)=\sU_{\wt X,\,\bar\omega},$$
there is an ample line bundle $\Theta_{\bar\omega}$ on $\sU_{\wt X,\,\bar\omega}$ such that $\hat\Theta_{\bar\omega}=\psi_{\bar\omega}^*\Theta_{\bar\omega}$ since
$$\bar\ell:=\frac{\bar k\wt\chi-\sum_{x\in I}\sum^{l_x}_{i=1}\bar d_i(x)r_i(x)}{r}=\tilde\ell+2\wt\chi-r|I|+\sum_{x\in I}n_{l_x+1}(x)$$
is an integer.
Then we have
$\Theta_{\wt\sR^{ss}_{\bar\omega}}=\psi_{\bar\omega}^*(\Theta_{\bar\omega}\otimes{\rm
Det}^*(\Theta_y)^{-2})\otimes\omega_{\wt\sR^{ss}_{\bar\omega}}$
and
$$({\psi_{\bar\omega}}_*\Theta_{\wt\sR^{ss}_{\bar\omega}})^{inv}=(\Theta_{\bar\omega}\otimes{\rm
Det}^*(\Theta_y)^{-2})\otimes({\psi_{\bar\omega}}_*\omega_{\wt\sR^{ss}_{\bar\omega}})^{inv}.$$
Since ${\rm
codim}(\widetilde{\sR}^{ss}_{\bar\omega}\setminus\widetilde{\sR}^s_{\bar\omega})\ge
2$, conditions in Lemma \ref{lem4.3} are satisfied and
$$({\psi_{\bar\omega}}_*\omega_{\widetilde{\sR}^{ss}_{\bar\omega}})^{inv}=\omega_{\sU_{\wt
X,\,\bar\omega}}.$$ Then, since $\Theta_{\bar\omega}\otimes{\rm
Det}^*(\Theta_y)^{-2}$ is ample by Lemma 5.3 of \cite{S1}, we have
$$H^1(\sU_{\widetilde{X},\,\omega},\Theta_{\sU_{\widetilde{X},\,\omega}})=H^1(\sU_{\wt
X,\,\bar\omega},\Theta_{\bar\omega}\otimes{\rm
Det}^*(\Theta_y)^{-2}\otimes\omega_{\sU_{\wt X,\,\bar\omega}})=0.$$
\end{proof}

The idea of the proof is to express
$H^1(\sU_{\widetilde{X},\,\omega},\Theta_{\sU_{\widetilde{X},\,\omega}})$
by $$H^1(M,\sL\otimes\omega_M)$$ such that $\sL$ is an ample line
bundle, where $M$ is another GIT quotient. In this process, we need
essentially the equality
$$H^1(\widetilde{\sR}_{\omega}^{ss},\Theta_{\widetilde{\sR}_F})^{inv}
=H^1(\widetilde{\sR}_F,\Theta_{\widetilde{\sR}_F})^{inv}$$ which
perhaps holds unconditional. In fact, we have the following
\begin{conj}\label{conj4.5} For any data $\omega$ satisfying \eqref{4.1} and any
$i\ge 0$ \ga{4.2}
{H^i(\widetilde{\sR}_{\omega}^{ss},\Theta_{\widetilde{\sR}_F})^{inv}
=H^i(\widetilde{\sR}_F,\Theta_{\widetilde{\sR}_F})^{inv},} where
$\Theta_{\widetilde{\sR}_F}$ is the polarization determined by
$\omega$.
\end{conj}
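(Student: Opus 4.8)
The plan is to study the open inclusion $\widetilde{\sR}_{\omega}^{ss}\hookrightarrow\widetilde{\sR}_F$ through the Hesselink--Kirwan--Ness stratification of the unstable locus and to reduce the asserted equality of invariant cohomology to a numerical positivity of the theta bundle along each unstable stratum, in the spirit of Teleman's quantization theorem. Working inside the smooth quasi-projective $\widetilde{\sR}_F$ (equivalently, its closure $\widetilde{\sR}$ in the projective $\mathbf{G}$), the Hilbert--Mumford function determined by $\omega$ produces a stratification
$$\widetilde{\sR}_F\setminus\widetilde{\sR}_{\omega}^{ss}=\bigsqcup_{\beta}S_{\beta},$$
indexed by instability types $\beta$, i.e.\ conjugacy classes of rational one-parameter subgroups $\lambda_{\beta}$ of $SL(V)$ that maximally destabilize. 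Each $S_{\beta}$ is a smooth, $SL(V)$-invariant, locally closed subvariety equivariantly fibered over the $\lambda_{\beta}$-fixed locus $Z_{\beta}$; this is precisely the geometry underlying the quotient picture of \cite{NR} from which Lemma \ref{lem4.3} is drawn.

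First I would pass to local cohomology along $Z=\widetilde{\sR}_F\setminus\widetilde{\sR}_{\omega}^{ss}$. The long exact sequence shows that \eqref{4.2} holds for all $i$ if and only if
$$H^i_Z(\widetilde{\sR}_F,\Theta_{\widetilde{\sR}_F})^{inv}=0\qquad(\forall\,i).$$
Filtering $Z$ by the closures of the strata reduces this to the vanishing of the invariant local cohomology along each $S_{\beta}$ separately. On $Z_{\beta}$ the centralizer of $\lambda_{\beta}$ acts, and taking $SL(V)$-invariants isolates the $\lambda_{\beta}$-weight-zero part of the Koszul/local-duality computation along $S_{\beta}$; this piece is governed by the $\lambda_{\beta}$-weight of $\Theta_{\widetilde{\sR}_F}$ on $Z_{\beta}$ weighed against the $\lambda_{\beta}$-weights of the conormal bundle $N^{\vee}_{S_{\beta}/\widetilde{\sR}_F}$.

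The key input is that the $\lambda_{\beta}$-weight $w(\Theta,\beta)$ of $\Theta_{\widetilde{\sR}_F}$ on $Z_{\beta}$ is strictly positive; this is exactly the Hilbert--Mumford numerical inequality $\mu^{\Theta}(x,\lambda_{\beta})>0$ that defines the unstable stratum $S_{\beta}$, and it can be read off from the data $\omega$ and the destabilizing flag encoded by $\beta$ just as in the computations of $e(H)$ and $s(F)$ in Propositions \ref{prop2.9}--\ref{prop2.12}. Teleman's criterion then forces the stratumwise vanishing provided the strict inequality
$$w(\Theta,\beta)>\eta_{\beta}$$
holds, where $\eta_{\beta}$ is the $\lambda_{\beta}$-weight of the conormal directions transverse to $S_{\beta}$. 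Here Proposition \ref{prop4.2} is essential: it expresses $\omega_{\widetilde{\sR}_F}$, and hence the determinant of the normal directions, as an explicit combination of determinant-of-cohomology bundles and evaluation bundles $\det\widetilde{\sF}_x$, $\det\widetilde{\sF}_q$, whose $\lambda_{\beta}$-weights are computable from the same numerical data.

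The hard part will be verifying the strict inequality $w(\Theta,\beta)>\eta_{\beta}$ for \emph{every} stratum $\beta$ and \emph{all} admissible $\omega$, with no lower bound on $\tilde g$ or $|I|$. When $(r-1)(\tilde g-1)+\frac{1}{k}|I|\ge 2$, the codimension estimate Proposition \ref{prop4.1}(2) already forces this (and indeed yields the full, non-invariant vanishing by Hartogs, as in Theorem \ref{thm4.4}); so the entire content of the conjecture lies in the small-genus, few-points regime, where individual strata can have codimension one and the non-invariant local cohomology need not vanish. In that regime one must genuinely exploit that the \emph{invariant} part sees only the weight-zero piece, and show that the positive Hilbert--Mumford weight of the specific theta bundle strictly dominates the conormal weight $\eta_{\beta}$ extracted from Proposition \ref{prop4.2}. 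I expect this to reduce, after the weight bookkeeping, to an elementary but delicate inequality among $r$, $\widetilde{\chi}$, $k$, the $d_i(x)$, and the flag-type invariants of $\lambda_{\beta}$ — and establishing it uniformly in $\beta$ and $\omega$ is the crux of the problem.
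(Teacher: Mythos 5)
First, a point of calibration: the statement you are addressing is stated in the paper as Conjecture \ref{conj4.5} and is \emph{not} proved there. The paper establishes only the case $i=0$ (via Lemma \ref{lem4.7}, i.e.\ Lemma 4.15 of \cite{NR}) and the range $(r-1)(\tilde g-1)+\frac{1}{k}|I|\ge 2$, where the equality for $i=1$ follows from the codimension estimate of Proposition \ref{prop4.1} together with local cohomology; the general case is left open and is tied, in Section 6, to Question \ref{question6.8} on extending the Guillemin--Sternberg quantization theorem from projective varieties to $G$-invariant quasi-projective open subsets. So there is no proof in the paper to compare yours against --- and your text is likewise not a proof: you reduce the conjecture to a stratumwise inequality $w(\Theta,\beta)>\eta_{\beta}$ and then state explicitly that verifying it uniformly in $\beta$ and $\omega$ is ``the crux of the problem.'' The decisive step is missing.

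Second, I believe the reduction itself misplaces the difficulty. For an \emph{ample} linearization $L$ on a \emph{projective} $M$ with rational singularities, Teleman's theorem \cite{Te} gives $H^i(M,L)^{inv}=H^i(M^{ss}_L,L)^{inv}$ with no auxiliary weight inequality: the positivity $\mu^{L}(x,\lambda_{\beta})>0$ defining the unstable strata already kills the weight-zero part of the local cohomology. The real obstruction here is geometric, not numerical: $\widetilde{\sR}_F$ is only a $G$-invariant quasi-projective open subset of the closure $\widetilde{\sR}$, which can be badly singular off $\widetilde{\sR}_F$, and Teleman's own example shows that equalities of type \eqref{4.2} can fail for projective $M$ with non-rational singularities. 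On $\widetilde{\sR}_F$ the structural facts your argument needs --- that each stratum is of the form $G\times_{P_{\beta}}Y_{\beta}$ with $Y_{\beta}\to Z_{\beta}$ an affine fibration over a complete fixed locus, and that the strata are saturated under the $\lambda_{\beta}$-flow --- are exactly what is unavailable: a point of $S_{\beta}\cap\widetilde{\sR}_F$ may flow under $\lambda_{\beta}$ to a limit outside $\widetilde{\sR}_F$ (e.g.\ to a non-locally-free quotient), so the computation of $H^{\bullet}_{S_{\beta}}$ by weights along $Z_{\beta}$ does not go through. This is precisely the content of Question \ref{question6.8}; until that is resolved, the stratification/Teleman route does not yield the conjecture, and the inequality you isolate as the crux would, in the projective ample setting, be automatic rather than the substantive issue.
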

Then the proof of Theorem \ref{thm4.4} implies the following
\begin{cor}\label{cor4.6} Assume the Conjecture \ref{conj4.5} is true. Then, for any data $\omega$ , we
 have, for any $i>0$,
 $$H^i(\sU_{\widetilde{X},\,\omega},\Theta_{\sU_{\widetilde{X},\,\omega}})=0.$$
\end{cor}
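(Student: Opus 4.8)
The plan is to run the argument of Theorem \ref{thm4.4} essentially verbatim, replacing the one place where the numerical hypothesis $(r-1)(\tilde g-1)+\frac{1}{k}|{\rm I}|\ge 2$ entered by Conjecture \ref{conj4.5}, and then observing that the concluding vanishing input kills $H^i$ for every $i>0$ rather than only for $i=1$. First I would use that $\sU_{\widetilde{X},\,\omega}=\widetilde{\sR}^{ss}_{\omega}//{\rm SL}(V)$ is a good quotient, so that for all $i\ge 0$
$$H^i(\sU_{\widetilde{X},\,\omega},\Theta_{\sU_{\widetilde{X},\,\omega}})=H^i(\widetilde{\sR}^{ss}_{\omega},\Theta_{\widetilde{\sR}^{ss}_{\omega}})^{inv}.$$
In Theorem \ref{thm4.4} the passage from $\widetilde{\sR}^{ss}_{\omega}$ to $\widetilde{\sR}_F$ rested on ${\rm codim}(\widetilde{\sR}_F\setminus\widetilde{\sR}^{ss}_{\omega})>2$, which controls only $H^1$ and only under the numerical hypothesis. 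I would instead invoke Conjecture \ref{conj4.5} with the data $\omega$, giving the unconditional identity $H^i(\widetilde{\sR}^{ss}_{\omega},\Theta_{\widetilde{\sR}_F})^{inv}=H^i(\widetilde{\sR}_F,\Theta_{\widetilde{\sR}_F})^{inv}$ for all $i\ge 0$.

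Next I would carry over the canonical-bundle computation of Proposition \ref{prop4.2} unchanged, which rewrites the polarization as
$$\Theta_{\widetilde{\sR}_F}=\hat\Theta_{\bar\omega}\otimes{\rm Det}^*(\Theta_y)^{-2}\otimes\omega_{\widetilde{\sR}_F}$$
for the auxiliary data $\bar\omega=\{\bar k,\vec n(x),\vec{\bar a}(x)\}_{x\in I}$ with $\bar k=k+2r$ and $\bar\ell\in\Z$, so that $\hat\Theta_{\bar\omega}=\psi_{\bar\omega}^*\Theta_{\bar\omega}$ descends and $\Theta_{\bar\omega}\otimes{\rm Det}^*(\Theta_y)^{-2}$ is ample by Lemma 5.3 of \cite{S1}. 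Restricting to $\widetilde{\sR}^{ss}_{\bar\omega}$, pushing forward, and applying Knop's Lemma \ref{lem4.3} (whose hypotheses hold since ${\rm codim}(\widetilde{\sR}^{ss}_{\bar\omega}\setminus\widetilde{\sR}^s_{\bar\omega})\ge 2$ by Proposition \ref{prop4.1}) yields $({\psi_{\bar\omega}}_*\omega_{\widetilde{\sR}^{ss}_{\bar\omega}})^{inv}=\omega_{\sU_{\widetilde{X},\,\bar\omega}}$, hence
$$H^i(\widetilde{\sR}_F,\Theta_{\widetilde{\sR}_F})^{inv}=H^i(\sU_{\widetilde{X},\,\bar\omega},\Theta_{\bar\omega}\otimes{\rm Det}^*(\Theta_y)^{-2}\otimes\omega_{\sU_{\widetilde{X},\,\bar\omega}}).$$
Since $\sU_{\widetilde{X},\,\bar\omega}$ is normal with at most rational singularities (Theorem \ref{thm2.13}) and $\Theta_{\bar\omega}\otimes{\rm Det}^*(\Theta_y)^{-2}$ is ample, a Kodaira-type vanishing (Grauert--Riemenschneider for the dualizing sheaf of a variety with rational singularities) annihilates the right-hand side for all $i>0$, which would complete the proof.

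The step I expect to be the main obstacle is the restriction from $\widetilde{\sR}_F$ to the $\bar\omega$-semistable locus $\widetilde{\sR}^{ss}_{\bar\omega}$: the decomposition of $\Theta_{\widetilde{\sR}_F}$ descends only over $\widetilde{\sR}^{ss}_{\bar\omega}$, so I must know that removing $\widetilde{\sR}_F\setminus\widetilde{\sR}^{ss}_{\bar\omega}$ does not change $H^i(-,\Theta_{\widetilde{\sR}_F})^{inv}$. For $i=1$ this is again a codimension estimate (Proposition \ref{prop4.1}(2) for $\bar\omega$), but that bound is not guaranteed for higher $i$ once the numerical hypothesis is dropped, so the honest route is a second application of Conjecture \ref{conj4.5}, now adapted to the $\bar\omega$-setup; here one must verify that twisting $\hat\Theta_{\bar\omega}$ by ${\rm Det}^*(\Theta_y)^{-2}\otimes\omega_{\widetilde{\sR}_F}$ does not alter the GIT-semistable locus. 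The factor ${\rm Det}^*(\Theta_y)^{-2}$ is neutral because ${\rm SL}(V)$ has no nontrivial characters and acts trivially on the Jacobian through ${\rm Det}$, but the $\omega_{\widetilde{\sR}_F}$-twist is the delicate point and is exactly what must be controlled to make the conjecture applicable to the line bundle at hand; granting this, together with the higher-$i$ vanishing on the rational-singularity quotient, the numerical hypothesis of Theorem \ref{thm4.4} becomes unnecessary and the vanishing holds for all $i>0$ and all $\omega$.
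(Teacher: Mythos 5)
Your overall strategy (use the conjecture in place of the codimension argument, then run the canonical-bundle rewriting of Theorem \ref{thm4.4} and conclude by Kodaira-type vanishing on a quotient with rational singularities) has the right shape, and you have correctly located the weak point; but the gap you flag is real and your proposed repair does not close it. Conjecture \ref{conj4.5} is stated only for the pair consisting of the $\omega'$-semistable locus and the polarization $\Theta_{\widetilde{\sR}_F}$ determined by that \emph{same} $\omega'$. What you need at the delicate step is $H^i(\widetilde{\sR}_F,\Theta_{\widetilde{\sR}_F})^{inv}=H^i(\widetilde{\sR}^{ss}_{\bar\omega},\Theta_{\widetilde{\sR}_F})^{inv}$, i.e.\ the $\bar\omega$-semistable locus paired with the line bundle $\hat\Theta_{\bar\omega}\otimes{\rm Det}^*(\Theta_y)^{-2}\otimes\omega_{\widetilde{\sR}_F}$, which is the $\bar\omega$-polarization twisted by a nontrivial line bundle; a second application of the conjecture does not deliver this, and there is no reason the conjectured equality for one linearization implies it after a twist. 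There is also a second, unflagged gap: once the numerical hypothesis is dropped, Proposition \ref{prop4.1}(1) no longer guarantees ${\rm codim}(\widetilde{\sR}^{ss}_{\bar\omega}\setminus\widetilde{\sR}^{s}_{\bar\omega})\ge 2$ (think of $\tilde g\le 1$ with few parabolic points), so Knop's Lemma \ref{lem4.3} need not apply and the identity $({\psi_{\bar\omega}}_*\omega_{\widetilde{\sR}^{ss}_{\bar\omega}})^{inv}=\omega_{\sU_{\wt X,\,\bar\omega}}$ is not available.

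The paper fixes both problems at once by a device you do not use: it enlarges the parabolic divisor. Given $\omega$ and the target degree $i$, one chooses an auxiliary finite set $I'$ with $(r-1)(\wt g-1)+\frac{|I\cup I'|}{k+2r}\ge i+2$ and puts trivial data ($\alpha_x=0$, $d_i(x)=0$) at the new points. The projection $p_I:\widetilde{\sR}(I')\to\widetilde{\sR}_F$ is an ${\rm SL}(V)$-equivariant flag bundle, so $H^i(\widetilde{\sR}_F,\Theta_{\widetilde{\sR}_F})^{inv}=H^i(\widetilde{\sR}(I'),p_I^*\Theta_{\widetilde{\sR}_F})^{inv}$, and Conjecture \ref{conj4.5} is invoked only once, for the original $\omega$ and in exactly the form in which it is stated. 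On $\widetilde{\sR}(I')$ the codimension bounds of Proposition \ref{prop4.1} carry the large constant $\frac{|I\cup I'|}{k+2r}$, so the restriction to $\widetilde{\sR}(I')^{ss}_{\bar\omega}$ is justified for the line bundle $p_I^*\Theta_{\widetilde{\sR}_F}$ by local cohomology in codimension $>i+2$ (no conjecture needed there), and the strictly semistable locus has codimension $\ge 2$, so Knop's lemma applies. You should replace your proposed second application of the conjecture by this enlargement of $I$.
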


\begin{proof}
For any data $\omega=\{k,\vec n(x),\vec a(x)\}_{x\in I}$, we choose
$$\omega(I')=\{k,\vec
n(x),\vec a(x)\}_{x\in I\cup I'}$$ such that
$(r-1)(\wt g-1)+\frac{|I\cup I'|}{k+2r}\ge i+2$. Note that the
projection
$$p_I:
\widetilde{\sR}(I')=\underset{x\in I\cup I'}{\times_{\wt{\bold
Q}_F}}Flag_{\vec n(x)}(\sF_x)\rightarrow \wt{\sR}_F=\underset{x\in
I}{\times_{\wt{\bold Q}_F}}Flag_{\vec n(x)}(\sF_x)$$ is a Flag
bundle and ${\rm SL}(V)$-invariant. By Conjecture \ref{conj4.5}, we
have
$$\aligned &{\rm
H}^i(\sU_{\wt X,\,\omega},\Theta_{\sU_{\wt X,\,\omega}})={\rm
H}^i({\wt\sR}^{ss}_{\omega}, \Theta_{\wt\sR_F})^{inv}\\&={\rm
H}^i(\wt\sR_F, \Theta_{\wt\sR_F})^{inv}={\rm
H}^i(\widetilde{\sR}(I'),
p_I^*(\Theta_{\wt\sR_F}))^{inv}.\endaligned$$ Write
$p_I^*(\Theta_{\wt\sR_F})\otimes\omega_{\widetilde{\sR}(I')}^{-1}:=\hat\Theta_{\bar\omega}\otimes{\rm
Det}^*(\Theta_y)^{-2}$, then we have
$$\aligned\hat\Theta_{\bar\omega}=&(det\,R\pi_{\wt\sR_F}\widetilde{\sF})^{-\bar k}\otimes
\bigotimes_{x\in I\cup I'}\left\{(det\,\widetilde{\sF}_x)^{\bar\alpha_x}
\otimes\bigotimes^{l_x}_{i=1}(det\,\sQ_{x,i})
^{\bar d_i(x)}\right\}\\
&\otimes
(det\wt\sF_y)^{\bar\ell_y}\otimes\bigotimes_q(det\,\widetilde{\sF}_q)^{1-r}\otimes
(det\wt\sF_y)^{(r-1)(2\tilde g-2)}
\endaligned$$
where $\bar k=k+2r$, $\,\bar\alpha_x=\alpha_x+n_{l_x+1}(x)-r$, $\,\bar\ell_y=2\wt\chi+\tilde\ell_y\,$ and
$$\bar{d}_i(x)=d_i(x)+n_i(x)+n_{i+1}(x)$$
(we define $\alpha_x=0$, $d_i(x)=0$ when $x\in I'$). Let $\bar\omega=\{\bar k, \vec n(x),\vec {\bar a}(x)\}_{x\in I\cup I'}$
with $\vec {\bar a}(x)=(\bar a_1(x), \bar a_2(x),\cdots,\bar a_{l_x+1}(x))$ such
that $$\bar d_i(x)=\bar a_{i+1}(x)-\bar a_i(x),\quad (i=1,\,2,\,\cdots,\,l_x).$$
Let
$\wt{\sR}(I')^{ss}_{\bar\omega}\subset \wt{\sR}(I')$ be the open set
of GIT semi-stable points (respect to the polarization defined by
$\bar\omega$), then
$$\aligned &{\rm
H}^i(\sU_{\wt X,\,\omega},\Theta_{\sU_{\wt X,\,\omega}})={\rm
H}^i({\wt\sR}^{ss}_{\omega}, \Theta_{\wt\sR_F})^{inv}={\rm
H}^i(\wt\sR_F, \Theta_{\wt\sR_F})^{inv}\\&={\rm
H}^i(\widetilde{\sR}(I'),
p_I^*(\Theta_{\wt\sR_F}))^{inv}={\rm
H}^i(\widetilde{\sR}(I')^{ss},
p_I^*(\Theta_{\wt\sR_F}))^{inv}\endaligned$$
the last equality holds since, by (2) of Proposition \ref{prop4.1}, we have
$${\rm
codim}(\wt{\sR}(I')\setminus
\wt{\sR}(I)^{ss}_{\bar\omega})> (r-1)(\wt g-1)+\frac{|I\cup
J|}{k+2r}\ge i+2.$$
Let $\psi: \widetilde{\sR}(I')^{ss}_{\bar\omega}\to
\sU_{\wt X,\,\bar\omega}$ be the good quotient. Then
$\hat\Theta_{\bar\omega}$ descends to an ample line bundle
$\Theta_{\bar\omega}$ on $\sU_{\wt X,\,\bar\omega}$ and
$(\psi_*\omega_{\wt\sR(I')^{ss}_{\bar\omega}})^{inv}=\omega_{\sU_{\wt
X,\,\bar\omega}}$ since
$${\rm
codim}(\wt{\sR}(I')^{ss}_{\bar\omega}\setminus
\wt{\sR}(I)^s_{\bar\omega})\ge (r-1)(\wt g-1)+\frac{|I\cup
J|}{k+2r}\ge i+2$$
by (1) of Proposition \ref{prop4.1}. Thus we have \ga{4.3}{{\rm H}^i(\sU_{\wt
X,\,\omega},\Theta_{\sU_{\wt X,\,\omega}})={\rm H}^i(\sU_{\wt
X,\,\bar\omega},\Theta_{\bar\omega}\otimes{\rm
Det}^*(\Theta_y)^{-2}\otimes\omega_{\sU_{\wt X,\,\bar\omega}})} for
any $i\ge 0$. In particular, ${\rm H}^i(\sU_{\wt
X,\,\omega},\Theta_{\sU_{\wt X,\,\omega}})=0$ for $i>0$.
\end{proof}

For $i=0$, Conjecture \ref{conj4.5} is true according to a general
fact
\begin{lem}[Lemma 4.15 of \cite{NR}]\label{lem4.7} Let $V$ be a projective scheme
on which a reductive group $G$ acts, $\wt\sL$ an ample line bundle
linearizing the $G$-action, and $V^{ss}\subset V$ the open set of
semi-stable points. Then, for any open $G$-invariant (irreducible)
normal subscheme $V^{ss}\subset W\subset V$,
$${\rm H}^0(V^{ss},\wt\sL)^{inv}={\rm H}^0(W,\wt L)^{inv}.$$
\end{lem}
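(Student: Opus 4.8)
The plan is to show that the restriction map $H^0(W,\wt\sL)^{inv}\to H^0(V^{ss},\wt\sL)^{inv}$, induced by the open inclusion $V^{ss}\subset W$, is an isomorphism. Injectivity is formal: since $W$ is irreducible and normal it is integral, $V^{ss}$ is a nonempty (hence dense) open subset, and a section of the locally free sheaf $\wt\sL$ that vanishes on a dense open subset of an integral scheme is zero. Thus the whole content is surjectivity, i.e.\ the assertion that every $G$-invariant section $s$ of $\wt\sL$ over $V^{ss}$ extends (necessarily uniquely, and necessarily invariantly) to a section over $W$.

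First I would set up the GIT machinery describing $H^0(V^{ss},\wt\sL)^{inv}$. Writing $A=\bigoplus_{n\ge 0}H^0(V,\wt\sL^{\,n})$ for the section ring of the ample bundle $\wt\sL$ on the projective scheme $V$, the group $G$ acts on $A$ and, because $G$ is reductive, the Reynolds operator makes $A^{G}=\bigoplus_n H^0(V,\wt\sL^{\,n})^{inv}$ a finitely generated graded ring with $V/\!/G=\operatorname{Proj}(A^{G})$. By definition of semistability, $V^{ss}$ is the union of the $G$-invariant affine opens $V_\theta=\{\theta\neq 0\}$ as $\theta$ ranges over homogeneous elements of $A^{G}_{+}$. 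The key formal input is that invariants commute with localisation at an invariant element, $\bigl(A_\theta\bigr)^{G}=(A^{G})_\theta$, whence $\mathcal O(V_\theta)^{inv}=(A^{G})_{(\theta)}$ and, more generally, $H^0(V_\theta,\wt\sL^{\,n})^{inv}=\bigl[(A^{G})_\theta\bigr]_n$. Gluing over the cover $V^{ss}=\bigcup_\theta V_\theta$ identifies $H^0(V^{ss},\wt\sL^{\,n})^{inv}$ with $\Gamma\bigl(V/\!/G,\mathcal O(n)\bigr)$. In particular any such $s$ is a genuine rational section of $\wt\sL$ on $V$ that is regular on $V^{ss}$.

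The extension step then reduces, since $W$ is normal, to a codimension-one statement: on a normal scheme a line bundle is reflexive and its sections are recovered by intersecting the local sections at the codimension-one points, so $s$ extends across $W$ precisely when it has no pole along any prime divisor $D\subset W\setminus V^{ss}$. Every such $D$ lies in the unstable locus $V\setminus V^{ss}$, so I would prove: an invariant section of $\wt\sL$ regular on $V^{ss}$ has non-negative order along any unstable prime divisor. This is exactly where semistability (rather than a codimension bound) enters. Working in the DVR $\mathcal O_{W,\eta_D}$ at the generic point of $D$, I would invoke the Hilbert--Mumford criterion: a general point $x$ of $D$ is unstable, so there is a one-parameter subgroup $\lambda$ of $G$ with strictly positive Mumford index $\mu^{\wt\sL}(\,\cdot\,,\lambda)$; the positive weight of $\lambda$ on the fibre of $\wt\sL$ near the limit $\lim_{t\to 0}\lambda(t)\cdot x$ is incompatible with a $\lambda$-invariant (a fortiori $G$-invariant) rational section having a pole along $D$. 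Hence $v_D(s)\ge 0$, $s$ extends to $\wt s\in H^0(W,\wt\sL)$, and $\wt s$ is invariant because for each $g\in G$ both $g^{*}\wt s$ and $\wt s$ extend the invariant section $s|_{V^{ss}}$, so they agree by uniqueness of extension.

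The main obstacle is precisely this last step: converting ``$D$ is unstable and $s$ is $G$-invariant'' into ``$s$ has no pole along $D$.'' Everything preceding it (injectivity, the affine-cover description, and the reduction to a codimension-one condition on the normal scheme $W$) is formal; by contrast the no-pole statement is the only place where reductivity of $G$ and the GIT notion of semistability are genuinely used, and it must be handled by the weight computation along $\lambda$ as in \cite{NR} rather than by any codimension estimate, since $W\setminus V^{ss}$ may well contain divisors. The same argument applies verbatim to every power $\wt\sL^{\,n}$, giving $H^0(W,\wt\sL^{\,n})^{inv}=H^0(V^{ss},\wt\sL^{\,n})^{inv}$ for all $n$; the case $n=1$ is the assertion of the lemma.
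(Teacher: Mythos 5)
First, note that the paper does not actually prove this statement; it quotes it as Lemma 4.15 of \cite{NR}, so I am comparing your argument with the one in that source. Your framing is the standard one and is correct as far as it goes: injectivity follows from density of $V^{ss}$ in the integral scheme $W$; the identification of $H^0(V^{ss},\wt\sL^{\,n})^{inv}$ with sections over ${\rm Proj}$ of the invariant ring is right; and, since $W$ is normal, surjectivity does reduce to showing that an invariant section $s$ regular on $V^{ss}$ satisfies $v_D(s)\ge 0$ for every prime divisor $D$ of $W$ whose generic point is unstable.

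The gap is exactly at the step you yourself identify as the crux. The Hilbert--Mumford weight computation proves that a \emph{globally defined} invariant section vanishes at an unstable point $x$, by evaluating the section along the orbit $\lambda(t)\cdot x$ and at the limit $y=\lim_{t\to 0}\lambda(t)x$. Here $s$ is not defined at a general point $x$ of $D$ --- that is precisely what is in question: the entire orbit $\lambda(\G_m)\cdot x$ stays inside the unstable locus, where $s$ is only a rational section; the limit point $y$ need not lie in $W$ at all; and if you instead regard $s$ as a regular section of $\wt\sL\otimes\sO_W(P)$ for $P$ its polar divisor, you have no control over the $\lambda$-weight of the twist $\sO_W(P)$ at $y$. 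So ``positive weight is incompatible with a pole along $D$'' does not follow, and I do not see how to complete your step. The argument of \cite{NR} avoids Hilbert--Mumford entirely and uses finite generation instead: $\bigoplus_n H^0(V^{ss},\wt\sL^{\,n})^{inv}$ is a finite module over the finitely generated invariant ring $R=\bigoplus_n H^0(V,\wt\sL^{\,n})^{inv}$ (both being computed on the projective quotient ${\rm Proj}\,R$), so every $s\in H^0(V^{ss},\wt\sL)^{inv}$ is integral over $R$; equivalently some power $s^m$ extends to a global invariant section $\sigma$ of $\wt\sL^{\,m}$ on all of $V$. Then $m\,v_D(s)=v_D(\sigma)\ge 0$ for every prime divisor $D$ of the normal $W$, which is the no-pole statement you need. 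Reductivity enters through finite generation of invariants (and the Reynolds operator), not through a one-parameter-subgroup weight estimate.
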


\begin{cor}\label{cor4.8}
For any data $\omega=\{k,\vec n(x),\vec a(x)\}_{x\in I})$ such that $\ell\in\mathbb{Z}$, the dimension of
$${\rm
H}^0(\sU_{\wt{X},\,\omega},\Theta_{\sU_{\wt{X},\,\omega}})$$ is
independent of the choices of curve $\wt X$ and the points $x\in \wt
X$.
\end{cor}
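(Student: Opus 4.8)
The plan is to run the argument of Corollary \ref{cor4.6} \emph{only in degree} $i=0$, where the unproven Conjecture \ref{conj4.5} can be replaced by the unconditional Lemma \ref{lem4.7}. This yields the identity \eqref{4.3} in degree $0$ with no genus hypothesis, and the invariance is then read off from the right-hand side, where a genuine Kodaira-type vanishing is available. First I would establish, for an arbitrary smooth curve $\wt X$ and data $\omega$ with $\ell\in\Z$, the two equalities
$$H^0(\sU_{\wt X,\,\omega},\Theta_{\sU_{\wt X,\,\omega}})=H^0(\wt\sR^{ss}_{\omega},\Theta_{\wt\sR_F})^{inv}=H^0(\wt\sR_F,\Theta_{\wt\sR_F})^{inv}.$$
The first is the good-quotient identity. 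The second is Lemma \ref{lem4.7} applied with $V=\overline{\wt\sR}$, $G={\rm SL}(V)$, ample linearisation the $\omega$-polarisation (whose semistable locus is $\wt\sR^{ss}_\omega$), and $W=\wt\sR_F$, which is open, invariant, irreducible and normal. The essential point is that this step needs \emph{no} codimension bound, so it holds for every genus and every $|I|$; this is exactly where the low-genus obstruction to Theorem \ref{thm4.4} is bypassed.

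Next I would introduce an auxiliary finite set $I'$ with $(r-1)(\wt g-1)+\frac{|I\cup I'|}{k+2r}\ge 2$ and the flag bundle $p_I:\wt\sR(I')\to\wt\sR_F$. The projection formula gives $H^0(\wt\sR_F,\Theta_{\wt\sR_F})^{inv}=H^0(\wt\sR(I'),p_I^*\Theta_{\wt\sR_F})^{inv}$, and since $\wt\sR(I')$ is normal with ${\rm codim}(\wt\sR(I')\setminus\wt\sR(I')^{ss}_{\bar\omega})\ge 2$ by Proposition \ref{prop4.1}(2), Hartogs extension identifies this with $H^0(\wt\sR(I')^{ss}_{\bar\omega},p_I^*\Theta_{\wt\sR_F})^{inv}$ (here any line bundle works, so the fact that $p_I^*\Theta_{\wt\sR_F}$ is not the $\bar\omega$-linearisation is harmless). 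Writing $p_I^*\Theta_{\wt\sR_F}=\hat\Theta_{\bar\omega}\otimes{\rm Det}^*(\Theta_y)^{-2}\otimes\omega_{\wt\sR(I')}$ with $\hat\Theta_{\bar\omega}$ descending to the ample $\Theta_{\bar\omega}$, and using Proposition \ref{prop4.1}(1) together with Knop's Lemma \ref{lem4.3} to get $(\psi_*\omega_{\wt\sR(I')^{ss}_{\bar\omega}})^{inv}=\omega_{\sU_{\wt X,\,\bar\omega}}$, I obtain \eqref{4.3} in degree $0$, now unconditionally:
$$H^0(\sU_{\wt X,\,\omega},\Theta_{\sU_{\wt X,\,\omega}})=H^0\!\left(\sU_{\wt X,\,\bar\omega},\,\Theta_{\bar\omega}\otimes{\rm Det}^*(\Theta_y)^{-2}\otimes\omega_{\sU_{\wt X,\,\bar\omega}}\right).$$

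I would then exploit the right-hand side, which is the whole point of the refinement. By Theorem \ref{thm2.13}, $\sU_{\wt X,\,\bar\omega}$ is normal with only rational singularities, hence Cohen--Macaulay, while $\Theta_{\bar\omega}\otimes{\rm Det}^*(\Theta_y)^{-2}$ is ample by Lemma 5.3 of \cite{S1}. A Kodaira--Grauert--Riemenschneider vanishing for ample$\,\otimes\,$dualizing sheaf on a variety with rational singularities gives $H^i=0$ for all $i>0$, so that
$$\dim H^0(\sU_{\wt X,\,\omega},\Theta_{\sU_{\wt X,\,\omega}})=\chi\!\left(\sU_{\wt X,\,\bar\omega},\,\Theta_{\bar\omega}\otimes{\rm Det}^*(\Theta_y)^{-2}\otimes\omega_{\sU_{\wt X,\,\bar\omega}}\right).$$
Finally I would carry the whole construction out over a connected base: any two smooth pointed curves with the same discrete invariants lie in a flat family $\{\wt X_t\}_{t\in T}$ with $T$ connected (as $\mathcal{M}_g$ is irreducible), producing a flat family $\sU_{\wt X_t,\,\bar\omega}$ with a relative line bundle. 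Euler characteristics are locally constant in flat families, and the degree-$0$ identity holds fibrewise; hence $\dim H^0(\sU_{\wt X_t,\,\omega},\Theta_{\sU_{\wt X_t,\,\omega}})$ is constant in $t$, i.e.\ independent of $\wt X$ and of the points.

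The main obstacle I anticipate is not the cohomological bookkeeping but making this family step precise: one must build the relative versions of $\widetilde{\mathbf Q}$, $\wt\sR(I')$ and the GIT quotient over $T$, verify flatness of $\sU_{\wt X_t,\,\bar\omega}$ and of the relative $\Theta_{\bar\omega}\otimes{\rm Det}^*(\Theta_y)^{-2}\otimes\omega$ over the connected base, and check that the auxiliary points $I'$ and the weight shift $\omega\mapsto\bar\omega$ can be chosen uniformly in $t$ so that \eqref{4.3} holds fibrewise. Granting this, fibrewise Kodaira vanishing combined with cohomology and base change (forcing $\pi_*$ locally free of rank equal to the constant fibrewise $\chi$) closes the argument without ever invoking $H^1(\sU_{\wt X,\,\omega},\Theta_{\sU_{\wt X,\,\omega}})=0$.
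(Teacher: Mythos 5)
Your proposal is correct and follows essentially the same route as the paper: Lemma \ref{lem4.7} replaces Conjecture \ref{conj4.5} in degree $0$, the chain of identifications through $\wt\sR_F$ and the auxiliary flag bundle $\wt\sR(I')$ yields \eqref{4.3} for $i=0$ unconditionally, and invariance then follows because the Kodaira-type vanishing on $\sU_{\wt X,\,\bar\omega}$ turns $\dim H^0$ into an Euler characteristic, which is constant in a flat family. Your closing remarks on relativizing the construction over a connected base merely make explicit what the paper leaves implicit in its final sentence.
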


\begin{proof} By the above Lemma \ref{lem4.7} and \eqref{4.3}, we
have $${\rm H}^0(\sU_{\wt X,\,\omega},\Theta_{\sU_{\wt
X,\,\omega}})={\rm H}^0(\sU_{\wt
X,\,\bar\omega},\Theta_{\bar\omega}\otimes{\rm
Det}^*(\Theta_y)^{-2}\otimes\omega_{\sU_{\wt X,\,\bar\omega}}).$$
The dimension of ${\rm H}^0(\sU_{\wt
X,\,\bar\omega},\Theta_{\bar\omega}\otimes{\rm
Det}^*(\Theta_y)^{-2}\otimes\omega_{\sU_{\wt X,\,\bar\omega}})$ is
independent of the choices of curve $\wt X$ and the points $x\in \wt
X$ since $${\rm H}^i(\sU_{\wt
X,\,\bar\omega},\Theta_{\bar\omega}\otimes{\rm
Det}^*(\Theta_y)^{-2}\otimes\omega_{\sU_{\wt X,\,\bar\omega}})=0$$
for all $i>0$.

\end{proof}

\section {Vanishing theorem for irreducible nodal curves}

When curves degenerate to a nodal curve $X$, the invariance of
spaces of generalized theta functions for smooth curves has proved
in last section (See Corollary \ref{cor4.8}). To complete the
program, we need the vanishing theorem ${\rm
H}^1(\sU_X,\Theta_{\sU_X})=0$. Its proof was reduced to prove a
vanishing theorem on the normalization $\sP$ of $\sU_X$.

Let $X$ be a connected nodal curve of genus $g$, with only one node
$x_0\in X$, let $\pi:\wt X\to X$ be the normalization of $X$ and
$\pi^{-1}(x_0)=\{x_1\,,\,x_2\}$. The normalization $\phi:
\sP\to\sU_X$ of $\sU_X$ is given by moduli space of semi-stable GPS
$(E,Q)$ on $\widetilde{X}$ with additional parabolic structures at
the points of $I$ (we identify $I$ with $\pi^{-1}(I)$) given by the
data $$\omega=\{k, \vec n(x),\,\vec a(x)\}_{x\in I}$$ satisfying
$$\sum_{x\in I}\sum^{l_x}_{i=1}d_i(x)r_i(x)+r\wt\ell=k\wt\chi$$ where
$d_i(x)=a_{i+1}(x)-a_i(x)$, $\wt\chi=\chi+r$, $\wt\ell=k+\ell$.
Recall that
$$\widetilde{\sR}'=Grass_r(\sF_{x_1}\oplus\sF_{x_2})\times_{\widetilde{\bold
Q}}\widetilde{\sR}$$ with the ${\rm SL}(V)$-equivariant embedding
$$\wt{\sR}'\hookrightarrow \mathbf{G}'=Grass_{\wt P(m)}(V\otimes W_m)\times\bold {Flag}\times Grass_r(V\otimes W_m),$$
where $W_m=H^0(\wt\sW(m))$, and $\bold{Flag}$ is defined to be
$$\bold{Flag}=\prod_{x\in I}\{Grass_{r_1(x)}(V\otimes W_m)
\times\cdots\times Grass_{r_{l_x}(x)}(V\otimes W_m)\}.$$ On
$\mathbf{G}'$, take the polarisation (determined by $\omega$)
\ga{5.1}{k\times\frac{(\ell+kcN)}{c(m-N)}\times\prod_{x\in I}
\{d_1(x),\cdots,d_{l_x}(x)\}.}  Then, when $X$ is irreducible,
$\sP:=\sP_{\omega}$ is the GIT (good) quotient
$$\psi:\widetilde{\sR}_{\omega}^{\prime ss}\to\sP_{\omega}:=\widetilde{\sR}_{\omega}^{\prime ss}//{\rm SL}(V).$$
There is a open subscheme $\sH\subset\widetilde{\sR}'$ such that
$\widetilde{\sR}_{\omega}^{\prime ss}\subset\sH$ for any data
$\omega$ (See Notation \ref{nota2.21}), one of the main results
proved in \cite{S1} and \cite{S2} is that $\sH$ is reduced, normal
and Cohen-Macaulay with only rational singularities (so is $\sP$).
Thus the Kodaira-type vanishing theorem and Hartogs-type extension
theorem for cohomology are applicable.

Let $\rho:\wt{\sR}'\to \widetilde{\sR}$ be the projection,
$V\otimes\sO_{\widetilde{X}\times\sH}(-N)\to \sE\to 0$,
$$\{\,\,\sE_{\{x\}\times \sH}=\sQ_{\{x\}\times \sH,\,l_x+1}\twoheadrightarrow\sQ_{\{x\}\times \sH,\,l_x}
\twoheadrightarrow \cdots\twoheadrightarrow \sQ_{\{x\}\times\,\sH,1}
\twoheadrightarrow0\,\,\}_{x\in I}$$ denote pullbacks of universal
quotients $V\otimes\sO_{\widetilde{X}\times\widetilde{\sR}}(-N)\to
\widetilde{\sF}\to 0$,
$$\{\,\,\wt\sF_{\{x\}\times \wt R}=\wt\sQ_{\{x\}\times \wt R,\,l_x+1}\twoheadrightarrow\wt\sQ_{\{x\}\times \wt R,\,l_x}
\twoheadrightarrow\cdots\twoheadrightarrow \wt\sQ_{\{x\}\times \wt
R,\,1}\twoheadrightarrow0\,\,\}_{x\in I}.$$ Then the restriction of
polarisation \eqref{5.1} to $\sH$ is
$$\hat\Theta_{\sH}':={\det}(\sQ)^k\otimes({\rm
det}R\pi_{\sH}\sE(m))^{\frac{\ell+kcN}{c(m-N)}}\otimes\bigotimes_{x\in
I}\left\{\bigotimes^{l_x}_{i=1}{\rm
det}(\sQ_{\{x\}\times\sH,\,i})^{d_i(x)}\right\}$$ where
$\sE_{x_1}\oplus\sE_{x_2}\to\sQ\to 0$ is the universal quotient on
$\sH$. If we choose $\sO(1)=\sO_{\wt X}(cy)$, note that
$\sO_{\sH}={\rm det}R\pi_{\sH}\sE(N)$, we have $$({\rm
det}R\pi_{\sH}\sE)^{-1}=({\rm det}\sE_y)^{cN},\quad {\rm
det}R\pi_{\sH}\sE(m)=({\rm det}\sE_y)^{c(m-N)},$$
$$\hat\Theta_{\sH}'={\det}(\sQ)^k\otimes({\rm
det}R\pi_{\sH}\sE)^{-k}\otimes\bigotimes_{x\in
I}\left\{\bigotimes^{l_x}_{i=1}{\rm
det}(\sQ_{\{x\}\times\sH,\,i})^{d_i(x)}\right\}\otimes({\rm
det}\sE_y)^{\ell}.$$
We will write $\hat\Theta_{\sH}'=\eta_y^k\otimes\rho^*\hat\Theta_{\wt\sR}$,
where $\eta_y={\det}(\sQ)\otimes{\det}(\sE_y)^{-1}$ and
$$ \hat\Theta_{\wt\sR}=({\rm
det}R\pi_{\wt\sR}\wt\sF)^{-k}\otimes\bigotimes_{x\in
I}\left\{\bigotimes^{l_x}_{i=1}{\rm
det}(\wt\sQ_{\{x\}\times\wt\sR,\,i})^{d_i(x)}\right\}\otimes({\rm
det}\wt\sF_y)^{\wt\ell}.$$ The universal quotient
$\sE_{x_1}\oplus\sE_{x_2}\to\sQ\to 0$ induces an exact sequence
\ga{5.2}{0\to\sF_{\sH}\to(\pi\times id_{\sH})_*\sE\to\,_{x_0}\sQ\to
0} on $X\times\sH$, where $\wt X\times\sH\xrightarrow{\pi\times
id_{\sH}} X\times\sH$. The sheaf $\sF_{\wt{\sR}_{\omega}^{\prime
ss}}$ defines $$\hat\phi:\wt{\sR}_{\omega}^{\prime ss}\to
\sU_X:=\sU_{X,\,\omega},$$ which induces a morphism
$\phi:\sP=\wt{\sR}_{\omega}^{\prime ss}//{\rm SL}(V)\to \sU_X$ such
that $$ \xymatrix{ \wt{\sR}_{\omega}^{\prime ss} \ar[dr]_{\hat\phi}
\ar[r]^{\psi}
                & \sP \ar[d]^{\phi}  \\
                & \sU_X             }$$
is commutative and
${\hat\phi}^*\Theta_{\sU_X}=\hat\Theta'_{\wt{\sR}_{\omega}^{\prime
ss}}$. Thus $\hat\Theta'_{\wt{\sR}_{\omega}^{\prime ss}}$ descends
to an ample line bundle $\Theta_{\sP}=\phi^*\Theta_{\sU_X}$. In fact, there are more general ample line bundles $\Theta_{\sP,\,\omega}$ on $\sP$, which are the descendants of
$$\aligned
\hat{\Theta}_{\omega}'&=(det\,R\pi_{\wt{\sR}'}\sE)^{-k}\otimes\bigotimes_{x\in
I} \lbrace(det\,\sE_x)^{\alpha_x}\otimes\bigotimes^{l_x}_{i=1}
(det\,\sQ_{x,i})^{d_i(x)}\rbrace\otimes(det\,\sE_y)^{\tilde\ell_y}
\otimes\eta_y^k\\&=\rho^*\Theta_{\wt\sR,\,\omega}\otimes(det\,\sQ\otimes
det\,\sE_y^{-1})^k\endaligned$$  such that
$\Theta_{\sP,\,\omega}=\phi^*\Theta_{\sU_X,\,\omega}$ where
$\wt\ell_y+\sum_{x\in I}\alpha_x=\wt\ell$, and
$\Theta_{\sU_X,\,\omega}=\Theta_{\sU_X,L}$ is determined (cf. Theorem \ref{thm3.1}) by the data $\omega=\{k, \vec n(x),\,\vec a(x)\}_{x\in I}$ and
$$L=\ell_yy+\sum_{x\in I}\alpha_xx.$$
By Lemma 5.5 of \cite{S1}, we have injection $\phi^*: {\rm
H}^1(\sU_X,\Theta_{\sU_X,\,\omega})\hookrightarrow {\rm
H}^1(\sP,\Theta_{\sP,\,\omega}).$
Thus it is enough to show ${\rm H}^1(\sP,\Theta_{\sP,\,\omega})=0$. Let $\sK$ be the kernel of
$$V\otimes\sO_{\widetilde{X}\times\widetilde{\sR}'}(-N)\to \sE\to 0,$$
and consider $0\to\sK\to V\otimes\sO_{\widetilde{X}\times\sH}(-N)\to \sE\to 0.$
The line
bundle ${\rm det}(\sK)^{-1}\otimes\sO_{\widetilde{X}\times\sH}(-{\rm
dim}(V)N)$ on $\widetilde{X}\times\sH$ defines
${\rm Det}_{\sH}: \sH\to J^d_{\wt X}$
which induces the determinant morphism (cf. Lemma 5.7 of \cite{S1})
\ga{5.3} {{\rm Det}: \sP\to J^d_{\wt X}.}

\begin{prop}[Proposition 3.4 of \cite{S1}]\label{prop5.1} Let $\omega_{\wt X}=\sO(\sum_qq)$ and
$$\Theta_{J^d_{\wt X}}=(detR\pi_{J^d_{\wt X}}\sL)^{-2}\otimes\sL_{x_1}^r\otimes\sL_{x_2}^r\otimes\sL_y^{2\wt\chi-2r}\otimes\bigotimes_q\sL_q^{r-1}$$
where $\sL$ is the universal line bundle on $\wt X\times
J^d_{\wt X}$. Then we have
$$\aligned&\omega^{-1}_{\sH}=(det\,R\pi_{\sH}\sE)^{-2r}\otimes\\&
\bigotimes_{x\in I}\left\{(det\,\sE_x)^{n_{l_x+1}-r}
\otimes\bigotimes^{l_x}_{i=1}(det\,\sQ_{x,i})
^{n_i(x)+n_{i+1}(x)}\right\}\otimes(det\,\sQ)^{2r}\\&
\otimes(det\,\sE_y)^{2\wt\chi-2r}\otimes{\rm Det}_{\sH}^*(\Theta_{J^d_{\wt X}}^{-1}).
\endaligned$$
\end{prop}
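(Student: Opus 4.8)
The plan is to exhibit $\sH$, over its locally free locus, as a relative Grassmann bundle and to combine the relative canonical bundle of that bundle with the formula of Proposition~\ref{prop4.2}, transporting the Jacobian contributions through the determinant morphism ${\rm Det}_{\sH}$.

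First I would restrict to the open locus $\wt{\sR}'_F:=\rho^{-1}(\wt{\sR}_F)$, where $\wt{\sR}_F\subset\wt\sR$ is the locus of locally free quotients (note that the Quot scheme and flag scheme $\wt\sR$ here coincide with those of Section~4, since $\wt\chi=d+r(1-\wt g)$). There $\sE$ is a vector bundle, $\sF_{x_1}\oplus\sF_{x_2}$ is locally free of rank $2r$, and $\rho:\wt{\sR}'_F\to\wt{\sR}_F$ is a smooth $Grass_r$-bundle, so $\omega_{\wt{\sR}'_F}=\omega_{\wt{\sR}'_F/\wt{\sR}_F}\otimes\rho^*\omega_{\wt{\sR}_F}$. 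The relative tangent bundle is $\Hom(S,\sQ)$ for $S$ the universal rank-$r$ subbundle of $\sE_{x_1}\oplus\sE_{x_2}$; taking determinants and using $\det S\otimes\det\sQ={\rm det}\,\sE_{x_1}\otimes{\rm det}\,\sE_{x_2}$ yields
$$\omega_{\wt{\sR}'_F/\wt{\sR}_F}^{-1}=({\rm det}\,\sE_{x_1})^{-r}\otimes({\rm det}\,\sE_{x_2})^{-r}\otimes({\rm det}\,\sQ)^{2r}.$$

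Next I would feed in Proposition~\ref{prop4.2}. On $\wt{\sR}'_F$ the universal bundle $\wt\sF$ is the $\rho$-pullback of $\sE$, and the determinant morphisms satisfy ${\rm Det}_{\sH}={\rm Det}\circ\rho$, so $\rho^*{\rm Det}^*={\rm Det}_{\sH}^*$ on every point-wise determinant factor. Multiplying the two expressions and cancelling the common factors $({\rm det}\,R\pi_{\sH}\sE)^{-2r}$, $\bigotimes_{x\in I}\{\dots\}$ and $({\rm det}\,\sQ)^{2r}$, the assertion reduces to the identity on $\wt{\sR}'_F$
$${\rm Det}_{\sH}^*(\Theta_{J^d_{\wt X}}\otimes\Theta_y^{-2})=({\rm det}\,\sE_{x_1})^{r}\otimes({\rm det}\,\sE_{x_2})^{r}\otimes\bigotimes_q({\rm det}\,\sE_q)^{r-1}\otimes({\rm det}\,\sE_y)^{-(r-1)(2\wt g-2)-2r}.$$
The convenient feature is that the two copies of $({\rm det}\,R\pi_{J}\sL)$ cancel between $\Theta_{J^d_{\wt X}}$ and $\Theta_y^{2}$, so only the point-wise factors $\sL_{x_1},\sL_{x_2},\sL_q,\sL_y$ survive. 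Writing ${\rm Det}_{\sH}^*\sL_p\cong{\rm det}\,\sE_p\otimes M$ for each point $p$ with a common base-line-bundle ambiguity $M$, I would check that the total exponent of $M$ vanishes (using that $\sum_q q$ is a canonical divisor of degree $2\wt g-2$) and that the exponent of $\sL_y$ comes out correctly after Riemann–Roch (using $\wt\chi=d+r(1-\wt g)$); both are short numerical verifications.

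Finally I would extend the identity from $\wt{\sR}'_F$ to all of $\sH$. The right-hand side is a genuine line bundle on $\sH$ (determinant bundles together with a ${\rm Det}_{\sH}$-pullback), while $\omega_{\sH}$ is reflexive because $\sH$ is normal and Cohen–Macaulay; two reflexive rank-one sheaves agreeing on an open subset whose complement has codimension $\ge 2$ agree everywhere. Thus it suffices to know that the non-locally-free (torsion) locus $\sH\setminus\wt{\sR}'_F$ has codimension $\ge 2$, and the extension then simultaneously shows that $\omega_{\sH}^{-1}$ is invertible. I expect this codimension estimate for the torsion locus to be the main obstacle: it requires a dimension count for the stratification of $\sH$ by the torsion length of $\sE$ at $x_1,x_2$, whereas the Grassmann and Jacobian bookkeeping, though lengthy, is routine once the compatibility ${\rm Det}_{\sH}={\rm Det}\circ\rho$ is in hand.
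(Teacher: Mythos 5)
Your computation on the locally free locus is correct: the relative anticanonical of the Grassmannian bundle $\rho:\wt{\sR}'_F\to\wt{\sR}_F$ is indeed $({\rm det}\,\sE_{x_1})^{-r}\otimes({\rm det}\,\sE_{x_2})^{-r}\otimes({\rm det}\,\sQ)^{2r}$, and combining it with Proposition \ref{prop4.2} and the identity $\Theta_{J^d_{\wt X}}\otimes\Theta_y^{-2}=\sL_{x_1}^r\otimes\sL_{x_2}^r\otimes\bigotimes_q\sL_q^{r-1}\otimes\sL_y^{-(r-1)(2\wt g-2)-2r}$ (the twist ambiguity of $\sL$ does cancel, and the $\sL_y$-exponent checks out against $\wt\chi=d+r(1-\wt g)$) reproduces the stated formula on $\wt{\sR}'_F$. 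The bookkeeping you call routine is in fact routine.

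The genuine gap is the final extension step. You need ${\rm codim}(\sH\setminus\wt{\sR}'_F)\ge 2$, and this is false: the locus where $\sE$ acquires torsion of length one at $x_1$ or at $x_2$ is a \emph{divisor} in $\sH$. These are exactly the divisors $\hat\sD_1^t,\hat\sD_2^t$ appearing in Proposition \ref{prop5.2}, and the whole factorization machinery depends on their being of codimension one. A direct count confirms it: for torsion $_{x_1}\C^{s_1}\oplus\,_{x_2}\C^{s_2}$ the stratum of such quotients in the Quot scheme has codimension $s_1^2+s_2^2+rs$, while the Grassmannian of admissible quotients $Q$ (those on which the torsion injects) gains dimension $rs$, so the net codimension in $\sH$ is $s_1^2+s_2^2$, which equals $1$ when $(s_1,s_2)=(1,0)$ or $(0,1)$. (For $r=1$ this is the familiar picture of $\sP$ as a $\P^1$-bundle over $J^d_{\wt X}$ with the two torsion sections as boundary divisors.) Consequently your argument only pins down $\omega_{\sH}^{-1}$ up to a twist by $\sO(a_1\hat\sD_1^t+a_2\hat\sD_2^t)$, and you must additionally compute at the generic points of these two divisors --- e.g.\ via the explicit local (determinantal) model of $\sH$ where $E$ has a single torsion element at $x_1$ or $x_2$ --- to see that $a_1=a_2=0$. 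That local analysis, not the stratification count you anticipated, is the real content missing from the proposal.
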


We will prove $R^1{\rm Det}_*(\Theta_{\sP,\,\omega})=0$ and
$H^1(J^d_{\wt X}, {\rm Det}_*\Theta_{\sP,\,\omega})=0,$ which imply ${\rm H}^1(\sP,\Theta_{\sP,\,\omega})=0$. To recall the proof of
$H^1(J^d_{\wt X}, {\rm Det}_*\Theta_{\sP,\,\omega})=0$.
Let $\widetilde{\sR}'_F\subset\wt{\sR}'$, $\widetilde{\sR}_F\subset\wt{\sR}$ denote open set of locally
free quotients, for $\mu=(\mu_1,\cdots,\mu_r)$  with $0\le\mu_r\le\cdots\le\mu_1\le
k,$ let $$\{d_i=\mu_{r_i}-\mu_{r_i+1}\}_{1\le i\le l}$$ be the
subset of nonzero integers in
$\{\mu_i-\mu_{i+1}\}_{i=1,\cdots,r-1}.$ We define
$$r_i(x_1)=r_i,\quad r_i(x_2)=r-r_{l-i+1}, \quad l_{x_1}=l_{x_2}=l$$
$$\vec n(x_j)=(r_1(x_j),r_2(x_j)-r_1(x_j),
\cdots,r_{l_{x_j}}(x_j)-r_{l_{x_j}-1}(x_j)),$$
$${\wt\sR}_F^{\mu}=\underset{x\in
I\cup\{x_1,\,x_2\}}{\times_{\wt{\mathbf{Q}}_F}}Flag_{\vec
n(x)}(\sF_x)\xrightarrow{p^{\mu}}\wt\sR_F=\underset{x\in
I}{\times_{\wt{\mathbf{Q}}_F}}Flag_{\vec n(x)}(\sF_x).$$
Then, by Remark 4.2 of \cite{S1}, we have decomposition (on
$\wt{\sR}_F$)
\ga{5.4} {\rho_*(\hat{\Theta}_{\omega}')=\bigoplus_{\mu}p_*^{\mu}(\hat\Theta_{\mu})}
$\mu=(\mu_1,\cdots,\mu_r)$ runs through integers
$0\le\mu_1\le\cdots\mu_r\le k$ and
$$\hat\Theta_{\mu}=(det\,R\pi_{\widetilde{\sR}^{\mu}_{F}}\widetilde{\sF})^{-k}\otimes
\bigotimes_{x\in I\cup\{x_1,\,x_2\}}
\{(det\,\widetilde{\sF}_x)^{\alpha_x}\otimes\bigotimes^{l_x}_{i=1}
(det\,\sQ_{x,i})^{d_i(x)}\}\otimes
(det\,\widetilde{\sF}_y)^{\ell_y}$$ where $\,
r_i(x_1)=r_i$, $d_i(x_1)=d_i$, $l_{x_1}=l$, $\alpha_{x_1}=\mu_r$, $
r_i(x_2)=r-r_{l-i+1}$, $d_i(x_2)=d_{l-i+1}$, $l_{x_2}=l$,
$\alpha_{x_2}=k-\mu_1$ and for $j=1,2$, we set
$$\vec a(x_j)=\left(\mu_r,\mu_r+d_1(x_j),\cdots,\mu_r+
\sum^{l_{x_j}-1}_{i=1}d_i(x_j),\mu_r+\sum^{l_{x_j}}_{i=1}d_i(x_j)\right).$$
It is easy to check that $$\sum_{x\in
I\cup\{x_1,\,x_2\}}\sum_{i=1}^{l_x}d_i(x)r_i(x)+r\sum_{x\in
I\cup\{x_1,\,x_2\}}\alpha_x+r\ell_y=k\wt\chi.$$

\

For the data $\omega^{\mu}=\{k, \vec n(x),\vec a_i(x)\}_{x\in I\cup\{x_1,\,x_2\}}$, we choose $$\omega^{\mu}(I')=\{k, \vec n(x),\vec a_i(x)\}_{x\in I\cup\{x_1,\,x_2\}\cup I'}$$ such that
$(r-1)(\wt g-1)+\frac{2+|I\cup I'|}{k+2r}\ge 2$. Note that the
projection
$$p_I:
\widetilde{\sR}^{\mu}(I')=\wt{\sR}^{\mu}_F\times_{\wt{\bold
Q}_F}\left(\underset{x\in I'}{\times_{\wt{\bold
Q}_F}}Flag_{\vec n(x)}(\wt\sF_x)\right)\rightarrow \wt{\sR}^{\mu}_F$$ is a ${\rm SL}(V)$-invariant Flag
bundle, consider the commutative diagram
\ga{5.5} {\xymatrix{ \widetilde{\sR}^{\mu}(I') \ar[dr]_{\hat{{\rm Det}}^{I'}_{\mu}}
\ar[r]^{p_I}
                & \wt{\sR}^{\mu}_F \ar[d]^{\hat{{\rm Det}}_{\mu}}  \\
                & J^d_{\wt X}             }}
and write $p_I^*(\hat\Theta_{\mu})\otimes\omega^{-1}_{\wt{\sR}^{\mu}(I')}=\hat\Theta_{\bar\omega_{\mu}}\otimes(\hat{{\rm Det}}^{I'}_{\mu})^*(\Theta_y)^{-2}$.
Then
$$\aligned\hat\Theta_{\bar\omega_{\mu}}=&(det\,R\pi\widetilde{\sF})^{-\bar k}\otimes
\bigotimes_{x\in I\cup\{x_1,\,x_2\}\cup I'}
\{(det\,\widetilde{\sF}_x)^{\bar\alpha_x}\otimes\bigotimes^{l_x}_{i=1}
(det\,\sQ_{x,i})^{\bar d_i(x)}\}\\&\otimes
(det\,\widetilde{\sF}_y)^{\bar\ell_y+(r-1)(2\wt g-2)}\otimes\bigotimes_q(det\wt\sF_q)^{1-r}\endaligned$$
where $\bar k=k+2r$, $\bar\alpha_x=\alpha_x+n_{l_x+1}(x)-r$, $\bar \ell_y=2\wt\chi+\wt\ell_y$ and
$$\bar d_i(x)=d_i(x)+n_i(x)+n_{i+1}(x),$$
$\bar\omega_{\mu}=\{\bar k,\vec n(x),\vec {\bar a}(x)\}_{I\cup\{x_1,\,x_2\}\cup I'}$ with $\vec {\bar a}(x)=(\bar a_1(x),\bar a_2(x),\cdots,\bar a_{l_x+1}(x))$
(note: $\bar a_{l_x+1}(x)-\bar a_1(x)=\sum_{i=1}^{l_x}\bar d_i(x)=a_{l_x+1}(x)-a_1(x)+2r-n_1(x)-n_{l_x+1}(x)\le k+2r-n_1(x)-n_{l_x+1}(x)<\bar k$).

\

Let
$\wt{\sR}^{\mu}(I')^{ss}_{\bar\omega_{\mu}}\subset \wt{\sR}^{\mu}(I')$ be the open set
of GIT semi-stable points (respect to the polarization defined by
$\bar\omega_{\mu}$), then $${\rm
codim}(\wt{\sR}^{\mu}(I')^{ss}_{\bar\omega_{\mu}}\setminus
\wt{\sR}^{\mu}(I')^s_{\bar\omega_{\mu}})\ge (r-1)(\wt g-1)+\frac{2+|I\cup
I'|}{k+2r}\ge 2.$$ Let $\psi: \widetilde{\sR}^{\mu}(I')^{ss}_{\bar\omega_{\mu}}\to
\sU_{\wt X,\,\bar\omega_{\mu}}$ be the good quotient. Then
$\hat\Theta_{\bar\omega_{\mu}}$ descends to an ample line bundle
$\Theta_{\bar\omega_{\mu}}$ on $\sU_{\wt X,\,\bar\omega_{\mu}}$ and
$(\psi_*\omega_{\wt\sR^{\mu}(I)^{ss}_{\bar\omega_{\mu}}})^{inv}=\omega_{\sU_{\wt
X,\,\bar\omega_{\mu}}}$.

\begin{lem} Let ${\rm Det}^{I'}_{\mu}: \sU_{\wt X,\,\bar\omega_{\mu}}\to J^d_{\wt X}$ be the morphism induced by
$$\hat{{\rm Det}}^{I'}_{\mu}: \widetilde{\sR}^{\mu}(I')^{ss}_{\bar\omega_{\mu}}\to J^d_{\wt X}$$
and ${\rm Det}: \sP\to J^d_{\wt X}$ be the determinant morphism. Then
\ga{5.6}{{\rm Det}_*(\Theta_{\sP,\,\omega})=\bigoplus_{\mu}({\rm Det}^{I'}_{\mu})_*
(\Theta_{\bar\omega_{\mu}}\otimes({\rm Det}^{I'}_{\mu})^*(\Theta_y)^{-2}\otimes\omega_{\sU_{\wt X,\,\bar\omega_{\mu}}})}
where $\mu=(\mu_1,\cdots,\mu_r)$ runs through integers $0\le\mu_1\le\cdots\mu_r\le k$. In particular, we have
$$H^i(J^d_{\wt X}, {\rm Det}_*\Theta_{\sP,\,\omega})=0\quad \forall\,\,i>0.$$
\end{lem}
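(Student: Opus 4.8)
The plan is to establish the direct-sum decomposition \eqref{5.6} first and then read off the vanishing as an immediate consequence. Since ${\rm Det}\circ\psi=\hat{{\rm Det}}$ and $J^d_{\wt X}$ carries the trivial ${\rm SL}(V)$-action, pushing the descent relation $\Theta_{\sP,\,\omega}=(\psi_*\hat\Theta'_{\omega})^{inv}$ forward gives ${\rm Det}_*\Theta_{\sP,\,\omega}=(\hat{{\rm Det}}_*\hat\Theta'_{\omega})^{inv}$, where $\hat{{\rm Det}}:\wt{\sR}_{\omega}^{\prime ss}\to J^d_{\wt X}$ is the ${\rm SL}(V)$-invariant determinant morphism and invariants may be interchanged with this pushforward because ${\rm SL}(V)$ is reductive in characteristic zero. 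First I would enlarge the domain from the semistable locus to the full locally free locus: using that $\sH$ (hence $\wt{\sR}'_F$) is normal and Cohen-Macaulay, together with the codimension estimates of \cite{S1} (which give that the complement of $\wt{\sR}_{\omega}^{\prime ss}$ has codimension $\ge 2$), a Hartogs-type extension identifies $(\hat{{\rm Det}}_*\hat\Theta'_{\omega})^{inv}$ with the corresponding invariant pushforward over $\wt{\sR}'_F$.

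Next, since the determinant of $E$ depends only on $E$, the morphism $\hat{{\rm Det}}$ factors through $\rho$ via a morphism $\wt{\sR}_F\to J^d_{\wt X}$; applying the induced pushforward to the decomposition \eqref{5.4}, namely $\rho_*\hat\Theta'_{\omega}=\bigoplus_{\mu}p^{\mu}_*\hat\Theta_{\mu}$, turns ${\rm Det}_*\Theta_{\sP,\,\omega}$ into $\bigoplus_{\mu}(\hat{{\rm Det}}_{\mu})_*\hat\Theta_{\mu}$ after taking invariants, with $\hat{{\rm Det}}_{\mu}$ the determinant morphism on $\wt{\sR}^{\mu}_F$ of diagram \eqref{5.5}. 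For each $\mu$ I would then introduce the auxiliary points $I'$ through the flag bundle $p_I:\wt{\sR}^{\mu}(I')\to\wt{\sR}^{\mu}_F$; because the fibres of $p_I$ are products of partial flag varieties one has $(p_I)_*\sO=\sO$ and $R^{>0}(p_I)_*\sO=0$, so $(p_I)_*p_I^*\hat\Theta_{\mu}=\hat\Theta_{\mu}$ and hence $(\hat{{\rm Det}}_{\mu})_*\hat\Theta_{\mu}=(\hat{{\rm Det}}^{I'}_{\mu})_*p_I^*\hat\Theta_{\mu}$. Substituting the canonical-bundle identity $p_I^*\hat\Theta_{\mu}=\hat\Theta_{\bar\omega_{\mu}}\otimes(\hat{{\rm Det}}^{I'}_{\mu})^*(\Theta_y)^{-2}\otimes\omega_{\wt{\sR}^{\mu}(I')}$ and invoking the projection formula pulls the factor $(\Theta_y)^{-2}$ out of the pushforward.

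It then remains to descend the surviving factor. Restricting once more to the GIT-semistable locus $\wt{\sR}^{\mu}(I')^{ss}_{\bar\omega_{\mu}}$, which is legitimate by Hartogs because its complement has codimension $\ge 2$ by the estimates recorded just before \eqref{5.6}, I would use $\hat\Theta_{\bar\omega_{\mu}}=\psi^*\Theta_{\bar\omega_{\mu}}$ together with Knop's Lemma \ref{lem4.3}, which yields $(\psi_*\omega_{\wt{\sR}^{\mu}(I')^{ss}_{\bar\omega_{\mu}}})^{inv}=\omega_{\sU_{\wt X,\,\bar\omega_{\mu}}}$. By the projection formula $(\psi_*(\psi^*\Theta_{\bar\omega_{\mu}}\otimes\omega))^{inv}=\Theta_{\bar\omega_{\mu}}\otimes\omega_{\sU_{\wt X,\,\bar\omega_{\mu}}}$, and taking the invariant pushforward to $J^d_{\wt X}$ produces exactly the $\mu$-summand $({\rm Det}^{I'}_{\mu})_*(\Theta_{\bar\omega_{\mu}}\otimes({\rm Det}^{I'}_{\mu})^*(\Theta_y)^{-2}\otimes\omega_{\sU_{\wt X,\,\bar\omega_{\mu}}})$ of \eqref{5.6}. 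Summing over the same range of $\mu$ as in \eqref{5.4} completes the decomposition.

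For the vanishing I would argue summand by summand. By the projection formula each summand equals $({\rm Det}^{I'}_{\mu})_*(\Theta_{\bar\omega_{\mu}}\otimes\omega_{\sU_{\wt X,\,\bar\omega_{\mu}}})\otimes(\Theta_y)^{-2}$. Since $\Theta_{\bar\omega_{\mu}}$ is relatively ample for ${\rm Det}^{I'}_{\mu}$ and $\sU_{\wt X,\,\bar\omega_{\mu}}$ has only rational singularities, a relative Grauert-Riemenschneider/Kodaira vanishing gives $R^q({\rm Det}^{I'}_{\mu})_*(\Theta_{\bar\omega_{\mu}}\otimes\omega_{\sU_{\wt X,\,\bar\omega_{\mu}}})=0$ for $q>0$; as $(\Theta_y)^{-2}$ is pulled back, the Leray spectral sequence of ${\rm Det}^{I'}_{\mu}$ degenerates and identifies the $J^d_{\wt X}$-cohomology of the $\mu$-summand with $H^i(\sU_{\wt X,\,\bar\omega_{\mu}},\,\Theta_{\bar\omega_{\mu}}\otimes({\rm Det}^{I'}_{\mu})^*(\Theta_y)^{-2}\otimes\omega_{\sU_{\wt X,\,\bar\omega_{\mu}}})$. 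As $\Theta_{\bar\omega_{\mu}}\otimes({\rm Det}^{I'}_{\mu})^*(\Theta_y)^{-2}$ is ample (the analogue of Lemma 5.3 of \cite{S1}), the Kodaira-type vanishing theorem applicable to the rational-singularity variety $\sU_{\wt X,\,\bar\omega_{\mu}}$ kills these groups for all $i>0$, and summing over $\mu$ gives $H^i(J^d_{\wt X},{\rm Det}_*\Theta_{\sP,\,\omega})=0$. The main obstacle is bookkeeping: one must commute the ${\rm SL}(V)$-invariants with both pushforwards across the repeated passages between the full parameter spaces and their semistable loci, justifying each passage by a separate codimension-$\ge 2$ estimate and Hartogs extension and checking compatibility with the descent supplied by Lemma \ref{lem4.3}; a secondary point is securing the relative vanishing $R^{>0}({\rm Det}^{I'}_{\mu})_*=0$ over the singular base.
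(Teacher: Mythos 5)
Your proposal follows the paper's proof essentially step for step: reduce the invariant pushforward from $\wt{\sR}_{\omega}^{\prime ss}$ to $\wt{\sR}'_F$ by the codimension estimates, apply the decomposition \eqref{5.4} to get $\bigoplus_{\mu}(\hat{{\rm Det}}_{\mu})_*\hat\Theta_{\mu}$, pass through the flag bundle $p_I$ and the canonical-bundle identity to obtain \eqref{5.7}, and descend via $\psi$ and Knop's lemma to reach \eqref{5.6}, with the vanishing coming from ampleness of $\Theta_{\bar\omega_{\mu}}\otimes({\rm Det}^{I'}_{\mu})^*(\Theta_y)^{-2}$. Your added detail on the Leray spectral sequence and the relative vanishing $R^{>0}({\rm Det}^{I'}_{\mu})_*=0$ usefully fills in what the paper dismisses as following "clearly," but the route is the same.
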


\begin{proof} Note ${\rm Det}_*(\Theta_{\sP,\,\omega})=\{({\rm Det}_{{\wt{\sR}}^{\prime\,ss}})_*{\hat\Theta}'_{\omega}\}^{inv}=\{({\rm Det}_{{\wt{\sR}_F}^{\prime}})_*{\hat\Theta}'_{\omega}\}^{inv}$ and
$({\rm Det}_{{\wt{\sR}_F}^{\prime}})_*{\hat\Theta}'_{\omega}=({\rm Det}_{\wt{\sR}_F})_*\rho_*{\hat\Theta}'_{\omega}$, by the decomposition \eqref{5.4}, we have
$$({\rm Det}_{{\wt{\sR}_F}^{\prime}})_*{\hat\Theta}'_{\omega}=\bigoplus_{\mu}(\hat{{\rm Det}}_{\mu})_*\hat\Theta_{\mu}$$
where $\hat{{\rm Det}}_{\mu}: \wt{\sR}^{\mu}_F\to J^d_{\wt X}$ satisfies the commutative diagram
$$ \xymatrix{ \wt{\sR}^{\mu}_F \ar[dr]_{\hat{{\rm Det}}_{\mu}}
\ar[r]^{p^{\mu}}
                & \wt{\sR}_F \ar[d]^{{\rm Det}_{\wt{\sR}_F}}  \\
                & J^d_{\wt X}             }$$
By diagram \eqref{5.5} and $p_I^*(\hat\Theta_{\mu})=\hat\Theta_{\bar\omega_{\mu}}\otimes(\hat{{\rm Det}}^{I'}_{\mu})^*(\Theta_y)^{-2}\otimes\omega_{\wt{\sR}^{\mu}(I')}$, we have
\ga{5.7}{(\hat{{\rm Det}}_{\mu})_*\hat\Theta_{\mu}=(\hat{{\rm Det}}^{I'}_{\mu})_*(\hat\Theta_{\bar\omega_{\mu}}\otimes(\hat{{\rm Det}}^{I'}_{\mu})^*(\Theta_y)^{-2}\otimes\omega_{\wt{\sR}^{\mu}(I')}).}
Recall $\psi: \widetilde{\sR}^{\mu}(I')^{ss}_{\bar\omega_{\mu}}\to\sU_{\wt X,\,\bar\omega_{\mu}}$, $\hat\Theta_{\bar\omega_{\mu}}=\psi^*\Theta_{\bar\omega_{\mu}}$,
$(\psi_*\omega_{\wt\sR^{\mu}(I')^{ss}_{\bar\omega_{\mu}}})^{inv}=\omega_{\sU_{\wt
X,\,\bar\omega_{\mu}}},$ then we have the decomposition \eqref{5.6}. The vanishing result follows the decomposition clearly since
$\Theta_{\bar\omega_{\mu}}\otimes({\rm Det}^{I'}_{\mu})^*(\Theta_y)^{-2}$ is ample.
\end{proof}

To prove $R^1{\rm Det}_*(\Theta_{\sP,\,\omega})=0$, the idea is same with Section 4. Let
$$\wt{\sR}(I')=\underset{x\in I\cup I'}{\times_{\wt{\bold Q}}}Flag_{\vec n(x)}(\sF_x)\xrightarrow{p_I} \wt{\sR}
=\underset{x\in I}{\times_{\wt{\bold Q}}}Flag_{\vec n(x)}(\sF_x),$$
$$\wt{\sR}'(I')=Grass_r(\sF_{x_1}\oplus\sF_{x_2})\times_{\widetilde{\bold
Q}}\widetilde{\sR}(I')\xrightarrow{p_I} \wt{\sR}'=Grass_r(\sF_{x_1}\oplus\sF_{x_2})\times_{\widetilde{\bold
Q}}\widetilde{\sR}$$
be the projection, $\sH(I')\subset\wt{\sR}'(I')$, $\sH\subset \wt{\sR}'$ be the open set defined in Notation \ref{nota2.21}.
By Proposition \ref{prop5.1},  we have
\ga{5.8} {p_I^*(\hat\Theta'_{\omega})\otimes\omega^{-1}_{\sH(I')}=
\hat{\Theta}_{\bar\omega}'\otimes{\rm Det}_{\sH(I')}^*(\Theta_{J^d_{\wt X}}^{-1})}
with $\bar\omega=(d, r,
\bar k, \bar\ell_y, \{\bar\alpha_x,\,\bar d_i(x)\}_{x\in I\cup J,1\le i\le
l_x})$ and
$$\aligned \hat{\Theta}_{\bar\omega}'=&(det\,R\pi_{\sH(I')}\sE)^{-\bar k}\otimes\bigotimes_{x\in I\cup I'}
\lbrace(det\,\sE_x)^{\bar\alpha_x}\otimes\bigotimes^{l_x}_{i=1}
(det\,\sQ_{x,i})^{\bar
d_i(x)}\rbrace\\&\otimes(det\,\sE_y)^{\bar\ell_y}
\otimes(det\,\sQ)^{\bar k}\otimes(det\,\sE_y)^{-\bar k}\endaligned$$
where $\bar k=k+2r$, $\bar\alpha_x=\alpha_x+n_{l_x+1}(x)-r$,
$\bar\ell_y=\wt\ell_y+2\wt\chi$, and
$$\bar d_i(x)=d_i(x)+n_i(x)+n_{i+1}(x).$$

Let $\wt{\sR}'(I')^{ss}_{\bar\omega}\subset \sH(I')$
be the open set of GIT semi-stable points (respect to $\bar\omega$),
$\psi: \wt{\sR}'(I')^{ss}_{\bar\omega}\to \sP_{\bar\omega}:=\wt{\sR}'(I)^{ss}_{\bar\omega}//{\rm SL}(V)$
be the quotient map. There is an ample line bundle $\Theta_{\sP,\,\bar\omega}$ on $\sP_{\bar\omega}$ such that
$\hat{\Theta}_{\bar\omega}'=\psi^*(\Theta_{\sP,\,\bar\omega})$, and $\omega_{\sP_{\bar\omega}}=(\psi_*\omega_{\wt{\sR}'(I')^{ss}_{\bar\omega}})^{inv}$ if
\ga{5.9}{(r-1)(\wt g-1)+\frac{|I|+|I'|}{k+2r}\ge 2}
where we need essentially the estimate of codimension from \cite{S1}.

\begin{prop}[Proposition 5.2 of \cite{S1}]\label{prop5.2} Let $\sD_1^f=\hat\sD_1\cup\hat\sD_1^t$
and $\sD_2^f=\hat\sD_2\cup\hat\sD_2^t$, where $\hat\sD_i\subset \wt{\sR}'$ is the Zariski closure of
$\hat\sD_{F,\,1}\subset\wt{\sR}'_F$ consisting of $(E,Q)\in\wt{\sR}'_F$ that $E_{x_i}\to Q$ is not an isomorphism, and
${\hat\sD}_1^t\subset\wt{\sR}'$ (rep. ${\hat\sD}_2^t\subset\wt{\sR}'$) consists of $(E,Q)\in\wt{\sR}'$ such that $E$ is not locally
free at $x_2$ (resp. at $x_1$). Then \begin{itemize}
\item [(1)] ${\rm codim}(\sH\setminus\wt\sR_{\omega}^{\prime ss})>(r-1)\tilde g+\frac{|I|}{k}.$
\item [(2)] the complement in $\wt\sR_{\omega}^{\prime ss}\setminus\{\sD_1^f\cup\sD^f_2\}$ of the set
$\wt\sR_{\omega}^{\prime s}$ of stable points has codimension $\ge
(r-1)\tilde g+\frac{|I|}{k}$.\end{itemize}
\end{prop}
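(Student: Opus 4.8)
The plan is to stratify each bad locus by the numerical type of a destabilizing (for (1)) or an equality-achieving (for (2)) subsheaf, bound the dimension of every stratum by a relative count over parameter schemes for the sub- and quotient sheaves, and then minimize the resulting codimension over the finitely many admissible types. For (1), Proposition \ref{prop2.22} says that $(E,Q)\in\sH$ lies in $\sH\setminus\wt\sR_{\omega}^{\prime ss}$ exactly when some nontrivial $F\subset E$ has $s(F)>0$; choosing $F$ with maximal $s(F)$, its type $\tau$ — the ranks $(r(F),r(G))$ with $G=E/F$, the Euler characteristics, and the induced flag- and $Q$-data — ranges over a bounded, hence finite, list. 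For (2) one stratifies $\wt\sR_{\omega}^{\prime ss}\setminus(\wt\sR_{\omega}^{\prime s}\cup\sD_1^f\cup\sD_2^f)$ by the type of a proper $F\subset E$ with $s(F)=0$. In both cases it suffices to bound from below the codimension in $\wt\sR'$ of the locus $Z_\tau$ of generalized parabolic sheaves admitting a sub-object of type $\tau$.

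To estimate $\dim Z_\tau$ I would fiber it over a product $\sA\times\sB$ of two parameter schemes of the same shape as $\wt\sR$ (relative flag schemes over Quot schemes) attached to $F$ and to $G=E/F$. Over a point of $\sA\times\sB$ the fiber of $Z_\tau$ is assembled from the extension space ${\rm Ext}^1(G,F)$ producing $E$, the splitting of the parabolic flags at $x\in I$ between $F$ and $G$, and the choice of $r$-dimensional quotient $E_{x_1}\oplus E_{x_2}\to Q$ inducing the prescribed $Q^F$. Subtracting this from $\dim\wt\sR'$ — built at locally free points from $\dim\wt{\mathbf Q}=(\dim V)^2+r^2(\tilde g-1)$, the flag dimensions, and the contribution $r^2=\dim Grass_r(\C^{2r})$ of the Grassmannian factor — the net codimension of $Z_\tau$ is $-\chi_{\wt X}(F,G)$, corrected by a nonnegative parabolic-weight defect and by a $Q$-term. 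The key point is that this $Q$-term equals $r(F)r(G)$: on the open part where $E$ is locally free at $x_1,x_2$ and $Q$ glues $E_{x_1}$ to $E_{x_2}$ through an isomorphism $\phi$, the sub-object $F$ survives on the nodal curve only when $\phi(F_{x_1})=F_{x_2}$, a block-triangularity condition of codimension $r(F)r(G)$ inside $GL_r$. Equivalently, this $Q$-term upgrades the genus $\tilde g$ of $\wt X$ to the arithmetic genus $g=\tilde g+1$ of $X$, so that $\codim Z_\tau=-\chi_X(F,G)+(\text{parabolic defect})$ is computed directly on $X$.

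Now Riemann--Roch on $X$ gives $-\chi_X(F,G)=r(F)r(G)(g-1)+\big(r(G)\,{\rm par}\chi(F)-r(F)\,{\rm par}\chi(G)\big)$ up to the quotient $Q^F,Q^G$ corrections, and the destabilizing condition $s(F)>0$ (resp. the equality $s(F)=0$) forces the degree/weight part to be $>0$ (resp. $\ge0$), while $r(F)+r(G)=r$ forces $r(F)r(G)\ge r-1$. Hence the leading term is $\ge(r-1)(g-1)=(r-1)\tilde g$, and the parabolic weights contribute at least $\tfrac1k$ at each of the $|I|$ points. Assembling these gives $\codim Z_\tau>(r-1)\tilde g+\tfrac{|I|}{k}$ in case (1) and $\ge(r-1)\tilde g+\tfrac{|I|}{k}$ in case (2). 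The excision of $\sD_1^f\cup\sD_2^f$ in (2) is exactly what guarantees that $E$ is locally free at $x_1,x_2$ and that both $E_{x_i}\to Q$ are isomorphisms, so that the clean gluing picture — and hence the full gain $r(F)r(G)$ promoting $(\tilde g-1)$ to $\tilde g$ — is available and the equality-achieving $F$ descends to a genuine sub-bundle on $X$.

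The step I expect to be the main obstacle is controlling the strata on which $E$ fails to be locally free at one of $x_1,x_2$ (the loci $\hat\sD_i^t$) or on which $E_{x_i}\to Q$ degenerates (the loci $\hat\sD_i$): there the gluing description breaks down, the $Q$-term must be recomputed from the torsion of $E$ and its image in $Q$ (using the injection $\tau_{x_1}\oplus\tau_{x_2}\hookrightarrow Q$ of Lemma \ref{lem2.20}), and one must verify that such degenerations can only raise the codimension. Securing a uniform bound that survives the worst type — the minimal rank gap $r(F)r(G)=r-1$ combined with torsion concentrated at a single node — is the delicate heart of the estimate; it is precisely to sidestep these degenerate strata in (2) that the divisors $\sD_1^f,\sD_2^f$ are removed rather than merely bounded.
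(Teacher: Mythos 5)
Your proposal follows essentially the same route as the paper's proof (carried out in \cite{S1} for this statement and mirrored for the reducible analogue in Proposition 6.3 here): stratify the non-(semi)stable locus by the numerical type of a destabilizing or equality-achieving subsheaf $F$, parametrize each stratum by Quot schemes, extension spaces, flag data and the Grassmannian incidence condition on $Q^F$, and bound the codimension from below via the (semi)stability inequality, with the incidence condition supplying the extra $r(F)r(G)$ that promotes $(r-1)(\tilde g-1)$ to $(r-1)\tilde g$. The only caveat is that the step you flag as the main obstacle --- checking that the torsion strata ($s_1,s_2>0$) and the intermediate values of $t=\dim Q^F$ never lower the bound --- is exactly where the paper's computation concentrates (the function $f(r_1,r_2,s_1,s_2,t)$ in the proof of Proposition 6.3), and it is not a formality: in the reducible analogue that minimum drops to $-\tfrac{(r-1)^2}{4}$ and genuinely weakens the estimate, so the favorable outcome in the irreducible case must be verified by the explicit minimization rather than asserted.
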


\begin{lem}\label{lem5.3} When $(r-1)\wt g+\frac{|I|}{k}\ge 2$ and $I'\subset \wt X\setminus I$ satisfying \eqref{5.9},
\ga{5.10}{H^1(\sP_{\omega},\Theta_{\sP,\,\omega})=
H^1(\sP_{\bar\omega},\Theta_{\sP,\,\bar\omega}\otimes{\rm
Det}_J^*(\Theta_{J^d_{\wt X}}^{-1})\otimes\omega_{\sP_{\bar\omega}})}
where ${\rm Det}_J: \sP_{\bar\omega}\to J^d_{\wt X}$ is induced by ${\rm Det}_{\sH(I')}:\sH(I')\to J^d_{\wt X}$.
\end{lem}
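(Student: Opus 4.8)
The plan is to run the very same argument used for Theorem~\ref{thm4.4} and Corollary~\ref{cor4.6}, but now on the GPS side, transferring the computation of $H^1(\sP_{\omega},\Theta_{\sP,\,\omega})$ onto the quotient $\sP_{\bar\omega}$ by means of the canonical-bundle identity \eqref{5.8}. The four mechanisms I will chain together are: (a) the equality, on a good quotient, of cohomology with the ${\rm SL}(V)$-invariants of the cohomology upstairs; (b) Hartogs-type extension across a complement of codimension $>2$, available because $\sH$ (and hence the flag bundle $\sH(I')$ over it) is normal and Cohen--Macaulay with rational singularities; (c) the vanishing $R^jp_{I*}\sO=0$ for $j>0$ for the flag bundle $p_I$; and (d) Knop's Lemma~\ref{lem4.3} to identify the dualizing sheaf of $\sP_{\bar\omega}$.

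Concretely, first I would write $H^1(\sP_{\omega},\Theta_{\sP,\,\omega})=H^1(\wt{\sR}_{\omega}^{\prime ss},\hat\Theta'_{\omega})^{inv}$, and then, since ${\rm codim}(\sH\setminus\wt{\sR}_{\omega}^{\prime ss})>(r-1)\wt g+\frac{|I|}{k}\ge 2$ by Proposition~\ref{prop5.2}(1), extend across the complement to obtain $H^1(\sH,\hat\Theta'_{\omega})^{inv}$. Pulling back along the ${\rm SL}(V)$-invariant flag bundle $p_I\colon\sH(I')\to\sH$ and using $p_{I*}\sO=\sO$ together with $R^jp_{I*}\sO=0$ $(j>0)$ identifies this with $H^1(\sH(I'),p_I^*\hat\Theta'_{\omega})^{inv}$. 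Now I substitute \eqref{5.8}, namely $p_I^*\hat\Theta'_{\omega}=\hat\Theta'_{\bar\omega}\otimes{\rm Det}_{\sH(I')}^*(\Theta_{J^d_{\wt X}}^{-1})\otimes\omega_{\sH(I')}$. A second application of the codimension estimate, this time for the $\bar\omega$-semistable locus and guaranteed by \eqref{5.9}, lets me restrict to $\wt{\sR}'(I')^{ss}_{\bar\omega}$; there $\hat\Theta'_{\bar\omega}=\psi^*\Theta_{\sP,\,\bar\omega}$, the morphism ${\rm Det}_{\sH(I')}$ factors as ${\rm Det}_J\circ\psi$, and $\omega_{\sH(I')}$ restricts to the dualizing sheaf of the open locus. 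Finally, descending along the good quotient $\psi$ via the projection formula and using $(\psi_*\omega_{\wt{\sR}'(I')^{ss}_{\bar\omega}})^{inv}=\omega_{\sP_{\bar\omega}}$ (Lemma~\ref{lem4.3}, valid under \eqref{5.9}) produces the right-hand side of \eqref{5.10}.

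I expect the main obstacle to be the bookkeeping of the two codimension requirements so that the extension statement holds for $H^1$ rather than merely for $H^0$: each extension step needs the complement to have codimension strictly greater than $2$, and one must check that Proposition~\ref{prop5.2}(1) combined with $(r-1)\wt g+\frac{|I|}{k}\ge2$, and separately with \eqref{5.9}, indeed delivers this in both the $\omega$ and the $\bar\omega$ settings (recalling that on the GPS side the estimate carries the full $(r-1)\wt g$ rather than $(r-1)(\wt g-1)$). The second delicate point is the descent of the canonical sheaf: one must verify that the hypotheses of Lemma~\ref{lem4.3}---generic freeness, the equidimensionality $\dim{\rm SL}(V)=\dim\wt{\sR}'(I')^{ss}_{\bar\omega}-\dim\sP_{\bar\omega}$, and the codimension~$\ge2$ condition on the non-free (non-stable) locus---hold for $\psi$, which is precisely what Proposition~\ref{prop5.2}(2) and \eqref{5.9} are arranged to supply. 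The remaining verifications, that $\hat\Theta'_{\bar\omega}$ descends to the ample $\Theta_{\sP,\,\bar\omega}$ and that ${\rm Det}^*_{\sH(I')}\Theta^{-1}_{J^d_{\wt X}}$ descends to ${\rm Det}_J^*\Theta^{-1}_{J^d_{\wt X}}$, are routine and follow because ${\rm Det}$ is ${\rm SL}(V)$-invariant and factors through the quotient.
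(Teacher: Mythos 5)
Your proposal is correct and follows essentially the same route as the paper: the same chain of identifications (good-quotient invariants, Hartogs/local-cohomology extension across the codimension $\ge 3$ complements supplied by Proposition \ref{prop5.2}(1) under the stated hypotheses, pullback along the flag bundle, substitution of \eqref{5.8}, restriction to the $\bar\omega$-semistable locus, and descent of the dualizing sheaf via Knop's lemma using Proposition \ref{prop4.1}(1) and Proposition \ref{prop5.2}(2)). The two ``delicate points'' you flag are exactly the ones the paper's proof relies on, so nothing is missing.
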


\begin{proof} By using
Proposition \ref{prop4.1} (1) and Proposition \ref{prop5.2} (2), we have
$$(\psi_*\omega_{\wt{\sR}'(I')^{ss}_{\bar\omega}})^{inv}=\omega_{\sP_{\bar\omega}}$$ (cf. Lemma 5.6
of \cite{S1}). By Proposition \ref{prop5.2} (1), we have
$${\rm codim}(\sH\setminus\wt\sR_{\omega}^{\prime ss})\ge 3,\quad {\rm codim}(\sH(I')\setminus{\wt\sR}'(I')_{\bar\omega}^{ss})\ge 3$$
for any data $\omega$. Thus, by theory of local cohomology, we have
$$\aligned H^1(\sP_{\omega},\Theta_{\sP,\,\omega})&=H^1(\wt{\sR}^{\prime
ss}_{\omega},\hat\Theta'_{\omega})^{inv}=H^1(\sH,\hat\Theta'_{\omega})^{inv}=H^1(\sH(I'),p^*_I(\hat\Theta'_{\omega}))^{inv}\\
&=H^1(\sH(I'),\hat\Theta'_{\bar\omega}\otimes{\rm
Det}_{\sH(I')}^*(\Theta_{J^d_{\wt X}}^{-1})\otimes\omega_{\sH(I)})^{inv}\\
&=H^1(\wt{\sR}'(I')^{ss}_{\bar\omega},\hat\Theta'_{\bar\omega}\otimes{\rm
Det}_{\wt{\sR}'(I')^{ss}_{\bar\omega}}^*(\Theta_{J^d_{\wt X}}^{-1})\otimes\omega_{\wt{\sR}'(I')^{ss}_{\bar\omega}})^{inv}
\\&=H^1(\wt{\sR}'(I')^{ss}_{\bar\omega},\psi^*(\Theta_{\sP,\,\bar\omega}\otimes{\rm
Det}_J^*(\Theta_{J^d_{\wt X}}^{-1}))\otimes\omega_{\wt{\sR}'(I')^{ss}_{\bar\omega}})^{inv}
\\&=H^1(\sP_{\bar\omega},\Theta_{\sP,\,\bar\omega}\otimes{\rm
Det}_J^*(\Theta_{J^d_{\wt X}}^{-1})\otimes(\psi_*\omega_{\wt{\sR}'(I')^{ss}_{\bar\omega}})^{inv})
\\&=H^1(\sP_{\bar\omega},\Theta_{\sP,\,\bar\omega}\otimes{\rm
Det}_J^*(\Theta_{J^d_{\wt X}}^{-1})\otimes\omega_{\sP_{\bar\omega}}).
\endaligned$$

\end{proof}

When $X$ is irreducible, $\Theta_{\sP,\,\bar\omega}\otimes{\rm
Det}_J^*(\Theta_{J^d_{\wt X}}^{-1})$ may not be an ample line bundle on $\sP_{\bar\omega}$.
But, for any $L\in J^d_{\wt X}$, on the fiber $\sP^L_{\omega}={\rm Det}^{-1}(L)$ of
$${\rm Det}:\sP_{\omega}\to J^d_{\wt X}$$
and the fiber $\sP^L_{\bar\omega}={\rm Det}_J^{-1}(L)$ of ${\rm Det}_J:\sP_{\bar\omega}\to J^d_{\wt X}$
we have  $$H^1(\sP^L_{\omega},\Theta^L_{\sP,\,\omega})=
H^1(\sP^L_{\bar\omega},\Theta^L_{\sP,\,\bar\omega}\otimes\omega_{\sP^L_{\bar\omega}})=0$$
when $(r-1)(g-1)+\frac{|I|}{k}\ge 2$, which means ${\rm R}^1{\rm
Det}_*(\Theta_{\sP,\,\omega})=0.$

\begin{thm}[Theorem 5.3 of \cite{S1}]\label{thm5.4} If $X$ is an irreducible curve of genus
$g$ with one node and $(r-1)(g-1)+\frac{|I|}{k}\ge 2$, then
$$H^1(\sU_{X},\Theta_{\sU_X,\,\omega})\cong H^1(\sP_{\omega},\Theta_{\sP,\,\omega})=0.$$
\end{thm}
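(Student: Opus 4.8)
The plan is to transfer the problem to the normalization $\sP_{\omega}$ of $\sU_X$ and then to prove the vanishing there by pushing forward along the determinant morphism ${\rm Det}\colon\sP_{\omega}\to J^d_{\wt X}$. First I would invoke the injection $\phi^*\colon H^1(\sU_X,\Theta_{\sU_X,\,\omega})\hookrightarrow H^1(\sP_{\omega},\Theta_{\sP,\,\omega})$ coming from Lemma 5.5 of \cite{S1}, which reduces the whole statement to proving $H^1(\sP_{\omega},\Theta_{\sP,\,\omega})=0$. Note that, since $\wt g=g-1$ for an irreducible one-nodal $X$, the hypothesis $(r-1)(g-1)+\frac{|I|}{k}\ge 2$ is exactly the condition $(r-1)\wt g+\frac{|I|}{k}\ge 2$ under which the codimension estimates of Proposition \ref{prop5.2} and the identity \eqref{5.10} of Lemma \ref{lem5.3} become available.

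Next I would exploit the Leray spectral sequence of ${\rm Det}$, whose low-degree exact sequence reads
\[
0\to H^1(J^d_{\wt X},{\rm Det}_*\Theta_{\sP,\,\omega})\to H^1(\sP_{\omega},\Theta_{\sP,\,\omega})\to H^0(J^d_{\wt X},R^1{\rm Det}_*\Theta_{\sP,\,\omega}),
\]
so that it suffices to annihilate the two outer terms. The left-hand term vanishes by the preceding Lemma: the decomposition \eqref{5.6} writes ${\rm Det}_*\Theta_{\sP,\,\omega}$ as a direct sum of the pushforwards $({\rm Det}^{I'}_{\mu})_*(\Theta_{\bar\omega_{\mu}}\otimes({\rm Det}^{I'}_{\mu})^*(\Theta_y)^{-2}\otimes\omega_{\sU_{\wt X,\,\bar\omega_{\mu}}})$, in each of which $\Theta_{\bar\omega_{\mu}}\otimes({\rm Det}^{I'}_{\mu})^*(\Theta_y)^{-2}$ is ample, and Kodaira-type vanishing then forces $H^1(J^d_{\wt X},{\rm Det}_*\Theta_{\sP,\,\omega})=0$.

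It remains to kill the right-hand term, which I would do by proving the stronger statement $R^1{\rm Det}_*\Theta_{\sP,\,\omega}=0$ via cohomology and base change. Since $\sP_{\omega}$ is Cohen-Macaulay, normal with rational singularities and ${\rm Det}$ is proper, the relative $H^1$ is detected on fibres, so it is enough to show $H^1(\sP^L_{\omega},\Theta^L_{\sP,\,\omega})=0$ for every $L\in J^d_{\wt X}$. Restricting the isomorphism \eqref{5.10} to a fibre, and using that ${\rm Det}_J^*(\Theta_{J^d_{\wt X}}^{-1})$ is trivial on the fibres of ${\rm Det}_J$, one obtains $H^1(\sP^L_{\omega},\Theta^L_{\sP,\,\omega})\cong H^1(\sP^L_{\bar\omega},\Theta^L_{\sP,\,\bar\omega}\otimes\omega_{\sP^L_{\bar\omega}})$, in which $\Theta^L_{\sP,\,\bar\omega}$ is now genuinely ample on the projective fibre $\sP^L_{\bar\omega}$. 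Kodaira vanishing on $\sP^L_{\bar\omega}$ then gives the fibrewise vanishing, hence $R^1{\rm Det}_*\Theta_{\sP,\,\omega}=0$; feeding both vanishings into the low-degree sequence completes the proof.

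The step I expect to be the main obstacle is precisely this last relative vanishing. Globally $\Theta_{\sP,\,\bar\omega}\otimes{\rm Det}_J^*(\Theta_{J^d_{\wt X}}^{-1})$ is \emph{not} ample, because ${\rm Det}_J^*(\Theta_{J^d_{\wt X}}^{-1})$ is pulled back from $J^d_{\wt X}$ and is negative there, so no global Kodaira vanishing applies on $\sP_{\bar\omega}$; this is the genuinely new difficulty relative to the smooth-curve case of Section 4, where the ampleness survived globally. What saves the argument is that this offending factor restricts trivially to every fibre of ${\rm Det}_J$, so ampleness of $\Theta^L_{\sP,\,\bar\omega}$ is restored fibrewise. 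The care required is in verifying the base-change hypotheses — properness of ${\rm Det}$, flatness of $\Theta_{\sP,\,\omega}$ over $J^d_{\wt X}$ (which should follow from the Cohen-Macaulayness of $\sP_{\omega}$, the regularity of $J^d_{\wt X}$ and the equidimensionality of ${\rm Det}$ by miracle flatness), and the constancy, here vanishing, of the fibrewise $H^1$ — so that the theorem on cohomology and base change indeed yields $R^1{\rm Det}_*\Theta_{\sP,\,\omega}=0$.
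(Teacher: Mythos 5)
Your proposal follows essentially the same route as the paper: reduce to $\sP_{\omega}$ via the injection $\phi^*$, split $H^1(\sP_{\omega},\Theta_{\sP,\,\omega})$ by the Leray sequence of ${\rm Det}$, kill $H^1(J^d_{\wt X},{\rm Det}_*\Theta_{\sP,\,\omega})$ by the decomposition \eqref{5.6} together with ampleness, and kill $R^1{\rm Det}_*\Theta_{\sP,\,\omega}$ by restricting \eqref{5.10} to the fibres of ${\rm Det}_J$, where the pulled-back factor ${\rm Det}_J^*(\Theta_{J^d_{\wt X}}^{-1})$ trivializes and Kodaira-type vanishing applies. Your diagnosis of the main obstacle (loss of global ampleness, recovered fibrewise) and the base-change bookkeeping you add are consistent with, and slightly more explicit than, the paper's own argument.
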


\begin{rmk}\label{rmk5.5} The condition $(r-1)(g-1)+\frac{|I|}{k}\ge 2$ is used only for the proof of
$H^1(\wt{\sR}^{\prime ss}_{\omega},\hat\Theta'_{\omega})^{inv}=H^1(\sH,\hat\Theta'_{\omega})^{inv}$
in Lemma \ref{lem5.3}, which may hold unconditional. In fact, we conjecture that for any $i\ge 0$ and $\omega$,
$$H^i(\wt{\sR}^{\prime ss}_{\omega},\hat\Theta'_{\omega})^{inv}=H^i(\sH,\hat\Theta'_{\omega})^{inv}.$$
If the conjecture is true, $H^i(\sP^L_{\omega},\Theta^L_{\sP,\,\omega})=0$ holds unconditional for $i>0$, which
implies that $H^i(\sP_{\omega},\Theta_{\sP,\,\omega})=0$ for $i>0$.
\end{rmk}

\section {generalized parabolic sheaves on reducible nodal curves}

A natural idea to prove a vanishing theorem
$H^1(\sU_{X},\Theta_{\sU_X,\,\omega})=0$ for $X=X_1\cup X_2$ is to extend above method to reducible curves.
In this section, we give estimates of various codimension and compute canonical line bundle of moduli space of generalized parabolic sheaves on a reducible curve.
However, the estimate is not good enough to prove a vanishing theorem via the method in last section.

Let $\chi_1$ and $\chi_2$ be integers such that
$\chi_1+\chi_2-r=\chi$, and fix, for $i=1,2$, the polynomials
$P_i(m)=c_irm+\chi_i$ and $\sW_i=\sO_{X_i}(-N)$ where
$\sO_{X_i}(1)=\sO (1)|_{X_i}$ has degree $c_i$. Write $V_i=\Bbb
C^{P_i(N)}$ and consider the Quot schemes $Quot(V_i\otimes\sW_i,
P_i)$, let $\wt{\textbf{Q}}_i$ be the closure of the open set
$$\mathbf{Q}_i=\left\{\aligned&\text{$V_i\otimes\sW_i\to E_i\to 0$, with
$H^1(E_i(N))=0$ and}\\&\text{$V_i\to H^0(E_i(N))$ induces an
isomorphism}\endaligned\right\},$$ we have the universal quotient
$V_i\otimes\sW_i\to \sF^i\to 0$ on $X_i\times \wt{\textbf{Q}}_i$ and
the relative flag scheme
$$\sR_i=\underset{x\in I_i}{\times_{\wt{\textbf{Q}}_i}}
Flag_{\vec n(x)}(\sF^i_x)\to \wt{\textbf{Q}}_i.$$ Let
$\sF=\sF^1\oplus\sF^2$ denote direct sum of pullbacks of $\sF^1$,
$\sF^2$ on $$\wt X\times
(\wt{\textbf{Q}}_1\times\wt{\textbf{Q}}_2)=(X_1\times\wt{\textbf{Q}}_1)\sqcup(X_2\times\wt{\textbf{Q}}_2).$$
Let $\sE$ be the pullback of $\sF$ to $\wt
X\times(\sR_1\times\sR_2)$, and
$$\rho:\widetilde{\sR}':=Grass_r(\sE_{x_1}\oplus\sE_{x_2})\to\wt\sR:=\sR_1\times\sR_2\to
\wt{\textbf{Q}}:=\wt{\textbf{Q}}_1\times\wt{\textbf{Q}}_2.$$
When $m$ is large enough, we have a $G$-equivariant embedding
$$\wt{\sR}'\hookrightarrow \mathbf{G}'=Grass_{\wt P(m)}(\wt V\otimes W_m)\times\bold {Flag}\times Grass_r(\wt V\otimes W_m).$$
For $\omega=(r,\chi_1,\chi_2,\{\vec n(x),\,\vec a(x)\}_{x\in I},\mathcal{O}(1),k)$, give $\mathbf{G}'$
polarization
\ga{6.1}{\frac{\ell+kcN}{c(m-N)}\times\prod_{x\in
I} \{d_1(x),\cdots,d_{l_x}(x)\}\times k.}
where $I=I_1\cup I_2$, $d_i(x)=a_{i+1}(x)-a_i(x)$, $r_i(x)=n_1(x)+\cdots+n_i(x)$,
$$\ell=\frac{k\chi-\sum_{x\in I}\sum^{l_x}_{i=1}d_i(x)r_i(x)}{r}.$$
Let $\sH\subset\wt{\sR}'$ be the open set defined in Notation \ref{nota2.21}, $\wt{\sR}_{\omega}^{\prime ss}\subset\sH$
be the open set of GIT semi-stable points (respect to the polarization). Let
$$\psi:\wt{\sR}_{\omega}^{\prime ss}\to \sP_{\omega}:=\wt{\sR}_{\omega}^{\prime ss}//G.$$
If $\sO(1)|_{X_j}=\sO_{X_j}(c_jy_j)$, the restriction of polarization \eqref{6.1} to $\sH$ is
$${\hat\Theta}'_{\sH}=\rho^*(\hat\Theta_{\sR_1}\boxtimes\hat\Theta_{\sR_2})\otimes {\rm det}(\sQ)^k$$
where (for $j=1,\,2$, $\pi_{\sR_j}:X_j\times\sR_j\to\sR_j$ is projection) we have
$$\hat\Theta_{\sR_j}=({\rm det}R\pi_{\sR_j}\sE^j)^{-k}\otimes\bigotimes_{x\in I_j}
\left\{\bigotimes^{l_x}_{i=1}{\rm det}(\sQ_{\{x\}\times\sR_j,\,i})^{d_i(x)}\right\}\otimes({\rm det}\sE^j_{y_j})^{\frac{c_j\ell}{c_1+c_2}}$$
where we assume that $\ell$ and $\ell_j:=\frac{c_j\ell}{c_1+c_2}$ are integers. The sequence
$$0\to\sF\to (\pi\times id)_*\sE\to\,_{x_0}\sQ\to 0$$
on $X\times\wt{\sR}_{\omega}^{\prime ss}$ defines a morphism $\hat\phi: \wt{\sR}_{\omega}^{\prime ss}\to \sU_X$ such that
$$\aligned{\hat\phi}^*(\Theta_{\sU_X})=&{\rm det}R\pi_{\wt{\sR}_{\omega}^{\prime ss}}(\sF)^{-k}\otimes\bigotimes_{x\in I}
\left\{\bigotimes^{l_x}_{i=1}{\rm det}(\sQ_{\{x\}\times\wt{\sR}_{\omega}^{\prime ss},\,i})^{d_i(x)}\right\}\\&\otimes({\rm det}\sF_{y_1})^{\ell_1}\otimes({\rm det}\sF_{y_2})^{\ell_2}={\hat\Theta}'_{\wt{\sR}_{\omega}^{\prime ss}}.\endaligned$$
Clearly, $\hat\phi$ induces a morphism $\phi: \sP_{\omega}\to \sU_X$ such that $\hat\phi=\phi\cdot\psi$. Thus
${\hat\Theta}'_{\wt{\sR}_{\omega}^{\prime ss}}$ descends to an ample line bundle $\Theta_{\sP_{\omega}}=\phi^*(\Theta_{\sU_X})$ on $\sP_{\omega}$.
Similarly, $\phi^*: H^1(\sU_X, \Theta_{\sU_X})\hookrightarrow H^1(\sP_{\omega}, \Theta_{\sP_{\omega}})$ is injective. To prove
$$H^1(\sP_{\omega}, \Theta_{\sP_{\omega}})=0,$$
we need as before to compute canonical bundle $\omega_{\sP_{\omega}}$ and to estimate the codimension of non-semistable points. However, the situation
is slightly different with the case when $\wt X$ is connected. We firstly figure out some necessary conditions when $(E,Q)\in \wt{\sR}_{\omega}^{\prime ss}$.

For $(E, E_{x_1}\oplus E_{x_2}\xrightarrow{q}Q\to 0)\in\sH$, $F=(F_1,F_2)\subset E=(E_1,E_2)$, let
$$D_m(F):=r(F)\frac{par\chi_m(E)-r}{r}-(par\chi_m(F)-t)$$
$$D(F):=\left(r_1\frac{par\chi(E_1)}{r}-par\chi(F_1)\right)+\left(r_2\frac{par\chi(E_2)}{r}-par\chi(F_2)\right)$$
where $t={\rm dim}(Q^F)$, $Q^F=q(F_{x_1}\oplus F_{x_2})\subset Q$, $r_i={\rm rk}(F_i)$. Then
\ga{6.2}{\aligned D_m(F)&=D(F)+\frac{(r_1-r_2)}{r}\left(D_m(E_1)-{\rm dim}(Q^{E_1})\right)+
t-r_2\\&=D(F)+\frac{(r_2-r_1)}{r}\left(D_m(E_2)-{\rm dim}(Q^{E_2})\right)+
t-r_1.\endaligned}

\begin{lem}\label{lem6.1} For $(E, Q)\in \wt{\sR}_{\omega}^{\prime ss}$, let $E_j=E_j'\oplus\,_{x_j}\mathbb{C}^{s_j}$ and
$$n_j^{\omega}=\frac{1}{k}\left(r\ell_j+\sum_{x\in I_j}\sum^{l_x}_{i=1}d_i(x)r_i(x)\right)\quad (j=1,\,2).$$
Then, for the fixed $\chi_j:=\chi(E_j)$ ($j=1,\,2$), we have \begin{itemize}
\item [(1)] $n_j^{\omega}\le \chi_j\le n_j^{\omega}+r$ $\,\,(j=1,\,2)$,
\item [(2)] $s_1\le n^{\omega}_2+r-\chi_2$, $\,\,s_2\le n^{\omega}_1+r-\chi_1$,
\item [(3)] let $(E,Q)\in\sH\setminus\{\sD_1^f\cup\sD^f_2\}$ with $n_j^{\omega}\le \chi(E_j)\le n_j^{\omega}+r$, then
$$E_1\in \sR_1^{ss},\,\,E_2\in\sR_2^{ss}\,\,\,\Rightarrow\,\,\,(E,Q)\in  \wt{\sR}_{\omega}^{\prime\, ss}.$$
Moreover, when $n^{\omega}_1<\chi_1<n^{\omega}_1+r$, we have
$(E,Q)\in \wt{\sR}_{\omega}^{\prime s}$ if one of $E_1$, $E_2$ is a stable parabolic bundle,
\item [(4)] let $(E,Q)\in\sH\setminus\{\sD_1^f\cup\sD^f_2\}$, if $\chi_1=n^{\omega}_1+r$ or $\chi_1=n^{\omega}_1$, then
$$(E,Q)\in \wt{\sR}_{\omega}^{\prime\, ss}\,\,\,\Rightarrow\,\,\,E_1\in\sR_1^{ss},\,\,E_2\in\sR_2^{ss}.$$
\end{itemize}
\end{lem}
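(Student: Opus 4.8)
The plan is to reduce everything to the single inequality characterizing GPS-(semi)stability. By Proposition \ref{prop2.22} (which, as noted in the text, holds verbatim for $\wt X=X_1\sqcup X_2$), a point $(E,Q)\in\sH$ lies in $\wt{\sR}_{\omega}^{\prime ss}$ (resp. $\wt{\sR}_{\omega}^{\prime s}$) if and only if $D_m(F)\ge 0$ (resp. $>0$) for every nontrivial $F\subset E$ with $E/F$ torsion free outside $\{x_1,x_2\}$, since $s(F)=-k\wt P(N)\,D_m(F)$. The computational engine is a single identity obtained by applying this to the two component subsheaves $E_1=(E_1,0)$ and $E_2=(0,E_2)$: a direct calculation from the definition of $n^{\omega}_j$, the additivity of $par\chi_m$, and the identity $\sum_i d_i(x)r_i(x)=r\,a_{l_x+1}(x)-\sum_i a_i(x)n_i(x)$ yields $D_m(E_j)=n^{\omega}_j+{\rm dim}(Q^{E_j})-\chi_j$. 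I will also record once and for all that $n^{\omega}_1+n^{\omega}_2=\chi$ (from $r\ell_1+r\ell_2=r\ell$) and $\chi_1+\chi_2=\chi+r$, so that the bounds for index $1$ and index $2$ are interchangeable.

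Granting this, parts (1) and (2) are short. For (1), $D_m(E_1)\ge 0$ together with ${\rm dim}(Q^{E_1})\le r$ gives $\chi_1\le n^{\omega}_1+r$, while $D_m(E_2)\ge 0$ gives $\chi_2\le n^{\omega}_2+r$, i.e. $\chi_1\ge n^{\omega}_1$; this is the GPS analogue of the estimate behind \eqref{2.4} in Theorem \ref{thm2.16}, with the slack now carried by ${\rm dim}(Q^{E_j})$ rather than by a twist. For (2), writing $E_j=E_j'\oplus\,_{x_j}\mathbb{C}^{s_j}$, I apply $D_m(F)\ge 0$ to $F=(\tau_{x_1},E_2)$ (the torsion of $E_1$ together with all of $E_2$); since $par\chi_m(F)=par\chi_m(E_2)+s_1$ and $Q^{E_2}\subseteq Q^F\subseteq Q$, one gets $D_m(F)=D_m(E_2)-s_1+({\rm dim}Q^F-{\rm dim}Q^{E_2})$, whence $s_1\le D_m(E_2)+(r-{\rm dim}Q^{E_2})=n^{\omega}_2+r-\chi_2$; the bound for $s_2$ is symmetric.

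Parts (3) and (4) are where \eqref{6.2} does the work, and here I use that on $\sH\setminus\{\sD_1^f\cup\sD_2^f\}$ the sheaf $E$ is locally free at $x_1,x_2$ and each $E_{x_j}\to Q$ is an isomorphism (by the description of $\sD_j^f$ from Proposition \ref{prop5.2}), so ${\rm dim}(Q^{E_j})=r$ and $D_m(E_j)-{\rm dim}(Q^{E_j})=n^{\omega}_j-\chi_j$. For (3), semistability of $E_1$ and $E_2$ on the smooth curves $X_1,X_2$ makes each parenthesis of $D(F)$ nonnegative, so $D(F)\ge 0$; choosing in \eqref{6.2} the form indexed by whichever of $r_1,r_2$ is larger, and using both ${\rm dim}(Q^F)\ge\max(r_1,r_2)$ (because $E_{x_j}\cong Q$ forces $F_{j,x_j}\hookrightarrow Q$) and the bound $-r\le n^{\omega}_1-\chi_1\le 0$ from (1), the node-correction term plus $t-r_j$ is $\ge 0$, giving $D_m(F)\ge 0$; the stable ``moreover'' follows because $n^{\omega}_1<\chi_1<n^{\omega}_1+r$ makes those inequalities strict and stability of one component makes the relevant summand of $D(F)$ strictly positive. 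For (4) I argue by contradiction at the extremal value $\chi_1=n^{\omega}_1+r$ (so $\chi_2=n^{\omega}_2$): for a saturated destabilizing $F_1\subset E_1$ I take $F=(F_1,0)$, where $t={\rm dim}(Q^F)=r_1$ and \eqref{6.2} collapses exactly to $D_m(F)=D(F)<0$; for $E_2$ I take $F=(E_1,F_2)$ with $F_2\subset E_2$ saturated destabilizing, where $t=r$ (since $E_{x_1}$ already surjects onto $Q$) and again the correction terms cancel to leave $D_m(F)=D(F)<0$. The case $\chi_1=n^{\omega}_1$ is symmetric.

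The main obstacle I expect is not any single step but the bookkeeping that makes the cancellations in (4) exact: one must verify $D_m(E_j)=n^{\omega}_j+{\rm dim}(Q^{E_j})-\chi_j$ and then pin down the precise value of ${\rm dim}(Q^F)$ forced by $E_{x_j}\cong Q$ for each trial subsheaf. The genuinely delicate point is the $E_2$-semistability in (4) at $\chi_1=n^{\omega}_1+r$: the naive choice $F=(0,F_2)$ fails, because its $Q^F$-cost ${\rm dim}(Q^F)=r(F_2)$ is not absorbed by the boundary correction and only yields $D_m(F)=D(F)+r(F_2)$, which need not be negative; the remedy is to enlarge to $F=(E_1,F_2)$ so that $t=r$ and the cost vanishes. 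Away from the boundary these same terms carry strict slack, which is exactly the structural reason that (3) needs only semistability of the components whereas (4) forces the extremal Euler characteristic.
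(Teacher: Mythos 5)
Your proposal is correct and follows essentially the same route as the paper's own proof: the same quantities $D_m(F)$ and $D(F)$, the same identity $D_m(E_j)=n_j^{\omega}+\dim(Q^{E_j})-\chi_j$ feeding into \eqref{6.2}--\eqref{6.3}, and the same test subsheaves in each part, including the crucial replacement of $(0,F_2)$ by $(E_1,F_2)$ at the extremal Euler characteristic in part (4). The only cosmetic differences are that the paper proves (2) via the subsheaves $E_1^s=(E_1,\,_{x_2}\mathbb{C}^{s_2})$ and $E_2^s=(\,_{x_1}\mathbb{C}^{s_1},E_2)$ (identical to your $(\tau_{x_1},E_2)$) and states (4) directly rather than by contradiction.
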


\begin{proof} Note that $\chi_1+\chi_2=\chi+r$ and $n_1^{\omega}+n_2^{\omega}=\chi$,
(1) and (2) are clear by the following formulas ($j=1,\,2$)
$$\chi(E_j)=n_j^{\omega}+{\rm dim}(Q^{E_j})-D_m(E_j)$$
$$\chi(E_1)+s_2=n_1^{\omega}+{\rm dim}(Q^{E^s_1})-D_m(E^s_1)$$
$$\chi(E_2)+s_1=n_2^{\omega}+{\rm dim}(Q^{E^s_2})-D_m(E^s_2)$$
where $E^s_1=(E_1, \,_{x_2}\mathbb{C}^{s_2})$, $E^s_2=(\,_{x_1}\mathbb{C}^{s_1},\,E_2)$. The formula \eqref{6.2} becomes
\ga{6.3}{\aligned D_m(F)&=D(F)+\frac{r_2-r_1}{r}(\chi_1-n_1^{\omega})+
{\rm dim}(Q^F)-r_2\\&=D(F)+\frac{r_1-r_2}{r}(\chi_2-n_2^{\omega})+
{\rm dim}(Q^F)-r_1.\endaligned}

To prove (3), by \eqref{6.3} and ${\rm dim}(Q^F)-r_j\ge 0$ ($j=1,\,2$), we have $D_m(F)\ge 0$ whenever $D(F)\ge 0$. Thus
$$E_1\in \sR_1^{ss},\,\,E_2\in\sR_2^{ss}\,\,\,\Rightarrow\,\,\,(E,Q)\in  \wt{\sR}_{\omega}^{\prime\, ss}.$$
When $n^{\omega}_1<\chi_1<n^{\omega}_1+r$ (which implies $n^{\omega}_2<\chi_2<n^{\omega}_2+r$), we have
$D_m(F)>D(F)\ge 0$ if $r_1\neq r_2$. Thus $(E,Q)\in \wt{\sR}_{\omega}^{\prime s}$ if one of $E_1$, $E_2$ is a stable parabolic bundle.

To prove (4), if $\chi_1=n^{\omega}_1+r$ or $\chi_1=n^{\omega}_1$, the formula \eqref{6.3} becomes
\ga{6.4}{D_m(F)=D(F)+{\rm dim}(Q^F)-r_1.}
For $F_1\subset E_1$ of rank $r_1$, take $F=(F_1,0)\subset E$ in \eqref{6.4}, we have
$$D_m(F)=D(F)=r_1\frac{par\chi(E_1)}{r}-par\chi(F_1)$$
which implies that $E_1\in\sR_1^{ss}$ if $(E,Q)\in \wt{\sR}_{\omega}^{\prime\, ss}$. For $F_2\subset E_2$ of rank $r_2$, take
$F=(E_1,F_2)\subset E$ in \eqref{6.4}, we have
$$D_m(F)=D(F)=r_2\frac{par\chi(E_2)}{r}-par\chi(F_2)$$
which implies that $E_2\in\sR_2^{ss}$ if $(E,Q)\in \wt{\sR}_{\omega}^{\prime\, ss}$.
\end{proof}

\begin{nota}\label{nota6.2} For $\omega=(r,\chi_1,\chi_2,\{\vec n(x),\,\vec a(x)\}_{x\in I},\mathcal{O}(1),k)$, let
$$\sH^{\omega}=\left\{\aligned&\text{$(E,Q)\in\sH$, with
$n_j^{\omega}\le \chi(E_j)=\chi_j\le n_j^{\omega}+r$ ($j=1,\,2$), and}\\&\text{${\rm dim}({\rm Tor}(E_1))\le n^{\omega}_2+r-\chi_2$, $\,\,{\rm dim}({\rm Tor}(E_2))\le n^{\omega}_1+r-\chi_1$}\endaligned\right\}.$$
\end{nota}

\begin{prop}\label{prop6.3} Let $\sD_1^f=\hat\sD_1\cup\hat\sD_1^t$
and $\sD_2^f=\hat\sD_2\cup\hat\sD_2^t$. Then \begin{itemize}
\item [(1)] ${\rm codim}(\sH^{\omega}\setminus\wt\sR_{\omega}^{\prime ss})>\underset{1\le i\le 2}{{\rm min}}\left\{(r-1)(g_i-\frac{r+3}{4})+\frac{|I_i|}{k}\right\}.$
\item [(2)] ${\rm codim}(\wt\sR_{\omega}^{\prime ss}\setminus\{\sD_1^f\cup\sD^f_2\}\setminus\wt\sR_{\omega}^{\prime s})>\underset{1\le i\le 2}{{\rm min}}\left\{(r-1)(g_i-1)+\frac{|I_i|}{k}\right\}$ when $n_1^{\omega}<\chi_1<n_1^{\omega}+r$.
\item [(3)] ${\rm codim}(\wt\sR_{\omega}^{\prime ss}\setminus\{\sD_1^f\cup\sD^f_2\}\setminus\wt\sR_{\omega}^{\prime -s})\ge\underset{1\le i\le 2}{{\rm min}}\left\{(r-1)(g_i-1)+\frac{|I_i|}{k}\right\}$
when $\chi_1=n_1^{\omega}$ or $n_1^{\omega}+r$, where
$$\wt\sR_{\omega}^{\prime -s}:=\left\{\aligned
&(E,Q)\in\wt\sR_{\omega}^{\prime ss} \,\text{satisfies $\,par\mu(F)<par\mu(E)$ for any }\\
&\text{nontrivial $F\subset E$ of rank $(r_1,r_2)\neq (0,r)$ or $(r,0)$}\endaligned\right\}.$$
\end{itemize}
\end{prop}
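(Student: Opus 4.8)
The plan is to adapt the strategy of Proposition \ref{prop5.2} (the irreducible case of \cite{S1}) to the product situation $\wt\sR=\sR_1\times\sR_2$, using Lemma \ref{lem6.1} to translate (semi)stability of a GPS $(E,Q)$ into ordinary parabolic (semi)stability of its two restrictions $E_1$ on $X_1$ and $E_2$ on $X_2$, and then invoking the smooth-curve estimates of Proposition \ref{prop4.1} on each component separately. The crucial point is that away from $\sD_1^f\cup\sD_2^f$ the sheaf $E$ is locally free and both maps $E_{x_i}\to Q$ are isomorphisms, so $\dim(Q^F)\ge\max(r_1,r_2)$ and, by \eqref{6.3}, the GPS-defect $D_m(F)$ is governed by the componentwise defect $D(F)=D_1(F_1)+D_2(F_2)$ up to a nonnegative correction. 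Because $\rho$ is a Grassmannian bundle, any codimension computed on $\sR_1\times\sR_2$ pulls back unchanged to $\wt\sR'$, so all estimates off $\sD^f$ reduce to estimates on the two smooth factors.

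For part (2), in the interior range $n_1^\omega<\chi_1<n_1^\omega+r$, I would argue by contraposition: by Lemma \ref{lem6.1}(3), a point of $\wt\sR_\omega^{\prime ss}\setminus\{\sD_1^f\cup\sD_2^f\}$ with $E_1$ or $E_2$ stable is already in $\wt\sR_\omega^{\prime s}$, so every point of the locus in question has both $E_1$ and $E_2$ non-stable. These are conditions on the two independent factors, so their codimensions add, and Proposition \ref{prop4.1}(1),(2) gives a total codimension at least $\sum_i\bigl((r-1)(g_i-1)+|I_i|/k\bigr)$, which strictly exceeds $\min_i\{(r-1)(g_i-1)+|I_i|/k\}$. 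For part (3), in the boundary range $\chi_1\in\{n_1^\omega,n_1^\omega+r\}$, Lemma \ref{lem6.1}(4) shows that off $\sD^f$ semistability of $(E,Q)$ already forces $E_1,E_2\in\sR_i^{ss}$; then \eqref{6.4} reads $D_m(F)=D(F)+(\dim Q^F-r_j)$ with both summands nonnegative, so a destabilizing $F$ of rank $(r_1,r_2)\neq(0,r),(r,0)$ with $D_m(F)=0$ forces $D_1(F_1)=D_2(F_2)=0$ with $0<r_i<r$ for some $i$, i.e. a proper equal-slope parabolic subbundle of $E_1$ or $E_2$. Hence that component is strictly semistable and Proposition \ref{prop4.1}(1) on $X_i$ yields the stated bound $\ge\min_i\{(r-1)(g_i-1)+|I_i|/k\}$.

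Part (1) is the genuine obstacle and the source of the non-sharp constant. I would split $\sH^\omega\setminus\wt\sR_\omega^{\prime ss}$ into its intersection with the complement of $\sD_1^f\cup\sD_2^f$ and its intersection with $\sD_1^f\cup\sD_2^f$. On the first piece $E$ is locally free, Lemma \ref{lem6.1}(3) forces $E_1$ or $E_2$ to be non-semistable, and Proposition \ref{prop4.1}(2) gives codimension $>\min_i\{(r-1)(g_i-1)+|I_i|/k\}$, already stronger than claimed. The difficulty is concentrated on the boundary divisors: on $\hat\sD_i$ the map $E_{x_i}\to Q$ drops rank and on $\hat\sD_i^t$ the sheaf acquires torsion at $x_{3-i}$, so Lemma \ref{lem6.1}(3) no longer applies and the inequality $\dim Q^F\ge\max(r_1,r_2)$ used above can fail. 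Here I would estimate the non-semistable locus by a direct dimension count, stratifying $\hat\sD_i\cup\hat\sD_i^t$ by the corank $c$ of $E_{x_i}\to Q$ (whose corank-$\ge c$ locus is determinantal of codimension $c^2$) and by the torsion lengths $s_1,s_2$, which are bounded on $\sH^\omega$ by Notation \ref{nota6.2}; each stratum is bounded in dimension by the parameters of its torsion-free locally free model together with the determinantal, Grassmannian and torsion data, and compared with $\dim\sH^\omega$. The hard part will be the optimization of the resulting lower bound over $c$ and $s_j$: entering the bad divisor costs only order $c^2$ (or the torsion length) while it simultaneously relaxes the destabilizing constraint by an amount of order $r_i(r-r_i)$, and balancing these produces a loss of order $(r-1)^2/4$ relative to the smooth-curve estimate, i.e. the constant $\frac{r+3}{4}$ in place of $1$. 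Carefully tracking how torsion at $x_j$ interacts with the quotient $Q$ through the defining sequence $0\to\sF\to(\pi\times\mathrm{id})_*\sE\to{}_{x_0}\sQ\to0$ is where the bookkeeping is most delicate, and it is precisely this loss that prevents the estimate from being sharp enough for a vanishing theorem.
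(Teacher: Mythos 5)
Your treatment of parts (2) and (3) is essentially sound, and for (2) it takes a genuinely different (and cleaner) route than the paper: the paper obtains (2) by specializing its master dimension count from part (1) to the case $s_1=s_2=0$, $\max\{r_1,r_2\}\le t$, observing that then $f(r_1,r_2,0,0,t)=r\min\{r_1,r_2\}+t(t-r_1-r_2)\ge 0$, whereas you reduce directly to componentwise non-stability via Lemma \ref{lem6.1}(3) and add codimensions over the two factors of $\wt\sR=\sR_1\times\sR_2$ using Proposition \ref{prop4.1}. For (3) your identification of the complement of $\wt\sR_{\omega}^{\prime -s}$ with the locus where some $E_i$ is strictly semistable is exactly the paper's observation that $\wt\sR_{\omega}^{\prime -s}=\rho^{-1}(\sR_1^s\times\sR_2^s)$ off $\sD_1^f\cup\sD_2^f$. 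One caveat for (2): your argument yields the sum $\sum_i\{(r-1)(g_i-1)+|I_i|/k\}$, and the claimed strict inequality against the minimum requires the larger of the two quantities to be positive; this is harmless in the regime where the proposition is applied, but it is an extra hypothesis your phrasing hides.

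The genuine gap is part (1). You correctly isolate the difficulty on $\sD_1^f\cup\sD_2^f$ and correctly guess that the constant $\tfrac{r+3}{4}$ encodes a loss of $\tfrac{(r-1)^2}{4}$ coming from an optimization over discrete invariants, but the argument you propose for the bad locus is neither the right stratification nor carried out. The non-semistable locus is not controlled by the corank of $E_{x_i}\to Q$ together with the torsion lengths alone; one must stratify by the full numerical type $v$ of a destabilizing subsheaf $F=(F_1,F_2)$ — the ranks and degrees of the torsion-free parts $F_i'$, the induced flag types $\{m_j(x)\}$, the dimension $h_i$ of the extension space of $E_i'$ by $\wt F_i$, the torsion lengths $s_i$, and the rank $t$ of $F_{x_1}\oplus F_{x_2}\to Q$ — and build for each $v$ a parameter variety $X(v)$ (from Quot schemes, projectivized extension bundles, frame bundles and the Grassmannian of quotients) dominating the corresponding stratum. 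The paper then bounds $\dim X(v)$ minus the fibre dimension of $\varphi_v$ via $\dim\mathrm{Aut}(E)$, inserts the destabilizing inequality \eqref{6.6} together with the constraints $\chi_1+s_2\le n_1^{\omega}+r$ from Lemma \ref{lem6.1}(2), and reduces everything to bounding below the explicit quadratic
$$f(r_1,r_2,s_1,s_2,t)=\Bigl(t-\tfrac{r_1+r_2+s}{2}\Bigr)^2+\tfrac{2(s_1^2+s_2^2)+(s_1-s_2)^2}{4}+\tfrac{|r_1-r_2|}{2}\,s+\min\{r_1,r_2\}\bigl(r-\max\{r_1,r_2\}\bigr)-\tfrac{(r_1-r_2)^2}{4},$$
whose infimum $-\tfrac{(r-1)^2}{4}$ (attained when $|r_1-r_2|$ is large and $t$ sits midway between the ranks) is precisely the loss. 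Your determinantal ``cost $c^2$ versus gain $r_i(r-r_i)$'' heuristic does not see the three-way interaction between $t$, the $s_i$ and the unequal ranks $(r_1,r_2)$ of the destabilizing subsheaf, and it is exactly that interaction which produces $f$ and hence the constant $\tfrac{r+3}{4}$. As written, part (1) of your proposal is a plausible plan rather than a proof, and the plan as stated would not assemble into the stated bound without being replaced by the Shatz-type stratification above.
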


\begin{proof} To prove (1),
let $(E,Q)\in \sH^{\omega}\setminus\wt{R}_{\omega}^{\prime ss}$ with $E=(E_1,E_2)$, then there exists a $F=(F_1,F_2)\subset E$ such that $E/F$ is torsion free and
\ga{6.5}{par\chi_m(F)-dim(Q^F)>r(F)\frac{par\chi_m(E)-r}{r}.}
Let $t=dim(Q^F)$, $r_i=rk(F_i)$, $m_i(x)=dim\frac{F_x\cap F_{i-1}(E)_x}{F_x\cap F_i(E)_x}$, $\chi_i=\chi(E_i)$
$$m(F)=\frac{r(F)-r_1}{k}\sum_{x\in I_1}a_{l_x+1}(x)+\frac{r(F)-r_2}{k}\sum_{x\in I_2}a_{l_x+1}(x)$$
where $r(F)=\frac{c_1r_1+c_2r_2}{c_1+c_2}$. Then we can rewrite \eqref{6.5} as
\ga{6.6}{\aligned r\chi(F)-r(F)\chi>&rt-rm(F)+\frac{r(F)}{k}\sum_{x\in I}\sum^{l_x+1}_{i=1}a_i(x)n_i(x)\\
&-\frac{r}{k}\sum_{x\in I}\sum^{l_x+1}_{i=1}a_i(x)m_i(x)\endaligned}
$$0\to F\to E\to E/F:=\wt F=(\wt F_1,\wt F_2)\to 0$$
Write $E=E'\oplus\,_{x_1}\mathbb{C}^{s_1}\oplus\, _{x_2}\mathbb{C}^{s_2}$, $F=F'\oplus\,_{x_1}\mathbb{C}^{s_1}\oplus\, _{x_2}\mathbb{C}^{s_2}$ and
$F_1=F_1'\oplus\,_{x_1}\mathbb{C}^{s_1}$, $F_2=F_2'\oplus\, _{x_2}\mathbb{C}^{s_2}$ where $E'$, $F'$ (thus $F_1'$, $F_2'$) are torsion free sheaves
satisfying the exact sequences
$$0\to F'_1\to E_1'\to\wt F_1\to 0, \quad 0\to F'_2\to E_2'\to\wt F_2\to 0.$$
Let $d_i={\rm deg}(F'_i)$, $r_i={\rm rk}(F'_i)$, ${\rm deg}(\wt F_i)=\chi_i-r(1-g_i)-d_i-s_i$ and
$$P_i(m)=c_ir_im+d_i+r_i(1-g_i), \quad \wt P_i(m)=c_irm+\chi_i-s_i-P_i(m).$$
For $\sW_i=\sO_{X_i}(-N)$, $V_i=\mathbb{C}^{P_i(N)}$ (resp. $\wt V_i=\mathbb{C}^{\wt P_i(N)}$), let $$Q_i\subset Quot(V_i\otimes\sW_i,P_i)$$
(resp. $\wt Q_i\subset Quot(\wt V_i\otimes\sW_i,\wt P_i)$) be the open set of locally free quotients $F_i'$ (resp. $\wt F_i$) with
vanishing $H^1(F'_i(N))$ (resp. $H^1(\wt F_i(N))$) and $F'_i(N)$ (resp. $\wt F_i(N)$) generated by global sections.
Let $\sF'_i$ (resp. $\wt F_i$) be the universal quotient on $X_i\times Q_i$ (resp. on $X_i\times \wt Q_i$), let $\sV_i=Q_i\times \wt Q_i$ and
$\sG_i={\wt F_i}^{\surd}\otimes\sF_i'$ on $X_i\times\sV_i$. Then we have $$\sV_i=\bigcup_{h_i\ge 0}\sV_i^{h_i}$$ such that $R^1f_{i*}(\sG_i)$ is locally free of
rank $h_i$ on $\sV_i^{h_i}$ where $f_i:X_i\times\sV_i\to \sV_i$ is the projection. Let $P_{h_i}=\mathbb{P}(R^1f_{i*}(\sG_i)^{\surd})\to \sV_i^{h_i}$
be the projective bundle on $\sV_i$ and $0\to\sF_i'\otimes\sO_{P_{h_i}}(-1)\to\sE'_i(h_i)\to\wt\sF_i\to 0$ be the universal extension on $X_i\times P_{h_i}$ (we set $P_{h_i}=\sV_i$ and
$\sE'_i(h_i)=\sF_i'\oplus\wt\sF_i$ if $h_i=0$). For $v_i'=(d_i,r_i,\{m_1(x),\cdots,m_{l_x+1}(x)\}_{x\in I_i},h_i)$, we can define a
variety $X(v_i')\to P_{h_i}$.
It parametrises a family of parabolic bundles $E_i'$, which occur as extensions
$0\to F'_i\to E_i'\to \wt F_i\to 0$ (the extension being split if $h_i=0$), with
parabolic structures at $x\in I_i$ of type
$\vec n(x)=n_1(x),\cdots,n_{l_x+1}(x))$, whose induced parabolic structures
on $F'_i$ are of type $(m_1(x),\cdots,m_{l_x+1}(x))$ (we will forget $m_j(x)$ if it
is zero). Let $0\to\sF'_i(-1)\to\sE'(v_i')\to\wt\sF_i\to 0$ be the pull back of
universal extension to $X_i\times X(v_i')$, $\sE(v_i')=
\sE'(v_i')\oplus\,_{x_i}\sO^{s_i}$ and let $F(v_i')$ be the frame bundle of the direct image of $\sE(v_i')(N)$ (under the projection $X_i\times X(v_i')\to X(v_i')$).
Write $\sE(v'):=\sE(v_1')\oplus \sE(v_2')$, we consider
$$G_{v'}:=Grass_r(\sE(v')_{x_1}\oplus\sE(v')_{x_2})\to F(v_1')\times F(v_2')$$ and define a
subvariety of $G_{v'}$ by
$$X(v):=\left\{\aligned
&(E_{x_1}\oplus E_{x_2}\xrightarrow{q}Q\to0)\in G_{v'}),\,\text{
$ker(q)\cap(\Bbb C^{s_1}\oplus\Bbb C^{s_2})=0$,}\\
&\text{$dim(ker(q)
\cap(F'_{x_1}\oplus\Bbb C^{s_1}\oplus F'_{x_2}\oplus\Bbb C^{s_2}))=r_1+r_2
+s-t $}\endaligned\right\}.$$
Then $X(v)$ parametrises a family of GPS $(E=E'\oplus\,_{x_1}\Bbb C^{s_1}\,
\oplus\,_{x_2}\Bbb C^{s_2},\,Q)$, where $E'=(E'_1, E'_2)$ occurs as extensions
$0\to F'_i\to E_i'\to \wt F_i\to 0$ (it is split if $h_i=0$)
with parabolic structures at $x\in I$ of type $\vec n(x)$,
whose induced parabolic structures on $F'_i$ are of type
$(m_1(x),\cdots,m_{l_x+1}(x))$ (we will forget $m_i(x)$ if it is zero),
such that $_{x_1}\Bbb C^{s_1}\,\oplus\,_{x_2}
\Bbb C^{s_2}\to Q$ is injective and
$\text{rank$(F'_{x_1}\oplus\Bbb C^{s_1}\oplus F'_{x_2}\oplus\Bbb C^{s_2}\to Q)=t$}.$
There is a morphism $X(v)\to\sH^{\omega}\setminus\wt\sR_{\omega}^{ss}$ whose image contains $(E,Q)$.
Therefore we have a (countable) number of quasi-projective
varieties $X(v)$ and morphisms $\varphi_v:X(v)\to\sH^{\omega}\setminus\wt\sR_{\omega}^{ss}$ such that
the union of the images covers $\sH^{\omega}\setminus\wt\sR_{\omega}^{ss}$.

One computes ${\rm dim}\,F(v_i')={\rm dim}\,X(v_i')+(c_irN+\chi_i)^2$,
$${\rm dim}\,X(v_i')=\left\{
\begin{array}{llll}\sum_{x\in I_i}{\rm dim}\,X_{v_i(x)}+h_i-1+{\rm dim}\,Q_i+{\rm dim}\,\wt Q_i,  &\mbox{if $h_i\neq 0$}\\
\sum_{x\in I_i}{\rm dim}\,X_{v_i(x)}+{\rm dim}\,Q_i+{\rm dim}\,\wt Q_i &\mbox{if
$h_i=0$}\end{array}\right.$$
${\rm dim}\,Q_i+{\rm dim}\,\wt Q_i=(g_i-1)(r_i^2+(r-r_i)^2)+P_i(N)^2+\wt P_i(N)^2$ and the dimension of $\sH$, $\,X(v)$
are (let $s=s_1+s_2$):
$$r^2(g-2)+r^2+\sum_{i=1}^2(c_irN+\chi_i)^2+\sum_{x\in I}dim\,
Flag_{\vec n(x)}(\sF_x),$$
$$r(r+s)-(r-t)(r_1+r_2+s-t)+\sum_{i=1}^2(c_irN+\chi_i)^2+\sum_{i=1}^2{\rm dim}\,X(v_i').$$

To estimate the minimum $e$ of fiber dimension of $\varphi_v$,  note that
$${\rm dim}\,Aut(E)\ge {\rm dim}\,Aut(E_1')+{\rm dim}\,Aut(E'_2)+rs+s_1^2+s_2^2$$
and $0\to F'_i\to E'_i\to\wt F_i\to 0$, we have
$${\rm dim}\,Aut(E_i')\ge\left\{
\begin{array}{llll} 1+h^0({\wt F_i}^\surd\otimes F'_i),  &\mbox{if $h_i\neq 0$}\\
2+h^0({\wt F_i}^\surd\otimes F'_i) &\mbox{if
$h_i=0$}\end{array}\right.$$
Define $e(h_i)=1$ when $h_i\neq 0$ and $e(h_i)=2$ when $h_i=0$, then
$$\aligned e\ge& rs+s_1^2+s_2^2+h^0({\wt F_1}^\surd\otimes F'_1)+h^0({\wt F_2}^\surd\otimes F'_2)+e(h_1)\\
&+e(h_2)-4+P_1(N)^2+\wt P_1(N)^2+P_2(N)^2+\wt P_2(N)^2.\endaligned$$
Then the codimension of $\sH^{\omega}\setminus\wt\sR_{\omega}^{ss}$ is bounded below by
$$\aligned & \sum^2_{i=1}r_i(r-r_i)(g_i-1)+\sum^2_{i=1}(r_i+s_i-t)s_i+(r-t)(r_1+r_2-t)+\\
& r\chi(F)-(r_1\chi_1+r_2\chi_2)+\sum_{x\in I_1} \sum^{l_x+1}_{j=1}(r_1-\sum^j_{i=1}m_i(x))(n_j(x)-m_j(x))\\&+\sum_{x\in I_2} \sum^{l_x+1}_{j=1}(r_2-\sum^j_{i=1}m_i(x))(n_j(x)-m_j(x)).\endaligned$$
If $r_1\ge r_2$, use $\chi_1+s_2\le n^{\omega}_1+r$ and $\chi_2=\chi+r-\chi_1$ to get
\ga{6.7}{\aligned r\chi(F)-&(r_1\chi_1+r_2\chi_2)\ge r\chi(F)-r(F)\chi+rm(F)-\\&r_1r+(r_1-r_2)s_2+\frac{r_1-r(F)}{k}\sum_{x\in I_1}\sum^{l_x+1}_{i=1}a_i(x)n_i(x)
\\&+\frac{r_2-r(F)}{k}\sum_{x\in I_2}\sum^{l_x+1}_{i=1}a_i(x)n_i(x).\endaligned }
Similarly, if $r_2\ge r_1$, we have
\ga{6.8}{\aligned r\chi(F)-&(r_1\chi_1+r_2\chi_2)\ge r\chi(F)-r(F)\chi+rm(F)-\\&r_2r+(r_2-r_1)s_1+\frac{r_1-r(F)}{k}\sum_{x\in I_1}\sum^{l_x+1}_{i=1}a_i(x)n_i(x)
\\&+\frac{r_2-r(F)}{k}\sum_{x\in I_2}\sum^{l_x+1}_{i=1}a_i(x)n_i(x).\endaligned }

By using of the inequalities \eqref{6.6}, \eqref{6.7} and \eqref{6.8}, we have
$$\aligned codim(\sH^{\omega}\setminus\wt\sR_{\omega}^{\prime ss})>&\sum^2_{i=1}r_i(r-r_i)(g_i-1)+({\rm max}\{r_1,r_2\}-t)s\\&+s_1^2+s_2^2+r\cdot{\rm min}\{r_1,r_2\}-t(r_1+r_2-t)\\
&+\sum_{x\in I_1}
\left\{\aligned &\sum^{l_x+1}_{j=1}(r_1-\sum^j_{i=1}m_i(x))(n_j(x)-m_j(x))\\
&+\sum^{l_x+1}_{j=1}(r_1n_j(x)-rm_j(x))\frac{a_j(x)}{k}\endaligned\right\}\\&+\sum_{x\in I_2}
\left\{\aligned &\sum^{l_x+1}_{j=1}(r_2-\sum^j_{i=1}m_i(x))(n_j(x)-m_j(x))\\
&+\sum^{l_x+1}_{j=1}(r_1n_j(x)-rm_j(x))\frac{a_j(x)}{k}\endaligned\right\},
\endaligned$$
where $s=s_1+s_2$. Let $f(r_1,r_2,s_1,s_2,t)$ denote
$$\aligned&({\rm max}\{r_1,r_2\}-t)s+s_1^2+s_2^2+r\cdot{\rm min}\{r_1,r_2\}-t(r_1+r_2-t)=\\&(t-\frac{r_1+r_2+s}{2})^2+\frac{2(s_1^2+s_2^2)+(s_1-s_2)^2}{4}+
\frac{{\rm max}\{r_1,r_2\}-{\rm min}\{r_1,r_2\}}{2}s\\&+{\rm min}\{r_1,r_2\}(r-{\rm max}\{r_1,r_2\})-\frac{(r_1-r_2)^2}{4}.
\endaligned$$
When $r_1=r_2$, it is clear that $f(r_1,r_2,s_1,s_2,t)\ge r_1(r-r_1)$ and we have
$$codim(\sH^{\omega}\setminus\wt\sR_{\omega}^{\prime ss})>r_1(r-r_1)(g-1)+\frac{|I|}{k}.$$
In general, we have only $f(r_1,r_2,s_1,s_2,t)\ge -\frac{(r-1)^2}{4}$ and
$${\rm codim}(\sH^{\omega}\setminus\wt\sR_{\omega}^{\prime ss})>\underset{1\le i\le 2}{{\rm min}}\left\{(r-1)(g_i-\frac{r+3}{4})+\frac{|I_i|}{k}\right\}.$$

To prove (2), note $s_1=s_2=0$, ${\rm max}\{r_1,r_2\}\le t$ for $(E,Q)\in \wt\sR_{\omega}^{\prime ss}\setminus\{\sD_1^f\cup\sD^f_2\}$, we have
$f(r_1,r_2,s_1,s_2,t)=r\cdot{\rm min}\{r_1,r_2\}+t(t-r_1-r_2)\ge 0.$
Then, when $n_1^{\omega}<\chi_1<n_1^{\omega}+r$, which implies $(r_1,r_2)\neq (r,0),\,(0,r)$,
$${\rm codim}(\wt\sR_{\omega}^{\prime ss}\setminus\{\sD_1^f\cup\sD^f_2\}\setminus\wt\sR_{\omega}^{\prime s})>\underset{1\le i\le 2}{{\rm min}}\left\{(r-1)(g_i-1)+\frac{|I_i|}{k}\right\}.$$

The assertion (3) follows the same arguments of (2) and the definition of $\wt\sR_{\omega}^{\prime -s}$. In fact, $\wt\sR_{\omega}^{\prime -s}=\rho^{-1}(\sR^s_1\times\sR^s_2)$ by Lemma \ref{lem6.1} (4), where
$$\rho:\wt\sR_{\omega}^{\prime ss}\setminus\{\sD_1^f\cup\sD^f_2\}\to \sR_1^{ss}\times\sR^{ss}_2.$$
\end{proof}

The schemes $\sH$ and $\sP$ are Gorenstein, so they have canonical sheaves. To compute the canonical sheaves $\omega_{\sH}$ and $\omega_{\sP}$, let
\ga{6.9}{0\to\sK^j\to V_j\otimes\sO_{X_j\times\sR_j}(-N)\to\sE^j\to 0\quad(j=1,\,2)}
be the universal quotient on $X_j\times\sR_j$ ($\sK^j$ are in fact locally free), and
$$\aligned\omega^{-1}_{\sR_j}=&(det\,R\pi_{\sR_j}\sE^j)^{-2r}\otimes
\bigotimes_{x\in I_j}\left\{(det\,\sE^j_x)^{n_{l_x+1}(x)-r}
\otimes\bigotimes^{l_x}_{i=1}(det\,\sQ_{x,i})
^{n_i(x)+n_{i+1}(x)}\right\}\\
&\otimes\bigotimes_{q\in X_j}(det\,\sE^j_q)^{1-r}\otimes(det\,R\pi_{\sR_j}det\sE^j)^2
\endaligned$$
where $\omega_{X_j}=\sO_{X_j}(\sum_{q\in X_j}q)$. Let $\hat{\rm Det}_j:\sR_j\to J^{d_j}_{X_j}$, where $d_j=\chi_j+r(g_j-1)$, be defined by
${\rm det}\sE^j:=({\rm det}\sK^j)^{-1}\otimes\sO_{X_j\times\sR_j}(-P_j(N)N),$
let $\sL_j$ be a universal line bundle on $X_j\times J^{d_j}_{X_j}$ and
\ga{6.10}{\Theta_{J^{d_j}_{X_j}}=({\rm det}R\pi_{J^{d_j}_{X_j}}\sL_j)^{-2}\otimes(\sL_j)_{x_j}^{r}\otimes\bigotimes_{q\in X_j}(\sL_j)_q^{r-1}\otimes(\sL_j)_{y_j}^{2\chi_j-r}}
(which are independent of the choices of $\sL_j$). Let
$$\hat{\rm Det}_{\wt\sR}:=(\hat{\rm Det}_1,\hat{\rm Det}_2):\wt\sR=\sR_1\times\sR_2\to J^d_{\wt X}:=J^{d_1}_{X_1}\times J^{d_2}_{X_2},$$
which induces $\hat{\rm Det}_{\sH}:\sH\to J^d_{\wt X}$ and ${\rm Det}:\sP_{\omega}\to J^d_{\wt X}$ such that
$$\xymatrix{
  \sH \ar[d]_{\rho} \ar[dr]^{\hat{\rm Det}_{\sH}}   \\
  \wt\sR \ar[r]_{\hat{\rm Det}_{\wt\sR}}  & J^d_{\wt X}}\qquad \qquad\xymatrix{
  {\wt\sR}^{\prime ss}_{\omega} \ar[d]_{\psi} \ar[dr]^{\hat{\rm Det}_{{\wt\sR}^{\prime ss}_{\omega}}}        \\
  \sP_{\omega} \ar[r]_{{\rm Det}}  & J^d_{\wt X}      }$$
are commutative. Let $\Theta_{J^d_{\wt X}}=p_1^*\Theta_{J^{d_1}_{X_1}}\otimes p_2^*\Theta_{J^{d_2}_{X_2}}$ (where $p_j: J^d_{\wt X}:=J^{d_1}_{X_1}\times J^{d_2}_{X_2}\to J^{d_j}_{X_j}$ are projections). Then similar arguments of \cite{S1} give

\begin{prop}\label{prop6.4} Let $\rho:\sH\to \wt{\sR}:=\sR_1\times\sR_2$ and
$\sE^1_{x_1}\oplus\sE^2_{x_2}\to \sQ\to 0$ be the universal quotient on $\sH$. Then
$$\aligned\omega_{\sH}^{-1}&=\rho^*(\omega^{-1}_{\sR_1}\otimes\omega^{-1}_{\sR_2})\otimes(det\sQ)^{2r}\otimes(det\sK^1_{x_1})^{r}\otimes(det\sK^2_{x_2})^r=\\
&(detR\pi_{\sH}\sE)^{-2r}\otimes\bigotimes_{x\in I}\left\{(det\,\sE_x)^{-r_{l_x}(x)}
\otimes\bigotimes^{l_x}_{i=1}(det\,\sQ_{x,i})
^{n_i(x)+n_{i+1}(x)}\right\}\\& \otimes(det\sQ)^{2r}\otimes\bigotimes^2_{j=1}(det\sE_{y_j})^{2\chi_j-r}\otimes{\hat{\rm Det}}^*_{\sH}(\Theta_{J^d_{\wt X}}^{-1})={\hat\Theta}'_{\omega^c}\otimes {\hat{\rm Det}}^*_{\sH}(\Theta_{J^d_{\wt X}}^{-1})\endaligned $$
where $$\aligned {\hat\Theta}'_{\omega^c}=&(det\,R\pi_{\sH}\sE)^{-2r}\otimes
\bigotimes_{x\in I}\left\{\bigotimes^{l_x}_{i=1}(det\,\sQ_{x,i})
^{n_i(x)+n_{i+1}(x)}\right\}\otimes det(\sQ)^{2r}\\&\otimes({\rm det}\sE_{y_1})^{2\chi_1-r}\otimes({\rm det}\sE_{y_2})^{2\chi_2-r}\otimes\bigotimes_{x\in I} (det\,\sE_x)^{-r_{l_x}(x)}.
\endaligned$$
\end{prop}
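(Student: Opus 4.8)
The plan is to factor the computation through the projection $\rho:\sH\to\wt\sR=\sR_1\times\sR_2$, which over the open locus of locally free quotients is a smooth relative Grassmannian $Grass_r(\sE_{x_1}\oplus\sE_{x_2})$ with smooth base $\wt\sR_F$. On this locus the adjunction formula $\omega_\sH=\rho^*\omega_{\wt\sR}\otimes\omega_{\sH/\wt\sR}$ holds, and the product structure gives $\omega_{\wt\sR}^{-1}=\rho^*(\omega_{\sR_1}^{-1}\boxtimes\omega_{\sR_2}^{-1})$, where each $\omega_{\sR_j}^{-1}$ is the line bundle recorded after \eqref{6.9} (the one--curve analogue of Proposition \ref{prop4.2}). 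I would prove the asserted identity on this open locus and then extend it to all of $\sH$: both the claimed right--hand side and $\omega_\sH$ are line bundles on the normal, Gorenstein scheme $\sH$ that agree on a dense open set whose complement has codimension $\ge 2$ by the estimates of Proposition \ref{prop6.3}, so they agree everywhere. This is exactly the mechanism used for the irreducible analogue in Proposition \ref{prop5.1}.

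First I would compute the relative canonical sheaf. From the universal sequence $0\to\sS\to\rho^*(\sE_{x_1}\oplus\sE_{x_2})\to\sQ\to0$ with $\sS,\sQ$ of rank $r$, the relative tangent bundle is $\Hom(\sS,\sQ)=\sS^\vee\otimes\sQ$, so
\[
\omega_{\sH/\wt\sR}^{-1}={\rm det}(\sS^\vee\otimes\sQ)=({\rm det}\,\sS)^{-r}\otimes({\rm det}\,\sQ)^{r}=({\rm det}\,\sQ)^{2r}\otimes({\rm det}\,\sE_{x_1})^{-r}\otimes({\rm det}\,\sE_{x_2})^{-r},
\]
using ${\rm det}\,\sS\otimes{\rm det}\,\sQ={\rm det}\,\sE_{x_1}\otimes{\rm det}\,\sE_{x_2}$. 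Restricting \eqref{6.9} to $\{x_j\}\times\sR_j$, the middle term is pulled back from a point, hence ${\rm det}\,\sE^j_{x_j}\cong({\rm det}\,\sK^j_{x_j})^{-1}$ and $({\rm det}\,\sE_{x_j})^{-r}=({\rm det}\,\sK^j_{x_j})^{r}$; I would keep the answer in the $\sK^j$ form because $\sK^j$ stays locally free along the loci where $\sE^j_{x_j}$ acquires torsion. Combining this with $\rho^*(\omega_{\sR_1}^{-1}\boxtimes\omega_{\sR_2}^{-1})$ yields the first displayed formula for $\omega_\sH^{-1}$ at once.

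To reach the second formula I would regroup terms. The parabolic contributions simplify through $n_{l_x+1}(x)=r-r_{l_x}(x)$, giving $({\rm det}\,\sE_x)^{n_{l_x+1}(x)-r}=({\rm det}\,\sE_x)^{-r_{l_x}(x)}$, while the determinant--of--cohomology factors assemble via ${\rm det}\,R\pi_\sH\sE=\rho^*({\rm det}\,R\pi_{\sR_1}\sE^1\otimes{\rm det}\,R\pi_{\sR_2}\sE^2)$. The remaining factors---the terms $({\rm det}\,R\pi_{\sR_j}{\rm det}\,\sE^j)^{2}$ and the point evaluations $\bigotimes_{q\in X_j}({\rm det}\,\sE^j_q)^{1-r}$---must be collected into the single pullback $\hat{\rm Det}_\sH^*(\Theta_{J^d_{\wt X}}^{-1})$, where $\Theta_{J^d_{\wt X}}=p_1^*\Theta_{J^{d_1}_{X_1}}\otimes p_2^*\Theta_{J^{d_2}_{X_2}}$ with the factors defined by \eqref{6.10}; this leaves behind the line bundle $\hat\Theta'_{\omega^c}$ together with the evaluation factors $({\rm det}\,\sE_{y_j})^{2\chi_j-r}$.

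I expect this last collection to be the main obstacle. It is the Grothendieck--Riemann--Roch bookkeeping already needed for Proposition \ref{prop4.2} and Proposition \ref{prop5.1}, but now carried out over the \emph{product} $J^{d_1}_{X_1}\times J^{d_2}_{X_2}$ of two Jacobians, so one must correctly attribute each point evaluation, each $\det R\pi$ term, and the two $y_j$--contributions to the correct component. The computation rests on the fact that ${\rm det}\,\sE^j$ differs from $(\hat{\rm Det}_j\times{\rm id})^*\sL_j$ only by a line bundle pulled back from $\sR_j$, so that the combination entering $\omega_\sH^{-1}$ is independent of the choice of Poincaré bundle $\sL_j$, exactly as asserted for \eqref{6.10}; keeping track of these corrections and of the separate theta bundles on the two Jacobians is where the care is required.
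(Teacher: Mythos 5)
Your proposal is correct and follows essentially the route the paper intends (the paper only invokes ``similar arguments of \cite{S1}'', i.e.\ the mechanism behind Proposition \ref{prop5.1}): adjunction for the Grassmannian bundle giving $\omega_{\sH/\wt\sR}^{-1}=({\rm det}\,\sQ)^{2r}\otimes({\rm det}\,\sE_{x_1})^{-r}\otimes({\rm det}\,\sE_{x_2})^{-r}$, the product formula $\omega_{\wt\sR}^{-1}=\omega_{\sR_1}^{-1}\boxtimes\omega_{\sR_2}^{-1}$ with the displayed expressions after \eqref{6.9}, regrouping of the $({\rm det}\,R\pi_{\sR_j}{\rm det}\,\sE^j)^2$ and point-evaluation factors into $\hat{\rm Det}^*_{\sH}(\Theta_{J^d_{\wt X}}^{-1})$ via \eqref{6.10}, and extension across the non--locally-free locus by normality; the exponents do check out, since $n_{l_x+1}(x)-r=-r_{l_x}(x)$ and the factors $({\rm det}\,\sE_{x_j})^{\pm r}$ contributed by $\omega_{\sH/\wt\sR}$ and by $(\sL_j)_{x_j}^{r}$ in $\Theta_{J^{d_j}_{X_j}}$ cancel exactly. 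The one imprecision is your appeal to Proposition \ref{prop6.3} for the codimension-$\ge 2$ statement: what is actually needed is that the locus in $\sH$ where $\sE$ acquires torsion at $x_1$ or $x_2$ has codimension $\ge 2$, which follows from the dimension count of the strata with $s_i>0$ appearing inside the proof of that proposition (codimension on the order of $s_i(r+1)$), not from its statement, which concerns semistable loci.
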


Let $J_i\subset X_i\setminus (I_i\cup\{x_i\})$ be a subset, $J=J_1\cup J_2$ and
$$\sR(J)_i=\underset{x\in I_i\cup J_i}{\times_{\wt{\textbf{Q}}_i}}
Flag_{\vec n(x)}(\sF^i_x)\to \wt{\textbf{Q}}_i, $$
$\wt\sR(J)=\sR(J)_1\times\sR(J)_2\xrightarrow{p_J}\wt\sR=\sR_1\times\sR_2$ be the projection. Consider
$$\CD
  {\wt\sR}(J)' @>p_J>> {\wt\sR}' \\
  @V \rho VV @V \rho VV  \\
  \wt\sR(J) @>p_J>> \wt\sR
\endCD
$$ and $\sH(J):=p_J^{-1}(\sH)\xrightarrow{p_J}\sH$. Then, by Proposition \ref{prop6.4}, we have
\ga{6.11}{\omega^{-1}_{\sH(J)}={\hat\Theta}'_{\omega^c(J)}\otimes {\hat{\rm Det}}^*_{\sH(J)}(\Theta_{J^d_{\wt X}}^{-1}),}
where
$$\aligned {\hat\Theta}'_{\omega^c(J)}=&(det\,R\pi_{\sH(J)}\sE)^{-2r}\otimes
\bigotimes_{x\in I\cup J}\left\{\bigotimes^{l_x}_{i=1}(det\,\sQ_{x,i})
^{n_i(x)+n_{i+1}(x)}\right\}\otimes det(\sQ)^{2r}\\&\otimes({\rm det}\sE_{y_1})^{2\chi_1-r}\otimes({\rm det}\sE_{y_2})^{2\chi_2-r}\otimes\bigotimes_{x\in I\cup J} (det\,\sE_x)^{-r_{l_x}(x)}.
\endaligned$$
Let $\omega^c(J)=(r,\chi_1,\chi_2,\{\{n_i(x)\}_{1\le i\le l_x+1},\,\{d^c_i(x)\}_{1\le i\le l_x}\}_{x\in I\cup J},\mathcal{O}(1),k^c)$ where $k^c=2r$, $d^c_i(x)=n_i(x)+n_{i+1}(x)$, let $\ell_j^c=2\chi_j-r-\sum_{x\in I_j\cup J_j}r_{l_x}(x)$ and $\ell^c=\ell^c_1+\ell^c_2=2\chi-\sum_{x\in I\cup J}r_{l_x}(x).$
Then $$\sum_{x\in I\cup J}\sum^{l_x}_{i=1} d^c_i(x)r_i(x) +r\ell^c= k^c\chi.$$
The type $\{\vec n(x)\}_{x\in J}$ of flags at $x\in J$ will be chosen to satisfy
\ga{6.12}{\ell^c_1=\frac{c_1}{c_1+c_2}\ell^c}
which is equivalent to the following condition
\ga{6.13}{\aligned &c_1\sum_{x\in J_2}r_{l_x}(x)-c_2\sum_{x\in J_1}r_{l_x}(x)=\\&c_1\left(2\chi_2-r-\sum_{x\in I_2}r_{l_x}(x)\right)-c_2\left(2\chi_1-r-\sum_{x\in I_1}r_{l_x}(x)\right).\endaligned}

The choices of $\{\vec n(x)\}_{x\in J}$ satisfying \eqref{6.12} for arbitrary large $|J_1|$ and $|J_2|$ are possible since
the equation \eqref{6.13} has arbitrary large integer solutions. In this case, the line bundle ${\hat\Theta}'_{\omega^c(J)}$ is (algebraically) equivalent
to the restriction (on $\sH(J)$) of the following polarization
$$\frac{\ell^c+k^ccN}{c(m-N)}\times\prod_{x\in
I\cap J} \{d^c_1(x),\cdots,d^c_{l_x}(x)\}\times k^c.$$
On the other hand, it is easy to compute that $n_j^{\omega^c(J)}=\chi_j-\frac{r}{2}$, thus
$$n_j^{\omega^c(J)}<\chi_j<n_j^{\omega^c(J)}+r\qquad (j=1,\,\,2).$$
Moreover, for any polarization \eqref{6.1} (determined by $\omega$), let ${\hat\Theta}'_{\sH}$ be its restriction to $\sH$. Then we can write
$$p^*_J({\hat\Theta}'_{\sH})=\omega_{\sH(J)}\otimes{\hat\Theta}'_{\bar\omega}\otimes{\hat{\rm Det}}^*_{\sH(J)}(\Theta_{J^d_{\wt X}}^{-1}),$$
where $\bar\omega=(r,\chi_1,\chi_2,\{\{n_i(x)\}_{1\le i\le l_x+1},\,\{\bar d_i(x)\}_{1\le i\le l_x}\}_{x\in I\cup J},\mathcal{O}(1),\bar k)$,
$$\aligned {\hat\Theta}'_{\bar\omega}=&(det\,R\pi_{\sH(J)}\sE)^{-\bar k}\otimes
\bigotimes_{x\in I\cup J}\left\{\bigotimes^{l_x}_{i=1}(det\,\sQ_{x,i})
^{\bar d_i(x)}\right\}\otimes det(\sQ)^{\bar k}\\&\otimes({\rm det}\sE_{y_1})^{\ell_1+2\chi_1-r}\otimes({\rm det}\sE_{y_2})^{\ell_2+2\chi_2-r}\otimes\bigotimes_{x\in I\cup J} (det\,\sE_x)^{-r_{l_x}(x)}
,\endaligned$$ $\bar k=k+2r$, $\bar d_i(x)=d_i(x)+n_i(x)+n_{i+1}(x)$ ($d_i(x)=0$ for $x\in J$). Let
$$\bar\ell_j=\ell_j+2\chi_j-r-\sum_{x\in I_j\cup J_j}r_{l_x}(x)=\ell_j+\ell_j^c,$$
$$\bar\ell:=\bar\ell_1+\bar\ell_2=\ell+2\chi-\sum_{x\in I\cup J}r_{l_x}(x)=\ell+\ell^c.$$ Then it is easy to see that $\bar\ell_j=\frac{c_j}{c_1+c_2}\bar\ell$
(by \eqref{6.12}),
$$\sum_{x\in I\cup J}\sum^{l_x}_{i=1}\bar d_i(x)r_i(x) +r\bar\ell=\bar k\chi,$$
and ${\hat\Theta}'_{\bar\omega}$ is (algebraically) equivalent to the restriction of polarization determined by $\bar\omega$.
The condition \eqref{6.12} implies the following identities
\ga{6.14}{2r(\chi_j-n_j^{\bar\omega})=r^2+k(n_j^{\bar\omega}-n_j^{\omega})\quad (j=1,2).}

\begin{lem}\label{lem6.5} For any $(E,Q)\in \sH(J)$, we have $n_j^{\bar\omega}\le\chi_j\le n_j^{\bar\omega}+r$
(which is the necessary condition that $\wt{\sR}(J)_{\bar\omega}^{\prime ss}\neq \emptyset$).
\end{lem}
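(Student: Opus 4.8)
The plan is to reduce Lemma \ref{lem6.5} to an elementary inequality between the prescribed integers $\chi_1,\chi_2$ and the numbers $n_j^{\bar\omega}$, and then to read that inequality off directly from the identities \eqref{6.14}. First I would observe that every point $(E,Q)\in\sH(J)$ has $\chi(E_j)=\chi_j$ ($j=1,2$) by construction, since $\sH(J)$ lies over $\wt{\textbf{Q}}_1\times\wt{\textbf{Q}}_2$, where the Hilbert polynomials $P_j(m)=c_jrm+\chi_j$ are fixed; hence the asserted bound $n_j^{\bar\omega}\le\chi_j\le n_j^{\bar\omega}+r$ is a statement about these fixed numbers and need only be checked once. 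The single input I would use is that $\chi_1,\chi_2$ have been chosen so that the relevant component of the moduli space is nonempty, i.e. $\wt\sR_{\omega}^{\prime ss}\neq\emptyset$; applying Lemma \ref{lem6.1}(1) to any one semistable point then yields the basic range $n_j^{\omega}\le\chi_j\le n_j^{\omega}+r$, equivalently $0\le a_j\le r$ where $a_j:=\chi_j-n_j^{\omega}$.

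Next I would feed this into \eqref{6.14}. Writing $b_j:=\chi_j-n_j^{\bar\omega}$ and using $n_j^{\bar\omega}-n_j^{\omega}=a_j-b_j$, the identity $2r(\chi_j-n_j^{\bar\omega})=r^2+k(n_j^{\bar\omega}-n_j^{\omega})$ rearranges to $(2r+k)b_j=r^2+ka_j$, so that
\[
b_j=\frac{r^2+k\,a_j}{k+2r}.
\]
Thus $b_j$ is an explicit strictly increasing affine function of $a_j$, and the desired conclusion $0\le b_j\le r$ follows by evaluating at the two endpoints of the interval $0\le a_j\le r$. For the lower bound I would note $b_j\ge\frac{r^2}{k+2r}>0$, which uses only $a_j\ge0$ and in fact gives the strict inequality $\chi_j>n_j^{\bar\omega}$. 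For the upper bound, $a_j\le r$ forces $b_j\le\frac{r^2+kr}{k+2r}=\frac{r(k+r)}{k+2r}\le r$, i.e. $\chi_j\le n_j^{\bar\omega}+r$. Combining the two gives $n_j^{\bar\omega}\le\chi_j\le n_j^{\bar\omega}+r$, and since \eqref{6.14} is stated for $j=1,2$ and the range $0\le a_j\le r$ holds for each $j$, both components are handled by the same computation.

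I do not anticipate a genuine difficulty once \eqref{6.14} is available: the whole lemma is then a one-line rearrangement, and the role of \eqref{6.12} is precisely to guarantee the proportionality $\bar\ell_j=\frac{c_j}{c_1+c_2}\bar\ell$ that makes \eqref{6.14} valid. The only point that I think requires care is the justification of the input range $n_j^{\omega}\le\chi_j\le n_j^{\omega}+r$. This is \emph{not} a consequence of $(E,Q)\in\sH(J)$ by itself — the scheme $\sH(J)$ is already nonempty for an arbitrary choice of the integers $\chi_j$ — but rather of the standing assumption that $\chi_1,\chi_2$ index an actual (nonempty) component of $\sP$, exactly as in \eqref{2.4} and Remarks \ref{rmks2.17}.

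I would therefore make this hypothesis explicit at the start of the proof, so that the appeal to Lemma \ref{lem6.1}(1) is unambiguous, and then present the elementary deduction $b_j=\frac{r^2+ka_j}{k+2r}\in[0,r]$ as above. A secondary bookkeeping check worth including is the consistency with the computation $n_j^{\omega^c(J)}=\chi_j-\tfrac{r}{2}$ recorded before the lemma: since $n_j^{\bar\omega}$ is the weighted average $\frac{k\,n_j^{\omega}+2r\,n_j^{\omega^c(J)}}{k+2r}$, the same formula for $b_j$ emerges, which I would cite as an independent confirmation that the inequality is stable in both $j$.
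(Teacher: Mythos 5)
Your proposal is correct and is essentially the paper's argument: the paper likewise deduces the lemma from the identity \eqref{6.14} together with the input range $n_j^{\omega}\le\chi_j\le n_j^{\omega}+r$ of Lemma \ref{lem6.1}(1), only organized as a two-case analysis on the sign of $n_1^{\bar\omega}-n_1^{\omega}$ rather than your single rearrangement $\chi_j-n_j^{\bar\omega}=\frac{r^2+k(\chi_j-n_j^{\omega})}{k+2r}$. Your explicit flagging of the standing nonemptiness hypothesis behind that input range is a fair point that the paper leaves implicit.
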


\begin{proof} If $n_1^{\bar\omega}\ge n_1^{\omega}$, by \eqref{6.14}, we have
$n_1^{\bar\omega}<\chi_1\le n_1^{\omega}+r\le n_1^{\bar\omega}+r$, which implies
$n_2^{\bar\omega}\le\chi_2 <n_2^{\bar\omega}+r$. If $n_1^{\bar\omega}<n_1^{\omega}$, by $n_1^{\bar\omega}+n_2^{\bar\omega}=\chi=n_1^{\omega}+n_2^{\omega}$, we have
$n_2^{\bar\omega}>n_2^{\omega}$ which implies $n_2^{\bar\omega}<\chi_2 \le n_2^{\omega}+r <n_2^{\bar\omega}+r$ by \eqref{6.14} again (thus $n_1^{\bar\omega}<\chi_1< n_1^{\bar\omega}+r$).
\end{proof}

To prove $H^1(\sP_{\omega},\Theta_{\sP_{\omega}})=0$ via the same method of Section 5, even if we assume that $\underset{1\le i\le 2}{{\rm min}}\left\{(r-1)(g_i-\frac{r+3}{4})+\frac{|I_i|}{k}\right\}\ge 3$, we only have
$$\aligned H^1(\sP_{\omega},\Theta_{\sP_{\omega}}):&=H^1(\wt{\sR}_{\omega}^{\prime ss},{\hat\Theta}'_{\wt{\sR}_{\omega}^{\prime ss}})^{inv.}=H^1(\sH^{\omega},{\hat\Theta}'_{\sH})^{inv.}\\&=H^1(p_J^{-1}(\sH^{\omega}),p^*_J({\hat\Theta}'_{\sH}))^{inv.}\\
&=H^1(p_J^{-1}(\sH^{\omega}),\omega_{\sH(J)}\otimes{\hat\Theta}'_{\bar\omega}\otimes{\hat{\rm Det}}^*_{\sH(J)}(\Theta_{J^d_{\wt X}}^{-1}))^{inv.}.\endaligned$$
If $p_J^{-1}(\sH^{\omega})=\sH(J)^{\bar\omega}$, we would have (choosing $|J_1|$, $|J_2|$ large enough)
$$\aligned &H^1(p_J^{-1}(\sH^{\omega}),\omega_{\sH(J)}\otimes{\hat\Theta}'_{\bar\omega}\otimes{\hat{\rm Det}}^*_{\sH(J)}(\Theta_{J^d_{\wt X}}^{-1}))^{inv.}\\&=H^1(\sP_{\bar\omega}, \omega_{\sP_{\bar\omega}}\otimes \Theta_{\sP_{\bar\omega}}\otimes {\rm Det}^*_{\sP_{\bar\omega}}(\Theta_{J^d_{\wt X}}^{-1}))\endaligned$$
which vanishes by Kodaira-type theorem and the following lemma.
\begin{lem}\label{lem6.4} When $X=X_1\cup X_2$ with node $x_0$, the line bundle
$$\Theta_{\sP_{\bar\omega}}\otimes {\rm Det}^*_{\sP_{\bar\omega}}(\Theta_{J^d_{\wt X}}^{-1})$$
on $\sP_{\bar\omega}$ is ample if $\bar
k>2r$.
\end{lem}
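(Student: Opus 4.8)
The plan is to deduce ampleness of $\sL:=\Theta_{\sP_{\bar\omega}}\otimes{\rm Det}^*(\Theta_{J^d_{\wt X}}^{-1})$ from the product structure $J^d_{\wt X}=J^{d_1}_{X_1}\times J^{d_2}_{X_2}$ together with the ampleness already known on the smooth factors $X_1,X_2$. Writing $\hat\Theta_{\sR_j}$ for the part of $\hat\Theta'_{\bar\omega}$ supported on $\sR_j$, the explicit formula for $\hat\Theta'_{\bar\omega}$ splits (since $\sE=\sE^1\oplus\sE^2$ on $\wt X=X_1\sqcup X_2$) as $\hat\Theta'_{\bar\omega}=\rho^*(\hat\Theta_{\sR_1}\boxtimes\hat\Theta_{\sR_2})\otimes{\rm det}(\sQ)^{\bar k}$, and the factorization ${\rm Det}=(\hat{\rm Det}_1,\hat{\rm Det}_2)\circ\rho$ gives, on $\wt\sR'$,
$$\hat\Theta'_{\bar\omega}\otimes\hat{\rm Det}^*(\Theta_{J^d_{\wt X}}^{-1})=\rho^*\Big[\big(\hat\Theta_{\sR_1}\otimes\hat{\rm Det}_1^*\Theta_{J^{d_1}_{X_1}}^{-1}\big)\boxtimes\big(\hat\Theta_{\sR_2}\otimes\hat{\rm Det}_2^*\Theta_{J^{d_2}_{X_2}}^{-1}\big)\Big]\otimes{\rm det}(\sQ)^{\bar k}.$$
Both sides being $G$-linearized, it suffices to prove that the right-hand side descends to an ample bundle on $\sP_{\bar\omega}$.

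First I would treat each factor. By the ampleness result for smooth curves used in the proof of Theorem \ref{thm4.4} (Lemma 5.3 of \cite{S1}), the line bundle $\hat\Theta_{\sR_j}\otimes\hat{\rm Det}_j^*\Theta_{J^{d_j}_{X_j}}^{-1}$ descends to an ample line bundle $\sA_j$ on $\sU_{X_j,\,\bar\omega}$; here the hypothesis $\bar k>2r$ is exactly what is needed, as one sees by restricting to the $\Pic^0(X_j)$-orbit obtained by tensoring $E_j$ with degree-zero line bundles. Along this orbit $\hat{\rm Det}_j$ becomes multiplication by $r$ on $J^{d_j}_{X_j}$, and the line bundle is algebraically equivalent to $r(\bar k-2r)\theta_{0,j}$, which is positive precisely when $\bar k>2r$. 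One routine point is to check that the fibre-point correction terms distinguishing $\Theta_{J^{d_j}_{X_j}}^{-1}$ from the bundle $\Theta_y^{-2}$ of \cite{S1} do not affect this conclusion; they only change the class by terms that are algebraically trivial along the $\Pic^0$-orbits. Consequently $\sA:=\sA_1\boxtimes\sA_2$ is ample on $\sU_{X_1,\,\bar\omega}\times\sU_{X_2,\,\bar\omega}$.

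Next, $\rho$ descends to a morphism $\bar\rho\colon\sP_{\bar\omega}\to\sU_{X_1,\,\bar\omega}\times\sU_{X_2,\,\bar\omega}$, and the displayed identity shows $\sL=\bar\rho^*\sA\otimes{\rm det}(\sQ)^{\bar k}$, where ${\rm det}(\sQ)^{\bar k}$ is $\bar\rho$-relatively ample (the fibres of $\rho$ are the Grassmannians $Grass_r(\sE_{x_1}\oplus\sE_{x_2})$ and ${\rm det}(\sQ)$ is their Plücker polarization) and $\bar k>0$. To pass from this to global ampleness I would use the criterion that a $\bar\rho$-ample line bundle $\sL$ is ample as soon as $\bar\rho_*(\sL^{\otimes n})$ is an ample vector bundle on the base for $n\gg0$: here $\bar\rho_*(\sL^{\otimes n})=\sA^{\otimes n}\otimes\bar\rho_*({\rm det}(\sQ)^{n\bar k})$, and since ${\rm det}(\sQ)$ is relatively globally generated its pushforward is nef, so tensoring by the ample line bundle $\sA^{\otimes n}$ produces an ample vector bundle.

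The main obstacle is precisely this last combination step: transporting the clean Grassmann-bundle description of $\rho$ and the pushforward computation through the GIT quotient to the (singular, possibly reducible) space $\sP_{\bar\omega}$, and justifying the nef/ample-vector-bundle statement rigorously. Everything else is bookkeeping: the verification that $\hat\Theta'_{\bar\omega}$ splits as $\rho^*(\hat\Theta_{\sR_1}\boxtimes\hat\Theta_{\sR_2})\otimes{\rm det}(\sQ)^{\bar k}$, and the matching of correction terms needed to invoke the smooth-curve ampleness. Note finally that $\bar k=k+2r>2r$ holds automatically, so the hypothesis $\bar k>2r$ is the natural and only constraint.
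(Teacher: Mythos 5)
Your strategy is genuinely different from the paper's, and it has a gap that I do not think can be repaired in the form you propose. The key problem is the existence of the morphism $\bar\rho\colon\sP_{\bar\omega}\to\sU_{X_1,\,\bar\omega}\times\sU_{X_2,\,\bar\omega}$. For $\rho$ to descend to the GIT quotients you need every GPS $(E,Q)$ that is semistable for the polarization $\bar\omega$ to have semistable restrictions $E_1,E_2$; but Lemma \ref{lem6.1} only gives the implication in that direction in the degenerate boundary cases $\chi_1=n_1^{\bar\omega}$ or $n_1^{\bar\omega}+r$ (part (4)), whereas for the data $\bar\omega$ actually arising here one has $n_j^{\bar\omega}<\chi_j<n_j^{\bar\omega}+r$ (cf.\ \eqref{6.12}, Lemma \ref{lem6.5}), and in that range the implication runs the other way (part (3)): semistability of $E_1,E_2$ forces semistability of $(E,Q)$, not conversely. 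So $\rho$ only induces a rational map to $\sU_{X_1,\,\bar\omega}\times\sU_{X_2,\,\bar\omega}$, the ample bundle $\sA=\sA_1\boxtimes\sA_2$ cannot be pulled back to all of $\sP_{\bar\omega}$, and the identity $\sL=\bar\rho^*\sA\otimes\det(\sQ)^{\bar k}$ on which the whole argument rests is not available. A second, independent gap is the nefness of $\bar\rho_*(\det(\sQ)^{n\bar k})$: relative global generation of $\det(\sQ)$ gives surjectivity of $\bar\rho^*\bar\rho_*\det(\sQ)^{m}\to\det(\sQ)^{m}$, which says nothing about positivity of the pushforward as a bundle on the base; for a Grassmannian bundle $Grass_r(\sE_{x_1}\oplus\sE_{x_2})$ these pushforwards are Schur functors of $\sE_{x_1}\oplus\sE_{x_2}$, and their nefness would require positivity of that bundle, which is not available.

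The paper's proof avoids both issues by never leaving $\sP_{\bar\omega}$. It restricts to one component $\sP_{d_1,d_2}$, uses the action of $J^0_X=J^0_{X_1}\times J^0_{X_2}$ on $\sP_{\bar\omega}$ given by $((E,Q),\sN)\mapsto(E\otimes\pi^*\sN,Q\otimes\sN_{x_0})$, and tests ampleness after pulling back along the surjective morphism $f\colon\sP^L_{\bar\omega}\times J^0_X\to\sP_{\bar\omega}$, where $\sP^L_{\bar\omega}={\rm Det}^{-1}(L)$. On the slices $\sP^L_{\bar\omega}\times\{\sN\}$ the factor ${\rm Det}^*_{\sP_{\bar\omega}}(\Theta_{J^d_{\wt X}}^{-1})$ is trivial and one is left with the restriction of the ample $\Theta_{\sP_{\bar\omega}}$; on the slices $\{(E,Q)\}\times J^0_X$ one computes that the class is algebraically equivalent to $\Theta_y^{r\bar k-2r^2}$, which is where the hypothesis $\bar k>2r$ enters (exactly as in Lemma 5.3 of \cite{S1}). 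If you want to salvage your factorized approach, you would at minimum have to replace $\sU_{X_1,\,\bar\omega}\times\sU_{X_2,\,\bar\omega}$ by a target that actually receives a morphism from $\sP_{\bar\omega}$, and find a substitute for the nefness claim; as written, neither step goes through.
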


\begin{proof} When $X=X_1\cup X_2$, the moduli space
$\sP_{\bar\omega}$ is a disjoint union of
$$\{\sP_{d_1,d_2}\}_{d_1+d_2=d}.$$ It is enough to consider
$\sP_{\bar\omega}=\sP_{d_1,d_2}$, thus we the flat morphism
$${\rm Det}:\sP_{\bar\omega}\to J^d_{\wt X}=J^{d_1}_{X_1}\times
J^{d_2}_{X_2}=J^d_X$$ and $J^0_{\wt X}=J^0_{X_1}\times
J^0_{X_2}=J^0_X$ acts on $\sP_{\bar\omega}$ by
$$((E,Q),\sN)\mapsto (E\otimes\pi^*\sN, Q\otimes \sN_{x_0}).$$
Let $\sP_{\bar\omega}^L={\rm Det}_{\sP_{\bar\omega}}^{-1}(L)$ (which is unirational),
consider the morphism
$$f:\sP_{\bar\omega}^L\times J^0_X\to \sP_{\bar\omega}.$$
Then it is enough to check the ampleness of
$$f^*(\Theta_{\sP_{\bar\omega}}\otimes{\rm
Det}_{\sP_{\bar\omega}}^*(\Theta_{J^d_{\wt X}}^{-1}))|_{\{(E,\,Q)\}\times J^0_X}\,,\quad
f^*(\Theta_{\sP_{\bar\omega}}\otimes{\rm
Det}_{\sP_{\bar\omega}}^*(\Theta_{J^d_{\wt X}}^{-1}))|_{\sP_{\bar\omega}^L\times \{\sN\}}.$$ It is
clearly that $f^*(\Theta_{\sP_{\bar\omega}}\otimes{\rm
Det}_{\sP_{\bar\omega}}^*(\Theta_{J^d_{\wt X}}^{-1}))|_{\sP_{\bar\omega}^L\times \{\sN\}}$ is
ample, and
$$f^*(\Theta_{\sP_{\bar\omega}}\otimes{\rm
Det}_{\sP_{\bar\omega}}^*(\Theta_{J^d_{\wt X}}^{-1}))|_{\{(E,\,Q)\}\times J^0_X}=M_1\otimes M_2$$
where $M_1=f_1^*(\Theta_{\sP_{\bar\omega}})$,
$M_2=f_2^*(\Theta_{J^d_{\wt X}}^{-1})$, $f_1: J^0_X\to \sP_{\bar\omega}$, $ f_2:
J^0_X\to J^d_X$,
$$f_1(\sN)=(E\otimes\pi^*\sN,Q\otimes\sN_{x_0}),\quad
f_2(L_0)=L_0^r\otimes L.$$ Then $M_1$ (resp. $M_2$) is algebraically
equivalent to $\Theta_y^{r\bar k}$ (resp. $\Theta_y^{-2r^2}$) (see
Lemma 5.3 of \cite{S1} for details). Thus $M_1\otimes M_2$ is
algebraically equivalent to $\Theta_y^{r\bar k-2r^2}$, which is ample
when $\bar k>2r$.
\end{proof}

\begin{rmks} (1) The equality $p_J^{-1}(\sH^{\omega})=\sH(J)^{\bar\omega}$ is equivalent to the statement that for any $(E,Q)\in \sH(J)$ with torsion $\tau_i$ at $x_i$ we have
\ga{6.15} {\tau_i\le n_j^{\omega}+r-\chi_j\,\,(j\neq i)\,\,\Leftrightarrow\, \,\tau_i\le  n_j^{\bar\omega}+r-\chi_j\,\,(j\neq i)}
which may not be true unfortunately. (2) The proof of Proposition \ref{prop6.3} in fact implies the following estimate
\ga{6.16}{{\rm codim}(\sH\setminus\wt\sR_{\omega}^{\prime -ss})>\underset{1\le i\le 2}{{\rm min}}\left\{(r-1)(g_i-\frac{r+3}{4})+\frac{|I_i|}{k}\right\}}
where the open set $\wt\sR_{\omega}^{\prime -ss}\subset \sH$ satisfying $\wt\sR_{\omega}^{\prime -ss}\supset\wt\sR_{\omega}^{\prime ss}$ is defined to be
$$\wt\sR_{\omega}^{\prime -ss}:=\left\{\aligned
&(E,Q)\in\sH \,\text{satisfies $\,par\mu(F)\le par\mu(E)$ for any }\\
&\text{nontrivial $F\subset E$ of rank $(r_1,r_2)\neq (0,r)$ or $(r,0)$}\endaligned\right\}.$$
\end{rmks}

We end up by some comments about quantization conjecture of Guillemin-Sternberg.
Let $M$ be a projective variety with an action of a reductive group $G$ and an ample $L$ linearizing the action of $G$. If $ M^{ss}_{L}\subset M$ is the open set of GIT semistable points, then the so called quantization conjecture of Guillemin-Sternberg predict that
\ga{6.17} {H^i(M,L)^{inv.}=H^i(M^{ss}_L,L)^{inv.}}
which was proved when $M$ is projective and has at most rational singularities (see \cite{Te}, \cite{TZ} and \cite{Zh}). There is an example in \cite{Te} showing the failure of \eqref{6.17} when $M$ has worse singularities. However, for the applications of studying moduli spaces in algebraic geometry, $M$ is in general a locally closed subvariety of Quotient schemes or Hilbert schemes (for example, $M=\wt{\sR}_F,\,\,\sH$ in this article, which are quasi-projective and have at most rational singularities). Thus the following question seems natural and important for application.

\begin{question}\label{question6.8} Let $M$ be a normal, projective variety with action by a reductive group $G$. If $M_0\subset M$ is an $G$-invariant open set such that $M^{ss}_{L}\subset M_0$ for any ample linearization $L$. Does the equality $$H^i(M_0,L)^{inv.}=H^i(M^{ss}_{L}, L)^{inv.}$$ holds for any $i\ge 0$ ?
\end{question}

If the question has an affirmative answer, conjecture in Remark \ref{rmk5.5} and Conjecture \ref{conj4.5} will hold, which imply $H^1(\sU_X,\Theta_{\sU_X,\,\omega})=0$ for any irreducible $X$ with one node and any data $\omega$ (see Remark \ref{rmk5.5}). However, the affirmative answer of
Question \ref{question6.8} seems not imply $H^1(\sU_X,\Theta_{\sU_X,\,\omega})=0$ for reducible $X=X_1\cup X_2$.

Let $q_L: M^{ss}_L\to \sM_L:=M^{ss}_L//G$ be the GIT quotient and assume that $L$ descends to a line bundle $\sL$ (i.e. $L$ is the pullback of $\sL$). One of the general strategy of proving $H^i(\sM_L, \sL)=0$ is to use equalities
$$H^i(\sM_L, \sL)=H^i(M^{ss}_L,L)^{inv.}=H^i(M_0,L)^{inv.}$$
where the first equality holds by definition and the second holds by the affirmative answer of
Question \ref{question6.8}. Then one can write (on $M_0$)
$$L=\omega_{M_0}\otimes L',\quad L'=\omega_{M_0}^{-1}\otimes L$$
where $\omega_{M_0}$ is the canonical bundle of $M_0$. Let $q_{L'}: M^{ss}_{L'}\to \sM_{L'}$ be the GIT quotient and
$L'$ descend to $\sL'$. Assume that
\ga{6.18} {H^i(M_0,L)^{inv.}=H^i(M^{ss}_{L'}, L)^{inv.}, \quad \omega_{M_0}=q_{L'}^*(\omega_{\sM_{L'}}).}
Then $H^i(\sM_L,\sL)=H^i(\sM_{L'}, \omega_{\sM_{L'}}\otimes\sL')=0$ ($\forall\,\,i>0$). Assumption \eqref{6.18} does
not hold in general, which need a good estimate of codimension of $M_0\setminus M^{ss}_{L'}$ and $M^{ss}_{L'}\setminus M^{s}_{L'}$.
It is the reason that this strategy does not work for reducible $X=X_1\cup X_2$ since we do not have a good estimate of codimension of
$\sH\setminus \wt\sR_{\omega}^{\prime ss}$ (we have only an estimate of ${\rm codim}(\sH^{\omega}\setminus\wt\sR_{\omega}^{\prime ss})$).
However, we will prove vanishing theorems in a forthcoming article \cite{SZ} for all of these moduli spaces by a method of modulo $p$ reduction, which essentially
needs the estimates of codimension and computation of canonical bundles.

\bibliographystyle{plain}

\begin{thebibliography}{99}

\bibitem{Be} A.Beauville: {Vector bundles on curves and generalized theta functions: recent results and open problems},
Current topics in complex algebraic geometry (Berkeley, CA, 1992/93), Math. Sci. Res. Inst. Publ.,{\bf 28} (1995), 17¨C33.

\bibitem{Bh} U. Bhosle: {vector bundles on curves with many components},
Proc. London Math. Soc., {\bf 79} (1999), 81--106.

\bibitem{DW1} G. Daskalopoulos and R. Wentworth: {\em Local degeneration of the moduli space of
vector bundles and factorization of rank two theta functions. I},
Math. Ann., {\bf 297} (1993), 417--466.

\bibitem{DW2} G. Daskalopoulos and R. Wentworth: {\em Factorization of rank two
theta functions. II: Proof of the Verlinde formula}, Math. Ann.,{\bf
304} (1996), 21--51

\bibitem{Kn} F. Knop: {\em Der kanonische Moduleines Invariantenrings}, Journal of Algebra,{\bf
127} (1989), 40--54

\bibitem{NR}M.S.Narasimhan and T.R. Ramadas: {\em Factorisation of generalised theta functions I},
Invent. Math., {\bf 114} (1993), 565--623.

\bibitem{Pa} C. Pauly : {\em Espaces modules de fibr{\'e}s paraboliques et blocs
conformes}, Duke Math., {\bf 84} (1996), 217--235.

\bibitem{R} Ramadas: {\em Factorisation of generalised theta functions II: the Verlinde formula},
Topology, {\bf 35},(1996),  641--654.

\bibitem{S1} X. Sun: {\em Degeneration of moduli spaces and generalized theta functions},
Journal of Algebraic Geometry, {\bf 9}, (2000), 459-527.

\bibitem{S2} X. Sun: {\em Factorization of generalized theta functions in the reducible case},
Ark. Mat., {\bf 41} (2003), 165--202.

\bibitem{SZ} X. Sun and M. Zhou: {\em Globally F-regular type of moduli spaces and Verlinde formula}, in prepare.

\bibitem{Te} C. Teleman: {\em The quantization conjecture revisited},
Ann. of Math. (2) {\bf 152} (2000), no. 1, 1--43.

\bibitem{TZ} Y. Tian and W. Zhang: {\em An analytic proof of the geometric quantization conjecture of Guillemin-Sternberg},
Invent. Math., {\bf 132} (1998), no. 2, 229¨C259.

\bibitem{Zh} W. Zhang: {\em Holomorphic quantization formula in singular reduction},
Commun. Contemp. Math., {\bf 1} (1999), 281¨C293.

\end{thebibliography}

\renewcommand\refname{References}

\end{document}